\documentclass[a4paper,11pt]{article}
\usepackage{amsfonts}
\usepackage{amscd,color}
\usepackage{amsmath,amsfonts,amssymb,amscd}
\usepackage{indentfirst,graphicx,epsfig}
\usepackage{graphicx,psfrag}
\input{epsf}

\setlength{\textwidth}{152mm} \setlength{\textheight}{230mm}
\setlength{\headheight}{2cm} \setlength{\topmargin}{0pt}
\setlength{\headsep}{0pt} \setlength{\oddsidemargin}{0pt}
\setlength{\evensidemargin}{0pt}

\parskip=3pt

\voffset -25mm \rm

\newtheorem{df}{Definition}[section]
\newtheorem{thm}{Theorem}[section]
\newtheorem{prop}[thm]{Proposition}

\newtheorem{lem}[thm]{Lemma}
\newtheorem{ob}{Observation}[section]
\newtheorem{claim}{Claim}[section]

\newenvironment {proof} {\noindent{\em Proof.}}{\hspace*{\fill}$\Box$\par\vspace{4mm}}

\baselineskip=20pt
\def\qed{\hfill \nopagebreak\rule{5pt}{8pt}}

\title{Anti-Ramsey numbers of paths and cycles in hypergraphs}

\author{
\small  Ran Gu$^1$, Jiaao Li$^2$ and  Yongtang Shi$^3$\\
\small $^1$College of Science, Hohai University,\\
\small Nanjing, Jiangsu Province 210098,
P.R. China\\
\small $^2$School of Mathematical Sciences and LPMC \\
\small Nankai University, Tianjin 300071, China \\
\small $^3$Center for Combinatorics and LPMC \\
\small Nankai University, Tianjin 300071, China \\
\small Emails: rangu@hhu.edu.cn; lijiaao@nankai.edu.cn; shi@nankai.edu.cn
\\
\date{}}

\begin{document}
\maketitle
\begin{abstract}
The anti-Ramsey problem was introduced by Erd\H os, Simonovits and S\'{o}s in 1970s. The anti-Ramsey number of a  hypergraph $\mathcal{H}$, $ar(n,s, \mathcal{H})$, is the smallest integer $c$ such that in any coloring of the
edges of the $s$-uniform complete hypergraph on $n$ vertices  with exactly $c$ colors, there is a copy of $\mathcal{H}$ whose edges have distinct colors.  In this paper, we determine the anti-Ramsey numbers of  linear paths and loose paths in hypergraphs for sufficiently large $n$, and give bounds for the anti-Ramsey numbers of  Berge paths. Similar exact anti-Ramsey numbers are obtained for linear/loose cycles, and bounds are obtained for Berge cycles. Our  main tools are path extension technique and stability results on hypergraph Tur\'{a}n problems of paths and cycles.
\\[2mm]
\textbf{Keywords:} anti-Ramsey; rainbow; hypergraphs; paths; {cycles; }Tur\'{a}n number\\
[2mm] \textbf{AMS Subject Classification (2010):} 05D05, 05C35, 05C65
\end{abstract}

\section{Introduction}
The
\emph{anti-Ramsey number} of a graph $G$, denoted by $ar(n,G)$, is the minimum number of colors needed to color the edges of the complete graph $K_n$ so
that, in any coloring, there exists a  copy of $G$ whose edges have distinct colors. The \emph{Tur\'{a}n number} of a graph $G$, denoted by $ex(n, G)$, is the maximum
number of edges in a graph on $n$ vertices that does not contain $G$ as a subgraph.  It is easy to observe that
\begin{equation}\label{H-e}
2+ex(n,\{H-e, e\in E(H)\})\leq ar (n,H)\leq ex(n,H)+1,
\end{equation}
for any graph $H$.

In 1973, Erd\H os, Simonovits and S\'{o}s \cite{ESS} showed a remarkable result that $ar(n,K_p) = ex(n, K_{p-1}) + 2$ for sufficiently large $n$.
Montellano-Ballesteros and Neumann-Lara \cite{MN} extended this result to all values of $n$ and $p$ with $n > p \ge 3$. In \cite{ESS}, it was shown that $ar (n,H)-ex(n,\{H-e, e\in E(H)\})=o(n^2)$ when $n\rightarrow \infty$. Furthermore, Jiang \cite{Jiang} proved that if $H$ is a graph such that each edge is incident to a
vertex of degree two, then $ar (n,H)-ex(n,\{H-e, e\in E(H)\})=O(n)$. A history
of results and open problems on this topic was given by Fujita, Magnant, and Ozeki \cite{FMO}.

A \emph{hypergraph} $\mathcal{H}$ consists of a set $V(\mathcal{H})$ of vertices and a family $E(\mathcal{H})$ of nonempty subsets of $V(\mathcal{H})$ called edges of $\mathcal{H}$. If
each edge of $\mathcal{H}$ has exactly $s$ vertices, then $\mathcal{H}$ is {\em $s$-uniform} and $\mathcal{H}$ is called an {\em $s$-graph}. A \emph{complete $s$-uniform hypergraph} is a hypergraph whose edge
set consists of all $s$-subsets of the vertex set. In an edge-coloring of a (hyper)graph $\mathcal{H}$, a sub(hyper)graph $\mathcal{F}\subseteq\mathcal{H}$ is \emph{rainbow} if all edges of $\mathcal{F}$ have distinct colors.

The anti-Ramsey number and Tur\'{a}n number are naturally extended from  graphs to hypergraphs. The anti-Ramsey number of an $s$-uniform hypergraph $\mathcal{H}$,
denoted by $ar(n, s, \mathcal{H})$, is the minimum number of colors needed to color the edges of a complete $s$-uniform hypergraph on $n$
vertices so that there exists a rainbow $\mathcal{H}$ in any coloring. The Tur\'{a}n number of $\mathcal{H}$, denoted by $ex(n, s, \mathcal{H})$,
is the maximum number of edges in an $s$-uniform hypergraph on $n$ vertices that contains no $\mathcal{H}$.  \"{O}zkahya and Young \cite{OY} investigated the anti-Ramsey number of matchings in hypergraphs, where a \emph{matching} is a set of edges in a (hyper)graph in which no two edges have
a common vertex.   A $k$-\emph{matching}, denoted by $M_k$, is a matching with $k$ edges. \"{O}zkahya and Young \cite{OY} gave the lower and upper bounds for $ar(n, s, M_k)$  in terms of $ex(n, s, M_{k-1})$. They proved that $$ ex(n, s, M_{k-1}) + 2\le ar(n, s, M_k)\le ex(n, s, M_{k-1}) + k,$$ where the lower bound holds for every $n$, and the upper bound holds for $n\ge  sk + (s - 1)(k - 1)$. For $s=2$, Schiermeyer \cite{Schiermeyer} proved that $ar(n, 2, M_{k}) = ex(n, 2, M_{k-1}) + 2$ for $k \geq 2$ and $n\geq 3k + 3$, and this condition was further released to all $n\geq 2k+1$ by Chen, Li and Tu \cite{CLT2009} and  by Fujita, Kaneko, Schiermeyer and Suzuki \cite{FKSS2009}, independently.

In fact, for $k$-matchings, the Tur\'{a}n number $ex(n, s, M_k)$ is still not known for $k \ge 3$ and $s \ge 3$. Let $[n]$ denote the set $\{1,2,\ldots,n\}$, $\binom{[n]}{k}$ denote the set consisting of all the $k$-sets of $[n]$.
 Erd\H{o}s put forward a conjecture in 1965  that $ex(n,s,M_{k})=\max\{|A_s|,|B_s(n)|\},$ where
$A_s={[sk-1] \choose s}$ and $B_s(n)=\{F\in {[n]\choose s}|
F\cap [k-1]\neq \emptyset\}$.
This conjecture is true for $s = 2$, which was
shown by Erd\H{o}s and Gallai \cite{EG}. In \cite{N14}, Erd\H{o}s proved that there exists a constant $n_0(s,k)$ such that for $n>n_0(s,k)$, the conjecture holds. Then Bollob\'as,  Daykin and  Erd\H{o}s \cite{N5} improved the bound for $n_0(k,s)$ such that $n_0(k,s)\le 2s^3(k-1)$. It was  improved to $n_0(k,s)\le 3s^2(k-1)$ by Huang, Loh and Sudakov \cite{N21} later.

For the anti-Ramsey number of $k$-matching, \"{O}zkahya and Young \cite{OY}  conjectured
that when $k \ge 3$, $ar(n, s, M_k) = ex(n, s, M_{k-1}) + 2$ if $n > sk$, and
  \begin{equation*}
ar(n, s, M_k)=
\left\{
  \begin{array}{ll}
  ex(n, s, M_{k-1}) + 2 & \hbox{  if $k \le c_s$,} \\
  ex(n, s, M_{k-1}) + s+1 & \hbox{  if $k\ge c_s$},
  \end{array}
\right.
\end{equation*}
if $n = sk$,  where $c_s$ is a constant dependent on $s$. They proved that the conjecture is true when $k = 2,3$ for sufficiently large $n$. Later, Frankl and Kupavskii \cite{FK}  proved that $ar(n, s, M_k) =
ex(n, s, M_{k-1}) + 2$ for $n \ge sk+ (s-1)(k-1)$
and $k \ge3$. For more results on matchings, we refer to \cite{frankl1, FK1}.

{
 For paths, Simonovits and S\'{o}s \cite{SS} proved that  $ar(n, P_{2t+3+\epsilon})=tn-\binom{t-1}{2}+1+\epsilon$ for large $n$, where $\epsilon=0, 1$ and $P_k$ is a
path on $k$ vertices. Comparing with the Tur\'an number of paths
\begin{equation}\label{eqP}
ex(n, P_{k} )\le (k-2)n/2
\end{equation}
 given by Erd\H{o}s and Gallai \cite{EG}, it follows that $ar(n, P_{k})=ex(n,P_{k-1})+O(1)$ when $k$ is odd, and $ar(n, P_{k})=ex(n,P_{k-2})+O(1)$ when $k$ is even. For a cycle $C_k$ of order $k$, Erd\H{o}s,  Simonovits and S\'{o}s \cite{ESS} conjectured that
  $ar(n, C_{k})=n\left(\frac{k-2}{2}+\frac{1}{k-1}\right)+O(1)$. This conjecture was confirmed by Montellano-Ballesteros and Neumann-Lara \cite{MBNL}, and they gave the exact value of $ar(n, C_{k})$ for all $n\ge k\ge 3$.
It would be interesting to investigate the relation between the anti-Ramsey number and the Tur\'an number for paths and cycles in hypergraphs. The Tur\'an numbers of paths  and cycles are extensively studied, see \cite{frankl,FJ,FJS,KMV} or  Section 2 below for details. Motivated by this, we will study the anti-Ramsey numbers of paths and cycles and compare it with the Tur\'an numbers of paths  and cycles in hypergraphs.
}

There are several possible ways to define paths and cycles in hypergraphs as generalization of paths and cycles in graphs from different aspects.

\begin{df}
Let $\mathcal{H}$ be an $s$-uniform hypergraph.

 (i) A {\bf Berge path} of length $k$ in $\mathcal{H}$ is a family of $k$
distinct edges $e_1,\ldots , e_k$ and $k+1$
distinct vertices $v_1, \ldots, v_{k+1}$ such that for each
$1 \leq i \leq k$, $e_i$ contains $v_i$ and $v_{i+1}$. Let
$\mathcal{B}_k$ denote the family of  Berge
paths of length $k$. A {\bf Berge cycle} of length $k$ in $\mathcal{H}$ is a cyclic list of $k$ distinct edges $e_1,\ldots , e_k$ and $k$ distinct vertices $v_1, \ldots, v_k$ such that $e_i$ contains $v_i$ and $v_{i+1}$ for each $1 \leq i \leq k$, where $v_{k+1} = v_1$. Denote the family of all Berge cycles of
length $k$ by $\mathcal{BC}_{k}$.


(ii) A {\bf loose path} of length $k$ in $\mathcal{H}$
is a collection of  distinct edges
$\{e_1, e_2, \ldots, e_{k}\}$  such
that consecutive edges intersect in at least one element and
nonconsecutive edges are disjoint. Denote the family of loose paths of
length $k$ by $\mathcal{P}_{k}$. A  {\bf loose cycle} is defined similarly in a cyclic order, and denote the family of all loose cycles of
length $k$ by $\mathcal{C}_{k}$.

(iii) A {\bf linear path} of length $k$ in $\mathcal{H}$
is a collection of  distinct edges
$\{e_1, e_2, \ldots, e_{k}\}$  such
that consecutive edges intersect in exactly one element and
nonconsecutive edges are disjoint. Let $\mathbb{P}_{k}$ denote the  linear
path of length $k$.  A
 {\bf linear cycle} is defined similarly in a cyclic order, and let  $\mathbb{C}_{k}$ denote the collection of  linear
path of length $k$.
\end{df}


We first give the exact anti-Ramsey numbers of short paths $\mathbb{P}_{i}$, $\mathcal{B}_i$, $\mathcal{P}_i$ for $i=2,3$.
\begin{thm}\label{P2}
(i) For $s\ge 3$ and $n\ge 3s-4$, $ar(n,s,\mathbb{P}_{2})=2$.

(ii) For $s\ge 4$ and sufficiently large $n$, $ar(n,s,\mathbb{P}_{3})=\binom{n-2}{s-2}+2$.

(iii) For $n\ge 3s-4$, $ar(n,s,\mathcal{B}_{2})= ar(n,s,\mathcal{P}_{2})=2$.

(iv) For $n\ge 4s-3$, $ar(n,s,\mathcal{B}_{3})=ar(n,s,\mathcal{P}_{3})=3$.
\end{thm}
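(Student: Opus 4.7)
The plan treats the short cases (i), (iii), (iv) with a uniform representative-and-swap template and reserves a separate stability-based argument for (ii). The lower bounds $ar\ge 2$ in (i), (iii) and $ar\ge 3$ in (iv) follow from the number of edges in the target object. For the upper bounds of (i) and (iii), I take a $2$-coloring using both colors and fix $e_1,e_2$ of distinct colors. If $|e_1\cap e_2|$ already matches the target pattern (exactly $1$ for $\mathbb{P}_2$; at least $1$ for $\mathcal{B}_2$ and $\mathcal{P}_2$), we are done. Otherwise I build an auxiliary $s$-edge $e'$ through chosen vertices $v_1\in e_1$, $v_2\in e_2$ and $s-2$ further vertices outside $e_1\cup e_2$; the hypothesis $n\ge 3s-4$ leaves the needed room, and whichever of the two colors $e'$ takes, $e'$ together with the edge of the opposite color realizes the desired rainbow pair.

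For (iv), pick $e_1,e_2,e_3$ of the three colors; if they already form the desired path we are done, otherwise swap one of them for a same-colored auxiliary edge adjusted to supply the required consecutive intersection and (for $\mathcal{P}_3$) the disjointness of the outer pair, using $n\ge 4s-3$ to place four distinct path vertices while avoiding forbidden positions. Since every rainbow $\mathcal{P}_3$ determines a rainbow $\mathcal{B}_3$ by choosing path vertices, treating $\mathcal{P}_3$ subsumes $\mathcal{B}_3$, and a short case analysis on the pairwise intersections $|e_i\cap e_j|$ completes the argument.

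For the lower bound of (ii), fix a pair $\{x,y\}\subseteq[n]$, color the $\binom{n-2}{s-2}$ edges containing $\{x,y\}$ with distinct colors, and color every other edge with one additional color. In a rainbow $\mathbb{P}_3=\{e_1,e_2,e_3\}$, at most one edge uses the common extra color, so at least two of the $e_i$ contain $\{x,y\}$ and therefore intersect in $\{x,y\}$; this contradicts both $|e_i\cap e_{i+1}|=1$ and $e_1\cap e_3=\emptyset$. Hence this $\binom{n-2}{s-2}+1$ coloring has no rainbow $\mathbb{P}_3$, giving $ar(n,s,\mathbb{P}_3)\ge \binom{n-2}{s-2}+2$.

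The upper bound of (ii) is the main obstacle. With $c=\binom{n-2}{s-2}+2$ colors, choosing one representative per color produces a rainbow subhypergraph $\mathcal{R}$ with $c$ edges that must be $\mathbb{P}_3$-free. Since $c$ sits far below the Tur\'an value $ex(n,s,\mathbb{P}_3)=\binom{n-1}{s-1}$, the blunt ``representative exceeds Tur\'an'' argument is unavailable. Instead I plan to combine two ingredients: a path-extension step, showing that a maximum-length rainbow linear path in the coloring resists extension only when many surrounding edges share a common intersection pair; and the stability form of the F\"uredi--Jiang--Seiver Tur\'an theorem for linear paths, which forces any near-extremal $\mathbb{P}_3$-free $s$-graph to concentrate through a fixed pair. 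Together these identify a distinguished pair $\{x,y\}$ such that all but a controlled number of colors consist solely of edges through $\{x,y\}$; counting such edges then caps the number of available colors at $\binom{n-2}{s-2}+1$, contradicting $c=\binom{n-2}{s-2}+2$ and finishing the proof.
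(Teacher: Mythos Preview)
Your sketches for (i), (iii), and (iv) follow the same representative-and-swap template as the paper and are essentially fine, though (iv) will require more care than you indicate (the paper first uses part (iii) to secure a rainbow $\mathcal{P}_2$ with edges $e_1,e_2$, and then does a genuine case split on whether the third color has an edge disjoint from $e_1\cup e_2$; the single-swap you describe does not obviously cover every configuration). A small technical point in (i): if you allow $e_1\cap e_2=\emptyset$ you would need $n\ge 3s-2$ for your auxiliary edge; the paper avoids this by first arguing that in a $2$-coloring with no rainbow $\mathbb{P}_2$ one can always find two differently colored edges meeting in at least two vertices, which is why $n\ge 3s-4$ suffices.

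The real gap is in the upper bound of (ii). Your plan invokes ``the stability form of the F\"uredi--Jiang--Seiver theorem for linear paths'' to force a near-extremal $\mathbb{P}_3$-free $s$-graph through a fixed pair. But the representative hypergraph $\mathcal{R}$ has only $\binom{n-2}{s-2}+2=\Theta(n^{s-2})$ edges, while $ex(n,s,\mathbb{P}_3)=\binom{n-1}{s-1}=\Theta(n^{s-1})$; $\mathcal{R}$ is nowhere near extremal for $\mathbb{P}_3$, so no $\mathbb{P}_3$-stability result applies, and the extremal $\mathbb{P}_3$-free family concentrates through a single vertex, not a pair. What actually drives the argument is the $\mathbb{P}_2$ Tur\'an number: since $|\mathcal{R}|=\binom{n-2}{s-2}+2>ex(n,s,\mathbb{P}_2)$, $\mathcal{R}$ contains a rainbow $\mathbb{P}_2$ with edges $e_1,e_2$; deleting them leaves $\mathcal{F}$ with exactly $\binom{n-2}{s-2}=ex(n,s,\mathbb{P}_2)$ edges. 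If $\mathcal{F}$ still contains a $\mathbb{P}_2$ one builds a rainbow $\mathbb{P}_3$ by linking the two $\mathbb{P}_2$'s with a suitably colored bridge edge (this needs a short case analysis). Otherwise $\mathcal{F}$ is exactly extremal for $\mathbb{P}_2$, hence by uniqueness consists of all edges through a fixed pair $\{x,y\}$; a further case split on how $x,y$ sit inside $e_1\cup e_2$ then produces a rainbow $\mathbb{P}_3$ directly. Your final step of ``counting colors through $\{x,y\}$'' does not close the argument either: even after identifying $\{x,y\}$ one must exhibit the rainbow $\mathbb{P}_3$, not merely bound the color count.
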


For linear paths and loose paths, we obtain the exact anti-Ramsey numbers for sufficiently large $n$.
\begin{thm}\label{th1}
For any integer $k$, if $k=2t\ge4$ and $s\ge 3$, then for  sufficiently large $n$,
$$ ar(n,s,\mathbb{P}_{k})={ \binom{n}{s}-\binom{n-t+1}{s}+2 };$$
if $k=2t+1>5$ and $s\ge 4$, then for  sufficiently large $n$,  $$ar(n,s,\mathbb{P}_{k})=\binom{n}{s}-\binom{n-t+1}{s}+ \binom{n-t-1}{s-2}+2.$$
\end{thm}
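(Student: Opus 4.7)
The plan splits naturally into lower and upper bounds. For the \emph{lower bound}, I will exhibit an edge-coloring of the complete $s$-uniform hypergraph on $n$ vertices with one fewer color than claimed and no rainbow $\mathbb{P}_k$. When $k=2t$, fix a set $T$ of $t-1$ vertices, assign each of the $\binom{n}{s}-\binom{n-t+1}{s}$ edges meeting $T$ its own color, and paint every remaining edge with one extra color. A hypothetical rainbow $\mathbb{P}_{2t}$ must contain at least $2t-1$ edges meeting $T$, but in a linear path each vertex lies in at most two consecutive edges, so at most $2|T|=2t-2$ of its edges can meet $T$, a contradiction. When $k=2t+1$, additionally give distinct colors to the $\binom{n-t-1}{s-2}$ edges containing a fixed pair $\{u,v\}$ disjoint from $T$; since any two edges through $\{u,v\}$ share two vertices, at most one such edge lies in any linear path, so a rainbow $\mathbb{P}_{2t+1}$ would use at most $2(t-1)+1=2t-1$ rainbow-part edges, again impossible.

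For the \emph{upper bound}, suppose toward contradiction that a coloring with the claimed number $c$ of colors admits no rainbow $\mathbb{P}_k$. Choose a rainbow representative subhypergraph $R$ by keeping one edge of each color, so $|E(R)|=c$. Since $c>ex(n,s,\mathbb{P}_{k-1})$ (using the Tur\'an formulas of Section~2), $R$ contains a rainbow linear path $P=e_1\cdots e_{k-1}$ with colors $c_1,\dots,c_{k-1}$. Apply path extension at an endpoint: for every $v\in e_{k-1}\setminus e_{k-2}$ and every $(s-1)$-set $W$ disjoint from $V(P)$, the edge $\{v\}\cup W$ would complete $P$ to a rainbow $\mathbb{P}_k$ unless its color lies in $\{c_1,\dots,c_{k-1}\}$. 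Hence $\Theta(n^{s-1})$ host edges share only $k-1$ colors; the same holds at the other endpoint, and rotating $P$ (replacing edges of $P$ by other edges of $R$ with compatible attachment vertices) produces many further groups of extension edges whose colors are confined to small lists.

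To extract a contradiction with $|E(R)|=c$, I couple these extension constraints with the stability of the hypergraph Tur\'an problem for $\mathbb{P}_{k-1}$: since $|E(R)|\geq ex(n,s,\mathbb{P}_{k-1})+2$ and $R$ is $\mathbb{P}_k$-free, stability forces $R$ to coincide, apart from $O(1)$ edges, with the extremal $\mathbb{P}_{k-1}$-free configuration---a star on a $(t-1)$-set when $k=2t$, or such a star together with all edges through a fixed pair when $k=2t+1$. Identifying this core set (respectively $T$ and $T\cup\{u,v\}$) and running the extension analysis through its many free endpoint vertices, one shows that the forced color collisions swallow the slack of two extra edges, driving the achievable number of colors strictly below $c$. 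The main obstacle is precisely this last coupling: one must choreograph enough rotations of rainbow $\mathbb{P}_{k-1}$'s through enough distinct free endpoints so that the aggregate extension restrictions overwhelm the two-edge slack. The odd case $k=2t+1$ is more delicate and responsible for the stronger hypothesis $s\geq 4$, since the ``pair'' component of the extremal structure must also participate nontrivially in the extension argument.
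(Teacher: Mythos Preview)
Your lower-bound constructions are correct and match the paper's Proposition~4.1. The upper-bound outline, however, has real gaps beyond its admitted vagueness.

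First, the claim that stability forces $R$ to coincide with the extremal $\mathbb{P}_{k-1}$-free configuration ``apart from $O(1)$ edges'' is not what the stability theorem (Theorem~2.4) delivers: it gives a set $L$ of size $\lfloor (k-1)/2\rfloor$ with $|R-L|=o(n^{s-1})$, not $O(1)$. Closing the gap from $o(n^{s-1})$ to an exact contradiction is the bulk of the work, and your ``color collisions swallow the two-edge slack'' sentence is not a mechanism for doing so. In the paper the stability output is used \emph{constructively}: the dense shadow $G^*\subset\partial R$ together with $L$ lets one build, via Lemma~2.6, any desired rainbow $\mathbb{P}_{k-2}$ or $\mathbb{P}_{k-3}$ avoiding a prescribed bounded set, and these are glued to short pieces in $R-L$ to force a rainbow $\mathbb{P}_k$. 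The contradiction for even $k$ then comes from an exact count: one shows $\mathcal{F}-L$ is empty (Claim~4.2) and at most one edge of $P$ misses $L$ (Claim~4.3), whence $P$ contributes $\ge k-2$ edges meeting $L$ but only $k-3$ such edges are missing from $\mathcal{F}$.

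Second, and more seriously, for odd $k=2t+1$ your plan to apply stability directly to $R$ fails outright. The stability hypothesis for $\mathbb{P}_{2t+1}$-free hypergraphs requires $|R|\sim t\binom{n}{s-1}$, but here $|R|\sim (t-1)\binom{n}{s-1}$, so the theorem says nothing. Nor can you apply $\mathbb{P}_{2t}$-stability to $R$, since $R$ certainly contains copies of $\mathbb{P}_{2t}$. The paper's way around this is a genuine extra step (Claim~4.4): after removing the edges of one $\mathbb{P}_{k-1}$ from $\mathcal{F}=R\setminus E(P)$, the resulting $\mathcal{F}_0$ is $\mathbb{P}_{k-1}$-free, and \emph{then} $\mathbb{P}_{2t}$-stability applies with $|L|=t-1$. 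The odd case then needs a sequence of structural claims (Claims~4.5--4.8) culminating in a three-edge configuration lemma in $\mathcal{H}-L$ analogous to your vague ``rotation'' idea but made precise. Your proposal does not contain this reduction, and without it the odd case does not get off the ground.
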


\begin{thm}\label{th2}
For any integer $k$, if $k=2t\ge 4$ and $s\ge 3$, then for sufficiently large $n$, $$ar(n,s,\mathcal{P}_{k})=\binom{n}{s}-\binom{n-t+1}{s}+2;$$
if $k=2t+1\ge 5$ and $s\ge 3$, then for  sufficiently large $n$,  $$\ ar(n,s,\mathcal{P}_{k})=\binom{n}{s}-\binom{n-t+1}{s}+3.$$

\end{thm}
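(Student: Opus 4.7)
My plan is to prove the matching lower and upper bounds separately, using the color-representative method combined with a path extension argument driven by the Tur\'{a}n stability theorem for loose paths.

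For the lower bound, fix a set $S\subset[n]$ with $|S|=t-1$ and construct an edge-coloring as follows: each edge meeting $S$ receives its own distinct color, while the edges avoiding $S$ get one additional color (for $k=2t$) or two additional colors (for $k=2t+1$). The key observation is that in any loose path $e_1,\ldots,e_k$, every vertex is contained in at most two edges (necessarily consecutive), since nonconsecutive edges are disjoint. Hence $S$ intersects at most $2(t-1)$ edges of any such path, forcing at least $2$ (resp.\ $3$) edges to avoid $S$. For $k=2t$ these two avoiding edges share the lone common color, and for $k=2t+1$ at least two of the three avoiding edges coincide in color by pigeonhole. In either case no rainbow $\mathcal{P}_k$ exists, which establishes the lower bounds $ar(n,s,\mathcal{P}_{2t})\geq\binom{n}{s}-\binom{n-t+1}{s}+2$ and $ar(n,s,\mathcal{P}_{2t+1})\geq\binom{n}{s}-\binom{n-t+1}{s}+3$.

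For the upper bound, suppose we have a surjective $c$-coloring of $K_n^{(s)}$ with $c$ equal to the claimed value. Pick a representing sub-hypergraph $R$ containing exactly one edge from each color class, so $|E(R)|=c$. If $R$ itself contains a copy of $\mathcal{P}_k$, the representatives form a rainbow $\mathcal{P}_k$ and we are done. Otherwise $R$ is $\mathcal{P}_k$-free, and since $|E(R)|$ strictly exceeds the Tur\'{a}n number $ex(n,s,\mathcal{P}_{k-1})$, the stability form of the Tur\'{a}n theorem for loose paths (cf.\ \cite{KMV,FJS}) yields a set $S\subseteq V$ of size $t-1$ such that only a bounded number of edges of $R$ avoid $S$. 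Comparing $|E(R)|$ with $\binom{n}{s}-\binom{n-t+1}{s}$, the number of edges meeting $S$, produces at least $2$ edges of $R$ avoiding $S$ when $k=2t$ and at least $3$ such edges when $k=2t+1$.

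The final step is a greedy path extension. Using these ``free'' edges of $R$ as anchors at strategic positions of the target loose path (positions $1$ and $k$ in the even case, plus a middle position in the odd case), fill in the remaining $k-2$ or $k-3$ positions one by one with edges meeting $S$ that carry fresh colors. At each step the pool of candidate edges --- those meeting the current endpoint at exactly one vertex, disjoint from the earlier edges of the path, and meeting $S$ --- has size $\Omega(n^{s-2})$, while the set of forbidden colors is of size at most $k-1$. Any forbidden color class monopolizing too many candidates can be circumvented by swapping the corresponding representative of $R$ into the partial path, producing an alternative fresh-color extension; this guarantees a valid choice at each step once $n$ is sufficiently large, and the extension yields a rainbow $\mathcal{P}_k$.

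I expect the main obstacle to be the interaction between the stability conclusion and the greedy extension, especially in the odd case $k=2t+1$. Accommodating three free edges $f_1,f_2,f_3\in R$ as a legal piece of a loose path --- respecting the strong disjointness between nonconsecutive edges --- requires a short case analysis on how the $f_i$ intersect each other and the eventual $S$-meeting edges. Controlling the few exceptional edges of $R$ that meet $S$ but obstruct greedy availability, and verifying that the rerouting argument absorbs every problematic color class, is the most delicate part of the proof.
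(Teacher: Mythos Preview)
Your lower bound construction is correct and matches the paper. The gap lies in the upper bound, specifically in the stability step.

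You write that if $R$ is $\mathcal{P}_k$-free then, since $|E(R)|>ex(n,s,\mathcal{P}_{k-1})$, stability produces a set $S$ of size $t-1$ capturing almost all of $R$. This does not follow from the available stability theorems. Being $\mathcal{P}_k$-free implies $\mathbb{P}_k$-free, but the stability result for $\mathbb{P}_k$-free hypergraphs (Theorem~\ref{stability}) requires $|R|\sim\lfloor (k-1)/2\rfloor\binom{n}{s-1}=t\binom{n}{s-1}$ for $k=2t+1$, whereas your $|R|=\binom{n}{s}-\binom{n-t+1}{s}+3\sim (t-1)\binom{n}{s-1}$ is a full order smaller. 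Conversely, stability for $\mathbb{P}_{k-1}$-free hypergraphs would hit the right density and give $|L|=t-1$, but $R$ is \emph{not} $\mathbb{P}_{k-1}$-free: you yourself use $|R|>ex(n,s,\mathcal{P}_{k-1})$, and in fact $|R|>ex(n,s,\mathbb{P}_{k-1})$ as well. So neither version applies directly to $R$.

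The paper closes this gap by an extra layer of argument. It first locates a rainbow $\mathcal{P}_{k-1}$ in $R$, passes to $\mathcal{F}=R-E(P)$, and then proves (using the full coloring of $\mathcal{H}$, not just $R$) that $\mathcal{F}$ cannot contain two edge-disjoint ``bad'' loose paths of length $k-1$; deleting at most $k-1$ further edges yields $\mathcal{F}_0$ which is genuinely $\mathbb{P}_{k-1}$-free, and only then is stability invoked. Your proposal skips this intermediate step entirely, and without it there is no structural set $S$ to anchor the greedy extension. Separately, for even $k$ the paper simply quotes $ar(n,s,\mathcal{P}_{2t})\le ar(n,s,\mathbb{P}_{2t})$ and Theorem~\ref{th1}; your greedy argument for that case is unnecessary. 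Finally, the extension phase in the odd case is more delicate than you indicate: the paper needs a careful classification of the edges of $\mathcal{F}_0-L$ (Type~I containing all free endpoints of $P$, Type~II otherwise) and a dedicated claim showing at most one Type~I edge survives, before the final case analysis can go through.
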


{We remark that, due to some technique obstruction, our proof of Theorem \ref{th1} does not work directly for the case $k=5$ or the case  $k$ is odd and $s=3$. However,  those special cases are handled in Theorem  \ref{th2} for loose path with a refined analysis.
}

The methods developed in proving Theorems \ref{th1} and \ref{th2}, with additional effort and some new ideas, allow us to determine the anti-Ramsey numbers of linear cycles and loose cycles as well, if $k$ and $s$ are not too small. We obtain the following exact results for linear cycles and loose cycles.
\begin{thm}\label{thc1}
For any integer $k$, if $k=2t\ge8$ and $s\ge 4$, then for  sufficiently large $n$,
$$ ar(n,s,\mathbb{C}_{k})=ar(n,s,\mathbb{P}_{k})={ \binom{n}{s}-\binom{n-t+1}{s}+2 };$$
if $k=2t+1\ge 11$ and $s\ge k+3$, then for  sufficiently large $n$,  $$ar(n,s,\mathbb{C}_{k})=ar(n,s,\mathbb{P}_{k})=\binom{n}{s}-\binom{n-t+1}{s}+ \binom{n-t-1}{s-2}+2.$$
\end{thm}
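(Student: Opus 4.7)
The plan is to prove $ar(n,s,\mathbb{C}_k)=ar(n,s,\mathbb{P}_k)$; combined with Theorem \ref{th1} this will yield the stated formulas. For the lower bound $ar(n,s,\mathbb{C}_k)\ge ar(n,s,\mathbb{P}_k)$, I would re-use the extremal coloring from the proof of Theorem \ref{th1}. For even $k=2t$, fix $S\subseteq V$ with $|S|=t-1$, give every edge meeting $S$ its own color, and color all remaining edges with one additional color. Because every vertex in a linear cycle belongs to at most two edges, $S$ covers at most $2(t-1)=2t-2$ edges of any linear cycle, so a rainbow subhypergraph uses at most $2t-1<2t$ edges and cannot form $\mathbb{C}_{2t}$. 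For odd $k=2t+1$, the richer construction of Theorem \ref{th1} that additionally rainbow-colors the $\binom{n-t-1}{s-2}$ edges through a fixed pair $\{u,w\}\subseteq V\setminus S$ avoids $\mathbb{C}_{2t+1}$ by the same covering argument, coupled with the observation that linearity forces at most one edge of a linear cycle to contain both $u$ and $w$.

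For the upper bound, I would argue by contradiction: suppose a coloring with $c=ar(n,s,\mathbb{P}_k)$ colors contains no rainbow $\mathbb{C}_k$. Theorem \ref{th1} yields a rainbow linear path $\mathbb{P}_k=e_1e_2\cdots e_k$. I would pass to a representing subhypergraph $\mathcal{G}$ with $|E(\mathcal{G})|=c$ (one edge per color class) and invoke the stability version of the Tur\'an bound for linear paths from \cite{KMV}, obtaining a dichotomy: either $\mathcal{G}$ is structurally close to the star-type extremal configuration centered on some $(t-1)$-set $S^\ast$, or it is not. In the non-extremal branch, the abundance of edges outside any such star allows one to reroute the rainbow $\mathbb{P}_k$ into a rainbow $\mathbb{C}_k$ through a new-color edge. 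In the extremal branch, I would focus on the rainbow $\mathbb{P}_{k-1}$ obtained by removing one endpoint edge of the rainbow $\mathbb{P}_k$ and seek a ``closing'' edge $e^\ast$ of a new color that contains both endpoints of this $\mathbb{P}_{k-1}$ in single points and is otherwise disjoint from its interior.

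The principal obstacle is this closing step in the extremal branch. The pool of closing candidates has size $\Theta(n^{s-2})$ for $n$ large, while the path uses only $k-1$ colors, so a naive averaging would suffice if the candidate pool met all color classes fairly; the real difficulty is showing that enough color classes of $\mathcal{G}$ actually reach this pool rather than being ``trapped'' on edges meeting $S^\ast$. The stability structure is exactly what rules this out: most colors must meet $S^\ast$, leaving ample fresh colors among the closing candidates (which, by their construction, stay away from $S^\ast$). The hypothesis $s\ge 4$ (resp.\ $s\ge k+3$) is used to guarantee enough free vertices outside the path's interior for the closing edge to attach properly; the lower bounds $k\ge 8$ and $k\ge 11$ avoid boundary cases where the extremal configuration and the closing-candidate family overlap too much to separate cleanly. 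Matching the two parities of $k$ with the corresponding formulas of Theorem \ref{th1} completes the proof.
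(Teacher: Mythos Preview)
Your lower bound matches the paper's Proposition~\ref{Prop:linearlow} exactly, so that part is fine.

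The upper bound, however, has a genuine gap in what you call the ``closing step.'' You argue that among the $\Theta(n^{s-2})$ edges through the two endpoints of a fixed rainbow $\mathbb{P}_{k-1}$, some must carry a color outside the $k-1$ path colors, and you invoke stability to justify this by saying ``most colors must meet $S^\ast$, leaving ample fresh colors among the closing candidates.'' But stability constrains the structure of the \emph{representing} subhypergraph $\mathcal{G}$, not the full coloring of $\mathcal{H}$. A closing candidate is an edge of $\mathcal{H}$, not of $\mathcal{G}$; its color can be any of the $k-1$ path colors regardless of where those colors' representatives in $\mathcal{G}$ sit. Nothing prevents a single path color $\alpha_i$ from being the color of all $\Theta(n^{s-2})$ closing edges. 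So the averaging you sketch simply does not apply, and the closing argument fails. Your non-extremal branch is likewise only a slogan (``reroute the rainbow $\mathbb{P}_k$ into a rainbow $\mathbb{C}_k$'') with no mechanism.

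The paper's route is quite different and avoids this trap entirely. After obtaining the rainbow $\mathbb{P}_k$, it removes $e_1,\ldots,e_{k-1}$ from $\mathcal{G}$, shows (for odd $k$) that what remains is essentially $\mathbb{P}_{k-1}$-free, and applies stability to that remainder to produce the set $L$ with $|L|=t-1$ and the dense shadow $G^*$. The key point is that the paper \emph{never tries to close the original path}. Instead it observes that $P$ has $k$ edges but $L$ can cover at most $2(t-1)$ of them, so $P-L$ contains at least two (even $k$) or three (odd $k$) edges. These few edges of $\mathcal{G}-L$ are then fed into a lemma (Claim~6.3) saying that any $\mathbb{P}_2$, $\mathbb{P}_3$, $\mathbb{P}_2$+disjoint edge, or three disjoint edges in $\mathcal{G}-L$ can be completed to a rainbow $\mathbb{C}_k$ by threading through $L$ and $G^*$; here all the auxiliary edges come from $\mathcal{F}_0$ and hence carry genuinely new colors, so no closing-color problem arises. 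Moreover, this argument only works when every vertex outside $L$ lies in a suitable edge of $G^*$, which requires the number $m$ of ``missing'' edges meeting $L$ to be small. The paper needs a second, substantially harder case when $m>\binom{n-8s-t+1}{s-2}-1$: there it builds, via a first-moment argument, a family of disjoint $(s-2)$-sets $T_i$ and a large set $U$ that together replace $G^*$, and then runs a refined version of the same scheme. The conditions $k\ge 8$, $k\ge 11$, and $s\ge k+3$ are used concretely in assembling the threading paths (e.g.\ the $\mathbb{P}_7$ in Claim~6.3(b)(iii) and the choice of six distinct free vertices in Claim~6.1), not merely to ``avoid boundary cases.''
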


\begin{thm}\label{thc2}
For any integer $k$, if $k=2t\ge 8$ and $s\ge 4$, then for sufficiently large $n$, $$ar(n,s,\mathcal{C}_{k})=ar(n,s,\mathcal{P}_{k})=\binom{n}{s}-\binom{n-t+1}{s}+2;$$
if $k=2t+1\ge 11$ and $s\ge k+3$, then for  sufficiently large $n$,  $$\ ar(n,s,\mathcal{C}_{k})=ar(n,s,\mathcal{P}_{k})=\binom{n}{s}-\binom{n-t+1}{s}+3.$$
\end{thm}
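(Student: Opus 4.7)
My plan is to inherit the lower-bound colorings of Theorem~\ref{th2} (which simultaneously forbid rainbow loose cycles) and then establish the matching upper bound via the representative-subhypergraph method combined with a tight Tur\'an-type bound for loose cycles.

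For the lower bound, the coloring achieving $ar(n,s,\mathcal{P}_k)-1$ colors in Theorem~\ref{th2} is built by fixing $T\subseteq[n]$ with $|T|=t-1$, assigning a private color to each edge meeting $T$, and distributing one or two additional colors over the remaining edges according to the parity of $k$. Since every vertex of a loose cycle $\mathcal{C}_k$ lies in at most two of its edges, the number of edges of $\mathcal{C}_k$ meeting $T$ is bounded by $\sum_{v\in T}\deg(v)\leq 2|T|=2t-2$. A hypothetical rainbow $\mathcal{C}_k$ in such a coloring would contain at most one (when $k=2t$) or two (when $k=2t+1$) edges of ``leftover'' color, forcing at least $2t-1$ of its $k$ edges to meet $T$, which contradicts the bound $2t-2$. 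Hence $ar(n,s,\mathcal{C}_k)\geq ar(n,s,\mathcal{P}_k)$.

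For the upper bound, take any coloring of $K_n^{(s)}$ with $c=ar(n,s,\mathcal{P}_k)$ colors and pick one representative edge per color to obtain a subhypergraph $\mathcal{G}$ of size $c$. Any $\mathcal{C}_k\subseteq\mathcal{G}$ is automatically rainbow, so it suffices to prove the Tur\'an bound $ex(n,s,\mathcal{C}_k)\leq c-1$ for $n$ sufficiently large. The natural extremal configuration $\mathcal{H}_T=\{e:e\cap T\neq\emptyset\}$, augmented by one extra edge disjoint from $T$ in the even case and by two such extras in the odd case, is $\mathcal{C}_k$-free by the same degree-counting argument as above and has exactly $c-1$ edges in each parity.

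The main obstacle is showing that this is indeed the extremal size, i.e.\ that no $\mathcal{C}_k$-free $s$-graph exceeds $c-1$ edges. This is where the ``additional effort and new ideas'' announced in the paper enter: one combines a stability result for $\mathcal{C}_k$-free hypergraphs with the path-extension technique of Theorem~\ref{th2}. The stability step forces any near-extremal $\mathcal{C}_k$-free hypergraph to be structurally close to $\mathcal{H}_T$, and the path-extension step shows that any such small deviation can be completed into a loose cycle of length $k$ via local edge swaps along a rainbow loose path produced by Theorem~\ref{th2}. The stronger hypothesis $s\geq k+3$ in the odd case is needed to suppress a secondary ``clique-like'' extremal family on $k$ vertices (which vanishes as soon as $s>k$ since no $s$-set fits into a $k$-vertex set) and to guarantee enough uniformity in each $s$-edge for the local swaps in the stability argument; the even case $k=2t\geq 8,\,s\geq 4$ is more lenient because the single extra edge is easier to control.
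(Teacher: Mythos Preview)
Your lower-bound argument and the even-case upper bound are correct. In fact the even case is simpler than you indicate: Theorem~\ref{hFJ1} already gives $ex(n,s,\mathcal{C}_{2t})=\binom{n}{s}-\binom{n-t+1}{s}+1=c-1$ for $t\ge 3$, so the representative subhypergraph $\mathcal{G}$ with $|\mathcal{G}|=c$ is forced to contain a (necessarily rainbow) $\mathcal{C}_{2t}$ with no further work. The paper instead invokes $ar(n,s,\mathcal{C}_{2t})\le ar(n,s,\mathbb{C}_{2t})$ together with Theorem~\ref{thc1}; your direct route via the loose-cycle Tur\'an number is a legitimate alternative.

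The odd case, however, has a fatal gap. Your plan reduces the upper bound to the Tur\'an inequality $ex(n,s,\mathcal{C}_{2t+1})\le c-1=\binom{n}{s}-\binom{n-t+1}{s}+2$, but this inequality is \emph{false}: by Theorem~\ref{hFJ1} one has $ex(n,s,\mathcal{C}_{2t+1})=\binom{n}{s}-\binom{n-t}{s}$, which exceeds $c-1$ by $\binom{n-t}{s-1}-2$, a quantity of order $n^{s-1}$. Concretely, the family of all $s$-sets meeting a fixed $t$-set is $\mathcal{C}_{2t+1}$-free and far larger than $c-1$, so your representative subhypergraph $\mathcal{G}$ can perfectly well be $\mathcal{C}_{2t+1}$-free and the argument stalls. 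No stability or path-extension argument can rescue this step, because you are trying to establish a Tur\'an bound that simply does not hold. The paper's proof for odd $k$ accordingly does \emph{not} work inside $\mathcal{G}$ alone: it first uses Theorem~\ref{th2} to locate a rainbow loose path $P$ of length $k$ in the full colored $\mathcal{H}$, applies stability (Theorem~\ref{stability}) to a pruned $\mathbb{P}_{k-1}$-free subgraph $\mathcal{F}_0\subset\mathcal{G}$ to extract a dominating set $L$ of size $t-1$, and then---splitting on whether the number $m$ of ``missing'' $L$-edges is small or large---uses either Lemma~\ref{lem1} or a probabilistic choice of anchor $(s-2)$-sets to thread suitable edges of $\mathcal{G}-L$ through $L$ into a rainbow $\mathcal{C}_k$. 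The essential difference from your sketch is that edges of $\mathcal{H}$ outside $\mathcal{G}$, whose colors repeat those already represented, are exploited in closing the cycle; Tur\'an counting on $\mathcal{G}$ by itself is not enough.
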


~

For a Berge path $\mathcal{B}_k$, Gy\"{o}ri, Katona and Lemons in \cite{GKL} proved that $ex(n,s,\mathcal{B}_{k})\le \frac{n}{k}{k \choose s}$ when $k>s+1>3$, and $ex(n,s,\mathcal{B}_{k})\le \frac{n(k-1)}{s+1}$ when $2<k\le s$, which are sharp for infinitely many $n$. Then  Davoodi, Gy\"{o}ri, Methuku and Tompkins \cite{DGMT} proved that $ex(n,s,\mathcal{B}_{s+1})\le n$.  We apply those results to  obtain the bounds for the anti-Ramsey number $ar(n,s,\mathcal{B}_{k})$ as follows.

\begin{thm}\label{th3}
If $k>2s+1$,  then for sufficiently large $n$,
  $$\frac{2n}{k}{\lfloor k/2\rfloor \choose s}\le ar(n,s,\mathcal{B}_{k})\le\frac{n}{k-1}\binom{k-1}{s}+1.$$
If $s+2\le k\le2s+1$,  then for sufficiently large $n$,
$$\frac{n}{s+1}\lfloor \frac{k-2}{2}\rfloor \le ar(n,s,\mathcal{B}_{k})\le \frac{n}{k-1}\binom{k-1}{s}+1.$$
If $ k\le s+1$,  then for sufficiently large $n$,  $$\frac{n}{s+1}\lfloor \frac{k-2}{2}\rfloor\le ar(n,s,\mathcal{B}_{k})\le \frac{(k-2)n}{s+1}+1.$$
\end{thm}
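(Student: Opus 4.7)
The plan is to combine a representative-edge reduction on the upper-bound side with an explicit block-partition construction on the lower-bound side. Each of the three claimed upper bounds equals $ex(n,s,\mathcal{B}_{k-1}) + 1$ after substituting the hypergraph Tur\'an estimates already cited in the paper: the Gy\"{o}ri-Katona-Lemons bound gives $ex(n,s,\mathcal{B}_{k-1}) \le \frac{n}{k-1}\binom{k-1}{s}$ whenever $k-1 > s+1$ and $ex(n,s,\mathcal{B}_{k-1}) \le \frac{n(k-2)}{s+1}$ whenever $2 < k-1 \le s$, while the Davoodi-Gy\"{o}ri-Methuku-Tompkins bound gives $ex(n,s,\mathcal{B}_{s+1}) \le n$ for the boundary case $k-1 = s+1$. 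So it suffices to prove $ar(n,s,\mathcal{B}_k) \le ex(n,s,\mathcal{B}_{k-1}) + 1$. Given any edge-coloring of the complete $s$-uniform hypergraph on $n$ vertices with $c = ex(n,s,\mathcal{B}_{k-1}) + 1$ colors, I would pick one representative edge per color to form a sub-hypergraph $R$ with $c > ex(n,s,\mathcal{B}_{k-1})$ edges. Then $R$ contains a Berge path of length $k-1$, which is automatically rainbow since all edges of $R$ have distinct colors; I then invoke the path-extension technique to promote this rainbow $\mathcal{B}_{k-1}$ into a rainbow $\mathcal{B}_k$, using that $n$ is large enough to guarantee an extension edge at an endpoint bearing a color outside the $k-1$ already-used colors.

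For the lower bound, I would construct explicit edge-colorings avoiding a rainbow $\mathcal{B}_k$ that use many colors. In the regime $k > 2s+1$, where $\lfloor k/2 \rfloor \ge s+1$, partition the vertex set into $\lfloor n/\lfloor k/2 \rfloor \rfloor$ blocks of size $\lfloor k/2 \rfloor$, assign each of the $\binom{\lfloor k/2 \rfloor}{s}$ edges inside each block its own new color, and color all remaining edges (those meeting at least two blocks) with one common color. Any rainbow Berge path uses the common color at most once; the remaining edges lie inside blocks, and a direct vertex count shows the rainbow Berge path involves the vertices of at most two blocks, totalling at most $2\lfloor k/2 \rfloor \le k$ vertices, hence length at most $k-1$. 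This yields the claimed $\frac{2n}{k}\binom{\lfloor k/2 \rfloor}{s}$ lower bound. In the regimes $s+2 \le k \le 2s+1$ and $k \le s+1$, where $\binom{\lfloor k/2 \rfloor}{s}$ is too small to give anything useful, I would use blocks of size $s+1$ and assign $\lfloor (k-2)/2 \rfloor$ distinct colors per block (with the rest recolored by the common color); the analogous count shows any rainbow Berge path has length at most $2\lfloor (k-2)/2 \rfloor + 1 \le k-1$, yielding the $\frac{n}{s+1}\lfloor (k-2)/2 \rfloor$ lower bound.

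The crux will be the path-extension step in the upper bound. For $n$ large, the endpoint $v_k$ of the rainbow $\mathcal{B}_{k-1}$ lies in $\binom{n-1}{s-1} - \binom{k-1}{s-1} = \Theta(n^{s-1})$ edges containing a vertex outside the $k$-vertex path, yet only $k-1$ colors are forbidden. A naive greedy count suggests one such edge has a fresh color, but a single forbidden color class could own most of the extending edges, so one must either swap the representative of the offending color (generating a different rainbow $\mathcal{B}_{k-1}$ with a new endpoint) or maximize the rainbow Berge path length and argue from both endpoints simultaneously. The lower-bound construction, by contrast, is routine once the block decomposition is fixed, and the Tur\'an estimates are quoted directly from the literature, so the bulk of the technical work lies in the extension argument.
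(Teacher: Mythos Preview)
Your lower-bound constructions are correct and match the paper's exactly, including the verification that a rainbow Berge path is confined to at most two blocks.

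The upper bound is where your proposal falls short. You correctly reduce to showing that any coloring with $ex(n,s,\mathcal{B}_{k-1})+1$ colors contains a rainbow $\mathcal{B}_k$, and you correctly find a rainbow $\mathcal{B}_{k-1}$ in the representative subgraph $\mathcal{G}$. But your extension step is not a proof: you observe that a single forbidden color could block all extensions at an endpoint, and you offer two vague remedies (swap the representative, or maximize the path and work from both ends) without carrying either out. Neither suggestion, as stated, closes the gap. Maximizing the path only tells you that every extension edge at either endpoint has a color among $\{\alpha_1,\dots,\alpha_{k-1}\}$; it does not by itself produce a longer rainbow path.

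The paper's argument is substantially more elaborate. The first key idea is a \emph{two-path trick}: letting $\mathcal{F}=\mathcal{G}-E(P)$, if $\mathcal{F}$ contains a second Berge path $P^*$ of length $k-1$ (necessarily with colors disjoint from those of $P$), then any edge $e$ through an endpoint of $P$ and an endpoint of $P^*$ either extends $P$ (if its color is new) or extends $P^*$ (if its color lies in $\{\alpha_i\}$). Hence $\mathcal{F}$ is $\mathcal{B}_{k-1}$-free. But this is not yet a contradiction, because $|\mathcal{F}|=|\mathcal{G}|-(k-1)$ can be below $ex(n,s,\mathcal{B}_{k-1})$. The paper then exploits the explicit form of the Tur\'an bound: for $k\ge s+2$ one shows $\mathcal{F}$ has minimum degree at least $\frac{1}{k-1}\binom{k-1}{s}-k+1$, finds a $\mathcal{B}_{k-2}$ in $\mathcal{F}$, upgrades it to a Berge cycle of length $k-1$ that must form an entire component of $\mathcal{F}$, deletes that component, finds a \emph{third} Berge path $P''$ of length $k-2$ in what remains, and finally uses an edge through endpoints of $P$, $P'$, $P''$ simultaneously to force a rainbow $\mathcal{B}_k$. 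For $k\le s+1$ a separate argument using connectivity and a lemma of Gy\H{o}ri--Katona--Lemons (guaranteeing a $\mathcal{B}_\ell$ starting at any prescribed edge in a dense enough component) replaces the cycle-component step. None of this machinery appears in your sketch; the ``path-extension technique'' you invoke is precisely this multi-stage argument, not a one-line greedy step.
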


Theorem \ref{th3} indicates that the anti-Ramsey number $ar(n,s,\mathcal{B}_{k})$ varies for different  $s$ and $k$.  This may suggest that determining the exact value of $ar(n,s,\mathcal{B}_{k})$ would be very difficult. Note that  the Tur\'an number of Berge path is still not clear at this moment.

However, it seems that  the anti-Ramsey numbers of  Berge cycles have different behavior with Berge paths, and we obtain the following bounds, similar to the \"{O}zkahya-Young result \cite{OY} on matchings.
\begin{prop}\label{BC}
For any fixed integers  $s\ge 4$, $k\ge 3$,
$$ex(n,s,\mathcal{B}_{k-1})+2\le ar(n,s,\mathcal{BC}_{k-1})\le ex(n,s,\mathcal{B}_{k-1})+k.$$
\end{prop}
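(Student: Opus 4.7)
The plan is to follow the \"{O}zkahya--Young template \cite{OY} from the matching case, interpreting a Berge cycle of length $k-1$ in two complementary ways: as a collection of $k-1$ edges that could alternatively support a Berge path of length $k-1$ (via an extra vertex living inside some cycle edge), and as a Berge path of length $k-2$ closed up by one further edge.

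For the lower bound, I would take a $\mathcal{B}_{k-1}$-free $s$-uniform hypergraph $G$ on $n$ vertices achieving $ex(n,s,\mathcal{B}_{k-1})$ edges, give its edges pairwise distinct colors, and assign a single extra color to every remaining edge of the complete $s$-uniform hypergraph on $[n]$, producing $ex(n,s,\mathcal{B}_{k-1})+1$ colors. Suppose for contradiction there is a rainbow $\mathcal{BC}_{k-1}$ on edges $f_1,\ldots,f_{k-1}$ with cycle vertices $v_1,\ldots,v_{k-1}$. At most one $f_i$ carries the shared extra color, so at least $k-2$ of them lie in $G$. If all $f_i\in E(G)$, I would use that $s\geq 4$ to pick a vertex $w\in f_i\setminus\{v_1,\ldots,v_{k-1}\}$ for some $i$, break the cycle at $f_i$, and append $w$, yielding a Berge path of length $k-1$ inside $G$ and contradicting extremality. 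If exactly one $f_i$ is the extra-color edge, the remaining $k-2$ cycle edges form a Berge path of length $k-2$ in $G$ whose two endpoints sit inside $f_i$; I would then extend this sub-path by one further $G$-edge at one endpoint to reach a Berge path of length $k-1$ in $G$.

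For the upper bound, take any coloring of the complete $s$-uniform hypergraph using $c=ex(n,s,\mathcal{B}_{k-1})+k$ colors, select one edge per color to form a representing sub-hypergraph $R$ with $|E(R)|=c$, and observe that $|E(R)|>ex(n,s,\mathcal{B}_{k-1})$ forces $R$ to contain a Berge path $P=e_1\cdots e_{k-1}$ with distinct vertices $v_1,\ldots,v_k$ and distinct colors $c_1,\ldots,c_{k-1}$. The prefix $e_1,\ldots,e_{k-2}$ is a Berge path of length $k-2$ joining $v_1$ and $v_{k-1}$, and any edge $f\supseteq\{v_1,v_{k-1}\}$ with a color outside $\{c_1,\ldots,c_{k-2}\}$ closes it into a rainbow $\mathcal{BC}_{k-1}$. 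Since $c-(k-2)=ex(n,s,\mathcal{B}_{k-1})+2$ colors remain unused by the prefix and there are $\binom{n-2}{s-2}$ candidate closing edges, an averaging/exchange argument, possibly combined with the freedom to reselect the Berge path of length $k-1$ inside $R$, forces the existence of such a rainbow closure.

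The principal obstacle is in the lower bound: the cycle-to-path conversion needs a vertex outside the cyclic vertex set, which is automatic when $s\geq k$ but requires a finer structural argument when $s<k$, and the ``extra-color edge'' case depends on local-degree information inside the extremal $\mathcal{B}_{k-1}$-free hypergraph in order to extend a length-$(k-2)$ Berge path by one more edge entirely within $G$.
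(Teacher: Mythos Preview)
Your proposal has genuine gaps, and part of the difficulty stems from a typo in the statement: the paper's own proof actually establishes the bounds for $ar(n,s,\mathcal{BC}_{k})$, not $ar(n,s,\mathcal{BC}_{k-1})$ (the proof text explicitly builds a rainbow $\mathcal{BC}_k$, and the final line's ``$\mathcal{BC}_{k-1}$'' is inconsistent with it). With this correction the lower bound really is immediate from Eq.~\eqref{H-e}: deleting any edge of a Berge cycle of length $k$ yields a Berge path of length $k-1$, so colouring an extremal $\mathcal{B}_{k-1}$-free hypergraph rainbow and everything else with one extra colour gives no rainbow $\mathcal{BC}_k$. The obstacles you flag---finding an extra vertex when $s<k$, or extending a length-$(k-2)$ sub-path inside an unknown extremal hypergraph---are real for $\mathcal{BC}_{k-1}$ (indeed, for $s=4$, $k-1=5$ the five $4$-subsets of a $5$-set form a $\mathcal{BC}_5$ containing no $\mathcal{B}_5$), but they simply evaporate for $\mathcal{BC}_k$.

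Your upper-bound sketch has a gap independent of the indexing. After truncating the rainbow $\mathcal{B}_{k-1}$ to a prefix of length $k-2$, nothing prevents \emph{every} edge through $\{v_1,v_{k-1}\}$ from carrying one of the colours $c_1,\ldots,c_{k-2}$, so the proposed closure need not exist, and the unspecified ``averaging/exchange'' does not rescue this. The paper's argument is different and is the missing idea: after finding the rainbow path $P$ on colours $\alpha_1,\ldots,\alpha_{k-1}$ with endpoints $w_1,w_k$, delete those $k-1$ edges from the representing hypergraph $\mathcal{G}$; the remainder still has $ex(n,s,\mathcal{B}_{k-1})+1$ edges and therefore contains a \emph{second} rainbow Berge path $P^*$ of length $k-1$ on entirely new colours, with endpoints $z_1,z_k$. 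Now take any $s$-edge $e\supseteq\{w_1,w_k,z_1,z_k\}$ (this is where $s\ge 4$ is used). If the colour of $e$ is outside $\{\alpha_1,\ldots,\alpha_{k-1}\}$ then $e\cup P$ is a rainbow $\mathcal{BC}_k$; otherwise the colour of $e$ lies in $\{\alpha_1,\ldots,\alpha_{k-1}\}$, hence avoids all colours of $P^*$, and $e\cup P^*$ is a rainbow $\mathcal{BC}_k$. The two edge-disjoint paths, not a single path plus counting, are what make the closing step work.
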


The next section will be focused on introducing results on Tur\'an numbers of paths and cycles in hypergraphs, which are  needed tools to derive our main results. A useful lemma obtained from the stability results on Tur\'{a}n problems will be given in the next section as well, which will be frequently used to find certain desired paths in later proofs.    The proof of the main results will be presented in later sections.

\section{Preliminaries}
Note that the $s$-uniform Berge path $\mathcal{B}_2$ and loose path $\mathcal{P}_2$ are the same definitions, and the determination of $ex(n,s,\mathcal{P}_2)$ is  trivial.
In \cite{FJS}, F\"{u}redi, Jiang and  Seiver determined  $ex(n,s,\mathcal{P}_k)$ for $s\ge 3$.
\begin{thm}\cite{FJS}\label{turanloose}
Let $s$, $t$ be positive integers with $s\geq 3$. For sufficiently
large $n$, we have
\[ex\left( n,s,\mathcal{P}_{2t+1} \right) =\binom{n}{s}-
\binom{n-t}{s}\]
and \[ex\left( n,s,\mathcal{P}_{2t+2}\right) =\binom{n}{s}-
\binom{n-t}{s}+1.\]
For $\mathcal{P}_{2t+1}$, the unique extremal family  consists of all the $s$-subsets of $[n]$ which meet some fixed set $S$ of size $t$. For $\mathcal{P}_{2t+2}$, the unique extremal family  consists of all the above edges plus one additional $s$-set  disjoint from $S$.
\end{thm}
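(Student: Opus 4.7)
The plan is to prove the lower bound by an explicit extremal construction and the upper bound by a path-extension argument that reduces the problem to the classical Erd\H{o}s matching theorem.

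For the lower bound, fix $S\subseteq[n]$ with $|S|=t$ and let $\mathcal{F}_{2t+1}=\{e\in\binom{[n]}{s}:e\cap S\ne\emptyset\}$, which has $\binom{n}{s}-\binom{n-t}{s}$ edges. In any loose path $e_1,\dots,e_{2t+1}$, the odd-indexed edges $e_1,e_3,\dots,e_{2t+1}$ are pairwise vertex-disjoint, producing $t+1$ disjoint $s$-sets. If every such set met $S$, pigeonhole on $|S|=t$ would give a contradiction. For the even bound, take $\mathcal{F}_{2t+2}=\mathcal{F}_{2t+1}\cup\{E_0\}$ for any $s$-set $E_0\subseteq[n]\setminus S$: since $E_0$ can appear at most once in a purported $\mathcal{P}_{2t+2}$, at least $t$ of the $t+1$ pairwise disjoint odd-indexed edges must still meet $S$, which is impossible.

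For the upper bound, let $\mathcal{H}$ be an $s$-uniform $\mathcal{P}_k$-free hypergraph on $[n]$ with $k\in\{2t+1,2t+2\}$. I would take a loose path $P=e_1\cdots e_\ell$ in $\mathcal{H}$ of maximum length $\ell<k$, set $U=V(P)$ with $|U|\le\ell(s-1)+1$, and exploit maximality: every edge of $\mathcal{H}\setminus E(P)$ must intersect $U$ in a restricted way, for otherwise $P$ could be prolonged at one of its ends. Partition $E(\mathcal{H})$ into edges meeting $U$ (at most $\binom{n}{s}-\binom{n-|U|}{s}$) and edges lying entirely outside $U$; the latter form a subhypergraph whose matching number is bounded via the Erd\H{o}s matching theorem, since a large matching disjoint from $P$ could be stitched onto $P$ using the restricted edges near the endpoints. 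A careful choice of $\ell$ and a refinement selecting exactly $t$ distinguished ``pivot'' vertices inside $U$ (one from each pair of consecutive edges of $P$) will drive the total edge count down to the claimed $\binom{n}{s}-\binom{n-t}{s}+\epsilon_k$ with $\epsilon_k=0$ in the odd case and $\epsilon_k=1$ in the even case.

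The hardest step will be turning the path-extension estimate into an \emph{exact} bound and proving uniqueness of the extremal family, as opposed to an asymptotic bound of shape $\binom{n}{s}-\binom{n-t}{s}+O(n^{s-2})$. This calls for a stability dichotomy: either $\mathcal{H}$ is structurally close to $\mathcal{F}_{2t+1}$, in which case a direct vertex-by-vertex comparison pins down both the exact count and the extremal structure, or $\mathcal{H}$ contains enough ``branching'' edges off $P$ to build a longer loose path, contradicting $\mathcal{P}_k$-freeness. The even case requires an extra perturbation step to identify the single exceptional edge $E_0$ disjoint from the pivot set. The hypothesis that $n$ is sufficiently large enters both in the sharp form of the Erd\H{o}s matching theorem (valid for $n\ge n_0(s,t)$) and in ensuring that the greedy path extension can always find fresh vertices outside $U$ and outside a bounded exceptional set.
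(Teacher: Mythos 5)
This statement is quoted from F\"uredi--Jiang--Seiver \cite{FJS}; the paper under review gives no proof of it, so your attempt can only be judged on its own merits, and it has two problems of different severity.

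First, a concrete error in the lower bound for the even case. From ``at least $t$ of the $t+1$ pairwise disjoint odd-indexed edges meet $S$'' you conclude a contradiction, but there is none: $t$ pairwise disjoint $s$-sets can each meet a $t$-set $S$ by using $t$ distinct vertices of $S$. The correct argument must use \emph{both} parity classes: in a loose path $e_1,\dots,e_{2t+2}$ the odd-indexed edges form one family of $t+1$ pairwise disjoint edges and the even-indexed edges form another, and $E_0$ lies in at most one of the two families; the other family then consists of $t+1$ pairwise disjoint edges all meeting $S$, which is the genuine pigeonhole contradiction. This is easily repaired, but as written the verification fails.

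Second, and more seriously, the upper bound is a plan rather than a proof, and the plan omits exactly the step where all the difficulty lives. Bounding the edges meeting $U=V(P)$ by $\binom{n}{s}-\binom{n-|U|}{s}$ with $|U|\approx (k-1)(s-1)$ gives roughly $(k-1)(s-1)\binom{n}{s-1}$, which overshoots the target $t\binom{n}{s-1}$ by a factor of about $2(s-1)$; the entire content of the theorem is the reduction from the $O(ks)$ vertices of a maximal path to exactly $t$ distinguished vertices, together with the exact count and uniqueness. Your ``refinement selecting exactly $t$ pivot vertices'' and ``stability dichotomy'' name this step without supplying it. In \cite{FJS} this is done via the delta-system (sunflower) method and a careful structural analysis of how edges can attach to a longest path, none of which is recoverable from the sketch. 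Note also that the paper itself, when it needs structural information of this kind, does not reprove the Tur\'an result but instead imports the separate stability theorem of Kostochka--Mubayi--Verstra\"ete (Theorem \ref{stability}); so even within this paper's framework your dichotomy would have to be an independent argument, not a consequence of anything stated here.
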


 The determination of $ex(n,s, \mathbb{P}_k)$ is nontrivial even for $k=2$.
 Frankl \cite{frankl} gave the value of $ex(n,s, \mathbb{P}_2)$
 for $s\ge 4$ and sufficiently large $n$.  Then Keevash, Mubayi and Wilson \cite{KMW} determined  the value of $ex(n,4, \mathbb{P}_2)$ for all $n$. Note
that when $s = 3$, $ex(n,3, \mathbb{P}_2)\le n$, which can be achieved when $n$ is divisible
by 4 by taking $n/4$ vertex disjoint copies of $K^{(3)}_
4$ (i.e. the complete 3-graph on
4 vertices). For $k\ge 3$, F\"{u}redi, Jiang and Seiver \cite{FJS} provided the exact Tur\'an number of  $\mathbb{P}_{k}$ for sufficiently large $n$, where $s\ge4$, $k\ge 3$. Kostochka,  Mubayi and Verstra\"ete \cite{KMV} considered $ex(n,s, \mathbb{P}_k)$ for $s\ge 3$, $k\ge 4$ and sufficiently large $n$. Later, Jackowska, Polcyn and  Ruci\'{n}ski \cite{JPR} determined $ex(n,3, \mathbb{P}_3)$ for all $n$.  We summarize those results (only for the sufficiently large $n$) as follows.

\begin{thm}\label{turan}\cite{frankl,FJS,JPR,KMV,KMW}
For sufficiently
large $n$, we have
\begin{enumerate}
  \item $ex\left( n,s,\mathbb{P}_{2} \right) =\binom{n-2}{s-2}$ for $s\ge 4$, and $ex\left( n,3,\mathbb{P}_{2} \right) \le n$.
      \item $ex\left( n,s,\mathbb{P}_{2t+1} \right) =\binom{n}{s}-
\binom{n-t}{s}$ for $s\ge 3$ and $t\ge 1$.
  \item $ex\left( n,s,\mathbb{P}_{2t+2} \right) =\binom{n}{s}-
\binom{n-t}{s}+\binom{n-t-2}{s-2}$ for $s\ge 3$ and $t\ge 1$.
\end{enumerate}
The unique extremal family for $\mathbb{P}_{2}$ consists of all the $s$-subsets of $[n]$ containing some two fixed vertices for $s\ge 4$.
For $\mathbb{P}_{2t+1}$, the unique extremal family consists of all the $s$-subsets of $[n]$ which meet some fixed set $S$ of size $t$. For $\mathbb{P}_{2t+2}$, the unique extremal family consists of all the above edges plus all the  $s$-sets in $[n]\backslash S$ containing some two fixed vertices not in  $S$.
\end{thm}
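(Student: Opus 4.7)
The plan is to verify each of the three formulas in two steps: (a) exhibit the constructions that attain the lower bounds, and (b) establish matching upper bounds via a path-extension argument combined with induction on $t$.

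For (a), the lower bounds follow from the constructions already identified in the statement. The $s$-subsets of $[n]$ through two fixed vertices are pairwise intersecting in at least two vertices, so no two of them can appear as consecutive edges of a linear path; this gives the lower bound $\binom{n-2}{s-2}$ for $\mathbb{P}_2$ when $s\ge 4$. For $\mathbb{P}_{2t+1}$, alternate edges of a linear path of length $2t+1$ form $t+1$ pairwise disjoint edges, hence any vertex cover of the path has size at least $t+1$; restricting to $s$-sets meeting a fixed set $S$ of size $t$ therefore avoids $\mathbb{P}_{2t+1}$ and contributes $\binom{n}{s}-\binom{n-t}{s}$ edges. For $\mathbb{P}_{2t+2}$ I would augment this family by all $s$-sets of $[n]\setminus S$ through a fixed pair $\{u,v\}\subseteq [n]\setminus S$: such added edges avoid $S$ and pairwise share $\{u,v\}$, so at most one of them can appear in any given linear path through $S$, keeping the length below $2t+2$.

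For (b), I would proceed by induction on $t$, treating the two parities separately. The base case $k=2$ reduces to showing that a $\mathbb{P}_2$-free $s$-graph on $n$ vertices, whose edges pairwise share at least two vertices, has at most $\binom{n-2}{s-2}$ edges; the standard tool is Frankl's delta-system (shifting) method, which forces the extremal family to be a sunflower with a common pair of core vertices. For the inductive step, let $\mathcal{H}$ be an extremal $\mathbb{P}_{k}$-free $s$-graph and choose a longest linear path $P=e_1\cdots e_\ell$ in $\mathcal{H}$; maximality gives $\ell\le k-1$, and any edge meeting a non-junction vertex of $e_1$ or $e_\ell$ must be contained in $V(P)$, otherwise one could prepend or append to extend $P$. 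This path-extension principle, used iteratively, forces almost all edges of $\mathcal{H}$ to hit a small set of "core" vertices.

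The main obstacle is the sharp stability step: one must show that the core can be taken to have size exactly $t$, and that in the $\mathbb{P}_{2t+2}$-case the residual $s$-graph on $[n]\setminus S$ is itself $\mathbb{P}_2$-free. The first part uses a delta-system extraction together with induction on $t$ applied to the subhypergraph $\mathcal{H}-v$ for a suitably chosen vertex $v$ of high degree. The second part is the delicate one: any linear $\mathbb{P}_2$ living outside $S$ could, combined with the dense structure around $S$ and the maximality of $P$, be concatenated into a $\mathbb{P}_{2t+2}$, contradicting extremality; turning this heuristic into a rigorous counting requires the strong stability form of the $\mathbb{P}_{2t+1}$-bound, which in turn feeds back into the induction. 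Once these pieces are combined, the upper bound $\binom{n}{s}-\binom{n-t}{s}+\binom{n-t-2}{s-2}$ and the uniqueness of the extremal family both fall out of the delta-system analysis.
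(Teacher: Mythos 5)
This statement is Theorem \ref{turan} of the paper, a compilation of results quoted from Frankl, F\"uredi--Jiang--Seiver, Jackowska--Polcyn--Ruci\'nski, Kostochka--Mubayi--Verstra\"ete and Keevash--Mubayi--Wilson; the paper gives no proof of it, so there is nothing in-paper to compare against and your sketch must stand on its own. Your lower-bound constructions are correct. (For $\mathbb{P}_{2t+2}$ the clean way to finish is to note that both the odd-indexed and the even-indexed edges of a putative path of length $2t+2$ form $t+1$ pairwise disjoint edges, so each of the two collections would have to contain one of the added edges, and any two added edges meet in the two fixed vertices, which is impossible inside a linear path.)

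The upper bound, however, is a plan rather than a proof, and two of its ingredients are stated incorrectly. First, your base case for $\mathbb{P}_2$ assumes the edges ``pairwise share at least two vertices''; $\mathbb{P}_2$-freeness only forbids intersections of size exactly one, so disjoint edges are allowed --- this is precisely why $ex(n,3,\mathbb{P}_2)$ is attained by disjoint copies of $K_4^{(3)}$, and why the case $s\ge 4$ (Frankl; Keevash--Mubayi--Wilson) is a genuinely hard theorem about set systems with no singleton intersection rather than a sunflower exercise. Second, the path-extension principle as you state it is false for linear paths: an edge meeting a free vertex of $e_1$ and containing a vertex outside $V(P)$ need not extend $P$, since it may meet $e_1$ in two or more vertices or meet some other $e_i$; controlling exactly such badly intersecting edges is the technical heart of the F\"uredi--Jiang--Seiver and Kostochka--Mubayi--Verstra\"ete arguments. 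Finally, you explicitly defer the two decisive steps --- that the core can be taken to have size exactly $t$, and that in the even case the residual graph outside $S$ is $\mathbb{P}_2$-free --- labelling them ``the main obstacle'' and ``the delicate one'' without carrying them out. As written, the proposal identifies a plausible toolbox (delta-systems, stability, path extension) but does not establish the upper bounds or the uniqueness of the extremal families.
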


For linear cycles, Frankl and F\"{u}redi  \cite{FF} showed that
the unique extremal $n$-vertex $s$-graph ($s\ge 3$)  containing no $\mathbb{C}_{3}$  consists of all edges containing
some fixed vertex $x $, for large enough $n$. For  $s = 3$, Cs\'{a}k\'{a}ny and Kahn \cite{CK} obtained the same result
for all $n \ge 6$.  F\"{u}redi and Jiang \cite{FJ}, and Kostochka,  Mubayi and Verstra\"ete \cite{KMV} determined the Tur\'an number of  $\mathbb{C}_{k}$ for all $k \ge 3$, $s \ge 3$ and sufficiently large $n$ as follows.
\begin{thm}\cite{FJ,KMV}\label{hFJ2}
Let $s,t$ be  positive integers with $s\geq 3$.  For sufficiently large $n$, we have
$$e{x}\left( n,s,\mathbb{C}_{2t+1} \right) = \binom{n}{s}-
\binom{n-t}{s} $$ and for $(s,t)\neq (3,1)$, $$e{x}\left( n,s, \mathbb{C}_{2t+2} \right) =
\binom{n}{s}-
\binom{n-t}{s}+\binom{n-t-2}{s-2}.$$
For $\mathbb{C}_{2t+1}$, the only extremal family
consists of all the $s$-sets in $[n]$ that meet some fixed $t$-set
$L$. For $\mathbb{C}_{2t+2}$, the only extremal
family consists of all the $s$-sets in $[n]$ that intersect some
fixed $t$-set $L$ plus all the $s$-sets in $[n]\setminus L$ that
contain some two fixed elements.
\end{thm}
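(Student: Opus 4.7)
The plan is to reduce the cycle Tur\'{a}n problem to the linear-path Tur\'{a}n results from Theorem~\ref{turan}, via a stability argument combined with a path-closure step. For the lower bound, fix a $t$-set $L\subset[n]$ and let $\mathcal{F}_L=\{e\in\binom{[n]}{s}:e\cap L\neq\emptyset\}$; since every vertex of a linear cycle belongs to at most two of its edges, any transversal of $\mathbb{C}_{2t+1}$ needs at least $t+1$ vertices, so $\mathcal{F}_L$ is $\mathbb{C}_{2t+1}$-free with $\binom{n}{s}-\binom{n-t}{s}$ edges. For the $\mathbb{C}_{2t+2}$ construction, add the $\binom{n-t-2}{s-2}$ sets in $[n]\setminus L$ containing two fixed vertices; at most one such extra edge can appear in a $\mathbb{C}_{2t+2}$ (two would share two vertices, violating linearity), and the remaining $\geq 2t+1$ edges must then be covered by $L$, impossible by the same transversal bound.

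For the upper bound the key observation is that the conjectured $ex(n,s,\mathbb{C}_k)$ exactly matches $ex(n,s,\mathbb{P}_k)$ in all listed cases, so Theorem~\ref{turan} immediately produces a linear $\mathbb{P}_{k-1}$ (indeed a $\mathbb{P}_k$) inside any $\mathcal{H}$ with $|\mathcal{H}|>ex(n,s,\mathbb{C}_k)$. To upgrade this to a linear cycle I would invoke a robust (stability) version of the path theorem: if $\mathcal{H}$ is $\mathbb{P}_{k+1}$-free and $|\mathcal{H}|\geq ex(n,s,\mathbb{P}_k)-o(n^{s-1})$, then all but $o(n^{s-1})$ edges of $\mathcal{H}$ meet some fixed $t$-set $L$. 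Such a stability statement can be imported from the cited papers or derived by iteratively removing a vertex of maximum degree and inducting on $t$.

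The decisive step is closure into a cycle of the correct length. Fix a linear path $P=e_1\cdots e_{k-1}$ in $\mathcal{H}$ with link vertices $v_1,\ldots,v_{k-2}$. Since $[n]\setminus V(P)$ still carries $\Omega(n^{s-1})$ edges by the stability bound, a pigeonhole on the shadows at the two endpoints produces an edge $f\in\mathcal{H}$ sharing exactly one vertex with the private part of $e_1$, one with the private part of $e_{k-1}$, and $s-2$ vertices outside $V(P)$. Adjoining $f$ to $P$ yields a linear cycle of length exactly $k$. If every candidate $f$ fails, the resulting forbidden patterns pin $\mathcal{H}$ down to the claimed extremal family, giving both the count and the uniqueness clause.

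The step I expect to be hardest is controlling the length of the closing cycle precisely: obtaining $\mathbb{C}_k$ rather than some linear cycle of uncontrolled length typically requires local rotations on $P$ that preserve linearity, a maneuver that is fragile when $s$ is small and the private parts of the end-edges shrink. This same fragility explains the excluded pair $(s,t)=(3,1)$ in the even statement: for $\mathbb{C}_4$ in $3$-graphs the closure mechanism collapses and a genuinely different extremal construction (of Steiner-triple-like type) takes over, which is why the theorem explicitly excludes that single case.
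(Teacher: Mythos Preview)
This theorem is not proved in the paper at all: it is stated with citations \cite{FJ,KMV} and used as a black-box input to the anti-Ramsey results. So there is no ``paper's own proof'' to compare against; the actual proofs live in F\"uredi--Jiang and Kostochka--Mubayi--Verstra\"ete.

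That said, your sketch does aim in the right direction---the cited proofs do go through a stability statement for the $\mathbb{P}_k$-free (or $\mathbb{C}_k$-free) problem, identifying a $t$-set $L$ that meets almost all edges, and then finishing by an exact analysis of the residual hypergraph $\mathcal{H}-L$. Where your outline has a genuine gap is the ``closure'' step. You write that a pigeonhole on shadows at the two endpoints of a long path $P$ produces an edge $f$ meeting one free vertex of $e_1$, one free vertex of $e_{k-1}$, and $s-2$ vertices outside $V(P)$. But the number of such candidate $s$-sets is only $\Theta(n^{s-2})$, whereas stability only controls $\mathcal{H}$ up to $o(n^{s-1})$ error edges; you have no leverage to force any specific one of these candidates to lie in $\mathcal{H}$. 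Pigeonhole on endpoint shadows does not manufacture a closing edge with two prescribed vertices---it at best forces many edges through \emph{some} single prescribed vertex.

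The actual argument in \cite{KMV} works the other way round: rather than closing one specific path, one uses the stability structure (the dense star-like $(s-1)$-graph $G^*$ sitting over $L$, exactly as in Theorem~\ref{stability} of the present paper) to build paths \emph{to order}, routing through $L$ via edges of the form $a\cup\{v\}$ with $a\in G^*$, $v\in L$. This turns the closure problem into one where both endpoints can be moved essentially freely inside $G^*$, and then the $o(n^{s-1})$ edges of $\mathcal{H}-L$ are analyzed separately to finish. Your proposal inverts this: you fix $P$ first and hope to close it, which is why the length-control and linearity issues you flag as ``fragile'' arise. They are not merely fragile; as stated the step does not go through.
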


For the exceptional case of $\mathbb{C}_{4}$ in $3$-uniform hypergraphs,
Kostochka,  Mubayi and Verstra\"ete \cite{KMV} showed that
$$e{x}\left( n,3,\mathbb{C}_{4} \right) = \binom{n}{s}-\binom{n-1}{s}+\max\left\{n-3,4\left\lfloor
\frac{n-1}{4}\right\rfloor\right\},$$
and they also characterized the extremal graphs.

The Tur\'an number of a loose cycle was initially studied by Chv\'atal \cite{Chvatal}. Then Mubayi and Verstra\"ete \cite{MV2005} proved that $e{x}\left( n,s,\mathcal{C}_{3} \right) = \binom{n}{s}-\binom{n-1}{s-1}$ for all $s\ge 3$ and $n\ge 3s/2$.  F\"{u}redi and Jiang \cite{FJ}
determined $e{x}\left( n,s,\mathcal{C}_{k} \right) $ for $k\ge3$, $s\ge 4$  and sufficiently large $n$. This confirms (in a
stronger form) a conjecture proposed by Mubayi and Verstra\"ete \cite{MV} for
$k\ge3$, $s\ge 4$. Kostochka,  Mubayi and Verstra\"ete \cite{KMV} extended the results above  and determined $e{x}\left( n,s,\mathcal{C}_{k} \right) $  for all
$s \ge3 $ and large $n$. We summarize their  results as follows.
\begin{thm}\cite{FJ,KMV}\label{hFJ1}
Let $t\ge 2$, $s\geq3$ be fixed integers. For sufficiently large $n$, we have
$$e{x}\left( n,s,\mathcal{C}_{2t+1} \right) = \binom{n}{s}-
\binom{n-t}{s},$$  $$e{x}\left( n,s,\mathcal{C}_{2t+2}
\right) = \binom{n}{s}-
\binom{n-t}{s}+1,$$
and
$$e{x}\left( n,s,\mathcal{C}_{4}
\right) = \binom{n}{s}-\binom{n-1}{s}+\left\lfloor\frac{n-1}{s}
\right\rfloor.$$
For $\mathcal{C}_{2t+1}$ ($t\ge 2$), the only extremal
family consists of all the $s$-sets in $[n]$ that meet some fixed
$t$-set $L$. For $\mathcal{C}_{2t+2}$  ($t\ge 2$), the only
extremal family consists of all the $s$-sets in $[n]$ that intersect
some fixed $t$-set $L$ plus one additional $s$-set outside $L$. For $\mathcal{C}_{4}$, the only
extremal family consists of all the $s$-sets in $[n]$ that intersect
some fixed $t$-set $L$ plus $\left\lfloor\frac{n-1}{s}
\right\rfloor$ disjoint edges outside $L$.
\end{thm}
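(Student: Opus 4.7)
The plan is to handle the lower and upper bounds separately, with the lower bounds following from direct constructions and the upper bounds requiring a path-extension argument combined with Theorem~\ref{turanloose} on $ex(n,s,\mathcal{P}_k)$.

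For the lower bounds I would first verify the proposed constructions. Fix an $s$-graph $\mathcal{H}_0$ on $[n]$ consisting of all $s$-sets that meet a fixed $t$-set $L$; it has exactly $\binom{n}{s}-\binom{n-t}{s}$ edges. The key observation is that in any loose cycle $C = e_1 e_2 \cdots e_r$ every vertex lies in at most two edges of $C$, so the total number of edge-incidences between $L$ and $E(C)$ is at most $2|L|=2t$. If $\mathcal{H}_0$ contained a $\mathcal{C}_{2t+1}$, then all $2t+1$ edges of the cycle would meet $L$, forcing $\ge 2t+1$ incidences, a contradiction. Adding a single $s$-set disjoint from $L$ provides a $\mathcal{C}_{2t+2}$-free family of the claimed size, since in a hypothetical $\mathcal{C}_{2t+2}$ at most one edge is disjoint from $L$, leaving $\ge 2t+1$ edges incident to $L$. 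For $\mathcal{C}_4$ (the case $t=1$) a more generous construction is possible because disjointness of the added edges among themselves already prevents two of them from appearing consecutively in a loose $4$-cycle; I would check by case analysis that no $\mathcal{C}_4$ can use $4,3,2,1,0$ edges through $L=\{v\}$, using that a $\mathcal{C}_4$ through $v$ would need two edges through $v$ whose other $s-1$ vertices are linked by two more edges, forcing one of the ``outside'' matching edges to share a vertex with an edge through $v$ — impossible.

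For the upper bound, given $\mathcal{H}$ with $|\mathcal{H}|>\binom{n}{s}-\binom{n-t}{s}$ and (aiming for contradiction) no $\mathcal{C}_{2t+1}$, I would apply Theorem~\ref{turanloose} which states $ex(n,s,\mathcal{P}_{2t+1}) = \binom{n}{s}-\binom{n-t}{s}$: this immediately provides a loose path $P=e_1\cdots e_{2t+1}$ in $\mathcal{H}$. Let $v_1\in e_1\setminus e_2$ and $v_{2t+2}\in e_{2t+1}\setminus e_{2t}$ be the endpoints, and consider the family $\mathcal{L}$ of all edges of $\mathcal{H}$ containing both $v_1$ and $v_{2t+2}$ and otherwise disjoint from $V(P)\setminus\{v_1,v_{2t+2}\}$; any such edge closes $P$ into a loose cycle of length $2t+1$, so by assumption $\mathcal{L}=\emptyset$. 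One then argues: either the longest loose path in $\mathcal{H}$ is strictly longer than $2t+1$, in which case reassigning endpoints reduces to the same closing problem; or every edge of $\mathcal{H}$ has a ``heavy'' intersection with $V(P)$. In the latter case the stability part of Theorem~\ref{turanloose} (obtained because $\mathcal{P}_{2t+1}$'s extremal family is unique and consists of sets meeting a fixed $t$-set) forces $\mathcal{H}$ to be essentially contained in $\{F\in\binom{[n]}{s}: F\cap L\ne\emptyset\}$ for some $t$-set $L\subseteq V(P)$, contradicting $|\mathcal{H}|>\binom{n}{s}-\binom{n-t}{s}$. The even case $\mathcal{C}_{2t+2}$ ($t\ge 2$) runs in parallel, using $ex(n,s,\mathcal{P}_{2t+2})=\binom{n}{s}-\binom{n-t}{s}+1$ to extract a longer path and a slightly refined closing argument that permits at most one edge outside the $t$-set. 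The exceptional $\mathcal{C}_4$ case needs a direct shadow/matching argument: one shows that if $\mathcal{H}$ exceeds the stated bound, then either some vertex $v$ has very high link-degree and there is a loose path of length $3$ through $v$ whose endpoints lie in a common edge (forming $\mathcal{C}_4$), or there is no such dominant vertex and a greedy matching together with a star produces $\mathcal{C}_4$ directly.

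For the extremal-family characterization I would pin down the structure by pushing the stability argument further: once $|\mathcal{H}|$ equals the Tur\'an number, any edge not meeting the fixed $t$-set $L$ must be the unique ``extra'' edge (in the $\mathcal{C}_{2t+2}$ case) or one of the $\lfloor (n-1)/s\rfloor$ disjoint outside edges (in the $\mathcal{C}_4$ case), since two such edges combined with the ``star through $L$'' would create a loose cycle of the forbidden length. Uniqueness of $L$ follows from a standard shifting/averaging argument given that $n$ is sufficiently large.

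The main obstacle is the identity $ex(n,s,\mathcal{C}_{2t+1}) = ex(n,s,\mathcal{P}_{2t+1})$: the loose-path Tur\'an bound gives just enough to force a $\mathcal{P}_{2t+1}$ but no surplus edges to spend on closing it, so the argument cannot simply exhibit a ``spare'' edge between the endpoints. Overcoming this requires the stability version of Theorem~\ref{turanloose}, i.e.\ showing that any near-extremal $\mathcal{P}_{2t+1}$-free family is structurally forced into the $L$-star configuration, so that any strict excess of edges over the bound already contradicts the absence of $\mathcal{C}_{2t+1}$. Establishing this stability quantitatively — with error terms small enough to beat the gap of one edge — is the delicate technical step; the $\mathcal{C}_4$ case is a separate subtlety because the extremal family itself carries a large matching of outside edges and so the stability statement takes a different form.
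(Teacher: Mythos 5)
This statement is quoted from \cite{FJ,KMV} and the paper contains no proof of it, so your proposal has to stand on its own. The lower-bound half does: the incidence count (each vertex of a loose cycle lies in at most two of its edges, so at most $2|L|=2t$ edges of the cycle can meet a $t$-set $L$) correctly shows the star is $\mathcal{C}_{2t+1}$-free, the star plus one outside edge is $\mathcal{C}_{2t+2}$-free, and your case analysis for $\mathcal{C}_4$ (two disjoint outside edges would have to sit in nonconsecutive positions, forcing the other two nonconsecutive edges to share the center $v$) is sound.

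The upper bound, however, has a genuine gap, and it is exactly the part you flag as ``the delicate technical step.'' Theorem~\ref{turanloose} gives you a loose path $\mathcal{P}_{2t+1}$ and the uniqueness of the \emph{extremal} family, but it contains no stability statement about \emph{near}-extremal $\mathcal{P}_{2t+1}$-free families; the stability result the paper does import (Theorem~\ref{stability}) concerns linear paths and cycles, not loose ones. So the assertion that ``the stability part of Theorem~\ref{turanloose} forces $\mathcal{H}$ to be essentially contained in the star'' invokes a theorem that is not available and is itself a substantial result. Moreover, your dichotomy is false as stated: an edge that fails to extend a longest loose path $P$ need not have heavy intersection with $V(P)$ --- it may meet $V(P)$ in a single interior vertex, or be entirely disjoint from $V(P)$ (maximality of $P$ only forbids edges that attach at an end edge and avoid the rest of $P$). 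Closing the path into a cycle with a surplus of only one edge over the path Tur\'an number is precisely the content of the F\"uredi--Jiang and Kostochka--Mubayi--Verstra\"ete papers, which use delta-system/shadow machinery rather than the path-extension argument sketched here; nothing in your proposal substitutes for that machinery. The $\mathcal{C}_4$ upper bound and the uniqueness of the extremal families are likewise only gestured at.
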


For Tur\'an number of a Berge cycle, Gy\H{o}ri and Lemons  proved that $ex(n,3,\mathcal{BC}_{2k+1}) \le O(k^4)\cdot  n^{1+1/k}$ in \cite{GL3}
and  $ex(n,3,\mathcal{BC}_{2k+1}) \le O(k^2) \cdot ex(n,C_{2k})$ in \cite{GL}.  F\"{u}redi and \"{O}zkahya \cite{FO} obtained better constant factors (depending on $k$).  Further improvements  were obtained for even $k$ by Gerbner, Methuku and Vizer \cite{GMV}, also by
Gerbner, Methuku and Palmer \cite{GMP}, and for odd $k$ by Gerbner \cite{Gerbner}.
 For $s\ge 4$, Gy\H{o}ri and Lemons \cite{GL} showed that $ex(n,s,\mathcal{BC}_{2k+1})\le O_s(k^{s-2})\cdot ex(n,3,\mathcal{BC}_{2k+1})$ and  $ex(n,s,\mathcal{BC}_{2k})\le O_s(k^{s-1})\cdot ex(n,C_{2k})$, i.e., $ ex(n,s,\mathcal{BC}_{k})= O(n^{1+1/\lfloor k/2\rfloor} )$. The constant factors were improved
by Jiang and Ma \cite{JM}, also for even $k$ by Gerbner, Methuku and Vizer \cite{GMV}.

There are many other results on the Tur\'{a}n numbers of paths and cycles,  in graphs \cite{BJ,BK2011,LLP,YZ} or  hypergraphs \cite{BK,FO,GKL}.  The readers are referred to these references for details.

The following stability result on linear paths and linear cycles will be needed in our proofs. Let $\partial \mathcal{H}$ denote
the $(s- 1)$-graph consisting of sets contained  in some edge of $\mathcal{H}$.
\begin{thm}\label{stability}\cite{KMV}
For fixed  $s
\ge 3$ and  $ k\ge 4$, let $\ell=\left\lfloor\frac{k-1}{2}\right\rfloor$ and  $\mathcal{H}$ be  an  $n$-vertex  $s$-graph  with  $|\mathcal{H}| \sim \ell\binom{n}{s-1}$
containing  no  $\mathbb{P}_k$ or containing no $\mathbb{C}_k$.  Then  there  exists  $G^*\subset \partial \mathcal{H}$ with  $|G^*|  \sim \binom{n}{s-1}$ and a set $L$ of $\ell$ vertices of $\mathcal{H}$ such that $L\cap V(G^*)=\emptyset$ and $e\cup \{v\}\in \mathcal{H}$ for any $(s-1)$-edge $e\in G^*$  and any $v\in L$.  In  particular,
$|\mathcal{H }- L|= o(n^{s-1})$.
\end{thm}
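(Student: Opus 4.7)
The plan is to derive the structure of $\mathcal{H}$ by a two-step strategy: first iteratively extract $\ell$ ``dominating'' vertices whose degrees are asymptotically maximum, and then use a shadow-counting argument to identify the $(s-1)$-set family $G^*$. The whole argument is driven by the exact Tur\'an numbers from Theorem \ref{turan} (and Theorem \ref{hFJ2} in the cycle case) combined with the path-extension technique.

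First I would establish the following key auxiliary statement: if $\mathcal{F}$ is an $n$-vertex $s$-graph that is $\mathbb{P}_k$-free (resp.\ $\mathbb{C}_k$-free) and satisfies $|\mathcal{F}| \ge (\ell - o(1))\binom{n}{s-1}$, then some vertex $v \in V(\mathcal{F})$ has degree at least $(1-o(1))\binom{n-1}{s-1}$. To prove this by contradiction, suppose every vertex has degree at most $(1-\eta)\binom{n-1}{s-1}$ for some fixed $\eta>0$. Because $|\mathcal{F}|$ strictly exceeds the Tur\'an bound $ex(n,s,\mathbb{P}_{k-1})=\binom{n}{s}-\binom{n-\ell+1}{s}\sim (\ell-1)\binom{n}{s-1}$, there are in fact \emph{many} copies of $\mathbb{P}_{k-1}$ in $\mathcal{F}$, obtained by deleting any edge from the near-extremal surplus. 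Fix one such longest linear path $P=e_1,\ldots,e_{k-1}$ with endpoints $x,y$. Any edge $f\ni x$ avoiding $V(P)\setminus \{x\}$ would extend $P$ to a $\mathbb{P}_k$, contradiction; hence every edge through an endpoint meets the small set $V(P)\setminus\{x\}$ of size $O(ks)$. Running this over many $\mathbb{P}_{k-1}$'s (a ``moving endpoints'' argument) and using the assumed degree cap, one gets a strict upper bound on $|\mathcal{F}|$ of the form $(\ell-1+O(\eta))\binom{n}{s-1}$, contradicting the hypothesis. In the cycle case the same scheme works by starting from a longest linear cycle or a maximal linear path and closing it up; the role of the endpoints is played by the two special ``closing'' vertices.

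Next I would iterate. Having produced $v_1$ with $d(v_1)\ge(1-o(1))\binom{n-1}{s-1}$, set $\mathcal{H}_1=\mathcal{H}-v_1$. A linear path (cycle) of length $k-2$ in $\mathcal{H}_1$, together with two edges through $v_1$ chosen greedily (possible because $v_1$ has almost full degree and only $O(ks)$ vertices of the short path need to be avoided), would form a $\mathbb{P}_k$ (resp.\ $\mathbb{C}_k$) in $\mathcal{H}$. So $\mathcal{H}_1$ is $\mathbb{P}_{k-2}$-free (resp.\ $\mathbb{C}_{k-2}$-free), and $|\mathcal{H}_1|\ge (\ell-1-o(1))\binom{n}{s-1}$. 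Induction on $\ell$ (base $\ell=1$ where $k\in\{2,3\}$ reduces to the intersecting case) produces vertices $v_2,\ldots,v_\ell$ with the analogous degree property inside $\mathcal{H}_1,\mathcal{H}_2,\ldots$. Set $L=\{v_1,\ldots,v_\ell\}$.

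Finally, let $G^*$ be the family of $(s-1)$-subsets $e\subseteq V(\mathcal{H})\setminus L$ such that $e\cup\{v_i\}\in \mathcal{H}$ for every $i\in[\ell]$. For each $v_i\in L$, the number of $(s-1)$-sets $e\subseteq V(\mathcal{H})\setminus L$ with $e\cup\{v_i\}\notin\mathcal{H}$ is at most $\binom{n-\ell}{s-1}-d_{\mathcal{H}-(L\setminus\{v_i\})}(v_i)=o(\binom{n}{s-1})$, by the degree conclusion obtained at each stage of the iteration. A union bound over $i\in[\ell]$ yields $|G^*|\sim\binom{n}{s-1}$, and clearly $G^*\subseteq\partial\mathcal{H}$ with $L\cap V(G^*)=\emptyset$. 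The ``in particular'' conclusion $|\mathcal{H}-L|=o(n^{s-1})$ then follows by comparing $|\mathcal{H}|$ to $|\mathcal{H}\cap\{e:e\cap L\neq\emptyset\}|\ge \ell\cdot|G^*|\sim \ell\binom{n}{s-1}\sim |\mathcal{H}|$.

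The main obstacle is the quantitative high-degree vertex lemma in the first step: one needs to show not merely that \emph{some} vertex has high degree, but that its degree is asymptotically $\binom{n-1}{s-1}$, with an error small enough to survive $\ell$ rounds of iteration. This is the place where the path-extension technique must be pushed hardest, and where the cycle case diverges nontrivially from the path case because the ``endpoint'' argument has to be replaced by an argument based on two vertices of a carefully chosen breaking edge; handling this uniformly for both $\mathbb{P}_k$ and $\mathbb{C}_k$ is the technical heart of the proof.
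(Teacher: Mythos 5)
This statement is Theorem \ref{stability}, which the paper does not prove at all: it is imported verbatim from Kostochka, Mubayi and Verstra\"ete \cite{KMV}, so there is no in-paper proof to compare against. Judged on its own merits, your proposal reproduces the correct high-level shape of the known argument (extract $\ell$ dominating vertices one at a time, then take their common link as $G^*$), but two of its load-bearing steps do not go through as written.

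The fatal gap is in the iteration. Having found $v_1$ with $d(v_1)\ge (1-o(1))\binom{n-1}{s-1}$, you claim $\mathcal{H}-v_1$ is $\mathbb{P}_{k-2}$-free because a $\mathbb{P}_{k-2}$ there could be extended by two edges through $v_1$ ``chosen greedily.'' But an extending edge must contain $v_1$ \emph{and} exactly one free vertex of an end edge of the path, while avoiding the remaining $O(ks)$ vertices of the path; the family of $(s-1)$-sets eligible to be the link-part of such an edge has size only $O(n^{s-2})=o\bigl(\binom{n}{s-1}\bigr)$. Since the degree hypothesis only guarantees that the link of $v_1$ misses $o\bigl(\binom{n}{s-1}\bigr)$ of all $(s-1)$-sets, that entire eligible family could be missing, and the greedy extension is not justified. (A secondary error: attaching one edge through $v_1$ at each end of a $\mathbb{P}_{k-2}$ produces a $\mathbb{C}_k$, not a $\mathbb{P}_k$, because the two new edges meet in $v_1$ and nonconsecutive edges of a linear path must be disjoint; in the path case both edges must be attached at the same end as a cherry $g_0\oplus g_1$ centered at $v_1$, which only sharpens the difficulty above.) In addition, the auxiliary lemma itself --- that a near-extremal $\mathbb{P}_k$-free ($\mathbb{C}_k$-free) $s$-graph has a vertex of degree $(1-o(1))\binom{n-1}{s-1}$ --- is only gestured at: observing that edges through a free endpoint vertex of a longest path must hit $V(P)$ again, plus an unspecified ``moving endpoints'' average, does not yield the claimed degree lower bound, and you yourself flag this as the technical heart without supplying it. What is actually needed, and what \cite{KMV} proves with a substantially more delicate shadow/link analysis, is a vertex whose link is not merely large but well-spread enough to meet these $o(n^{s-1})$-sized extension families; your quantitative hypothesis is too weak to drive the induction.
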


Notice that the stability result above considers the case $ k\ge 4$. For $k=3$, $s\ge4$,  F\"{u}redi et al. \cite{FJS} provided another version of stability result on linear paths, and as the authors in \cite{JPR} pointed out, the similar stability result holds for $k=3$ and $s=3$ as well. We rewrite their results for $k=3$ with the similar notations in  Theorem \ref{stability} (in a slightly weaker form). Note that when $k=3$, $\ell=\left\lfloor\frac{k-1}{2}\right\rfloor=1$.
\begin{thm}\label{stability2}
For fixed   $s
\ge 3$,  let   $\mathcal{H}$ be  an  $n$-vertex  $s$-graph  with  $|\mathcal{H}| \sim \binom{n}{s-1}$
containing  no  $\mathbb{P}_3$.  Then  there  exists  $G^*\subset \partial \mathcal{H}$ with  $|G^*|  >\frac{1}{2} \binom{n}{s-1}$ and a vertex $v$  of $\mathcal{H}$ such that $v\notin V(G^*)$ and $e\cup \{v\}\in \mathcal{H}$ for any $(s-1)$-edge $e\in G^*$.  In  particular,
$|\mathcal{H }- v|= o(n^{s-1})$.
\end{thm}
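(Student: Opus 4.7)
The hypothesis $|\mathcal{H}| \sim \binom{n}{s-1}$ matches precisely the Tur\'an threshold $ex(n,s,\mathbb{P}_3) = \binom{n-1}{s-1}$ from Theorem~\ref{turan}, whose unique extremal family is the star at a single vertex. My plan is therefore to locate that ``center'' $v$ by choosing a vertex of maximum degree in $\mathcal{H}$, and to set
$$G^* := \left\{e \in \binom{[n]\setminus\{v\}}{s-1} : e \cup \{v\} \in \mathcal{H}\right\} \subseteq \partial \mathcal{H}.$$
Since $|G^*| = \deg_{\mathcal{H}}(v)$ and $|\mathcal{H}-v| = |\mathcal{H}| - \deg_{\mathcal{H}}(v)$, both conclusions $|G^*| > \tfrac{1}{2}\binom{n}{s-1}$ and $|\mathcal{H}-v|=o(n^{s-1})$ follow from the single estimate $\deg_{\mathcal{H}}(v) = (1-o(1))\binom{n-1}{s-1}$.

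To prove this I would argue by contradiction: suppose $|\mathcal{H}_{\bar v}|\ge \varepsilon\binom{n}{s-1}$ for some fixed $\varepsilon>0$, where $\mathcal{H}_{\bar v}:=\{e\in\mathcal{H}:v\notin e\}$ and $\mathcal{H}_v := \mathcal{H}\setminus \mathcal{H}_{\bar v}$. The configuration to chase is the following: for any $f\in\mathcal{H}_{\bar v}$, any $e_1\in\mathcal{H}_v$ with $|e_1\cap f|=1$, and any $e_2\in\mathcal{H}_v$ with $e_2\cap e_1=\{v\}$ and $e_2\cap f=\emptyset$, the triple $(e_2,e_1,f)$ is a linear path $\mathbb{P}_3$, because the three pairwise intersections are $\{v\}$, one vertex of $e_1\cap f$, and $\emptyset$. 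A simple averaging yields $\deg_{\mathcal{H}}(v)\ge s|\mathcal{H}|/n = \Theta(n^{s-2})$, and one then shows that once $\mathcal{H}_{\bar v}$ has $\Omega(n^{s-1})$ edges, some choice of $f,e_1,e_2$ as above exists, contradicting $\mathbb{P}_3$-freeness. A double-counting bootstrap then upgrades the starting bound ``$\deg_{\mathcal{H}}(v)=\Theta(n^{s-2})$'' to the desired ``$\deg_{\mathcal{H}}(v)\sim\binom{n-1}{s-1}$'' once $|\mathcal{H}_{\bar v}|$ has been forced to be $o(n^{s-1})$.

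The main technical obstacle is ruling out the degenerate case in which every $f\in\mathcal{H}_{\bar v}$ meets every edge of $\mathcal{H}_v$ in at least two vertices, so that the hypothesis $|e_1\cap f|=1$ cannot be realised directly. The natural workaround, carried out in \cite{FJS} and \cite{JPR}, is a pigeonhole on the vertices of $f$: a vertex $y\in f$ lying in many $e_1\in\mathcal{H}_v$ supplies an edge of the form $e_1=\{v,y\}\cup T$ with $T$ disjoint from $f\setminus\{y\}$, since only $O\bigl(\binom{n-2}{s-3}\bigr)$ of the $e_1$'s through $\{v,y\}$ can hit $f\setminus\{y\}$; this gives $|e_1\cap f|=1$. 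The required $e_2$ is then chosen from the link of $v$ by avoiding the $O\bigl(\binom{n-2}{s-2}\bigr)$ edges through $v$ that touch the $(2s-2)$-set $(e_1\cup f)\setminus\{v\}$. The whole argument hinges on $\deg_{\mathcal{H}}(v)\gg \binom{n-2}{s-2}$, which is exactly what the averaging and bootstrap above provide, and this is the step where the quantitative hypothesis $|\mathcal{H}|\sim \binom{n}{s-1}$ is truly used.
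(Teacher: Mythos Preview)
The paper does not supply its own proof of this statement: Theorem~\ref{stability2} is explicitly quoted from \cite{FJS} (for $s\ge 4$) and \cite{JPR} (for $s=3$) and merely restated ``in a slightly weaker form.'' So there is no in-paper argument to compare against; what matters is whether your sketch stands on its own.

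Your overall plan---pick $v$ of maximum degree, set $G^*$ to be its link, and show $|\mathcal{H}-v|=o(n^{s-1})$ by building a $\mathbb{P}_3$ out of one edge missing $v$ and two edges through $v$---is the right shape and matches what \cite{FJS,JPR} do. The gap is quantitative. Averaging gives only
\[
\deg_{\mathcal H}(v)\ \ge\ \frac{s|\mathcal H|}{n}\ \sim\ \frac{s}{s-1}\binom{n-2}{s-2},
\]
which is a \emph{bounded} multiple of $\binom{n-2}{s-2}$, not $\gg\binom{n-2}{s-2}$. Yet in your last paragraph you need $\deg_{\mathcal H}(v)$ to exceed roughly $(2s-2)\binom{n-2}{s-2}$ in order to choose $e_2$ through $v$ avoiding the $(2s-2)$-set $(e_1\cup f)\setminus\{v\}$. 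The sentence ``the whole argument hinges on $\deg_{\mathcal H}(v)\gg\binom{n-2}{s-2}$, which is exactly what the averaging and bootstrap above provide'' is circular: the bootstrap you describe only kicks in \emph{after} $|\mathcal H_{\bar v}|=o(n^{s-1})$ has already been established, but establishing that is precisely the step that needs the stronger degree bound.

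What actually closes this gap in \cite{FJS} is a preliminary structural step before one ever fixes $v$: one first shows (via a short sunflower/kernel argument using $\mathbb{P}_3$-freeness) that \emph{at most one} vertex can have degree exceeding a fixed multiple of $\binom{n-2}{s-2}$, and that this vertex---if it exists---already has degree $(1-o(1))\binom{n-1}{s-1}$. Equivalently, one argues that any $\mathbb{P}_2$ in $\mathcal H$ forces all remaining edges to hit its $2s-1$ vertices, then counts. Either way, the point is that the large-degree vertex is produced by the $\mathbb{P}_3$-free structure itself, not by averaging. Once you have that, your link construction and the $|\mathcal H-v|=o(n^{s-1})$ conclusion follow exactly as you wrote.
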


Considering the structure of the $(s-1)$-graph $G^*$, we present
the following lemma, which is frequently  used in our proofs.
\begin{lem}\label{lem1}
For fixed  $s
\ge 3$ and  $ k\ge 3$, let $t=\left\lfloor\frac{k-1}{2}\right\rfloor$ and  $\mathcal{H}$ be  an  $n$-vertex  $s$-graph  with  $|\mathcal{H}| \sim t\binom{n}{s-1}$, which
contains  no  $\mathbb{P}_k$, or contains no $\mathbb{C}_k$ when $k\ge 4$. Let $G^*\subset \partial \mathcal{H}$ be the $(s-1)$-graph as defined in Theorem \ref{stability} or \ref{stability2} above.
Given a vertex set $W$ of $d$ vertices in $\mathcal{H}$, where $d$ is a fixed constant.
Then for sufficiently large $n$, there are $\max\{t-1,1\}$ pairs of $(s-1)$-edges in $G^*$,
say $\{a_i,b_i\}$, $i=1,\ldots,t-1$,
such that each of the following holds.

 (i) For every $i$, $a_i$ and $b_i$ have exactly one common vertex (i.e. $|a_i\cap b_i|=1$),

 (ii) for any $i\neq j$,
$a_i\cup b_i$ and $a_j\cup b_j$ are vertex disjoint, and moreover,

 (iii) all these $(s-1)$-edges are disjoint from $W$.
\end{lem}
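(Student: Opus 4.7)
The plan is to pick the $m:=\max\{t-1,1\}$ pairs one at a time in a greedy fashion, exploiting the fact that $G^*$ is a dense $(s-1)$-graph. At each step I maintain a forbidden set $F_i$ of vertices that the next pair must avoid, and then argue that the remaining part of $G^*$ still contains two edges sharing exactly one vertex.

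The setup step uses the stability results at hand (Theorem \ref{stability} when $k\ge 4$, Theorem \ref{stability2} when $k=3$) to record $|G^*|\ge c\binom{n}{s-1}$ for an absolute constant $c>0$; one may take $c=1/2$. Suppose pairs $\{a_j,b_j\}$ satisfying (i)--(iii) have already been chosen for all $j<i$, and set
\[F_i \;=\; W\cup\bigcup_{j<i}(a_j\cup b_j), \qquad |F_i|\le d+(2s-3)(i-1),\]
a bounded constant since $d,s,t$ are fixed. Let $G^*_i\subseteq G^*$ denote the subfamily of edges avoiding $F_i$. Because at most $|F_i|\binom{n-1}{s-2}=O(n^{s-2})$ edges of $G^*$ can meet $F_i$, for $n$ large one has $|G^*_i|\ge\tfrac{c}{2}\binom{n}{s-1}$.

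The main step is to produce two distinct edges $a,b\in G^*_i$ with $|a\cap b|=1$, which will play the role of $a_i,b_i$; iterating yields all $m$ pairs. I argue by contradiction: if every two distinct edges of $G^*_i$ intersect in either $0$ or at least $2$ vertices, then picking a vertex $v$ of maximum degree in $G^*_i$ gives, by averaging,
\[\deg_{G^*_i}(v)\;\ge\;\frac{(s-1)|G^*_i|}{n}\;=\;\Omega(n^{s-2}).\]
The link $L_v:=\{e\setminus\{v\}:\,v\in e\in G^*_i\}$ is an $(s-2)$-uniform family of exactly that size, and for any two distinct $e_1,e_2\in G^*_i$ through $v$ the assumption forces $|e_1\cap e_2|\ge 2$, so $e_1\setminus\{v\}$ and $e_2\setminus\{v\}$ meet. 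Thus $L_v$ is an intersecting family, and for $n$ large the Erd\H{o}s--Ko--Rado theorem gives $|L_v|\le\binom{n-2}{s-3}=O(n^{s-3})$, contradicting the $\Omega(n^{s-2})$ bound.

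The only genuine obstacle is this EKR/intersecting-link argument combined with the density estimate on $G^*_i$, and it is the reason the conclusion requires $n$ to be sufficiently large; the greedy iteration itself is pure bookkeeping. A mild special case is $s=3$, where $L_v$ is a family of singletons so ``intersecting'' forces $|L_v|\le 1$, and the same contradiction goes through directly.
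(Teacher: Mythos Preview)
Your proof is correct and shares the paper's overall greedy framework: iteratively forbid a bounded vertex set $F_i$ and then find two $(s-1)$-edges in $G^*$ avoiding $F_i$ that meet in exactly one vertex. The difference lies in how that substep is justified. The paper simply observes that the number of edges of $G^*$ avoiding $F_i$ exceeds $ex(n,s-1,\mathbb{P}_2)$ (citing Theorem~\ref{turan} for $s\ge 4$ and the Erd\H{o}s--Gallai bound for $s=3$), so a linear path of length two exists immediately. You instead give a direct argument: assuming no such pair exists, the link of a maximum-degree vertex is intersecting, and Erd\H{o}s--Ko--Rado bounds it by $O(n^{s-3})$, contradicting the $\Omega(n^{s-2})$ degree. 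Your route is more self-contained (it avoids importing the exact Tur\'an number for $\mathbb{P}_2$) but trades this for invoking EKR; the paper's route is a one-line citation. Functionally the two are equivalent, since Frankl's $ex(n,s-1,\mathbb{P}_2)=\binom{n-2}{s-3}$ is itself of EKR type.
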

\begin{proof}
The number of $(s-1)$-edges incident with some vertices in $W$ is at
most $|W|\cdot \binom{n - 1}{s - 2}$, so in $G^*$ the number of $(s-1)$-edges disjoint from $W$ is
at least
\begin{equation*}
|G^*|-d\binom{n - 1}{s - 2}>
\left\{
  \begin{array}{ll}
  \binom{n}{s-3}& \hbox{ for $s\ge 4$,} \\[3mm]
  n  & \hbox{  for $s=3$}.
  \end{array}
\right.
\end{equation*}
By Theorem \ref{turan} and Eq.\eqref{eqP}, we get that  $|G^*|-d\binom{n - 1}{s - 2}>ex(n,s-1,\mathbb{P}_2)$  for sufficiently large $n$. So we can find a pair $\{a_1,b_1\}$ of
$(s-1)$-edges with exactly one common vertex.
Since  $$|G^*|-d\binom{n - 1}{s - 2}-(t-1)(2s-3)\binom{n - 1}{s - 2}
> ex(n,s-1,\mathbb{P}_2),$$
we can repeat the argument above to find $\{a_2,b_2\}$,
$\ldots,\{a_{t-1},b_{t-1}\}$ satisfying the properties described in Lemma
\ref{lem1}.
\end{proof}

Given a path $P$, if a vertex $v$  belongs to more than one edge in $P$, we call $v$ a \emph{cross vertex} of $P$, or say $v$ is a $cross(P)$ vertex. If $v$ belongs to exactly one edge  in $P$, we call $v$ a \emph{free vertex} of $P$, or say $v$ is a $free(P)$ vertex.

\section{Short Path--Proof of Theorem \ref{P2}}
(i). Let $\mathcal{H}$ be a complete $s$-uniform hypergraph on $n$ vertices. It is clear that $ar(n,s,\mathbb{P}_{2})\ge 2$. Suppose that there exists a 2-coloring of $\mathcal{H}$ without a rainbow $\mathbb{P}_{2}$. Then there must be two edges $e_1$ and $e_2$ satisfying that the colors of $e_1$ and $e_2$ are different and $|e_1\cap e_2|>1$. Let $u\in e_1\setminus e_2$ and $v\in e_2\setminus e_1$. Consider the edge $e_3$ consisting of $u$, $v$ and $s-2$ vertices in $V(\mathcal{H})\setminus V(e_1\cup e_2)$. Since there is no rainbow $\mathbb{P}_{2}$, $e_3$ can not be colored with either of the two colors, a contradiction. So any 2-coloring of $\mathcal{H}$ admits a rainbow $\mathbb{P}_{2}$.\\

(ii). Let $\mathcal{H}$ be a complete $s$-uniform hypergraph on $n$ vertices. Consider the following coloring of $\mathcal{H}$ with $\binom{n-2}{s-2}+1$ colors. Take two vertices $u$ and $v$ in $\mathcal{H}$, then the  number of edges containing both $u$ and $v$ is $\binom{n-2}{s-2}$. Coloring each of these edges with different colors and the remaining edges of $\mathcal{H}$ with an additional color. We can see that this coloring of $\mathcal{H}$ yields no rainbow $\mathbb{P}_{3}$. Thus, $ar(n,s,\mathbb{P}_{3})\ge\binom{n-2}{s-2}+2$.

To prove that $ar(n,s,\mathbb{P}_{3})\le\binom{n-2}{s-2}+2$, we argue by contradiction. Suppose that there exists a coloring of $\mathcal{H}$ without a rainbow $\mathbb{P}_{3}$, which uses $\binom{n-2}{s-2}+2$ colors. Let $\mathcal{G}$ be a spanning subgraph of $\mathcal{H}$ with $\binom{n-2}{s-2}+2$ edges such that each color appears on exactly one edge of $\mathcal{G}$. Since $|\mathcal{G}|=\binom{n-2}{s-2}+2>ex(n,s,\mathbb{P}_{2})$ for sufficiently large $n$, there is a linear path $P$ of length two with edges $e_1$ colored by $\alpha_1$ and $e_2$ colored by $\alpha_2$ in $\mathcal{G}$. Let $v$ be the common vertex of $e_1$ and $e_2$. Since  $\mathcal{H}$ contains no  rainbow $\mathbb{P}_{3}$, any edge which contains only one vertex from $(e_1 \cup e_2)\setminus \{v\}$,  must be colored with $\alpha_1$ or $\alpha_2$ in $\mathcal{H}$.

Denote by $\mathcal{F}$ the subgraph obtained by deleting $e_1$ and $e_2$ from $\mathcal{G}$. If there is a linear path $P'$ of length two with edges $f_1$ and $f_2$ in $\mathcal{F}$, let us say the colors of $f_1$ and $f_2$ are $\beta_1$ and $\beta_2$, respectively. If  $f_1$ or $f_2$ contains a $free (P)$ vertex $w$ of $e_1 \cup e_2$ and $w$ is not a $cross(P')$ vertex in $f_1 \cup f_2$, then the edge consisting of $w$ and some $s-1$ vertices in $V(\mathcal{H})\setminus V(e_1 \cup e_2\cup f_1 \cup f_2)$, along with $f_1$ and $f_2$ form a  rainbow $\mathbb{P}_3$. Suppose $f_1 \cup f_2$ contains exactly one  $free (P)$ vertex $w$ of $e_1 \cup e_2$ and $w$ is the $cross(P')$ vertex. Take an edge $e_3$ consisting of a $free (P)$ vertex $x\neq w$ of $e_1$, a $free (P')$ vertex $y$ of $f_1$ and $s-2$ vertices of $V(\mathcal{H})\setminus V(e_1 \cup e_2\cup f_1 \cup f_2)$, then the color of $e_3$ is either $\alpha_1$ or $\alpha_2$. Hence the path with edges $e_3$, $f_1$ and $f_2$ is a  rainbow ${\mathbb{P}_3}$. If $f_1 \cup f_2$ contains no  $free (P)$ vertex  of $e_1 \cup e_2$, then the edge $e_4$ formed by a $free (P)$ vertex $x$ of $e_1$, a $free (P')$ vertex $y$ of $f_1$ and $s-2$ vertices of $V(\mathcal{H})\setminus V(e_1 \cup e_2\cup f_1 \cup f_2)$, must be colored with $\alpha_1$ or $\alpha_2$. So the path with edges $e_4$, $f_1$ and $f_2$ is a  rainbow ${\mathbb{P}_3}$, a contradiction. Therefore, we can assume that there is no ${\mathbb{P}_2}$ in $\mathcal{F}$. By Theorem \ref{turan}, $\mathcal{F}$ consists of all the $\binom{n-2}{s-2}$ edges containing two fixed vertices $x$ and $y$. Note that $\{x,y\}\nsubseteq e_1$, and $\{x,y\}\nsubseteq e_2$ since $e_1,e_2\notin \mathcal{F}$.

We divide our discussion into the following cases depending on the relationship  between vertices $x$, $y$ and edges $e_1$, $e_2$.

\vspace{0.1cm}
\noindent{\em Case $1$.} $x$, $y\notin e_1 \cup e_2$.

Consider the edge $e$ consisting of $x$, $y$, a $free (P)$ vertex  of $e_1 \cup e_2$, and $s-3$ vertices in $V(\mathcal{H})\setminus V(e_1 \cup e_2\cup \{x,y\})$. Then, by the structure of ${\mathcal F}$, we have $e\in \mathcal{F}$, and thus $e$ has a different color with $\alpha_1$ and $\alpha_2$. Therefore, we find a rainbow ${\mathbb{P} _3}$ with edges $e, e_1$ and $e_2$ in $\mathcal{H}$.

\vspace{0.1cm}
\noindent{\em Case $2$.} $x\in  e_1 \cup e_2$, $y\notin  e_1 \cup e_2$, and $x$ is not the $cross (P)$ vertex  in $e_1 \cup e_2$.

The edge $e$, which consists of $x$, $y$ and $s-2$ vertices in $V(\mathcal{H})\setminus V(e_1 \cup e_2\cup \{x,y\})$, has a different color with $\alpha_1$ and $\alpha_2$. Hence, $e$, $e_1$, $e_2$ form a rainbow ${\mathbb{P}_3}$ in $\mathcal{H}$. Note that if $y\in  e_1 \cup e_2$, $x\notin  e_1 \cup e_2$, and $y$ is not the $cross (P)$ vertex  in $e_1 \cup e_2$, we can also find a rainbow ${\mathbb{P}_3}$ in $\mathcal{H}$ similarly.

\vspace{0.1cm}
\noindent{\em Case $3$.} $x$ is a $cross (P)$ vertex  in $e_1 \cup e_2$, $y\notin  e_1 \cup e_2$.

Let $e$ be an edge
with a $free (P)$ vertex  in $e_1$ and $s-1$ vertices in $V(\mathcal{H})\setminus V(e_1 \cup e_2\cup \{x,y\})$. If $e$ has color $\alpha_2$, then we have a rainbow ${\mathbb{P}_2}$  with edges $e$ and $e_1$. Similar to Case 2, we can find a rainbow ${\mathbb{P}_3}$ in $\mathcal{H}$. So the color of $e$ is $\alpha_1$.
Pick an edge $e'$ consisting of $x$, $y$, a vertex $z$ in $e\setminus e_1$ and $s-3$ vertices in $V(\mathcal{H})\setminus V(e_1 \cup e_2\cup \{x,y,z\})$, then $e$, $e'$, $e_2$ form a rainbow ${\mathbb{P}_3}$ in $\mathcal{H}$. And by symmetry, if $y$ is a $cross (P)$ vertex  in $e_1 \cup e_2$ and $x\notin  e_1 \cup e_2$, we can find a rainbow ${\mathbb{P}_3}$ in $\mathcal{H}$ as well.

\vspace{0.1cm}
\noindent{\em Case $4$.}  $x\in e_1\setminus e_2$, $y\in e_2\setminus e_1$.

Take an edge $e$ consisting of  a $free (P)$ vertex $w\neq x$ in $e_1$, and $s-1$ vertices in $V(\mathcal{H})\setminus V(e_1 \cup e_2)$, the color of $e$ is $\alpha_1$
or $\alpha_2$. If the color of $e$ is $\alpha_1$, then the edge $e'$ consisting of $w$, $x$, $y$ and $s-3$ vertices of $V(\mathcal{H})\setminus V(e_1 \cup e_2\cup e \cup \{x,y\})$, along with $e$ and $e_2$ form a rainbow ${\mathbb{P}_3}$. If the color of $e$ is $\alpha_2$, then the edge $e''$ consisting of $x$, $y$ and $s-2$ vertices of $V(\mathcal{H})\setminus V(e_1 \cup e_2\cup e \cup \{x,y\})$, along with $e_1$ and $e$ form a rainbow ${\mathbb{P}_3}$.

 We have examined all the cases in the above discussion. In conclusion, any coloring of $\mathcal{H}$ with $\binom{n-2}{s-2}+2$ colors admits a  rainbow ${\mathbb{P}_3}$. Hence, we have that $ar(n,s,\mathbb{P}_{3})=\binom{n-2}{s-2}+2$.\\

(iii). Since $ar(n,s,\mathcal{B}_{2})\le ar(n,s,\mathcal{P}_{2})\le ar(n,s,\mathbb{P}_{2})$, we can obtain  that $ar(n,s,\mathcal{B}_{2})= ar(n,s,\mathcal{P}_{2})=2$ for $n\ge 3s-4$.\\

(iv).
Let $\mathcal{H}$ be a complete $s$-uniform hypergraph on $n$ vertices. Consider a 3-coloring of $\mathcal{H}$ such that there is no rainbow $\mathcal{P}_{3}$ in $\mathcal{H}$.  Since $ar(n,s,\mathcal{P}_{2})=2<3$ by (iii), there is a rainbow loose path $P$ of length 2 with edges $e_1$ and $e_2$, colored by, say, $\alpha_1$ and $\alpha_2$. Suppose that the number of $free(P)$ vertices in $e_1$ is $a$, so the number of $free(P)$ vertices in $e_2$ is equal to $a$.  Let
\begin{equation*}
p=
\left\{
  \begin{array}{ll}
  \lfloor s/2\rfloor & \hbox{  if $s-a>\lfloor s/2\rfloor$,} \\[2mm]
  s-a & \hbox{  if $s-a\le\lfloor s/2\rfloor$}.
  \end{array}
\right.
\end{equation*}
 Assume that there is an edge $f$ with color $\alpha_3\notin \{\alpha_1,\alpha_2\}$ such that $f\cap (e_1\cup e_2)=\emptyset$. Consider an edge $e$ consisting of all the $free(P)$ vertices in $e_1$,
 $p$ vertices in $f$, and $s-a-p$ vertices in $V(\mathcal{H})\setminus V(e_1\cup e_2\cup f)$. Note that the color of $e$ is either $\alpha_1$ or $\alpha_2$.
 If $e$ is colored with $\alpha_2$, then $e_1,e,f$ is a rainbow $\mathcal{P}_{3}$. So $e$ can only be colored with $\alpha_1$. Similarly, let
 \begin{equation*}
q=
\left\{
  \begin{array}{ll}
 \lceil s/2\rceil & \hbox{  if $s-a>\lceil s/2\rceil$,} \\[2mm]
  s-a & \hbox{  if $s-a\le\lceil s/2\rceil$}.
  \end{array}
\right.
\end{equation*}
Consider the edge $e'$ consisting of  all the $free(P)$ vertices in $e_2$,
 $q$ vertices in $V(f)\setminus V(e)$, and $s-a-q$ vertices in $V(\mathcal{H})\setminus V(e_1\cup e_2\cup f\cup e)$,
then $e'$ is colored with $\alpha_2$. Thus, $e,f,e'$ is a rainbow $\mathcal{P}_{3}$.

So each of the edges colored with $\alpha_3$ contains vertices in $e_1\cup e_2$. Take an edge $h$ with color $\alpha_3$. Note that $h\cap e_1\neq\emptyset$ and $h\cap e_2\neq\emptyset$.

\vspace{0.1cm}
\noindent Case 1. Either $e_1$ or $e_2$ contains a $free(P)$ vertex not belonging to $h$.

Without loss of generality, suppose that there are $b$ $free(P)$ vertices in $e_1\setminus h$, where $b\ge1$. Take an edge $e$ with $b$ $free(P)$ vertices in $e_1\setminus h$ and $s-b$ vertices in $V(\mathcal{H})\setminus V(e_1\cup e_2\cup h)$. Then $e$ is colored with $\alpha_1$ or $\alpha_2$. If $e$ is colored with $\alpha_2$, then $e$, $e_1$, $h$ form a rainbow $\mathcal{P}_3$. So $e$ is colored with $\alpha_1$. Then we have a rainbow $\mathcal{P}_2$ with edges $h$ and $e_2$, which are colored with $\alpha_3$ and $\alpha_2$, respectively. And we have an edge $e$ colored with $\alpha_1$ and $e\cap (h\cup e_2)=\emptyset$. It is the same situation as we analysed before, we can also find a rainbow $\mathcal{P}_3$ in $\mathcal{H}$.

\vspace{0.1cm}
\noindent Case 2. All the $free(P)$ vertices in $e_1\cup e_2$ belong to $h$.

Recall that there are $a$ $free(P)$ vertices in $e_1$. Take an edge $e$ consisting of $a$ $free(P)$ vertices in $e_1$ and $s-a$ vertices in $V(\mathcal{H})\setminus V(e_1\cup e_2\cup h)$. If $e$ is colored with $\alpha_2$, then we have a loose path $P'$ of length two with edges $e$ and $e_1$, which are colored with $\alpha_2$ and $\alpha_1$, respectively, and $e$ contains at least one $free(P')$ vertex not belonging to $h$. It is just similar to Case 1, in which we can  find a rainbow $\mathcal{P}_3$ in $\mathcal{H}$. Thus $e$ is colored with $\alpha_1$. Similarly, take an edge $e'$ consisting of $a$ $free(P)$ vertices in $e_2$ and $s-a$ vertices $V(\mathcal{H})\setminus V(e_1\cup e_2\cup h\cup e)$, we can obtain that $e'$ is colored with $\alpha_2$. { Now, there is a rainbow $\mathcal{P}_3$ consisting of  edges $e$, $h$ and $e'$.}  

Therefore, $ar(n,s,\mathcal{P}_{3})\le3$, for $n\ge 4s-3$. Since $ar(n,s,\mathcal{P}_{3})\ge3$ trivially holds, we have that $ar(n,s,\mathcal{P}_{3})=3$ for $n\ge 4s-3$.

Since $ar(n,s,\mathcal{B}_{3})\le ar(n,s,\mathcal{P}_{3})$, we  obtain  that $ar(n,s,\mathcal{B}_{3})= ar(n,s,\mathcal{P}_{3})=3$ for $n\ge 4s-3$.\qed

\vspace{3mm}

\section{Linear Path--Proof of Theorem \ref{th1}}
Let $\mathcal{H}$ be a complete $s$-uniform hypergraph on $n$ vertices.
For  the lower bounds, we construct a coloring of $\mathcal{H}$ by using the extreme $s$-graphs in Theorem \ref{turan}.

  \begin{prop}\label{Prop:linearlow}
    (a) For $k=2t$, we have $$\min\{ar(n,s,\mathbb{P}_{2t}), ar(n,s,\mathbb{C}_{2t})\}\ge { \binom{n}{s}-\binom{n-t+1}{s}+2}.$$

    (b) For $k=2t+1$, we have $$\min\{ar(n,s,\mathbb{P}_{2t+1}), ar(n,s,\mathbb{C}_{2t+1})\} \ge \binom{n}{s}-\binom{n-t+1}{s}+ \binom{n-t-1}{s-2}+2.$$
  \end{prop}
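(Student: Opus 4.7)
The plan is to prove both lower bounds by constructing, in each case, an explicit edge-coloring of the complete $s$-uniform hypergraph on $n$ vertices that uses one fewer than the claimed number of colors and contains neither a rainbow $\mathbb{P}_k$ nor a rainbow $\mathbb{C}_k$. The colorings are modelled on the extremal families recalled in Theorems \ref{turanloose} and \ref{turan}: every edge of a distinguished ``large'' family receives its own private color, while one extra ``background'' color is used on all remaining edges. The single structural fact driving both arguments is that in a linear path or linear cycle every vertex lies in at most two edges, and any two distinct edges share at most one vertex.

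For part (a), with $k=2t$, I would fix a set $S\subset[n]$ of size $t-1$, assign a private color to each of the $\binom{n}{s}-\binom{n-t+1}{s}$ edges that meet $S$, and use one extra color on every edge disjoint from $S$. This yields $\binom{n}{s}-\binom{n-t+1}{s}+1$ colors. Any rainbow copy of $\mathbb{P}_{2t}$ or $\mathbb{C}_{2t}$ would use $2t$ distinct colors, so at most one of its $2t$ edges could carry the background color; hence at least $2t-1$ of its edges must meet $S$. But the $t-1$ vertices of $S$ each lie in at most two edges of the path or cycle, allowing at most $2(t-1)=2t-2$ of the edges to meet $S$, a contradiction.

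For part (b), with $k=2t+1$, I would fix $S\subset[n]$ of size $t-1$ together with two further vertices $x,y\notin S$. Give a private color to each edge meeting $S$ and to each $s$-edge contained in $[n]\setminus S$ that contains both $x$ and $y$; paint all remaining edges with one extra color. By Theorem \ref{turan} applied to $\mathbb{P}_{2t}$, this distinguished family has exactly $\binom{n}{s}-\binom{n-t+1}{s}+\binom{n-t-1}{s-2}$ edges, so the construction uses one more color than that. In a hypothetical rainbow $\mathbb{P}_{2t+1}$ or $\mathbb{C}_{2t+1}$, at least $2t$ of the $2t+1$ edges must lie in the distinguished family; write $a$ for the number meeting $S$ and $b$ for the number containing $\{x,y\}$, so that $a+b\ge 2t$. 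The degree bound gives $a\le 2(t-1)=2t-2$, while linearity forces $b\le 1$, because two distinct edges both containing $\{x,y\}$ would share at least two vertices. Hence $a+b\le 2t-1$, the desired contradiction.

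I do not anticipate a substantive obstacle: the whole argument reduces to the ``at most two edges through each vertex'' and ``at most one shared vertex between distinct edges'' properties of \emph{linear} paths and cycles, together with the Tur\'an count formulas. The only minor check is that for large $n$ the background color is actually used, so that the coloring really has the stated number of colors, which is clear since many edges avoid both $S$ and the pair $\{x,y\}$.
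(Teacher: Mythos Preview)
Your proposal is correct and follows essentially the same approach as the paper: both construct colorings from the extremal $\mathbb{P}_{k-1}$-free families of Theorem~\ref{turan}, giving private colors to the distinguished edges and one background color to the rest, and then argue that a rainbow $\mathbb{P}_k$ or $\mathbb{C}_k$ would force too many edges through $S$ (and, in the odd case, through $\{x,y\}$). Your write-up for part~(b) actually supplies the ``$a\le 2(t-1)$, $b\le 1$'' count that the paper leaves as a routine check.
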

\begin{proof}
  (a) If $k=2t$, we pick a vertex set $S$ with $t-1$ vertices. Take all the edges that meet $S$ and color each of these edges with different colors. Then color the remaining edges of $\mathcal{H}$ with one additional color. This gives a coloring of $\mathcal{H}$ with $\binom{n}{s}-\binom{n-t+1}{s}+1$ colors. Since each vertex is contained in at most two edges of a rainbow linear path and a rainbow linear cycle,   it is easy to see that any rainbow linear path or rainbow linear cycle  in $\mathcal{H}$ has length at most $2(t-1)+1<2t$. So we have $ar(n,s,\mathbb{P}_{2t})\ge { \binom{n}{s}-\binom{n-t+1}{s}+2}$ and $ar(n,s,\mathbb{C}_{2t})\ge { \binom{n}{s}-\binom{n-t+1}{s}+2}$.

  (b) If $k=2t+1$, we pick a copy of the extreme $\mathbb{P}_{2t}$-free graph obtained in Theorem \ref{turan}. Then color each edge of this extreme $\mathbb{P}_{2t}$-free graph with a distinct color, and color the remaining edges of $\mathcal{H}$ with one additional color to obtain a coloring of $\mathcal{H}$ with $\binom{n}{s}-\binom{n-t+1}{s}+ \binom{n-t-1}{s-2}+1$ colors. It is routine to check that there is no rainbow $\mathbb{P}_{2t+1}$ and no rainbow $\mathbb{C}_{2t+1}$ in the above coloring, and thus  $ar(n,s,\mathbb{P}_{2t+1}) \ge \binom{n}{s}-\binom{n-t+1}{s}+ \binom{n-t-1}{s-2}+2$ and $ar(n,s,\mathbb{C}_{2t+1}) \ge \binom{n}{s}-\binom{n-t+1}{s}+ \binom{n-t-1}{s-2}+2$.
\end{proof}

%

\vspace{1cm}

For the upper bounds, let
\begin{equation*}
D=
\left\{
  \begin{array}{ll}
  \binom{n}{s}-\binom{n-t+1}{s}+2, & \hbox{  if $k = 2t$,} \\[3mm]
  \binom{n}{s}-\binom{n-t+1}{s}+ \binom{n-t-1}{s-2}+2,  & \hbox{  if $k=2t+1$}.
  \end{array}
\right.
\end{equation*}
We argue by contradiction and suppose that there is  a coloring of $\mathcal{H}$ using $D$ colors  yielding no rainbow $\mathbb{P}_{k}$.
Let $\mathcal{G}$ be a spanning subgraph of $\mathcal{H}$ with $D$ edges such that each color appears on exactly one edge
of $\mathcal{G}$. By Theorem \ref{turan}, we obtain that there is a linear path $P$ of length $k-1$ in $\mathcal{G}$. Denote by $e_1, e_2,\ldots, e_{k-1}$ the edges of $P$, and $\alpha_1, \alpha_2,\ldots, \alpha_{k-1}$ the colors of $e_1, e_2,\ldots, e_{k-1}$, respectively.

Since  $\mathcal{H}$ contains no  rainbow ${\mathbb{P}}_{k}$, we obtain the following fact.
\begin{ob}\label{freePcolorsame}
 Let $v$ be a  $free(P)$ vertex in $e_1 \cup e_{k-1}$. Then for any edge $g$ satisfying $g\cap P=\{v\}$, the edge $g$ must be colored with a color of $\{\alpha_1, \alpha_2,\ldots, \alpha_{k-1}\}$.
\end{ob}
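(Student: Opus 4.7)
The plan is to prove Observation \ref{freePcolorsame} by contradiction. Assume for contradiction that there exists an edge $g$ of $\mathcal{H}$ with $g\cap V(P)=\{v\}$ whose color $\beta$ lies outside $\{\alpha_1,\alpha_2,\ldots,\alpha_{k-1}\}$. Without loss of generality, suppose $v$ is a $free(P)$ vertex of $e_1$ (the case $v\in e_{k-1}$ is symmetric, reversing the ordering of the path).

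The key step is to verify that the sequence of edges $g, e_1, e_2, \ldots, e_{k-1}$ forms a linear path of length $k$ in $\mathcal{H}$, which by construction would be rainbow and therefore contradict our standing assumption that the given coloring contains no rainbow $\mathbb{P}_k$. Two things need checking. First, consecutive edges must intersect in exactly one vertex: the intersection $g\cap e_1$ equals $\{v\}$, because $g\cap V(P)=\{v\}$ and $v\in e_1$, while the intersections $e_i\cap e_{i+1}$ each contain exactly one vertex since $P=e_1e_2\cdots e_{k-1}$ is itself linear. Second, nonconsecutive edges must be disjoint: the pairs $e_i, e_j$ with $|i-j|\ge 2$ are already disjoint by linearity of $P$, and for any $i\ge 2$ we have $g\cap e_i\subseteq g\cap V(P)=\{v\}$; since $v$ is $free(P)$, the vertex $v$ belongs only to $e_1$ among the edges of $P$, so $v\notin e_i$ and hence $g\cap e_i=\emptyset$.

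Together with the hypothesis that the colors $\alpha_1,\ldots,\alpha_{k-1}$ are pairwise distinct and $\beta\notin\{\alpha_1,\ldots,\alpha_{k-1}\}$, we conclude that $g,e_1,\ldots,e_{k-1}$ is a rainbow linear path of length $k$, contradicting the assumption that no rainbow $\mathbb{P}_k$ exists in $\mathcal{H}$. Hence the color of $g$ must belong to $\{\alpha_1,\alpha_2,\ldots,\alpha_{k-1}\}$.

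There is no substantive obstacle in this argument; it is essentially a direct unpacking of the definitions of a linear path, of a $free(P)$ vertex, and of the rainbow property, combined with the standing hypothesis that $P$ is a maximum-length rainbow linear path available in $\mathcal{G}$. The observation will serve as the workhorse in the next section, where it lets us force newly constructed edges incident to the endpoints of $P$ to reuse colors already present on $P$, and thereby derive structural information about the color classes outside $P$.
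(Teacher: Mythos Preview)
Your proof is correct and is precisely the argument the paper has in mind: the observation is stated immediately after the sentence ``Since $\mathcal{H}$ contains no rainbow $\mathbb{P}_k$, we obtain the following fact,'' and your contradiction argument---appending $g$ to the appropriate end of $P$ to produce a rainbow $\mathbb{P}_k$---is exactly that implicit reasoning made explicit. One minor quibble: the standing hypothesis is that the coloring admits no rainbow $\mathbb{P}_k$, not that $P$ is maximum-length, but you already invoke the correct contradiction earlier, so this does not affect the argument.
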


Denote by $\mathcal{F}$ the subgraph obtained by deleting $e_1, e_2,\ldots, e_{k-1}$ from $\mathcal{G}$.
We divide the remaining proof into two cases according to the parity of $k$.

\subsection{Completing the proof when $k=2t$ is even}
In this subsection, we assume that $k=2t\ge 4$ is even.

\begin{claim}\label{noPk-1}
When $k=2t\ge 4$, there is no ${\mathbb{P}}_{k-1}$ in  $\mathcal{F}$.
\end{claim}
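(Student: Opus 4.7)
\emph{Proof plan.} Suppose for contradiction that $P'=f_1 f_2 \cdots f_{k-1}$ is a linear path of length $k-1$ in $\mathcal{F}$, and write $\beta_1,\ldots,\beta_{k-1}$ for the (distinct) colors of its edges. Since $\mathcal{G}$ is rainbow and $P'$ is edge-disjoint from $P$, the color sets $\{\alpha_1,\ldots,\alpha_{k-1}\}$ and $\{\beta_1,\ldots,\beta_{k-1}\}$ are disjoint. The goal is to produce a single auxiliary edge $h\in E(\mathcal{H})$ whose color is forced to lie in $\{\alpha_i\}\cap\{\beta_j\}=\emptyset$, contradicting the assumption that $\mathcal{H}$ has no rainbow $\mathbb{P}_k$.

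The first step is to record a mirror analogue of Observation~\ref{freePcolorsame} applied to $P'$: for any edge $g\in E(\mathcal{H})$ with $g\cap V(P')=\{w\}$ and $w$ a free vertex in $f_1\cup f_{k-1}$, the color of $g$ must lie in $\{\beta_1,\ldots,\beta_{k-1}\}$; otherwise $g,f_1,\ldots,f_{k-1}$ would be a rainbow $\mathbb{P}_k$. Combined with Observation~\ref{freePcolorsame}, this gives two simultaneous constraints: any $\mathcal{H}$-edge meeting $V(P)$ exclusively in a free vertex of $e_1\cup e_{k-1}$ is colored in $\{\alpha_i\}$, while any $\mathcal{H}$-edge meeting $V(P')$ exclusively in a free vertex of $f_1\cup f_{k-1}$ is colored in $\{\beta_j\}$.

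The core of the proof is a connector construction. Because $n$ is large while $|V(P)\cup V(P')|$ is bounded in terms of $s$ and $k$, I would pick a free vertex $v\in (e_1\cup e_{k-1})\setminus V(P')$, a free vertex $w\in (f_1\cup f_{k-1})\setminus V(P)$, and an $(s-2)$-subset $S\subseteq V(\mathcal{H})\setminus(V(P)\cup V(P'))$, and form the edge $h=\{v,w\}\cup S$ in $\mathcal{H}$. By construction $h\cap V(P)=\{v\}$ and $h\cap V(P')=\{w\}$, so the two rules from the first step simultaneously force the color of $h$ into $\{\alpha_i\}$ and into $\{\beta_j\}$, which is impossible.

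The main obstacle is the degenerate possibility that $v$ or $w$ fails to exist; equivalently, that all $2(s-1)$ free end-vertices of one of $P,P'$ are absorbed into the vertex set of the other. To handle this I would invoke a path-extension/rotation argument in the spirit of Lemma~\ref{lem1}: either exchange an end-edge of $P$ for another $s$-edge (built from the auxiliary $(s-1)$-edge pairs supplied by the stability framework) whose free vertices escape $V(P')$, or locate an edge $f_j\in P'$ essentially disjoint from $V(P)$ and rerun the connector argument with a free vertex of $f_j$ in place of $w$. After either modification the generic situation above applies and yields the forbidden rainbow $\mathbb{P}_k$, completing the proof.
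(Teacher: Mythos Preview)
Your connector idea---a single edge $h=\{v,w\}\cup S$ meeting $P$ only in a free end-vertex $v$ and meeting $P'$ only in a free end-vertex $w$, so that Observation~\ref{freePcolorsame} and its mirror force the color of $h$ into $\{\alpha_i\}\cap\{\beta_j\}=\emptyset$---is clean and correct when such $v,w$ exist. The problem is your treatment of the degenerate case where one of them does not.

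Both proposed fixes are broken. First, the ``stability framework'' you invoke (the $(s-1)$-graph $G^*$, the set $L$, and Lemma~\ref{lem1}) is applied to $\mathcal{F}$ only \emph{after} this claim is established: Theorems~\ref{stability} and~\ref{stability2} require $\mathcal{F}$ to be $\mathbb{P}_{k-1}$-free, which is precisely what you are trying to prove. So exchanging an end-edge of $P$ for an $s$-edge ``built from the auxiliary $(s-1)$-edge pairs supplied by the stability framework'' is circular. Second, picking a middle edge $f_j$ of $P'$ and using a free vertex of $f_j$ as a new $w$ does not work, because Observation~\ref{freePcolorsame} only constrains edges meeting $P'$ in a free vertex of an \emph{end} edge $f_1\cup f_{k-1}$; a free vertex of some interior $f_j$ carries no such color restriction.

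The paper sidesteps the degenerate case entirely with a different idea: rather than a single connector, it finds a third rainbow linear path $P''$ of length $k-2$ inside $\mathcal{F}$ after deleting $f_1,\ldots,f_{k-1}$ and all edges with at least two vertices in $V(P)\cup V(P')$. A counting argument shows enough edges remain to exceed $ex(n,s,\mathbb{P}_{k-2})$, and by construction each edge of $P''$ meets $V(P)\cup V(P')$ in at most one vertex, so $P''$ avoids all free end-vertices of $P$ and of $P'$. One then attaches a connector $e$ (colored in $\{\alpha_i\}$) at one end of $P''$ through a free end-vertex of $P$, and a connector $e'$ (colored in $\{\beta_j\}$) at the other end through a free end-vertex of $P'$; since the colors on $P''$ are disjoint from both $\{\alpha_i\}$ and $\{\beta_j\}$, the path $e,P'',e'$ is a rainbow $\mathbb{P}_k$. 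The auxiliary path $P''$ is what supplies the ``fresh'' endpoints that your single-edge construction cannot guarantee.
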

\begin{proof}
By contradiction,
 suppose there is a linear path $P'$ of length $k-1$ in  $\mathcal{F}$. Denote the edges of $P'$ by $f_1, f_2,\ldots, f_{k-1}$ with colors $\beta_1, \beta_2,\ldots, \beta_{k-1}$, respectively. Since there is no rainbow ${\mathbb{P}}_{k}$ in $\mathcal{H}$, every edge $g$ with $g\cap V(P')=\{u\}$, where $u$ is a  $free(P')$ vertex in $f_1 \cup f_{k-1}$,  must be colored with a color of $\{\beta_1, \beta_2,\ldots, \beta_{k-1}\}$.
We obtain an $s$-graph $\mathcal{F}_e$ by deleting $f_1, f_2,\ldots, f_{k-1}$ and all the edges containing at least two vertices of $P\cup P'$ from $\mathcal{F}$. Let $c$ denote the number of vertices of $P\cup P'$. Then $c\le 2[(k-1)s-(k-2)]$, and so we have $$|\mathcal{F}_e|\ge|\mathcal{F}|-(k-1)-\sum\limits_{i = 2}^s {\binom{c}{i}\binom{n-c}{s-i}}>ex(n,s,\mathbb{P}_{k-2}),$$
for  sufficiently large $n$.  Thus, we have a linear path $P''$ of length $k-2$ in $\mathcal{F}_e$. Denote by $h_1, h_2,\ldots, h_{k-2}$ the edges of $P''$. Moreover, every edge in $P''$ contains at most one vertex from $P\cup P'$. So it follows from Observation \ref{freePcolorsame} that  $P''$ contains no $free(P)$ vertex of $e_1$, $e_{k-1}$, and no  $free(P')$ vertex of $f_1$, $f_{k-1}$. Take an edge $e$ consisting of a $free(P)$ vertex $x$ of $e_1$, a $free(P'')$ vertex of $h_1\setminus V(P\cup P')$ (since $s\ge3$, such vertex does exist), and $s-2$ vertices in $V(\mathcal{H})\setminus V (P\cup P'\cup P'')$, then $e$ is colored with one color in $\{\alpha_1, \alpha_2,\ldots, \alpha_{k-1}\}$ by Observation \ref{freePcolorsame}. Take another edge $e'$ consisting of a $free(P')$ vertex of $f_1\setminus \{x\}$, a $free(P'')$ vertex of $h_{k-2}\setminus V(P\cup P')$ and $s-2$ vertices in $V(\mathcal{H})\setminus (P\cup P'\cup P''\cup e)$, then Observation \ref{freePcolorsame} indicates $e'$ is colored with one color in $\{\beta_1, \beta_2,\ldots, \beta_{k-1}\}$. Hence the path with edges $e,h_1,h_2,\ldots,h_{k-2},e'$ is a rainbow $\mathbb{P}_{k}$, a contradiction. This proves the claim.
\end{proof}

Note that $|\mathcal{F}|\sim (t-1)\binom{n}{s-1}$. By Claim \ref{noPk-1},  Theorems \ref{stability} and \ref{stability2} are applied to $\mathcal{F}$. So we can find an $(s-1)$-graph  $G^*\subset \partial \mathcal{F}$ with  $|G^*|  \sim \binom{n}{s-1}$ for $k\ge 6$ and with $|G^*|  \ge \frac{1}{2}\binom{n}{s-1}$ for $k= 4$, and there is a vertex set $L=\{v_1,v_2,\ldots,v_{t-1}\}$ such that $L\cap V(G^*)=\emptyset$ and $e\cup \{v\}\in \mathcal{F}$ for any $(s-1)$-edge $e\in G^*$  and any $v\in L$.  Moreover,
$|\mathcal{F }- L|= o(n^{s-1})$. We point out that all the vertices of $L$ are not $free(P)$ vertices in $e_1\cup e_{k-1}$.
Otherwise, let $W$ be the vertex set of $P$. By Lemma \ref{lem1}, we can find an $(s-1)$-edge disjoint with $W$ in $G^*$, and it gives an $s$-edge  in $\mathcal{F }$ containing only a $free(P)$ vertex of $P$. This  edge together with $P$ form a rainbow  $\mathbb{P}_{k}$, a contradiction.

\begin{claim}\label{noPk-1}
When $k=2t\ge 4$, there is no  edge in $\mathcal{F}-L$.
\end{claim}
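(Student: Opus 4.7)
The plan is to derive a contradiction with the previously established claim that $\mathcal{F}$ contains no $\mathbb{P}_{k-1}$, by constructing a linear path of length $k-1=2t-1$ inside $\mathcal{F}$ that uses the putative edge $f$. Write $L=\{v_1,\dots,v_{t-1}\}$. First, apply Lemma \ref{lem1} to $\mathcal{F}$ with the fixed vertex set $W=V(P)\cup V(f)\cup L$ to obtain $t-1$ pairwise vertex-disjoint pairs $\{a_i,b_i\}$ in $G^*$ with $|a_i\cap b_i|=1$ and all disjoint from $W$. Next, using $|G^*|\sim\binom{n}{s-1}$ (or $|G^*|\ge\frac{1}{2}\binom{n}{s-1}$ in the $k=4$ case), extract two further $(s-1)$-edges $x_1,x_{2t-2}\in G^*$ that are disjoint from each other, from every $a_i\cup b_i$, and from $W\setminus V(f)$, such that $|x_1\cap f|=1$ and $x_{2t-2}\cap f=\emptyset$.

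Setting $x_{2j}=a_j$ and $x_{2j+1}=b_j$ for $j=1,\dots,t-2$, the candidate path is
$$f,\; x_1\cup\{v_1\},\; x_2\cup\{v_1\},\; x_3\cup\{v_2\},\; x_4\cup\{v_2\},\; \dots,\; x_{2t-3}\cup\{v_{t-1}\},\; x_{2t-2}\cup\{v_{t-1}\}.$$
Each $v_j\in L$ appears in two consecutive positions whose two $(s-1)$-parts are disjoint, so the corresponding \emph{matching pair} has intersection exactly $\{v_j\}$; each \emph{bridge} between consecutive matching pairs combines a cherry $(a_j,b_j)$ with distinct $v$'s, giving intersection equal to the single vertex of $a_j\cap b_j$. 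The pairwise disjointness of the $a_i\cup b_i$ supplied by Lemma \ref{lem1} together with the disjointness imposed on $x_1,x_{2t-2}$ yields the non-consecutive disjointness, while $f$ meets the first subsequent edge only in $x_1\cap f$ (one vertex) because $v_1\notin f$. Since each $x_i\cup\{v\}$ lies in $\mathcal{F}$ by the defining property of $G^*$ and $L$, the construction is a $\mathbb{P}_{k-1}$ in $\mathcal{F}$, contradicting the previously established claim.

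The main obstacle I expect is the second step, securing an $(s-1)$-edge $x_1\in G^*$ with $|x_1\cap f|=1$: the density $|G^*|\sim\binom{n}{s-1}$ alone does not a priori guarantee that any vertex of $f$ has positive degree in $G^*$. A natural way around this is a case split: either some $y\in f$ has positive $G^*$-degree, in which case an $(s-1)$-edge of $G^*$ through $y$ avoiding $f\setminus\{y\}$ yields $x_1$ by a standard counting argument; or every vertex of $f$ is isolated in $G^*$, in which case $f$ lies in the sparse remainder $\mathcal{F}-L$ of size $o(n^{s-1})$, and one should be able to combine $f$ with $V(P)$ and Observation \ref{freePcolorsame} to produce a rainbow $\mathbb{P}_k$ directly in $\mathcal{H}$, contradicting the coloring hypothesis.
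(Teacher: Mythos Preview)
Your plan to contradict Claim~4.1 by assembling a $\mathbb{P}_{k-1}$ inside $\mathcal{F}$ is natural, but the step you flagged is a genuine gap, and your proposed case split does not close it. The obstacle is exactly the existence of $x_1\in G^*$ with $|x_1\cap f|=1$. From $|G^*|\sim\binom{n}{s-1}$ one only knows that $o(n^{s-1})$ of the $(s-1)$-sets are missing from $G^*$; but the total number of $(s-1)$-sets meeting $f$ at all is $\Theta(n^{s-2})=o(n^{s-1})$, so \emph{all} of them could lie in the complement of $G^*$. In your Case~1, ``positive $G^*$-degree'' of some $y\in f$ is not enough for a counting argument: to find an $(s-1)$-edge through $y$ avoiding a fixed set of size $O(1)$ you would need the $G^*$-degree of $y$ to exceed a quantity of order $\binom{n}{s-3}$, not merely be nonzero. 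Your Case~2 (``every vertex of $f$ isolated in $G^*$'') proposes to combine $f$ with Observation~\ref{freePcolorsame} to get a rainbow $\mathbb{P}_k$, but this is not spelled out and no mechanism is given; $f$ may intersect $P$ arbitrarily, and the isolation hypothesis on $f$'s vertices in $G^*$ does not by itself produce any usable structure.

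The paper sidesteps precisely this difficulty by never asking $G^*$ to touch the rogue edge $h\in\mathcal{F}-L$. Instead it builds a $\mathbb{P}_{k-2}$ path $P'$ in $\mathcal{F}$ (using only $G^*$ and $L$) that is \emph{disjoint} from $h$, with one end having a pre-stored ``spare'' $(s-1)$-edge $b_0\in G^*$ sharing a vertex $u$ with $P'$. It then takes an edge $f\in\mathcal{H}$ (not required to be in $\mathcal{F}$, not required to lie over $G^*$) joining $u$ to a vertex of $h$, and argues by color: if $f$'s color is new or equals $\lambda$ one extends directly to a rainbow $\mathbb{P}_k$; if $f$'s color is some $\beta_j$ from $P'$, one discards $P'$ (except $b_0$) and rebuilds a fresh $\mathbb{P}_{k-3}$ from $G^*$ on the other side of $b_0$, obtaining $h\oplus f\oplus b_0\oplus\cdots$ as a rainbow $\mathbb{P}_k$. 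The point is that the connecting edge comes from $\mathcal{H}$, not $G^*$, so no degree condition at the vertices of $h$ is needed; the color analysis and the spare edge $b_0$ do the work that your $x_1$ was supposed to do.
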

\begin{proof}
Suppose to the contrary there exists an edge $h\in \mathcal{F}-L$ with color $\lambda$ say. By  Lemma \ref{lem1}, we can find two $(s-1)$-edges $a_0$, $b_0$ in $G^*$, such that $a_0$ and $b_0$ have exactly one common vertex $u$ and are disjoint from $P$ and $h$.  Let $W$ be the vertex set of $P\cup h\cup a_0\cup b_0$. By Lemma \ref{lem1}, we can find $(s-1)$-edges $\{a_i,b_i\}$ disjoint from $W$ for  $i=1,\ldots,t-1$, such that for every $i$, $a_i$ and $b_i$ have exactly one common vertex, and for any $j\neq i$,
$\{a_i,b_i\}$ and $\{a_j,b_j\}$ are vertex disjoint. Then,
\begin{eqnarray*}
  &&f_1=a_0\cup \{v_1\}, f_2=\{v_1\}\cup a_1, f_3=b_1\cup \{v_2\}, f_4=\{v_2\}\cup a_2,f_5=b_2\cup \{v_3\},\\
  && \ldots,f_{k-4}=\{v_{t-2}\}\cup a_{t-2},f_{k-3}=b_{t-2}\cup \{v_{t-1}\}, f_{k-2}=\{v_{t-1}\}\cup a_{t-1}
\end{eqnarray*}
form a ${\mathbb{P}}_{k-2}$ in $\mathcal{F}$, denoted by $P'$.  In the rest of the paper, this kind of path would be  abbreviated as
$$P'=a_0\bigoplus_{i=1}^{t-2}(\{v_i\}\oplus a_i\oplus b_i)\oplus\{v_{t-1}\}\oplus a_{t-1}.$$

Let $\beta_1, \beta_2,\ldots, \beta_{k-2}$ be the colors of $f_1, f_2,\ldots, f_{k-2}$ respectively.  Note that $b_0\cup \{v_1\}$ and $b_{t-1}\cup \{v_{t-1}\}$ are edges of $\mathcal{F}$, so both of them  have colors distinct from any other edges in $\mathcal{F}$. The edges, which consist of  one $free(P')$ vertex in $f_1$, one $free(P)$ vertex in $e_1$ and  $s-2$ vertices disjoint with $P$ and $P'$, must be  colored with  colors from $\{\alpha_1, \alpha_2,\ldots, \alpha_{k-1}\}$ by Observation \ref{freePcolorsame}. Let $f$ be an edge consisting of the $free(P')$ vertex $u$ in $f_1$, a vertex in $h$ and $s-2$ vertices disjoint with $P$, $P'$, $b_0$ and $h$. Then the color of $f$ is  in $\{\lambda,\beta_1, \beta_2,\ldots, \beta_{k-2}\}$, because otherwise $h\cup f\cup P'$ is a rainbow $\mathbb{P}_k$. If the color of $f$ is $\lambda$, then we can extend $f\cup P'$ to a rainbow  ${\mathbb{P}}_{k}$ with an additional edge, which containing one $free(P')$ vertex in $f_{k-2}$ and a  $free(P)$ vertex in $e_{k-1}$.

Assume the color
of $f$ is $\beta_j$ for some $j$.  Let $W$ be the vertex set of $f\cup P'\cup h\cup b_0$. By Lemma \ref{lem1}, we can find $(s-1)$-edges $\{a'_i,b'_i\}$ in $G^*$, which are disjoint from $W$ for  $i=1,\ldots,t-1$. Furthermore,
$$h\oplus f\oplus b_0\bigoplus_{i=1}^{t-2}(\{v_i\}\oplus a_i'\oplus b_i')\oplus\{v_{t-1}\}\oplus a_{t-1}'$$
is a rainbow ${\mathbb{P}}_{k}$, a contradiction. This shows that $\mathcal{F}-L$ contains no edge, i.e., all the edges in $\mathcal{F}$  contain vertices in $L$.
\end{proof}

Notice that
 $$|\mathcal{F}|=D-(k-1)=\binom{n}{s}-\binom{n-t+1}{s}-k+3$$
 and there are $\binom{n}{s}-\binom{n-t+1}{s}$ edges in $\mathcal{H}$ which intersect $L$.
Therefore Claim \ref{noPk-1} implies that $\mathcal{F}$ contains no isolated vertices, and
\begin{eqnarray}\label{k-3edges}
  \mbox{there are only  $k-3$ edges containing vertices in $L$ which are not belonging to $\mathcal{F}$.}
\end{eqnarray}

We will derive the final contradiction from the following claim.
\begin{claim}\label{<2disjointL}
When $k=2t\ge 4$, there exist at most one edge in $P$ which is disjoint with $L$.
\end{claim}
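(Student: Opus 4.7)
The plan is to derive a contradiction by assuming that two edges $e_i,e_j\in P$ with $i<j$ are both disjoint from $L$, and then explicitly exhibiting a rainbow $\mathbb{P}_k$ built out of $e_i$, $e_j$, and a carefully chosen linear path of length $k-2$ inside $\mathcal{F}$. Since every edge of $\mathcal{F}$ carries a color outside $\{\alpha_1,\dots,\alpha_{k-1}\}$, and each color in $\mathcal{G}$ appears on a unique edge, such a $k$-edge path is automatically rainbow as soon as it is linear.

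First I would invoke Lemma~\ref{lem1} with $W=V(P)$ to obtain $(s-1)$-edges $\{a_m,b_m\}_{m=1}^{t-2}$ in $G^*$, each pair with $|a_m\cap b_m|=1$ and all vertex-disjoint from $V(P)\cup L$. Next I would select two further $(s-1)$-edges $a_0, a_{t-1}\in G^*$ with $a_0\cap V(P)=\{x\}$ for some $free(P)$ vertex $x\in e_i$, and $a_{t-1}\cap V(P)=\{y\}$ for some $free(P)$ vertex $y\in e_j$. Existence of such $a_0,a_{t-1}$ for large $n$ is a mild quantitative refinement of Lemma~\ref{lem1}: since $|G^*|\sim\binom{n}{s-1}$, almost every vertex outside $L$ has $G^*$-link of order $\Theta\!\left(\binom{n-1}{s-2}\right)$, so one can prescribe a vertex of $e_i$ (respectively $e_j$) in the chosen $(s-1)$-edge while still avoiding the constant-sized set $V(P)$ elsewhere. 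Concatenating through $v_1,\dots,v_{t-1}$ produces
\[
Q=a_0\bigoplus_{m=1}^{t-2}(\{v_m\}\oplus a_m\oplus b_m)\oplus\{v_{t-1}\}\oplus a_{t-1},
\]
a linear path of length $k-2$ lying entirely in $\mathcal{F}$, constructed exactly as in the proof of the preceding claim.

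Now I would split on whether $e_i$ and $e_j$ are adjacent on $P$. If $j-i\geq 2$, then $e_i\cap e_j=\emptyset$ and the concatenation $e_i\oplus Q\oplus e_j$ is a linear path of length $k$: consecutive edges share exactly one vertex (namely $x$ and $y$ at the two end attachments, together with the designed single intersections inside $Q$), while non-consecutive edges are pairwise disjoint because all internal $(s-1)$-edges avoid $V(P)$ and $L$ is disjoint from $e_i\cup e_j$. If $j=i+1$, then $e_i$ and $e_{i+1}$ share a unique cross vertex $z$; in this subcase I would instead pick $x\in e_i\setminus\{z\}$ (possible since $s\geq 3$ guarantees at least two $free(P)$ vertices of $e_i$ distinct from $z$) and form the alternative linear path $Q\oplus e_i\oplus e_{i+1}$, which needs only the attachment via $a_{t-1}$ and permits $a_0$ to be chosen disjoint from all of $V(P)$. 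In either subcase the resulting $k$-edge linear path in $\mathcal{H}$ is rainbow, contradicting the standing assumption of no rainbow $\mathbb{P}_k$ and thereby proving the claim.

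The principal obstacle is this vertex-prescribed selection of $a_0$ and $a_{t-1}$, which goes slightly beyond the statement of Lemma~\ref{lem1}; it relies on the link-degree structure of $G^*$ ensured by the stability theorem (Theorem~\ref{stability} or~\ref{stability2}) rather than only on its total edge count.
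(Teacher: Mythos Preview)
Your overall strategy---attach $e_i$ and $e_j$ to a linear $\mathbb{P}_{k-2}$ in $\mathcal{F}$ threaded through $L=\{v_1,\dots,v_{t-1}\}$, treating the consecutive and non-consecutive cases separately---is exactly the paper's. The difference, and the genuine gap, is in how you produce the ``bridging'' pieces $a_0,a_{t-1}$ that connect the path in $\mathcal{F}$ to the prescribed vertices $x\in e_i$ and $y\in e_j$.

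Your justification that ``since $|G^*|\sim\binom{n}{s-1}$, almost every vertex outside $L$ has $G^*$-link of order $\Theta\bigl(\binom{n-1}{s-2}\bigr)$'' does not do the job: this only says that the set of vertices with small $G^*$-link has size $o(n)$, and nothing prevents all of the (at most $s-1$) $free(P)$ vertices of $e_i$ from lying in that bad set. You cannot prescribe $x$ on the basis of the size of $G^*$ alone, and the stability theorem gives no per-vertex link guarantee. The paper closes this gap not through $G^*$ but through the missing-edge count established immediately before the claim (equation~(\ref{k-3edges})): once Claim~\ref{noPk-1} shows $\mathcal{F}-L$ is empty, a size comparison forces that only $k-3$ $s$-edges meeting $L$ fail to lie in $\mathcal{F}$. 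This is a pointwise statement valid for \emph{every} vertex, so for any fixed vertex of $e_i$ and any $v_p\in L$ one can directly choose an $s$-edge $f\in\mathcal{F}$ through both of them, avoiding $e_j$; likewise $g\in\mathcal{F}$ through a vertex of $e_j$ and $v_q$, avoiding $e_i\cup f$. Your argument is easily repaired by invoking~(\ref{k-3edges}) in place of the link-degree heuristic; after that the construction and the case split you wrote are essentially identical to the paper's.
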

\begin{proof}
Suppose that there are two edges $e_i$ and $e_j$ ($j>i$) in $P$, which are disjoint with $L$. If $j>i+1$, we  find an edge $f$ in $\mathcal{F}$ containing a vertex in $e_i$ and disjoint with $e_j$, and an edge $g$ in $\mathcal{F}$ containing a vertex in $e_j$ and disjoint with $e_i$ and $f$. Let $f\cap L=v_p$, $g\cap L=v_q$. Without loss of generality, we suppose that $v_p=v_1$, $v_q=v_{t-1}$. Let $W$ consist of the vertices in $e_i\cup e_j\cup f\cup g$.  By Lemma \ref{lem1}, we can find $(s-1)$-edges $\{a'_i,b'_i\}$ disjoint from $W$ for  $i=1,\ldots,t-1$. Then,
$$e_i\oplus f\oplus a'_1\oplus b'_1\bigoplus_{i=2}^{t-2}(\{v_i\}\oplus a_i'\oplus b_i')\oplus g\oplus e_j$$
is a rainbow ${\mathbb{P}}_{k}$ in $\mathcal{H}$, a contradiction.

If $j=i+1$, we  find an edge $h$ in $\mathcal{F}$ such that $h$ contains exactly one vertex in $e_j\setminus e_i$, and is disjoint with $e_i$. Without loss of generality, we suppose that  $h\cap L=v_1$. Let $W$ consist of the vertices in $e_i\cup e_j\cup h$.  By Lemma \ref{lem1}, we can find $(s-1)$-edges $\{a''_i,b''_i\}$ disjoint from $W$ for  $i=1,\ldots,t-1$. Then,
$$e_i\oplus e_j \oplus h\oplus a''_1\oplus b''_1\bigoplus_{i=2}^{t-2}(\{v_i\}\oplus a_i''\oplus b_i'')\oplus \{v_{t-1}\}\oplus a''_{t-1}$$
is a  rainbow ${\mathbb{P}}_{k}$ in $\mathcal{H}$, a contradiction.
\end{proof}

Since $P$ has $k-1$ edges, Claim \ref{<2disjointL} shows there are at least $k-2$ edges containing vertices of $L$ in $P$.
As $\mathcal{F}=\mathcal{G}-E(P)$, we conclude that there are at least  $k-2$ edges containing vertices in $L$ which are not belonging to $\mathcal{F}$, contradicting (\ref{k-3edges}). This completes the proof for even $k$.

\subsection{Completing the proof when $k=2t+1$ is odd}
In this subsection, we assume that $k=2t+1$ is odd.

Recall that $\mathcal{F}$  denotes   the subgraph obtained from $\mathcal{G}$ by deleting $e_1, e_2,\ldots, e_{k-1}$. If  there is  no ${\mathbb{P}}_{k-1}$ in $\mathcal{F}$, then we can use Theorem \ref{stability} to characterize the structure of $\mathcal{F}$. However, this may not be the case when $k$ is odd. Fortunately, we can prove that after deleting a few edges, the remaining subgraph of  $\mathcal{F}$ contains no  ${\mathbb{P}}_{k-1}$.

\begin{claim}\label{FonePk-1}
 If there is a linear path $P_1$ of length $k-1$ in  $\mathcal{F}$, then $\mathcal{F}-E(P_1)$ contains no  ${\mathbb{P}}_{k-1}$.
\end{claim}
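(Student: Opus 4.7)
Argue by contradiction. Suppose a second linear path $P_2 = f'_1 \oplus \cdots \oplus f'_{k-1}$ of length $k-1$ exists in $\mathcal{F} - E(P_1)$, with edge colors $\gamma_1, \ldots, \gamma_{k-1}$. Since $\mathcal{G}$ assigns a distinct color to each of its edges, the three color sets $\{\alpha_i\}$, $\{\beta_i\}$, $\{\gamma_i\}$ associated with $P$, $P_1$, $P_2$ are pairwise disjoint, and every edge of $\mathcal{F}-E(P_1)-E(P_2)$ carries a color outside their union. My goal is to construct a rainbow $\mathbb{P}_k$ in $\mathcal{H}$, contradicting the standing hypothesis.

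The plan is to mimic the even-case strategy used in the proof of the preceding claim about $\mathcal{F}$ being $\mathbb{P}_{k-1}$-free. First I would form the auxiliary subgraph $\mathcal{F}_e$ from $\mathcal{F}$ by deleting $E(P_1)\cup E(P_2)$ together with all $s$-edges containing at least two vertices of $V(P\cup P_1\cup P_2)$, and verify $|\mathcal{F}_e|>ex(n,s,\mathbb{P}_{k-2})$ for sufficiently large $n$. This yields a linear path $P_3 = h_1 \oplus \cdots \oplus h_{k-2}$ of length $k-2$ in $\mathcal{F}_e$; by construction each $h_i$ meets $V(P\cup P_1\cup P_2)$ in at most one vertex, and the colors $\delta_1,\ldots,\delta_{k-2}$ of its edges are pairwise distinct and lie outside $\{\alpha_i\}\cup\{\beta_i\}\cup\{\gamma_i\}$. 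I would then attach two connector edges $e$ and $e'$ at the two ends of $P_3$. Let $e$ consist of a free endpoint of $h_1$ lying outside $V(P\cup P_1\cup P_2)$ (available since $s\geq 4$ guarantees at least $s-2\geq 2$ such candidates on $h_1$), a free endpoint of $f_1$ in $P_1$ not coinciding with the (at most one) vertex of $h_1$ inside $V(P_1)$, and $s-2$ fresh vertices outside every earlier path. The analog of Observation~\ref{freePcolorsame} applied to $P_1$ then forces the color of $e$ to be some $\beta_j$. Symmetrically define $e'$ joining a free endpoint of $h_{k-2}$ to a free endpoint of $f'_1$ in $P_2$, giving its color as some $\gamma_{j'}$. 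Then $e\oplus h_1\oplus\cdots\oplus h_{k-2}\oplus e'$ is a linear path of length $k$ whose $k$ edges bear pairwise distinct colors $\beta_j,\delta_1,\ldots,\delta_{k-2},\gamma_{j'}$, delivering the desired rainbow $\mathbb{P}_k$.

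The main technical obstacle is the inequality $|\mathcal{F}_e|>ex(n,s,\mathbb{P}_{k-2})$. In the even case, the slack $|\mathcal{F}|-ex(n,s,\mathbb{P}_{k-2})\sim \binom{n}{s-1}$ dominates the $O(n^{s-2})$ deleted bad edges by a whole order of magnitude. For odd $k=2t+1$, by contrast, a direct computation yields
\[
|\mathcal{F}| - ex(n,s,\mathbb{P}_{k-2}) = \binom{n-t-1}{s-2} - 2t + 2,
\]
which is only $\Theta(n^{s-2})$ and a priori of the same order as the naive estimate $\binom{c}{2}\binom{n-c}{s-2}$ for deleted bad edges (with $c=|V(P\cup P_1\cup P_2)|$ a constant). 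Resolving this requires a sharper accounting of the bad edges actually present in $\mathcal{F}$ rather than the global count in $\mathcal{H}$: Observation~\ref{freePcolorsame} applied to each of $P$, $P_1$, $P_2$ already rules out from $\mathcal{F}$ all edges whose intersection with $V(P)$ (respectively $V(P_1)$, $V(P_2)$) is a single free-endpoint vertex, and these exclusions, together with the hypotheses $s\geq 4$ and $k\geq 7$, should trim the dominant $\binom{c}{2}\binom{n-c}{s-2}$ term enough to secure the inequality for large $n$, at which point the rainbow-path construction above completes the contradiction.
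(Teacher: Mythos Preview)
Your strategy of finding a $\mathbb{P}_{k-2}$ in $\mathcal{F}_e$ does not go through, and your proposed patch via Observation~\ref{freePcolorsame} is not nearly strong enough to rescue it. As you correctly observe, for odd $k=2t+1$ the slack is
\[
|\mathcal{F}| - ex(n,s,\mathbb{P}_{k-2}) = \binom{n-t-1}{s-2} - 2t + 2 \sim \frac{1}{(s-2)!}\,n^{s-2},
\]
while the dominant deletion term is $\binom{c}{2}\binom{n-c}{s-2} \sim \frac{\binom{c}{2}}{(s-2)!}\,n^{s-2}$ with $c$ of order $3(k-1)s$, so $\binom{c}{2}$ is a large constant far exceeding~$1$. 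Observation~\ref{freePcolorsame} only excludes from $\mathcal{F}$ those edges $g$ whose intersection with $V(P)$ (respectively $V(P_1)$, $V(P_2)$) is precisely a single free endpoint; among the edges with exactly two vertices in $V(P\cup P_1\cup P_2)$, this saves at most a constant multiple of $(s-1)\cdot c\cdot n^{s-2}$ edges, which is a lower-order fraction of $\binom{c}{2}n^{s-2}$ and cannot close the gap. Thus the inequality $|\mathcal{F}_e| > ex(n,s,\mathbb{P}_{k-2})$ simply fails for large~$n$.

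The paper sidesteps this by asking only for a $\mathbb{P}_{k-3}$ in the cleaned subgraph. Since $ex(n,s,\mathbb{P}_{k-3})\sim (t-2)\binom{n}{s-1}$ while $|\mathcal{F}|\sim (t-1)\binom{n}{s-1}$, the slack is now $\Theta(n^{s-1})$ and swallows the $O(n^{s-2})$ deletions trivially. Having one fewer edge in $P_3$ is compensated by attaching \emph{three} connector edges instead of two: an edge $e$ to a free endpoint of $P$ (forced color in $\{\alpha_i\}$), an edge $e'$ to a free endpoint of $P_1$ (forced color in $\{\beta_i\}$), so that $e\cup P_3\cup e'$ is a rainbow $\mathbb{P}_{k-1}$; then a third edge $e''$ at a vertex of $e'\setminus(P_1\cup P_3)$ toward a free endpoint of $P_2$ must simultaneously carry a color already in $e\cup P_3\cup e'$ (else a rainbow $\mathbb{P}_k$) and a color in $\{\gamma_i\}$ (else $P_2$ extends), yet these colour sets are disjoint, yielding the contradiction. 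This is the idea you are missing.
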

\begin{proof}
Suppose to the contrary that there is a linear path  $P_2$ of length $k-1$ in  $\mathcal{F}-E(P_1)$. Notice  that the colors used in  $P_1$ and $P_2$ are pairwise distinct by the selection of $\mathcal{F}$. Let $f_1, f_2,\ldots, f_{k-1}$ be the edges of $P_1$ with colors  $\beta_1, \beta_2,\ldots, \beta_{k-1}$, respectively,  Denote by $g_1, g_2,\ldots, g_{k-1}$ the edges of $P_2$ with colors  $\gamma_1, \gamma_2,\ldots, \gamma_{k-1}$, respectively.

Let $c$ denote the number of vertices of $P\cup P_1\cup P_2$. Then we have   $c\le 3[(k-1)s-(k-2)]$. Note that the number of edges which contain at least two vertices in $P\cup P_1\cup P_2$ is at most $\sum^{s}_{i=2}{\binom{c}{i}\binom{n-c}{s-i}}$. Since
\begin{eqnarray*}
&&|\mathcal{F}|-\sum^{s}_{i=2}{\binom{c}{i}\binom{n-c}{s-i}}\\&=& \binom{n}{s}-\binom{n-t+1}{s}+ \binom{n-t-1}{s-2}+2-(k-1)
  -\sum^{s}_{i=2}{\binom{c}{i}\binom{n-c}{s-i}}\\
  &>&ex(n,s,\mathbb{P}_{k-3})
\end{eqnarray*}
for  sufficiently large $n$, there exists a linear path $P_3$ of length $k-3$, such that every edge in $P_3$ has at most one vertex of $P\cup P_1\cup P_2$.
Hence, all the $free(P)$ vertices in $e_1\cup e_{k-1}$, $free(P_1)$ vertices in $f_1\cup f_{k-1}$ and $free(P_2)$ vertices in $g_1\cup g_{k-1}$ are not in $P_3$ by Observation \ref{freePcolorsame}. Denote by  $h_1, h_2,\ldots, h_{k-3}$ the edges of $P_3$. Consider an edge $e$, which consists of  a $free(P)$ vertex $x$ in $e_1$, a $free(P_3)$ vertex  in $h_1\setminus (P_1\cup P_2)$ and $s-2$ vertices disjoint with $P\cup P_1\cup P_2\cup P_3$, it follows from Observation \ref{freePcolorsame} that the color of $e$ is  in $\{\alpha_1, \alpha_2,\ldots, \alpha_{k-1}\}$.
And consider an edge $e'$, which consists of  a $free(P_1)$ vertex $y\neq x$ in $f_1\cup f_{k-1}$  (we can find such a vertex $y$ since $s> 3$), a $free(P_3)$ vertex  in $h_{k-3}\setminus (P_1\cup P_2)$ and $s-2$ vertices disjoint with $P_1\cup P_2\cup P_3\cup P\cup e$, then the color of $e'$ is  from $\{\beta_1, \beta_2,\ldots, \beta_{k-1}\}$ by Observation \ref{freePcolorsame}. Moreover, $e\cup P_3\cup e'$ is a rainbow $\mathbb{P}_{k-1}$. Now consider another edge $e''$, which consists of  a $free(P_2)$ vertex $z\neq x, y$ in $g_1\cup g_{k-1}$, a  vertex  in $e'\setminus (P_1\cup P_3)$ and $s-2$ vertices disjoint with $P_1\cup P_2\cup P_3\cup P\cup e\cup e'$, then $e''$ has a color appeared in $e\cup P_3\cup e'$ by Observation \ref{freePcolorsame}. However, to prevent extending $P_2$ to a rainbow $\mathbb{P}_{k}$, the color of $e''$ should be one of $\{\gamma_1, \gamma_2,\ldots, \gamma_{k-3}\}$, a contradiction.
\end{proof}

So if $\mathcal{F}$ has a $\mathbb{P}_{k-1}$, denote by $\mathcal{F}_0$ the subgraph obtained by deleting all the $k-1$ edges of that $\mathbb{P}_{k-1}$ in $\mathcal{F}$.  Then we have $\mathcal{F}_0$ is $\mathbb{P}_{k-1}$-free by Claim \ref{FonePk-1} and $$|\mathcal{F}_0|=\binom{n}{s}-\binom{n-t+1}{s}+ \binom{n-t-1}{s-2}+2-2(k-1).$$
Since $|\mathcal{F}_0|\sim (t-1)\binom{n}{s-1}$ and by Theorem \ref{stability}, we can find an $(s-1)$-graph  $G^*\subset \partial \mathcal{F}_0$ with  $|G^*|  \sim \binom{n}{s-1}$ and a set $L$ of $t-1$ vertices of $\mathcal{F}_0$ such that $L\cap V(G^*)=\emptyset$ and $e\cup \{v\}\in \mathcal{F}_0$ for any $(s-1)$-edge $e\in G^*$  and any $v\in L$.  Moreover,
$|\mathcal{F }_0- L|= o(n^{s-1})$.

If  $\mathcal{F}$ dose not contain a $\mathbb{P}_{k-1}$, then by Theorem \ref{stability} again, we can find an $(s-1)$-graph  $G^*\subset \partial \mathcal{F}$ with  $|G^*|  \sim \binom{n}{s-1}$ and a set $L$ of $t-1$ vertices of $\mathcal{F}$ such that $L\cap V(G^*)=\emptyset$ and $e\cup \{v\}\in \mathcal{F}$ for any $(s-1)$-edge $e\in G^*$  and any $v\in L$.  Additionally,
$|\mathcal{F }- L|= o(n^{s-1})$. Since the number of edges meeting $L$ is at most $\binom{n}{s}-\binom{n-t+1}{s}$, we have $|\mathcal{F }- L|\ge |\mathcal{F}|-\left[\binom{n}{s}-\binom{n-t+1}{s}\right]
=\binom{n-t-1}{s-2}+2-(k-1)>k-1$. Now we delete any $k-1$ edges of $\mathcal{F }- L$ from $\mathcal{F }$ and still denote the remaining subgraph $\mathcal{F}_0$.

Therefore, in either case,  we can find an $(s-1)$-graph  $G^*\subset \partial \mathcal{F}_0$ with  $|G^*|  \sim \binom{n}{s-1}$ and a set $L$ of $t-1$ vertices of $\mathcal{F}_0$ such that $L\cap V(G^*)=\emptyset$ and $e\cup \{v\}\in \mathcal{F}_0$ for any $(s-1)$-edge $e\in G^*$  and any $v\in L$.  Moreover,
$|\mathcal{F }_0- L|= o(n^{s-1})$. We select a $G^*$ with the maximum number of $(s-1)$-edges.
Let the  vertices in $L$ be $v_1,v_2,\ldots,v_{t-1}$. We point out that all the $v_i$ for $i=1,2,\ldots, t-1$ are not $free(P)$ vertices in $e_1\cup e_{k-1}$. Otherwise, let $W$ be the vertex set of $P$. Then by Lemma \ref{lem1}  we can find an $(s-1)$-edge disjoint with $W$ in $G^*$, and this together with $v_i$ will form an $s$-edge which extends $P$ to a rainbow  $\mathbb{P}_{k}$.

Since the number of edges containing vertices of $L$  is at most
 $\binom{n}{s}-\binom{n-t+1}{s}$ in $\mathcal{F}_0$, we have $|\mathcal{F}_0-L|>\binom{n-t-1}{s-2}+2-2(k-1)$.
We further claim the following.

\begin{claim}\label{F0-L}
 $|\mathcal{F}_0-L|\le \binom{n-t-1}{s-2}+\binom{2s-2}{s}+
 \binom{2s-1}{s-1}n$.
\end{claim}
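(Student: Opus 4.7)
The plan is to prove this bound by contradiction: assume $|\mathcal{F}_0 - L|$ strictly exceeds the claimed quantity, and construct a rainbow $\mathbb{P}_k$ in $\mathcal{H}$ to contradict our standing hypothesis.

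The first step is to find a linear path $h_1 \oplus h_2$ of length $2$ in $\mathcal{F}_0 - L$ such that $V(h_1 \cup h_2)$ is disjoint from $V(P) \cup L$. The three terms in the claimed bound correspond to three obstructions to finding such a generic $\mathbb{P}_2$. The main term $\binom{n-t-1}{s-2}$ equals $ex(n-t+1, s, \mathbb{P}_2)$ (by Theorem \ref{turan}(1)), bounding edges of $\mathcal{F}_0 - L$ that all contain a fixed pair of vertices in $V \setminus L$. The term $\binom{2s-2}{s}$ bounds edges confined to a small $(2s-2)$-vertex set, which is exactly what happens when candidate edges pairwise share two vertices and their union spans only $2s-2$ vertices. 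The term $\binom{2s-1}{s-1}\,n$ bounds edges sharing an $(s-1)$-subset with a fixed $(2s-1)$-vertex configuration (the vertex set of some fixed linear $\mathbb{P}_2$), extended by one arbitrary vertex of $V$. If $|\mathcal{F}_0 - L|$ exceeds the sum of these quantities, a sunflower-style argument on intersection patterns produces a generic $h_1 \oplus h_2 \in \mathcal{F}_0 - L$ with $|h_1 \cap h_2| = 1$ and $V(h_1 \cup h_2) \cap (V(P) \cup L) = \emptyset$.

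The second step is to extend $h_1 \oplus h_2$ to a rainbow $\mathbb{P}_k$. Setting $W = V(P) \cup L \cup V(h_1 \cup h_2)$, Lemma \ref{lem1} (applied slightly beyond its stated form to also produce one extra $(s-1)$-edge $a_0$) yields pairs $\{a_i, b_i\} \subset G^*$ of $(s-1)$-edges for $i=1,\ldots,t-1$, together with $a_0 \in G^*$, all disjoint from $W$ and satisfying the required intersection properties. Concatenating,
\[
h_1 \oplus h_2 \oplus e \oplus (a_0 \cup \{v_1\}) \oplus (\{v_1\} \cup a_1) \oplus (b_1 \cup \{v_2\}) \oplus \cdots \oplus (\{v_{t-1}\} \cup a_{t-1})
\]
forms a linear path in $\mathcal{H}$ with $3 + (2t-2) = 2t+1 = k$ edges. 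The bridge edge $e$ is chosen to contain a vertex $w \in h_2 \setminus \{u\}$ (where $u = h_1 \cap h_2$), a vertex $z \in a_0$, a $free(P)$ vertex $x$ of $e_1$, and $s-3$ fresh vertices disjoint from everything else. Since $e \cap V(P) = \{x\}$ with $x$ free in $e_1$, Observation \ref{freePcolorsame} forces the color of $e$ to lie in $\{\alpha_1, \ldots, \alpha_{k-1}\}$. Meanwhile, all the other $2t$ edges in the path belong to $\mathcal{F}_0 = \mathcal{G} - E(P)$, so their colors are pairwise distinct and disjoint from $\{\alpha_1, \ldots, \alpha_{k-1}\}$. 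Therefore the constructed linear path is a rainbow $\mathbb{P}_k$, the desired contradiction.

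The main obstacle I foresee is in the first step: rigorously verifying that the error terms $\binom{2s-2}{s}$ and $\binom{2s-1}{s-1}\,n$ suffice to cover every obstruction to finding the generic $\mathbb{P}_2$, including absorbing those edges of $\mathcal{F}_0 - L$ that touch the $O(1)$-sized set $V(P)$. This will require a careful case analysis on the pairwise intersection sizes of edges in $\mathcal{F}_0 - L$ and a sunflower-type isolation of the common pair of vertices, combined with a separate small accounting for edges meeting $V(P)$ (which, being bounded by the total degree sum within $\mathcal{F}_0 - L$, should fit into the $\binom{2s-1}{s-1}\,n$ term). The rest of the argument is essentially the same chaining construction used elsewhere in the proof.
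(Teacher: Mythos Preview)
Your plan has a genuine gap in the first step. You want a $\mathbb{P}_2$ in $\mathcal{F}_0 - L$ whose vertex set avoids $V(P)$, and you assert that the excess beyond $\binom{n-t-1}{s-2}$, namely the $O(n)$ quantity $\binom{2s-2}{s}+\binom{2s-1}{s-1}n$, absorbs all edges of $\mathcal{F}_0 - L$ that meet $V(P)$. This is false: $V(P)$ has constant size, so the number of $s$-sets in $V\setminus L$ meeting $V(P)$ is $\Theta(n^{s-1})$, and nothing rules out that \emph{every} edge of $\mathcal{F}_0 - L$ (whose size we only know is $o(n^{s-1})$) meets $V(P)$. Your ``total degree sum'' remark does not help, since there is no degree bound on $\mathcal{F}_0 - L$. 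Without disjointness from $V(P)$, your bridge edge $e$ need not satisfy $e\cap V(P)=\{x\}$, so Observation~\ref{freePcolorsame} does not apply and the rainbow argument collapses.

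The paper's proof avoids $V(P)$ entirely and instead uses the extra $O(n)$ budget for a different purpose. After finding a first $\mathbb{P}_2$, say $P_1=h_1\oplus h_2$ (from $|\mathcal{F}_0 - L|>ex(n-t+1,s,\mathbb{P}_2)$), it observes that at most $\binom{2s-1}{s}+\binom{2s-1}{s-1}n$ edges share $\ge s-1$ vertices with $P_1$; discarding these still leaves more than $ex(n-t+1,s,\mathbb{P}_2)$ edges, yielding a \emph{second} $\mathbb{P}_2$, say $P_2=h_3\oplus h_4$, each of whose edges has at least two vertices outside $P_1$. A bridge edge $g$ is then built through a free vertex of $P_1$, a free vertex of $P_2$, and a vertex $u\in a_0\cap b_0\subset G^*$; a case analysis on the colour of $g$ against the (mutually disjoint) colour sets of $P_1$, $P_2$, and the auxiliary $\mathbb{P}_{k-3}$ always yields a rainbow $\mathbb{P}_k$, with the last case handled by rebuilding the $\mathbb{P}_{k-3}$ through $b_0$. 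So the two error terms in the bound encode ``find a second $\mathbb{P}_2$ generic relative to the first'', not ``avoid $V(P)$''.
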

\begin{proof}
By contradiction, assume that $|\mathcal{F}_0-L|>\binom{n-t-1}{s-2}+\binom{2s-2}{s}+
 \binom{2s-1}{s-1}n$. Note that the number of vertices of $\mathcal{F}_0-L$ is  $n-t+1$, by Theorem \ref{turan}, we can find  a ${\mathbb{P}}_{2}$ in $\mathcal{F}_0-L$,  denoted by $P_1$. Let $h_1$, $h_2$ be the edges of $P_1$ with colors $\gamma_1$, $\gamma_2$, respectively. The number of edges containing at least $s-1$ vertices in $P_1$ is  less than $\binom{2s-1}{s}+\binom{2s-1}{s-1}n$. Since $|\mathcal{F}_0-L|-\left[\binom{2s-2}{s}+
 \binom{2s-1}{s-1}n
 \right]
 >ex(n-t
 +1,s,{\mathbb{P}}_{2})$ for  sufficiently large $n$, there is another linear path $P_2$ of length two in $\mathcal{F}_0-L$ such that each edge of which has at least two vertices not in $P_1$. Let $h_3$, $h_4$ be the edges of $P_2$ with colors $\gamma_3$, $\gamma_4$. So $h_4$ contains a $free(P_2)$ vertex $x\notin P_1$. Furthermore, one of $h_1, h_2$ contains a $free(P_1)$ vertex not belonging to $P_2$. Let us say $h_2$ has a  $free(P_1)$ vertex $y\notin P_2$. Take two $(s-1)$-edges $a_0$, $b_0$ in $G^*$ that are disjoint from $P_1$, $P_2$ and $P$, such that $a_0 \cap b_0=u$.  Let $W$ be the vertex set of $P\cup P_1\cup P_2\cup  a_0\cup b_0$. By Lemma \ref{lem1}, we can find $(s-1)$-edges $\{a_i,b_i\}$ disjoint from $W$ for  $i=1,\ldots,t-1$, and so
$$P'=a_0\bigoplus_{i=1}^{t-2}(\{v_i\}\oplus a_i\oplus b_i)\oplus\{v_{t-1}\}\oplus a_{t-1}.$$
is a ${\mathbb{P}}_{2t-2}={\mathbb{P}}_{k-3}$ in $\mathcal{F}_0$.  Denote by  $f_1, f_2,\ldots, f_{k-3}$ the edges of $P'$ with colors $\beta_1, \beta_2,\ldots, \beta_{k-3}$ the colors of each edges in $P'$, respectively. Note that $f_1=a_0\cup \{v_1\}$.

Consider the edge $g$ consisting of $x$, $y$, $u$ and $s-3$ vertices disjoint with $P$, $P_1$, $P_2$ and $b_0$. Then the color of $g$ is in $\{\gamma_1,\ldots\gamma_4,\beta_1,\ldots,\beta_{k-3}\}$, otherwise we can easily extend $P'$ to a rainbow ${\mathbb{P}}_{k}$ by adding $g$, $h_1$ and $h_2$. If the color of $g$ is in $\{\gamma_1,\gamma_2\}$, then $h_3\cup h_4\cup g\cup P'$ is a rainbow ${\mathbb{P}}_{k}$. If the color of $g$ is in $\{\gamma_3,\gamma_4\}$, then $h_1\cup h_2\cup g\cup P'$ is a rainbow ${\mathbb{P}}_{k}$. So the color of $g$ must be in  $\{\beta_1,\ldots,\beta_{k-3}\}$. Let $W$ be the vertex set of $P\cup P_1\cup P_2\cup P'\cup b_0$. By Lemma \ref{lem1}, we can find $(s-1)$-edges $\{a'_i,b'_i\}$ disjoint from $W$ for  $i=1,\ldots,t-1$,
and
$$ h_1\oplus h_2\oplus g\oplus b_0 \bigoplus_{i=1}^{t-2}(\{v_i\}\oplus a_i'\oplus b_i')\oplus\{v_{t-1}\}\oplus a_{t-1}'$$
is a rainbow ${\mathbb{P}}_{k}$, a contradiction.
\end{proof}

Now Claim \ref{F0-L} provides some further structural properties of  $\mathcal{F}_0$.

\begin{claim}\label{F0structual}
 (a) There is no isolated vertex  in $\mathcal{F}_0$.\\
 (b) There are at most
$\binom{2s-2}{s}+
 \binom{2s-1}{s-1}n+2(k-1)-2$ edges meeting $L$ but not in $\mathcal{F}_0$.\\
 (c) Every vertex in $V(\mathcal{F}_0)\setminus L$ belongs to $G^*$, and is not an isolated vertex in $G^*$.
\end{claim}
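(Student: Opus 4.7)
The plan is to establish the three parts in the order (b), (c), (a): part (b) is a direct edge-count, part (c) is the substantive step and relies on the maximality of $G^*$ together with (b), and part (a) follows as a short consequence.

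For (b), the total number of $s$-edges of $\mathcal{H}$ meeting $L$ equals $\binom{n}{s}-\binom{n-t+1}{s}$, while the number of such edges lying in $\mathcal{F}_0$ is exactly $|\mathcal{F}_0|-|\mathcal{F}_0-L|$. Hence the number of $s$-edges meeting $L$ but absent from $\mathcal{F}_0$ equals
$$\binom{n}{s}-\binom{n-t+1}{s}-|\mathcal{F}_0|+|\mathcal{F}_0-L|.$$
Substituting $|\mathcal{F}_0|=D-2(k-1)=\binom{n}{s}-\binom{n-t+1}{s}+\binom{n-t-1}{s-2}+2-2(k-1)$ and then the upper bound on $|\mathcal{F}_0-L|$ from Claim \ref{F0-L} gives the desired estimate, as the two copies of $\binom{n-t-1}{s-2}$ cancel.

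For (c), I argue by contradiction: suppose there is a vertex $v\in V(\mathcal{F}_0)\setminus L$ that is either not in $V(G^*)$ or isolated in $G^*$. Then no $(s-1)$-edge of $G^*$ contains $v$. By the maximality of $G^*$ (chosen with the maximum number of $(s-1)$-edges in the proof of Claim \ref{F0-L}), any $(s-1)$-subset $e\subseteq V(\mathcal{H})\setminus L$ with $v\in e$ must fail the extension property, so there exists $v_{i(e)}\in L$ with $e\cup\{v_{i(e)}\}\notin\mathcal{F}_0$. Since each resulting $s$-edge $e\cup\{v_{i(e)}\}$ has a unique $L$-vertex (namely $v_{i(e)}$), distinct subsets $e$ produce distinct $s$-edges, giving at least $\binom{n-t}{s-2}$ distinct $s$-edges meeting $L$ and absent from $\mathcal{F}_0$. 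Because the odd case of Theorem \ref{th1} assumes $s\ge 4$, we have $\binom{n-t}{s-2}=\Theta(n^{s-2})\ge\Theta(n^2)$, which overwhelms the linear-in-$n$ bound of part (b) for sufficiently large $n$, a contradiction. I expect the main obstacle to sit exactly here: one must ensure that maximality of $G^*$ really forces quadratically-many ``missing'' edges at $L$, and this is precisely why the hypothesis $s\ge 4$ is essential in the odd-$k$ case.

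For (a), suppose some vertex $v$ is isolated in $\mathcal{F}_0$. If $v=v_i\in L$, then the defining property of $G^*$ gives $e\cup\{v_i\}\in\mathcal{F}_0$ for every $(s-1)$-edge $e\in G^*$; since $|G^*|\sim\binom{n}{s-1}$ is nonempty, this contradicts the isolation of $v_i$. If instead $v\in V(\mathcal{H})\setminus L$, then (c) places $v$ in some $(s-1)$-edge $e\in G^*$, so $e\cup\{v_1\}\in\mathcal{F}_0$ is an edge of $\mathcal{F}_0$ containing $v$, again a contradiction. (Equivalently, one can rerun the counting step of (c) directly: an isolated $v\notin L$ would make every $e\cup\{v_i\}$ through $v$ absent from $\mathcal{F}_0$, producing $(t-1)\binom{n-t}{s-2}$ missed edges at $L$, which is even more inconsistent with (b).)
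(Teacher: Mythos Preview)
Your proof is correct and follows essentially the same route as the paper. The paper proves the parts in order (a), (b), (c): for (a) it argues directly that an isolated vertex forces $|\mathcal{F}_0-L|$ to exceed the bound of Claim~\ref{F0-L}, for (b) it does the same edge-count you do, and for (c) it asserts the existence of $\binom{n-t}{s-2}$ edges meeting $L$ missing from $\mathcal{F}_0$, exactly your count via maximality of $G^*$. Your reordering (b), (c), (a) is equally valid, and your derivation of (a) from (c) is a clean shortcut; your explicit invocation of the maximality of $G^*$ in (c) actually fills in a detail the paper leaves implicit.
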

\begin{proof}
(a) In fact, if $\mathcal{F}_0$ contains an isolated vertex, then the number of edges meeting $L$ in $\mathcal{F}_0$ is at most $\binom{n-1}{s}-\binom{n-1-(t-1)}{s}$.  Thus, we have $$|\mathcal{F}_0-L|\ge |\mathcal{F}_0|-
\left[\binom{n-1}{s}-\binom{n-1-(t-1)}{s}\right]\ge O(n^{s-1}),$$ a contradiction to Claim \ref{F0-L}.  This indicates that $\mathcal{F}_0$ contains  no  isolated vertices.

(b) By Claim \ref{F0-L},  there are  $$|\mathcal{F}_0|-|\mathcal{F}_0-L|\ge\binom{n}{s}-\binom{n-t+1}
 {s}+2-2(k-1)-\left[\binom{2s-2}{s}+
 \binom{2s-1}{s-1}n
 \right]$$ edges in $\mathcal{F}_0$ containing vertices in $L$. Since there are $\binom{n}{s}-\binom{n-t+1}{s}$ edges containing vertices of $L$ in $\mathcal{H}$, we have that there are at most
$\binom{2s-2}{s}+
 \binom{2s-1}{s-1}n+2(k-1)-2$ edges meeting $L$ but not in $\mathcal{F}_0$.

 (c) If there exists a vertex $v\in V(\mathcal{F}_0)\setminus L$ but $v\notin G^*$ or $v$ is an isolated vertex in $G^*$,  then we have at least $\binom{n-(t-1)-1}{s-2}>\binom{2s-2}{s}+
 \binom{2s-1}{s-1}n+2(k-1)-2$ edges meeting $L$ but not belonging to $\mathcal{F}_0$, which is a contradiction to (b). Hence, every vertex in $V(\mathcal{F}_0)\setminus L$ is contained in some edges of  $G^*$ .
\end{proof}

Now we focus on the edges which are disjoint with $L$, namely, the edges in $(\mathcal{F}\cup P)-L=\mathcal{G}-L$ and more generally, the edges in $\mathcal{H}-L$.  Considering the relationship between edges in $\mathcal{H}-L$,
we make the following claim.

\begin{claim}\label{F-Ledges}
 Assume that there exist three edges $f,g,h$ in $\mathcal{H}-L$ with distinct colors such that one of the following holds:\\
 (i) $f,g,h$ form a ${\mathbb{P}}_{3}$;\\
 (ii) $f,g$ form a  ${\mathbb{P}}_{2}$, and $h$ is disjoint with $f\cup g$;\\
 (iii) $f,g,h$  are disjoint with each.\\
 Then we can find a rainbow ${\mathbb{P}}_{k}$ in $\mathcal{H}$.
\end{claim}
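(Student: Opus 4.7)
My plan is to handle the three cases uniformly by treating $f,g,h$ (together with one or two additional splicing edges when needed) as the $\mathcal{H}-L$ portion of the desired rainbow $\mathbb{P}_{k}$ and attaching a linear tail of the appropriate length built inside $\mathcal{F}_{0}$ via the $L$--$G^{*}$ structure.  The mechanism is the same bridging idea used inside the proof of Claim \ref{F0-L}: whenever $\{a,b\}\subset G^{*}$ is a pair produced by Lemma \ref{lem1} with $a\cap b=\{u\}$, the edge $a\cup\{v_{i}\}$ lies in $\mathcal{F}_{0}$ for every $v_{i}\in L$, and any freely chosen $s$-edge of the form $\{y,u\}\cup Z$ (with $y$ a designated free vertex of $f$, $g$, or $h$ and $Z$ a set of $s-2$ fresh vertices) meets $a\cup\{v_{i}\}$ in exactly $\{u\}$ and meets its host edge in exactly $\{y\}$.

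The setup common to all three cases is to apply Lemma \ref{lem1} with $W=V(f\cup g\cup h\cup P)$ to obtain $t-1$ pairwise vertex-disjoint pairs $\{a_{i},b_{i}\}\subset G^{*}$, $i=0,\ldots,t-2$, each disjoint from $W$ and satisfying $a_{i}\cap b_{i}=\{u_{i}\}$.  These produce the linear tail
\[
P_{0}\ =\ (a_{0}\cup\{v_{1}\})\oplus(\{v_{1}\}\cup a_{1})\oplus(b_{1}\cup\{v_{2}\})\oplus\cdots\oplus(b_{t-2}\cup\{v_{t-1}\})
\]
in $\mathcal{F}_{0}$ of length $2t-3$.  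Since every edge of $P_{0}$ meets $L$ while $f,g,h\in\mathcal{H}-L$, the colors used in $P_{0}$ are automatically distinct from $c_{f},c_{g},c_{h}$ and from the $\alpha_{j}$'s.  In Case~(i) I would take $e^{*}=\{y,u_{0}\}\cup Z$ for a free vertex $y\in h$ of the $\mathbb{P}_{3}$ and form $f\oplus g\oplus h\oplus e^{*}\oplus P_{0}$, of length $3+1+(2t-3)=k$.  In Case~(ii) I first splice $h$ onto $g$ by a bridging edge $e_{1}$ selecting one free vertex from each, producing the $\mathbb{P}_{4}$ $f\oplus g\oplus e_{1}\oplus h$, and then attach $e^{*}$ and the length-$(k-5)$ initial segment of $P_{0}$.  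In Case~(iii) I splice $f,g,h$ into a $\mathbb{P}_{5}$ via two bridging edges $e_{1},e_{2}$ and then attach $e^{*}$ together with the length-$(k-6)$ initial segment of $P_{0}$; the odd-$k$ hypothesis $k=2t+1>5$ of Theorem \ref{th1} gives $k\ge 7$, so this length is nonnegative.  In every case, linearity of the candidate path follows from the disjointness guarantees of Lemma \ref{lem1} together with the freedom in selecting fresh filler sets disjoint from $W\cup V(P_{0})$.

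The principal technical hurdle, as in the proof of Claim \ref{F0-L}, is to guarantee that the bridging edges $e^{*},e_{1},e_{2}$ (which lie in $\mathcal{H}$ but need not belong to $\mathcal{G}$, so have colors we do not a priori control) can be chosen so that the candidate $\mathbb{P}_{k}$ is in fact rainbow.  For each bridging edge there are $\Theta(n^{s-2})$ admissible choices of the filler $Z$; at most a bounded number of colors are forbidden; and each forbidden color is carried by exactly one edge of $\mathcal{G}$, so only a bounded family of fillers is blocked by a forced color match originating in $\mathcal{G}$.  Should a filler still refuse to land on a non-forbidden color, I would either reselect the Lemma \ref{lem1} pairs (which changes $u_{0}$ and the list of tail-colors $\{\beta_{j}\}$) or follow the color-swap reroute performed at the end of the proof of Claim \ref{F0-L}: if the offending color matches some $\beta_{j}$ in the tail, rearrange the prefix so that this edge no longer appears among the tail colors and obtain the rainbow $\mathbb{P}_{k}$ that way, using Observation \ref{freePcolorsame} to rule out the remaining color configurations.
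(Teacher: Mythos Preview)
Your overall architecture (build an $\mathcal{F}_0$-tail and attach $f,g,h$ via one or two splicing edges) matches the paper, but the step where you control the colors of the splicing edges $e^{*},e_{1},e_{2}$ has a genuine gap.  You argue that ``each forbidden color is carried by exactly one edge of $\mathcal{G}$, so only a bounded family of fillers is blocked.''  That would be correct if you were choosing $e^{*}$ from $\mathcal{G}$, but you are choosing it from $\mathcal{H}$: a single forbidden color can sit on $\Theta(n^{s-2})$ edges of $\mathcal{H}$, in particular on every admissible choice $\{y,u_{0}\}\cup Z$.  Your fallback moves do not close this.  Reselecting the Lemma~\ref{lem1} pairs changes the tail colors $\beta_{j}$ but not $c_{f},c_{g},c_{h}$; the $b_{0}$-reroute from Claim~\ref{F0-L} only helps when the offending color is a tail color; and Observation~\ref{freePcolorsame} concerns free vertices of the fixed path $P$, so it gives no information when the bridging edge's color happens to equal $c_{f}$, $c_{g}$, or $c_{h}$.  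In Cases~(ii) and~(iii) you need two or three such bridging edges, and the problem compounds.

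The paper sidesteps the whole issue by never introducing an uncontrolled bridging edge.  The point you are missing is Claim~\ref{F0structual}(c): every vertex outside $L$ actually lies in $G^{*}$ (with enough incident $(s-1)$-edges to avoid a bounded set).  Hence one may pick $a_{0}\in G^{*}$ so that $a_{0}$ already contains a chosen vertex $x\in h\setminus g$; then the edge $a_{0}\cup\{v_{1}\}$ both lies in $\mathcal{F}_{0}$ \emph{and} meets $h$ linearly, so $f\oplus g\oplus h\oplus(a_{0}\cup\{v_{1}\})\oplus\cdots$ is rainbow with no extra edge needed.  The same device, with $G^{*}$-edges chosen through specified vertices of $g$ and $h$ (respectively $f$, $g$, $h$), handles Cases~(ii) and~(iii).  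Routing your splicing through $G^{*}$ rather than through raw $\mathcal{H}$-edges is exactly the missing ingredient.
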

\begin{proof}
Notice that for each $e\in \{f,g,h\}$,  there is a unique edge $e'$ in $\mathcal{G}$ having the same color with $e$. So we denote $f', g', h'$ to be the edges in $\mathcal{G}$ with the same color with $f,g,h$, respectively. If the edge $e$ is in $\mathcal{G}$ for some $e\in \{f,g,h\}$, we have $e'=e$.

(i) Assume that  there are three edges $f,g,h$ in $\mathcal{H}-L$ with distinct colors such that
$f,g,h$ form a ${\mathbb{P}}_{3}$.
 Realize that in $G^{*}$, there exists an $(s-1)$-edge $a_0$ containing a vertex $x$ in $h\setminus g$ and disjoint with $(f'\cup g'\cup h'\cup f\cup g\cup h)\setminus\{x\}$. Let $W$ consist of the vertices in $f'\cup g'\cup h'\cup f\cup g\cup h\cup a_0$.  By Lemma \ref{lem1}, we can find $(s-1)$-edges $\{a_i,b_i\}$ disjoint from $W$ for  $i=1,\ldots,t-1$. Then,
$$ f\oplus g\oplus h\oplus a_0 \bigoplus_{i=1}^{t-2}(\{v_i\}\oplus a_i\oplus b_i)\oplus\{v_{t-1}\}\oplus a_{t-1}$$
is a rainbow ${\mathbb{P}}_{k}$ in $\mathcal{H}$.

(ii) Assume that there are three edges $f,g,h$ in $\mathcal{H}-L$ with distinct colors such that $f,g$ form a ${\mathbb{P}}_{2}$, and $h$ is disjoint with $f\cup g$.
In $G^{*}$, there exists an $(s-1)$-edge $a_0$ containing a vertex $x$ in $g\setminus f$ and disjoint with $(f'\cup g'\cup h'\cup g\cup h\cup f)\setminus\{x\}$. And there exists an $(s-1)$-edge $b_0$ containing a vertex $y$ in $h$ and disjoint with $(f'\cup g'\cup h'\cup f\cup g\cup h\cup a_0)\setminus\{y\}$. Let $W$ consist of the vertices in $f'\cup g'\cup h'\cup f\cup g\cup h\cup a_0\cup b_0$.  By Lemma \ref{lem1}, we can find $(s-1)$-edges $\{a_i,b_i\}$ disjoint from $W$ for  $i=1,\ldots,t-1$. Then,
$$ f\oplus g\oplus a_0 \bigoplus_{i=1}^{t-2}(\{v_i\}\oplus a_i\oplus b_i)\oplus\{v_{t-1}\}\oplus b_0\oplus h$$
is a rainbow ${\mathbb{P}}_{k}$ in $\mathcal{H}$.

(iii) Assume that there are three edges $f,g,h$ in $\mathcal{H}-L$ with distinct colors such that they are disjoint with each other.
In $G^{*}$, there exists an $(s-1)$-edge $a_0$ containing a vertex $x$ in $f$ and disjoint with $(f'\cup g'\cup h'\cup f\cup g\cup h)\setminus\{x\}$. And we can find  $(s-1)$-edges $a'_0$, $b'_0$ in $G^{*}$, such that the $(s-1)$-edge $a'_0$ contains a vertex $y_1$ in $g$ and disjoint with $(f'\cup g'\cup h'\cup f\cup g\cup h\cup a_0)\setminus\{y_1\}$; and the $(s-1)$-edge $b'_0$ contains a vertex $y_2\neq y_1$ in $g$ and disjoint with $(f'\cup g'\cup h'\cup f\cup g\cup h\cup a_0\cup a'_0)\setminus\{y_2\}$. Moreover, there exists an $(s-1)$-edge $b_0$ containing a vertex $z$ in $h$ and disjoint with $(f'\cup g'\cup h'\cup f\cup g\cup h\cup a_0\cup a'_0\cup b'_0)\setminus\{z\}$. Let $W$ consist of the vertices in $f'\cup g'\cup h'\cup f\cup g\cup h\cup a_0\cup b_0\cup a'_0\cup b'_0$.  By Lemma \ref{lem1}, we can find $(s-1)$-edges $\{a_i,b_i\}$ disjoint from $W$ for  $i=1,\ldots,t-1$. Then,
$$f\oplus a_0 \oplus \{v_1\}\oplus a'_0\oplus g\oplus b'_0\bigoplus_{i=2}^{t-2}(\{v_i\}\oplus a_{i-1}\oplus b_{i-1})\oplus \{v_{t-1}\}\oplus b_0\oplus h$$
is a rainbow ${\mathbb{P}}_{k}$  in $\mathcal{H}$. Note that in this case, we require that $k=2t+1\ge 7$.
\end{proof}

As $\mathcal{H}$ is a counterexample which contains no rainbow ${\mathbb{P}}_{k}$, we know that each of the conditions (i)(ii)(iii) in Claim \ref{F-Ledges}  cannot exist. Hence there are at most two edges in $P-L$ by Claim \ref{F-Ledges}. In fact, if there are more than two edges in $P-L$, then one of conditions (i), (ii), (iii) in Claim \ref{F-Ledges} must occur. On the other hand, since $|L|=t-1$, there are at most $2(t-1)=k-3$ edges in $P$ containing vertices in $L$. Therefore, there are exactly two edges in $P-L$,  denoted by $e_i$ and $e_j$. Since the number of edges meeting $L$ in $\mathcal{F}$ is at most $\binom{n}{s}-\binom{n-t+1}{s}-(k-3)$, we have
\begin{equation}\label{F-Lextrame}
  |\mathcal{F }- L|\ge |\mathcal{F}|-\left[\binom{n}{s}
-\binom{n-t+1}{s}-(k-3)\right]
=\binom{n-t-1}{s-2}.
\end{equation}
We shall derive the final contradiction depending on $\mathcal{F }- L$ contains a ${\mathbb{P}}_{2}$ or not.

~

\noindent{\bf Case A.} $\mathcal{F }- L$ contains a ${\mathbb{P}}_{2}$.

 We take such a ${\mathbb{P}}_{2}$ in $\mathcal{F }- L$ and denote its edges by $h_1$ and $h_2$ with colors $\gamma_1$ and $\gamma_2$, respectively. Select an edge $e$ in $\mathcal{H}-L$ such that $e$ is disjoint with $\{e_i,e_j,h_1,h_2\}$.  If the color of $e$ is $\alpha_i$ or $\alpha_j$, then  $h_1,h_2$ form a ${\mathbb{P}}_{2}$ and $e$ is disjoint with them, which satisfies condition (ii) of Claim \ref{F-Ledges}, and so we can find a rainbow ${\mathbb{P}}_{k}$ in $\mathcal{H}$.  Assume instead that the color of $e$ is not in $\{\alpha_i, \alpha_j\}$, then $e,e_i,e_j$ have distinct colors.  Furthermore, either $e,e_i,e_j$ are pairwise disjoint, or $e_i,e_j$ form a ${\mathbb{P}}_{2}$ and $e$ is disjoint with them. This satisfies one of the conditions (ii) and (iii) of Claim \ref{F-Ledges}, which we can find a rainbow ${\mathbb{P}}_{k}$ in $\mathcal{H}$.

 ~

\noindent{\bf Case B.} $\mathcal{F }- L$ does not contain a ${\mathbb{P}}_{2}$.

 By (\ref{F-Lextrame}) and Theorem \ref{turan}, $\mathcal{F }- L$ is the extreme ${\mathbb{P}}_{2}$-free hypergraph on $n-t+1$ vertices. Namely, $\mathcal{F }- L$ consists of all the $\binom{n-t-1}{s-2}$ edges containing two fixed vertices $x$ and $y$. Note that $\{x,y\}\nsubseteq e_i$, and $\{x,y\}\nsubseteq e_j$ since $e_i,e_j\notin \mathcal{F}$. If $e_i,e_j$ are not consecutive in $P$, then we select an edge $h$ in $\mathcal{F }- L$ such that $h$ intersects $e_i\cup e_j$ as small as possible. Since $\mathcal{F }- L$ consists of all the $\binom{n-t-1}{s-2}$ edges containing  $x$ and $y$, we have that $h$ intersects $e_i\cup e_j$ in at most two vertices, namely, some of $x$ and $y$. Then $e_i,e_j,h$ must satisfy one of the conditions (i), (ii) and (iii) in Claim \ref{F-Ledges}, which we can find a rainbow ${\mathbb{P}}_{k}$ in $\mathcal{H}$. Assume instead that $e_i,e_j$ are  consecutive in $P$ in the following.

    If either $x\in e_i\setminus e_j$,  $y\in e_j\setminus e_i$ or $e_i\cap e_j\in\{x,y\}$, then we can select an edge $h$ in $\mathcal{F }- L$ such that $h\cap (\{e_i, e_j\}\setminus\{x,y\})=\emptyset$.
 Take an edge $e$ in $\mathcal{H}$ such that $e$ is disjoint with $\{e_i,e_j,h\}$ and $L$.  If the color of $e$ is $\alpha_i$ or $\alpha_j$, then $e_j,h$ form a rainbow ${\mathbb{P}}_{2}$ and $e$ is disjoint with them, or $e_i,h$ form a rainbow ${\mathbb{P}}_{2}$ and $e$ is disjoint with them. Thus  $e,e_j,h$ or $e,e_i,h$ satisfy condition (ii) of Claim \ref{F-Ledges}, which we can  obtain  a rainbow ${\mathbb{P}}_{k}$ in $\mathcal{H}$.  If the color of $e$ is neither $\alpha_i$ nor $\alpha_j$, then $e,e_i,e_j$ have distinct colors. Moreover,  $e_i,e_j$ form a ${\mathbb{P}}_{2}$ and $e$ is disjoint with them. This shows $e,e_i,e_j$ satisfy  condition (ii)  of Claim \ref{F-Ledges}, which we can find a rainbow ${\mathbb{P}}_{k}$ in $\mathcal{H}$.

 Finally,
assume instead that
either $\{x,y\}\cap \{e_i, e_j\}=\emptyset$,  or $x\in e_i\setminus e_j$,  $y\notin e_j$. Then we select an edge $h$ in $\mathcal{F }- L$ such that $h$ intersects $e_i\cup e_j$ as small as possible. Thus $h$ intersects $e_i\cup e_j$ in at most one vertex, namely $x$, which is a free vertex in $e_i\cup e_j$. This indicates  $e_i, e_j,h$ satisfy one of the conditions (i) and (ii) of Claim \ref{F-Ledges}, and hence we can find a rainbow ${\mathbb{P}}_{k}$ in $\mathcal{H}$.

 Therefore, we have established  the upper bound.

\qed\\
\vspace{2mm}

\section{Loose Path--Proof of Theorem \ref{th2}}

Let $\mathcal{H}$ be a complete $s$-uniform hypergraph on $n$ vertices.
The lower bound in Theorem \ref{th2} follows from a similar construction in Theorem \ref{th1} by applying the extreme $s$-graphs obtained from Theorem \ref{turanloose}.

For the upper bound, if $k=2t$, since $ar(n,s,\mathcal{P}_{k})\le ar(n,s,\mathbb{P}_{k})=\binom{n}{s}-\binom{n-t+1}{s}+2$, we have done.

If $k=2t+1$,
we consider, by contradiction,  a coloring of $\mathcal{H}$ using $\binom{n}{s}-\binom{n-t+1}{s}+3$ colors  yielding no rainbow $\mathcal{P}_{k}$.
Let $\mathcal{G}$ be a spanning subgraph of $\mathcal{H}$ with $\binom{n}{s}-\binom{n-t+1}{s}+3$ edges such that each color appears on exactly one edge
of $\mathcal{G}$. By Theorem \ref{turan}, we obtain that there is a loose path $P$ of length $k-1$ in $\mathcal{G}$. Denote by $e_1, e_2,\ldots, e_{k-1}$ the edges of $P$, and $\alpha_1, \alpha_2,\ldots, \alpha_{k-1}$ the colors of $e_1, e_2,\ldots, e_{k-1}$, respectively.

 Denote by $\mathcal{F}$ the subgraph obtained by deleting $e_1, e_2,\ldots, e_{k-1}$ from $\mathcal{G}$. Similar to the proof of Theorem \ref{th1},  we   show that after deleting few edges from $\mathcal{F}$,  the remaining subgraph contains no  ${\mathbb{P}}_{k-1}$.
Actually, we prove something stronger. Call a loose path $P'$ \emph{bad}, if the number of $free(P')$ vertices in the two end edges of $P'$ is at least three. Since $s\ge 3$, it is easy to get that a linear path is also a bad loose path.
\begin{claim}
There are no edge-disjoint bad loose paths of length $k-1$ in  $\mathcal{F}$.
\end{claim}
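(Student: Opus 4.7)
The plan is to mirror the argument of Claim 4.1 from the linear case: assuming two edge-disjoint bad loose paths $P_1,P_2$ of length $k-1$ in $\mathcal{F}$, I will produce a rainbow $\mathcal{P}_k$ in $\mathcal{H}$ and thus contradict the choice of $\mathcal{G}$. Label the edges of $P_1$ as $f_1,\ldots,f_{k-1}$ with colors $\beta_1,\ldots,\beta_{k-1}$ and those of $P_2$ as $g_1,\ldots,g_{k-1}$ with colors $\gamma_1,\ldots,\gamma_{k-1}$; since $P_1,P_2\subseteq\mathcal{F}\subseteq\mathcal{G}$ are edge-disjoint, all the $\alpha_i,\beta_i,\gamma_i$ are pairwise distinct. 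Note that Observation \ref{freePcolorsame} extends verbatim to loose paths: if $v$ is a free vertex of an end edge of some rainbow $\mathcal{P}_{k-1}$ called $P^*$, and $g$ is an $s$-edge with $g\cap V(P^*)=\{v\}$, then the color of $g$ must already appear on $P^*$ (else $P^*\cup g$ is a rainbow $\mathcal{P}_k$).

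The first step is to extract a clean auxiliary loose path. Since $|V(P)\cup V(P_1)\cup V(P_2)|\le 3(k-1)s=:c$ is a constant, the number of edges of $\mathcal{F}$ sharing at least two vertices with this set is $\sum_{i=2}^{s}\binom{c}{i}\binom{n-c}{s-i}=O(n^{s-2})$. Deleting these together with the $2(k-1)$ edges of $P_1\cup P_2$ still leaves, for sufficiently large $n$, more than $ex(n,s,\mathcal{P}_{k-3})$ edges of $\mathcal{F}$, so by Theorem \ref{turanloose} there exists a loose path $P_3=h_1\cdots h_{k-3}$ in $\mathcal{F}$ whose every edge meets $V(P)\cup V(P_1)\cup V(P_2)$ in at most one vertex. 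The generalized Observation then forbids any free vertex of an end edge of $P$, $P_1$, or $P_2$ from lying on $P_3$: otherwise a rainbow $\mathcal{P}_k$ appears already.

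Next, I extend $P_3$ at both ends to build a rainbow $\mathcal{P}_{k-1}$ and then push one edge further. Pick a free vertex $x$ of $e_1\subseteq P$ and form $e$ by joining $x$ to a free vertex of $h_1$ lying outside $V(P_1)\cup V(P_2)$ together with $s-2$ fresh vertices outside $V(P\cup P_1\cup P_2\cup P_3)$; the generalized Observation applied to $P$ forces the color of $e$ into $\{\alpha_1,\ldots,\alpha_{k-1}\}$. Using the badness of $P_1$, which guarantees at least three free vertices in $f_1\cup f_{k-1}$, select a free vertex $y\neq x$ of $P_1$ in an end edge and analogously build $e'$ on the $h_{k-3}$ side; the color of $e'$ then lies in $\{\beta_1,\ldots,\beta_{k-1}\}$. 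Thus $e, h_1,\ldots, h_{k-3}, e'$ form a rainbow $\mathcal{P}_{k-1}$. Using the badness of $P_2$, pick a free vertex $z\notin\{x,y\}$ of $P_2$ in $g_1\cup g_{k-1}$ and form $e''$ from $z$, a vertex of $e'\setminus(V(P_1)\cup V(P_3))$ (necessarily a free vertex of the new rainbow path), and $s-2$ fresh vertices. To avoid a rainbow $\mathcal{P}_k$ of the form $e\cup P_3\cup e'\cup e''$, the color of $e''$ must lie in the palette of that rainbow $\mathcal{P}_{k-1}$; to avoid a rainbow $\mathcal{P}_k$ of the form $P_2\cup e''$, the generalized Observation forces the color of $e''$ into $\{\gamma_1,\ldots,\gamma_{k-1}\}$. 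These two color sets are disjoint, yielding the desired contradiction.

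The main obstacle I anticipate is the careful vertex-selection in the loose-path setting, where consecutive edges may share up to $s-1$ vertices and free vertices can be scarce at the ends. This is exactly what the badness hypothesis is designed to control: it guarantees that three \emph{distinct} free vertices $x,y,z$ can be chosen, one from each of $P$, $P_1$, $P_2$, so that the constructions above remain valid. Without the three-free-vertex condition, the picks $y\neq x$ or $z\notin\{x,y\}$ could fail when $P_1$ or $P_2$ is very ``narrow'' at its ends, breaking the chain of applications of the generalized Observation. A secondary bookkeeping point is to ensure that each constructed edge $e,e',e''$ intersects its target path at exactly the single intended free vertex, which is guaranteed by restricting the $s-2$ fresh vertices to the complement of all previously chosen vertex sets.
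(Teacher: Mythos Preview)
Your approach mirrors the paper's almost exactly, but there is one genuine technical slip. You invoke Theorem~\ref{turanloose} and extract a \emph{loose} path $P_3=h_1\cdots h_{k-3}$, whereas the paper uses $ex(n,s,\mathbb{P}_{k-3})$ and extracts a \emph{linear} path $P_3$. With a loose $P_3$ the end edge $h_1$ may share as many as $s-1$ vertices with $h_2$, leaving only a single $free(P_3)$ vertex in $h_1$. Since you only deleted edges having at least two vertices in $V(P)\cup V(P_1)\cup V(P_2)$, this unique free vertex of $h_1$ can perfectly well be the one vertex of $h_1$ that lies in $V(P)\cup V(P_1)\cup V(P_2)$. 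In that scenario there is no ``free vertex of $h_1$ lying outside $V(P_1)\cup V(P_2)$'' to choose for $e$; and even when such a vertex exists, it could lie in $V(P)$ (a cross vertex or an interior free vertex of $P$), in which case $e\cap V(P)\neq\{x\}$ and the generalized Observation for $P$ does not apply. The same problem recurs at the $h_{k-3}$ end when you build $e'$.

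The fix is immediate and is exactly what the paper does: since $|\mathcal{F}|-O(n^{s-2})>ex(n,s,\mathbb{P}_{k-3})$ as well (both Tur\'an numbers are $\sim(t-2)\binom{n}{s-1}$ while $|\mathcal{F}|\sim(t-1)\binom{n}{s-1}$), take $P_3$ to be a \emph{linear} path. Then $h_1$ has $s-1\ge 2$ free vertices, at most one of which meets $V(P)\cup V(P_1)\cup V(P_2)$, so a suitable free vertex outside all three paths always exists. With this adjustment the rest of your argument (the extensions $e,e',e''$ and the two incompatible colour constraints on $e''$) is correct and coincides with the paper's proof.
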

\begin{proof}
By contradiction, suppose that there are two edge-disjoint bad loose paths $P_1$ and $P_2$ of length $k-1$ in  $\mathcal{F}$. Denote by $f_1, f_2,\ldots, f_{k-1}$ the edges of $P_1$ with colors  $\beta_1, \beta_2,\ldots, \beta_{k-1}$, respectively.  And denote by $g_1, g_2,\ldots, g_{k-1}$ the edges of $P_2$ with colors $\gamma_1, \gamma_2,\ldots, \gamma_{k-1}$, respectively.
Let $c$ denote the number of vertices of $P\cup P_1\cup P_2$. Then $c\le 3[(k-1)s-(k-2)]$. Note that the number of edges which contain at least two vertices in $P\cup P_1\cup P_2$ is at most $\sum^{s}_{i=2}{\binom{c}{i}\binom{n-c}{s-i}}$. Since
\begin{eqnarray*}
|\mathcal{F}|-\sum^{s}_{i=2}{\binom{c}{i}\binom{n-c}{s-i}}>
ex(n,s,\mathbb{P}_{k-3})
\end{eqnarray*}
for  sufficiently large $n$, there exists a linear path $P_3$ of length $k-3$, such that every edge in $P_3$ has at most one vertex of $P\cup P_1\cup P_2$.
Hence, for the same reason as in Observation \ref{freePcolorsame}, all the $free(P)$ vertices in $e_1\cup e_{k-1}$, $free(P_1)$ vertices in $f_1\cup f_{k-1}$ and $free(P_2)$ vertices in $g_1\cup g_{k-1}$ are not in $P_3$. Denote by  $h_1, h_2,\ldots, h_{k-3}$ the edges of $P_3$. Consider the edge $e$, which consists of  a $free(P)$ vertex $x$ in $e_1$, a $free(P_3)$ vertex  in $h_1\setminus (P_1\cup P_2)$ and $s-2$ vertices disjoint with $P\cup P_1\cup P_2\cup P_3$. Then the color of $e$ must be  from $\{\alpha_1, \alpha_2,\ldots, \alpha_{k-1}\}$.
And consider the edge $e'$, which consists of  a $free(P_1)$ vertex $y\neq x$ in $f_1\cup f_{k-1}$  (we can find such a vertex $y$ since $P_1$ is bad), a $free(P_3)$ vertex  in $h_{k-3}\setminus (P_1\cup P_2)$ and $s-2$ vertices disjoint with $P_1\cup P_2\cup P_3\cup P\cup e$, then the color of $e'$ is  from $\{\beta_1, \beta_2,\ldots, \beta_{k-1}\}$. Moreover, $e\cup P_3\cup e'$ is a rainbow $\mathcal{P}_{k-1}$. Now consider another edge $e''$, which consists of  a $free(P_2)$ vertex $z\neq x, y$ in $g_1\cup g_{k-1}$, a  vertex  in $e'\setminus (P_1\cup P_3)$ and $s-2$ vertices disjoint with $P_1\cup P_2\cup P_3\cup P\cup e\cup e'$, then $e''$ must have a color appeared in the rainbow loose path $e\cup P_3\cup e'$. However, to avoid extending $P_2$ to a rainbow $\mathcal{P}_{k}$, the color of $e''$ should be one of $\{\gamma_1, \gamma_2,\ldots, \gamma_{k-3}\}$, a contradiction. Therefore, $\mathcal{F}$ contains no edge-disjoint bad loose paths of length $k-1$.
\end{proof}

So if $\mathcal{F}$ has a bad $\mathcal{P}_{k-1}$, denote by $\mathcal{F}_0$ the subgraph obtained by deleting all the $k-1$ edges of that $\mathcal{P}_{k-1}$ in $\mathcal{F}$; if  $\mathcal{F}$ dose not contain a bad $\mathcal{P}_{k-1}$, then we delete any $k-1$ edges of it and denote the subgraph remained by $\mathcal{F}_0$. Then in either case,
\begin{eqnarray}\label{bad}
  \mbox{$\mathcal{F}_0$ contains no bad $\mathcal{P}_{k-1}$. }
\end{eqnarray}
Therefore, $\mathcal{F}_0$ is $\mathbb{P}_{k-1}$-free and $|\mathcal{F}_0|=\binom{n}{s}-\binom{n-t+1}{s}+3-2(k-1)$.

Note that $|\mathcal{F}_0|\sim (t-1)\binom{n}{s-1}$, by Theorem \ref{stability}, we can find an $(s-1)$-graph  $G^*\subset \partial \mathcal{F}_0$ with  $|G^*|  \sim \binom{n}{s-1}$ and a set $L$ of $t-1$ vertices of $\mathcal{F}_0$ such that $L\cap V(G^*)=\emptyset$ and $e\cup \{v\}\in \mathcal{F}_0$ for any $(s-1)$-edge $e\in G^*$  and any $v\in L$.  Moreover,
$|\mathcal{F }_0- L|= o(n^{s-1})$. Select a $G^*$ with the maximum number of $(s-1)$-edges.  Let the  vertices in $L$ be $v_1,v_2,\ldots,v_{t-1}$. Note that all the $v_i$ for $i=1,2,\ldots, t-1$ are not $free(P)$ vertices in $e_1\cup e_{k-1}$. Otherwise, let $W$ be the vertex set of $P$, by Lemma \ref{lem1}, we can find an $(s-1)$-edge disjoint with $W$ in $G^*$, and then this together with $v_i$ will form an $s$-edge which extends $P$ to a rainbow  $\mathcal{P}_{k}$.

We divide the edges of $\mathcal{F }_0- L$ into two types. Let $Q$ denote the set of  $free(P)$ vertices in $e_1\cup e_{k-1}$. For an edge $e\in \mathcal{F }_0- L$, we call it of {\em Type I}  if $Q\subseteq e$, and of {\em Type II} otherwise. Now we estimate the number of edges of each  type.
\begin{claim}\label{claim2}
There is no $\mathcal{P}_2$ in $\mathcal{F}_0 - L$ whose edges are all of Type II. Therefore, the number of edges of Type II is at most $\lfloor n/s\rfloor$.
\end{claim}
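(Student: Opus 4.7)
The plan is to prove the main assertion by contradiction: if $h_1,h_2\in \mathcal{F}_0-L$ are both of Type II and form a loose $\mathcal{P}_2$ (so $h_1\cap h_2\neq\emptyset$), I will construct a rainbow $\mathcal{P}_k$, violating our standing assumption. Once the main assertion is established, the ``therefore'' clause is immediate: no loose $\mathcal{P}_2$ of Type II edges means the Type II edges are pairwise vertex-disjoint, hence form a matching in $\mathcal{F}_0 - L$, so at most $\lfloor n/s\rfloor$ of them can exist.

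The construction follows the template
$$h_1\,\oplus\,h_2\,\oplus\,g\,\oplus\, P',$$
for a total of $2+1+(k-3)=k$ edges, where $g$ is a single bridge edge and $P'$ is built using Lemma \ref{lem1}. Apply Lemma \ref{lem1} with $W=V(P)\cup h_1\cup h_2$ to obtain an $(s-1)$-edge $b_0$ and pairs $\{a_i,b_i\}$ for $i=1,\ldots,t-1$ in $G^*$, all pairwise disjoint and disjoint from $W$, with $|a_i\cap b_i|=1$. Set
$$P'=(b_0\cup\{v_1\})\oplus(\{v_1\}\cup a_1)\oplus(b_1\cup\{v_2\})\oplus\cdots\oplus(\{v_{t-1}\}\cup a_{t-1}),$$
a loose path of length $k-3$ inside $\mathcal{F}_0$, since $e\cup\{v\}\in\mathcal{F}_0$ for every $e\in G^*$ and every $v\in L$.

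For the bridge, I use the Type II property of $h_1$ to select a free vertex $q\in Q\setminus h_1$; without loss of generality $q$ is a free vertex of $e_1$. Then I pick a vertex $y\in h_2\setminus(h_1\cup V(P))$, a vertex $u\in b_0$, and $s-3$ fresh vertices in $V(\mathcal{H})\setminus(V(P)\cup h_1\cup h_2\cup V(P')\cup L)$, and let $g$ be the edge consisting of $\{q,y,u\}$ together with these $s-3$ fresh vertices. By construction $g\cap V(P)=\{q\}$, so by Observation \ref{freePcolorsame} (in the loose-path form used implicitly throughout Section 5) the color of $g$ must lie in $\{\alpha_1,\ldots,\alpha_{k-1}\}$; these colors are disjoint from the colors of $h_1,h_2$ and of the edges of $P'$, since those all lie in the $\mathcal{F}_0$ color palette. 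Consecutive adjacency of $h_1\oplus h_2\oplus g\oplus P'$ follows from $h_1\cap h_2\neq\emptyset$, $y\in h_2\cap g$, and $u\in g\cap(b_0\cup\{v_1\})$; non-consecutive pairs are disjoint by Lemma \ref{lem1} together with the fresh choices, so we obtain the desired rainbow $\mathcal{P}_k$.

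The main obstacle is guaranteeing the existence of the vertex $y\in h_2\setminus(h_1\cup V(P))$. Since $|h_2|=s$, $|h_2\cap h_1|\ge 1$ so $|h_2\setminus h_1|\ge s-1$, and only boundedly many of these can lie in $V(P)$, a direct counting argument typically yields such a $y$. In the degenerate case $h_2\subseteq h_1\cup V(P)$, I would swap the roles of $h_1,h_2$ and invoke Type II of $h_2$ to pick instead a free vertex of $e_{k-1}$ avoiding $h_2$, building the mirror-image bridge; if this also fails, the specific position of $h_2\cap V(P)$ forces enough structure to redesign $g$ so that $g\cap V(P)$ is still a single free vertex of an end edge. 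With the rainbow $\mathcal{P}_k$ produced in all subcases, the contradiction closes the proof.
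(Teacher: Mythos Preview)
Your construction is clean when it applies, but there is a genuine gap precisely where you flag it: the vertex $y\in h_2\setminus(h_1\cup V(P))$ need not exist. The edges $h_1,h_2$ are arbitrary edges of $\mathcal{F}_0-L$; nothing prevents $h_1\cup h_2\subseteq V(P)$, and in that situation $h_2\setminus(h_1\cup V(P))=\emptyset$. Your fallback (swap $h_1,h_2$) fails for the same reason, and the final sentence (``the specific position of $h_2\cap V(P)$ forces enough structure to redesign $g$'') is not an argument. The whole point of routing $q$ through $g$ is to invoke Observation~\ref{freePcolorsame}, which requires $g\cap V(P)=\{q\}$; once $y\in V(P)$ this breaks, and there is no evident redesign that preserves both the loose-path incidences and the single-point intersection with $V(P)$.

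The paper avoids this obstruction by not trying to force the colour of the bridge via Observation~\ref{freePcolorsame}. Instead it takes any vertex of $h_2\setminus h_1$ (which always exists), builds $g$ from that vertex, the hinge $u=a_0\cap b_0$, and fresh vertices, and argues directly: if the colour of $g$ is outside $\{\gamma_1,\gamma_2,\beta_1,\dots,\beta_{k-3}\}$ then $h_1\oplus h_2\oplus g\oplus P'$ is already a rainbow $\mathcal{P}_k$. Hence the colour of $g$ lies in that set, and a case analysis follows. The Type~II hypothesis is used only inside those cases (for instance, when $g$ has colour $\gamma_1$ one takes a vertex of $Q\setminus h_2$ to build a further edge whose colour is forced into the $\alpha$-palette). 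In other words, your idea of leveraging $Q\setminus h_i$ is correct in spirit, but it must be deployed \emph{after} first pinning down the colour of $g$ by the direct rainbow argument, not as the mechanism for pinning it down.
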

\begin{proof}
Suppose that there is a $\mathcal{P}_2$ with two edges of Type II in  $\mathcal{F}_0 - L$, whose edges are denoted by $h_1$ and $h_2$ with colors  $\gamma_1$ and $\gamma_2$, respectively.
We can take $(s-1)$-edges $a_0$, $b_0$ in $G^*$, such that $a_0$ and $b_0$ have exactly one common vertex $u$ and are disjoint from $P$, $h_1$ and $h_2$. Let $W$ be the vertex set of $P\cup h_1\cup h_2\cup a_0\cup b_0$. By Lemma \ref{lem1}, we can find $(s-1)$-edges $\{a_i,b_i\}$ disjoint from $W$ for  $i=1,\ldots,t-1$, such that for every $i$, $a_i$ and $b_i$ have exactly one common vertex, and for any $j\neq i$,
$\{a_i,b_i\}$ and $\{a_j,b_j\}$ are vertex disjoint. Then,
$$P'=  a_0 \bigoplus_{i=1}^{t-2}(\{v_i\}\oplus a_i\oplus b_i)\oplus \{v_{t-1}\}\oplus a_{t-1}$$
is a ${\mathcal{P}}_{2t-2}={\mathcal{P}}_{k-3}$. Let the edges of $P'$ be $f_1, f_2,\ldots, f_{k-3}$, and the colors of edges  be $\beta_1, \beta_2,\ldots, \beta_{k-3}$ respectively.

Consider an edge $g$, which consists of  a vertex in $h_2\setminus h_1$, the common vertex $u$ of $a_0$ and $b_0$, and $s-2$ vertices disjoint from $P$, $P'$, $h_1$, $h_2$, $b_0$ and $b_{t-1}$. So to prevent extending $P'$ to a rainbow ${\mathcal{P}}_{k}$, the color of $g$ is in $\{\gamma_1,\gamma_2,\beta_1, \beta_2,\ldots, \beta_{k-3}\}$.
If the color of $g$ is $\gamma_1$, consider the edge $e$, which consists of a  vertex in $Q\setminus h_2$, a vertex  in $a_{t-1}$ and $s-2$ vertices disjoint from $P$, $P'$, $h_1$, $h_2$, $g$, $b_0$ and $b_{t-1}$, then the color of $e$ is from $\{\alpha_1, \alpha_2,\ldots, \alpha_{k-1}\}$, and hence $h_2\cup g\cup P'\cup e$ is a rainbow $\mathcal{P}_k$.

 If the color of $g$ is $\gamma_2$, we pick a vertex $w$ in $h_1$, such that if $|Q|<s$, let $w\in h_1\setminus Q$, and if $|Q|\ge s$, let $w$ be an arbitrary vertex in $h_1$. Consider the edge $e'$ consisting of $ w$, the common  vertex  of $a_{t-1}$ and $b_{t-1}$, and $s-2$ vertices disjoint from $P$, $P'$, $h_1$, $h_2$, $g$, $b_0$ and $b_{t-1}$. Then the color of $e'$ is from $\{\gamma_1,\gamma_2,\beta_1, \beta_2,\ldots, \beta_{k-3}\},$ because otherwise, $g\cup P'\cup e'\cup h_1$ is a rainbow  $\mathcal{P}_{k}$.  If  the color of $e'$ is $\gamma_2$, then $h_1\cup e'\cup P'$ is a rainbow $\mathcal{P}_{k-1}$. We obtain a rainbow $\mathcal{P}_k$ by adding an edge $e''$, which consists of   a  vertex in $Q\setminus h_1$, a $free(P')$ vertex  in $f_{1}\setminus \{u\}$ and $s-2$ vertices disjoint from $P$, $P'$, $h_1$, $h_2$, $g$, $e'$, $b_0$ and $b_{t-1}$.  If the color of $e'$ is $\gamma_1$, consider the edge $e'''$ consisting of a  vertex in $Q\setminus (g\cup e')$, a vertex  in $g \setminus (h_2\cup \{u\})$ and $s-2$ vertices disjoint from $P$, $P'$, $h_1$, $h_2$, $g$, $e'$, $b_0$ and $b_{t-1}$. Then the color of $e'''$ is from $\{\alpha_1, \alpha_2,\ldots, \alpha_{k-1}\}$, and hence $e'''\cup g\cup P'\cup e'$ is a rainbow $\mathcal{P}_k$. If the color of $e'$ is $\beta_j$ for some $j$.  Let $W$ be the vertex set of $h_1\cup h_2\cup e'\cup a_0\cup b_0 \cup a_{t-1}\cup b_{t-1}\cup g$, by Lemma \ref{lem1}, we can find $(s-1)$-edges $\{a'_i,b'_i\}$ in $G^*$, which are disjoint from $W$ for  $i=1,\ldots,t-1$, and
$$g\oplus b_0\bigoplus_{i=1}^{t-2}(\{v_i\}\oplus a'_i\oplus b'_i)\oplus \{v_{t-1}\}\oplus b_{t-1} \oplus e'\oplus h_1$$
is a rainbow ${\mathcal{P}}_{k}$.

So assume instead the color of $g$ is one of $\{\beta_1, \beta_2,\ldots, \beta_{k-3}\}$. Let $W$ be the vertex set of $h_1\cup h_2\cup b_0 \cup b_{t-1}\cup g$. By Lemma \ref{lem1}, we can find $(s-1)$-edges $\{a''_i,b''_i\}$ disjoint from $W$ for  $i=1,\ldots,t-1$, and
$$h_1\oplus h_2\oplus g\oplus b_0\bigoplus_{i=1}^{t-2}(\{v_i\}\oplus a''_i\oplus b''_i)\oplus \{v_{t-1}\}\oplus b_{t-1}$$
is a rainbow ${\mathcal{P}}_{k}$.  Hence, there is no $\mathcal{P}_2$ with two edges of Type II, and we have proved  Claim \ref{claim2}.
\end{proof}

Now, we move to Type I edges. Recall that  $Q$ denotes the set of  $free(P)$ vertices in $e_1\cup e_{k-1}$.   So if $s\le|Q|\le2(s-1)$, the number of   Type I edges is at most $1$; if $2 \le|Q|\le s-1$, then a rough counting shows the number of  Type I edges is at most $\binom{n-(t-1)-|Q|}{s-|Q|}\le \binom{n-t-1}{s-2}$.
We further prove that there is no isolated vertex in $\mathcal{F}_0$. Indeed, if $\mathcal{F}_0$ has an isolated vertex, then combining with Claim \ref{claim2},
 $$|\mathcal{F}_0|\le \binom{n-1}{s}-\binom{n-1-(t-1)}{s}+\binom{n-t-1}{s-2}+
 \left\lfloor\frac{n}{s}\right\rfloor,$$
which is less than $\binom{n}{s}-\binom{n-t+1}{s}+3-2(k-1)$ for  sufficiently large $n$, a contradiction.
For the edges of Type I, we have the following claim.
\begin{claim}\label{claim3}
The number of edges of Type I is at most 1.
\end{claim}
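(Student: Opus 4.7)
Suppose for contradiction that there exist two distinct Type I edges $h_1,h_2 \in \mathcal{F}_0 - L$ with colors $\gamma_1,\gamma_2$. By definition both contain $Q$, so they share at least the vertex $x\in Q_1\subseteq e_1\setminus e_2$ and the vertex $y\in Q_{k-1}\subseteq e_{k-1}\setminus e_{k-2}$. Since $h_1\neq h_2$, we must have $|Q|<s$, and both $h_1\setminus h_2$ and $h_2\setminus h_1$ are non-empty. The plan is to mimic the proof of Claim \ref{claim2}: construct a long linear subpath of $\mathcal{F}_0$ of length $k-3$ using $(s-1)$-edges from $G^*$ and vertices of $L$ (all disjoint from $P,h_1,h_2$), then attach $h_1$ and $h_2$ to it via a bridging edge $g$, yielding a loose path of length $k$ whose colors can be made rainbow.

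Concretely, I would first apply Lemma \ref{lem1} to find $(s-1)$-edges $a_0,b_0 \in G^*$ with $a_0\cap b_0=\{u\}$ disjoint from $V(P)\cup h_1\cup h_2$, then enlarge $W$ to the vertex set of $V(P)\cup h_1\cup h_2\cup a_0\cup b_0$ and invoke Lemma \ref{lem1} again to produce pairs $(a_i,b_i)$ for $i=1,\ldots,t-1$ in $G^*$ disjoint from $W$. This yields
$$P'=a_0\bigoplus_{i=1}^{t-2}(\{v_i\}\oplus a_i\oplus b_i)\oplus\{v_{t-1}\}\oplus a_{t-1},$$
a linear path of length $k-3$ in $\mathcal{F}_0$ with edges $f_1,\ldots,f_{k-3}$ colored $\beta_1,\ldots,\beta_{k-3}$. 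Pick $w\in h_2\setminus h_1$ and form the bridge $g$ consisting of $w$, $u$, and $s-2$ further vertices chosen outside $V(P)\cup V(P')\cup h_1\cup h_2\cup b_0$. Then $h_1,h_2,g,f_1,\ldots,f_{k-3}$ is a loose path of length $k$ in $\mathcal{H}$, since $h_1\cap h_2\supseteq Q$, $h_2\cap g=\{w\}$, $g\cap f_1=\{u\}$, and all non-consecutive pairs are disjoint by construction.

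The remainder is a color analysis for $g$, directly parallel to the handling in Claim \ref{claim2}. If the color of $g$ lies outside $\{\gamma_1,\gamma_2,\beta_1,\ldots,\beta_{k-3}\}$, this path is already rainbow. If it equals some $\beta_j$, then I enlarge $W$ to include $g$ and apply Lemma \ref{lem1} once more to produce fresh pairs $(a'_i,b'_i)$, building the alternative rainbow path $h_1\oplus h_2\oplus g\oplus b_0\bigoplus_{i=1}^{t-2}(\{v_i\}\oplus a'_i\oplus b'_i)\oplus\{v_{t-1}\}\oplus a'_{t-1}$, whose intermediate edges are entirely new and thus have colors distinct from $\beta_j$. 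The most delicate cases are when the color of $g$ equals $\gamma_1$ or $\gamma_2$; here I would replace $g$ by a bridge built from a vertex $w'\in h_1\setminus h_2$ (giving the path $h_2,h_1,g',f_1,\ldots,f_{k-3}$), or insert an additional bridge between $h_1$ and $g$, iterating the rebuild of $P'$ via Lemma \ref{lem1} if further color conflicts arise. The main obstacle is keeping all bridging edges simultaneously free of the forbidden colors, but since the constraints involve only a constant number of vertices, Lemma \ref{lem1} supplies sufficiently many $(s-1)$-edges in $G^*$ for the construction to succeed for large $n$. A rainbow $\mathcal{P}_k$ is produced in every case, contradicting the coloring, and thus there is at most one Type I edge.
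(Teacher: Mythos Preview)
Your approach has a genuine gap in precisely the cases you flag as ``most delicate,'' and the hand-waving there cannot be completed along the lines you suggest. The resolution in Claim~\ref{claim2} for the case where the bridge $g$ receives colour $\gamma_1$ was to extend the rainbow $\mathcal{P}_{k-1}$ given by $h_2\cup g\cup P'$ to a $\mathcal{P}_k$ via an edge $e$ containing a vertex of $Q\setminus h_2$; Observation~\ref{freePcolorsame} then forces $e$ to carry a colour from $\{\alpha_1,\ldots,\alpha_{k-1}\}$, finishing the argument. But for Type~I edges we have $Q\subseteq h_1$ and $Q\subseteq h_2$, so $Q\setminus h_2=\emptyset$ and this extension is impossible: any edge meeting $Q$ already meets $h_2$, destroying the loose-path condition. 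Switching to a bridge $g'$ through $h_1\setminus h_2$ runs into the symmetric obstruction, and there is no a~priori reason the colours of $g$ and $g'$ cannot be $\gamma_1$ and $\gamma_2$ simultaneously. Varying the free vertices of the bridge does not help either, since arbitrarily many edges of $\mathcal{H}$ may share the colour $\gamma_1$.

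The paper sidesteps this entirely by aiming for a different contradiction. Instead of building a rainbow $\mathcal{P}_k$ in $\mathcal{H}$, it builds a \emph{bad} $\mathcal{P}_{k-1}$ inside $\mathcal{F}_0$ itself, contradicting~(\ref{bad}). The key new ingredient is to pick $x\in h_2\setminus h_1$, $y\in h_1\setminus h_2$ and count edges of $\mathcal{H}$ that contain exactly one of $x,y$, one vertex of $L$, and are otherwise disjoint from $h_1\cup h_2$; a short calculation shows this count exceeds the number of missing-edges, so some such edge $e$ lies in $\mathcal{F}_0$. Then $h_1\oplus h_2\oplus e$ followed by a standard $G^*$-path through $L$ (via Lemma~\ref{lem1}) gives a bad $\mathcal{P}_{k-1}$ in $\mathcal{F}_0$, and no colour analysis is needed at all. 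You should replace your bridge-and-colour argument with this construction.
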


\begin{proof}
If $s\le|Q|\le2(s-1)$, then Claim \ref{claim3} follows, and so we may assume  $2 \le|Q|\le s-1$. Suppose to the contrary there are at least two edges of Type I. Then we can find a ${\mathcal{P}}_{2}$ with two edges of Type I, denoted by $h_1$ and $h_2$. Pick vertices $x\in h_2 \setminus h_1$, and $y\in h_1 \setminus h_2$.  The number of edges, which containing exactly one of $\{x,y\}$, one vertex in $L$, and disjoint with $(h_1\cup h_2) \setminus \{x,y\}$, is at least
$$2(t-1)\binom{n-(t-1)-2s}{s-2}>\binom{n-(t-1)-|Q|}{s-2}+
\lfloor\frac{n}{s}\rfloor+2(k-1)-3,$$
which is at least $\binom{n-(t-1)-|Q|}{s-|Q|}+
\lfloor\frac{n}{s}\rfloor+2(k-1)-3$, and so some of them must belong to $\mathcal{F}_0$. Suppose $e\in \mathcal{F}_0$  is such an edge and $v_j\in e\cap L$. Let $W$ be  the vertex set of $ h_1\cup h_2\cup e$. By Lemma \ref{lem1}, we can find $(s-1)$-edges $\{a_i,b_i\}$ disjoint from $W$ for  $i=1,\ldots,t-1$, and
$$h_1\oplus h_2\oplus e\oplus \{a_1,b_1\}\bigoplus_{i=1}^{j-1}(\{v_i\}\oplus a_{i+1}\oplus b_{i+1})\bigoplus_{i=j+1}^{t-2}(\{v_i\}\oplus a_{i}\oplus b_{i})\oplus \{v_{t-1} \}\oplus  a_{t-1}$$
is a bad ${\mathcal{P}}_{k-1}$ in $\mathcal{F}_0$, a contradiction to (\ref{bad}).
\end{proof}

By Claims \ref{claim2} and  \ref{claim3}, we  get that, in  $\mathcal{F}_0$, there are at most $\lfloor n/s\rfloor+1$ edges disjoint with $L$. So there are at least $|\mathcal{F}_0|-\left(\lfloor n/s\rfloor+1\right)=\binom{n}{s}-\binom{n-t+1}{s}+3-2(k-1)-\lfloor n/s\rfloor-1=\binom{n}{s}-\binom{n-t+1}{s}-2(k-1)-\lfloor n/s\rfloor+2$ edges in $\mathcal{F}_0$ containing vertices in $L$. Since there are $\binom{n}{s}-\binom{n-t+1}{s}$ edges containing vertices of $L$ in $\mathcal{H}$, we have that there are at most
$\lfloor n/s\rfloor+2(k-1)-2$ edges meeting $L$ but not in $\mathcal{F}_0$.
If there exists a vertex $v\in V(\mathcal{F}_0)\setminus L$ but $v$ is an isolated vertex in $G^*$, then we have at least $\binom{n-(t-1)-1}{s-2}>\lfloor n/s\rfloor+2(k-1)-2$ edges meeting $L$ but not belonging to $\mathcal{F}_0$, which is a contradiction. Hence, every vertex in $V(\mathcal{F}_0)\setminus L$ is contained in some edges of $G^*$.

{
In fact, to find a rainbow ${\mathcal{P}}_{k}$ in $\mathcal{H}$, we can make use of the suitable edges in $\mathcal{F}_0-L$ and $P-L$ to extend a ${\mathcal{P}}_{k-3}$. We shall prove the following claim, which is analogous to Claim \ref{F-Ledges}.
\begin{claim}\label{claim4}
(i) For $k\ge 7$, if there are three edges $f,g,h$ in $\mathcal{H}-L$ with distinct colors such that  $f, g, h$ form a ${\mathcal{P}}_{3}$, or $f,g$ form a ${\mathcal{P}}_{2}$ and $h$ is disjoint with $f\cup g$, or $f, g, h$ are disjoint with each other, then we can find a rainbow ${\mathcal{P}}_{k}$ in $\mathcal{H}$.

(ii) For $k=5$, if there exists either a rainbow ${\mathcal{P}}_{3}$ or a ${\mathcal{P}}_{2}$ plus a disjoint edge  with all three edges having distinct colors in $\mathcal{H}-L$, then we can find a rainbow ${\mathcal{P}}_{k}$ in $\mathcal{H}$.
\end{claim}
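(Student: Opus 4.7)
The plan is to mirror the proof of Claim \ref{F-Ledges}, since the three configurations listed in part (i) of Claim \ref{claim4} are exactly those treated there, and a rainbow linear path $\mathbb{P}_k$ is automatically a rainbow loose path $\mathcal{P}_k$. As preliminary notation, for each $e \in \{f, g, h\}$ let $e'$ denote the unique edge of $\mathcal{G}$ of the same color (so $e' = e$ if $e \in \mathcal{G}$), and set $W := V(f \cup g \cup h \cup f' \cup g' \cup h' \cup P)$, a constant-sized vertex set. By the defining property of $G^*$, every edge $a \cup \{v_j\}$ with $a \in G^*$ and $v_j \in L$ lies in $\mathcal{F}_0$; hence, as long as the participating $(s-1)$-edges from $G^*$ are chosen disjoint from $W$, the resulting spine edges have pairwise distinct colors, all distinct from the colors of $f, g, h$, since they differ (in the ground set) from $f' \cup g' \cup h'$.

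For part (i) with $k \geq 7$, I would treat the three subcases verbatim. In the $\mathcal{P}_3$ subcase, select an $(s-1)$-edge $a_0 \in G^*$ through a vertex of $h$ and otherwise disjoint from $W$, then invoke Lemma \ref{lem1} to obtain pairs $\{a_i, b_i\}_{i=1}^{t-1}$ in $G^*$ pairwise disjoint and disjoint from $W \cup V(a_0)$; the concatenation
$$f \oplus g \oplus h \oplus a_0 \bigoplus_{i=1}^{t-2}(\{v_i\} \oplus a_i \oplus b_i) \oplus \{v_{t-1}\} \oplus a_{t-1}$$
is then a rainbow $\mathcal{P}_k$ in $\mathcal{H}$. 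In the $\mathcal{P}_2$-plus-disjoint-edge subcase, one additionally picks $b_0 \in G^*$ through a vertex of $h$ and $a_0 \in G^*$ through a vertex of $g \setminus f$ and forms the analogous path ending in $\oplus b_0 \oplus h$. In the three-pairwise-disjoint subcase, one also needs attachment edges $a'_0, b'_0 \in G^*$ through two distinct vertices of $g$, and assembles $f \oplus a_0 \oplus \{v_1\} \oplus a'_0 \oplus g \oplus b'_0 \oplus \cdots \oplus b_0 \oplus h$; this uses two distinct vertices of $L$ as gluing points and therefore requires $t \geq 3$, i.e., $k \geq 7$.

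For part (ii) with $k = 5$ (so $t = 2$ and $L = \{v_1\}$), the first two constructions degenerate to
$$f \oplus g \oplus h \oplus a_0 \oplus \{v_1\} \oplus a_1 \quad \text{and} \quad f \oplus g \oplus a_0 \oplus \{v_1\} \oplus b_0 \oplus h,$$
each of which is a loose path with exactly five edges, yielding the desired rainbow $\mathcal{P}_5$. The three-pairwise-disjoint configuration is intentionally excluded because connecting three disjoint edges into a path of length $5$ requires two independent gluings, consuming two distinct vertices of $L$, which is impossible when $|L| = 1$. The main technical obstacle to verify is selecting, simultaneously, auxiliary $(s-1)$-edges $a_0, b_0$ (and $a'_0, b'_0$ in the hardest subcase) through prescribed vertices of $f, g, h$ while avoiding $W$; this is handled by the density estimate $|G^*| \sim \binom{n}{s-1}$, which guarantees that the constant-sized forbidden set $W$ eliminates only $O(n^{s-2})$ edges of $G^*$ and leaves many $(s-1)$-edges through any given vertex of $V(\mathcal{F}_0) \setminus L$ available for use.
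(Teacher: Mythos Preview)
Your outline is essentially the same as the paper's proof: the paper, too, introduces the representatives $f',g',h'$ in $\mathcal{G}$, handles the three configurations of part (i) exactly as you describe (with the same three path constructions), and for part (ii) simply remarks that the proof is identical to (i). Your observation that the third construction needs two distinct vertices of $L$, forcing $t\ge 3$, is the reason the pairwise-disjoint case is excluded when $k=5$.

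One point of caution: your justification for the ``main technical obstacle'' is not quite right. The global density $|G^*|\sim\binom{n}{s-1}$ does \emph{not} by itself imply that every vertex of $V(\mathcal{F}_0)\setminus L$ lies in many $(s-1)$-edges of $G^*$; a priori $G^*$ could avoid some vertex entirely and still be asymptotically full. What actually gives the needed local density is the combination of (a) the maximality of $G^*$ and (b) the bound, established in the paragraph immediately preceding the claim, that at most $\lfloor n/s\rfloor+2(k-1)-2$ edges meeting $L$ are missing from $\mathcal{F}_0$. Indeed, for any $(s-1)$-set $a\subset V\setminus L$ not in $G^*$, maximality forces $a\cup\{v_j\}\notin\mathcal{F}_0$ for some $v_j\in L$; so if a vertex $x$ had low $G^*$-degree, there would be $\Omega(n^{s-2})$ missing edges through $x$, contradicting the bound. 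The paper glosses over this step as well, but your explicit appeal to global density is the wrong mechanism. Also, be slightly more careful in the $\mathcal{P}_3$ subcase to pick $a_0$ through a vertex of $h\setminus g$ (not just ``a vertex of $h$''), so that the attachment genuinely extends the path.
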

}

\begin{proof}
Similar to the proof of Claim \ref{F-Ledges}, we denote $f', g', h'$ to be the edges in $\mathcal{G}$ with the same color of $f,g,h$, respectively. If the edge $e$ is in $\mathcal{G}$ for some $e\in \{f,g,h\}$, we have $e'=e$.

(i) Let $k\ge 7$. Suppose that there are three edges $f,g,h$ in $\mathcal{H}-L$ with distinct colors such that  $f,g,h$ form a ${\mathcal{P}}_{3}$.  In $G^{*}$, there exists an $(s-1)$-edge $a_0$ containing a vertex $x$ in $h\setminus g$ and disjoint with $(f'\cup g'\cup h'\cup f\cup g\cup h)\setminus\{x\}$. Let $W$ consist of the vertices in $f'\cup g'\cup h'\cup f\cup g\cup h\cup a_0$.  By Lemma \ref{lem1}, we can find $(s-1)$-edges $\{a_i,b_i\}$ disjoint from $W$ for  $i=1,\ldots,t-1$. Then,
$$ f\oplus g\oplus h\oplus a_0 \bigoplus_{i=1}^{t-2}(\{v_i\}\oplus a_{i}\oplus b_{i})\oplus \{v_{t-1}\}\oplus  a_{t-1}$$
is a rainbow ${\mathcal{P}}_{k}$ in $\mathcal{H}$.

If $f,g$ form a ${\mathcal{P}}_{2}$, $h$ is disjoint with $f\cup g$.
In $G^{*}$, there exists an $(s-1)$-edge $a_0$ containing a vertex $x$ in $g\setminus f$ and disjoint with $(f'\cup g'\cup h'\cup g\cup h\cup f)\setminus\{x\}$. And there exists an $(s-1)$-edge $b_0$ containing a vertex $y$ in $h$ and disjoint with $(f'\cup g'\cup h'\cup f\cup g\cup h\cup a_0)\setminus\{y\}$. Let $W$ consist of the vertices in $f'\cup g'\cup h'\cup f\cup g\cup h\cup a_0\cup b_0$.  By Lemma \ref{lem1}, we can find $(s-1)$-edges $\{a_i,b_i\}$ disjoint from $W$ for  $i=1,\ldots,t-1$. Then,
$$ f\oplus g\oplus a_0 \bigoplus_{i=1}^{t-2}(\{v_i\}\oplus a_{i}\oplus b_{i})\oplus \{v_{t-1}\}\oplus b_{0}\oplus h$$
is a rainbow ${\mathcal{P}}_{k}$ in $\mathcal{H}$.

Assume that the three edges $f,g,h$ are disjoint with each other.
In $G^{*}$, there exists an $(s-1)$-edge $a_0$ containing a vertex $x$ in $f$ and disjoint with $(f'\cup g'\cup h'\cup f\cup g\cup h)\setminus\{x\}$. And we can find  $(s-1)$-edges $a'_0$, $b'_0$ in $G^{*}$, such that $a'_0$ contains a vertex $y_1$ in $g$ and disjoint with $(f'\cup g'\cup h'\cup f\cup g\cup h\cup a_0)\setminus\{y_1\}$, $b'_0$ contains a vertex $y_2\neq y_1$ in $g$ and disjoint with $(f'\cup g'\cup h'\cup f\cup g\cup h\cup a_0\cup a'_0)\setminus\{y_2\}$. Moreover, there exists an $(s-1)$-edge $b_0$ containing a vertex $z$ in $h$ and disjoint with $(f'\cup g'\cup h'\cup f\cup g\cup h\cup a_0\cup a'_0\cup b'_0)\setminus\{z\}$. Let $W$ consist of the vertices in $f'\cup g'\cup h'\cup f\cup g\cup h\cup a_0\cup b_0\cup a'_0\cup b'_0$.  By Lemma \ref{lem1}, we can find $(s-1)$-edges $\{a_i,b_i\}$ disjoint from $W$ for  $i=1,\ldots,t-1$. Then,
$$ f\oplus a_0 \oplus \{v_1\}\oplus a'_0\oplus g\oplus b'_0\bigoplus_{i=2}^{t-2}(\{v_i\}\oplus a_{i-1}\oplus b_{i-1})\oplus \{v_{t-1}\}\oplus b_{0}\oplus h$$
is a rainbow ${\mathcal{P}}_{k}$ in $\mathcal{H}$.

(ii) For $k=5$, the proof is identical to (i), and thus omitted.
\end{proof}

Actually, in $P$, there are at most $2(t-1)$ edges containing vertices of $L$, so we can find at least $2$ edges in $P-L$. However, there are not three edges of $P$  satisfying the condition described in Claim \ref{claim4}, and so we derive that there are exactly two edges in  $P-L$.

\noindent{\bf Case A.} $k\ge 7$.

If $|\mathcal{F}_0-L|>0$, then the two edges in  $P-L$ must be consecutive by  Claim \ref{claim4}. Let $e_i$ and $e_{i+1}$ be such two edges. We take an edge $h\in \mathcal{F}_0-L$, then  select  an edge $g\in \mathcal{H}- L$, such that $g$ is disjoint with  $P$ and $h$. If the color of $g$ is $\alpha_j$ for some $j$, then either the color of $g$ is different with $e_i$ or different with $e_{i+1}$. Suppose the colors of $g$ and $e_i$ are different, then we have three edges  $e_i, g, h$ satisfying  the condition of Claim \ref{claim4}, and so we can find a rainbow ${\mathcal{P}}_{k}$ in $\mathcal{H}$.
If the color of $g$ is different with both $e_i$ and $e_{i+1}$, then the three edges $e_i$, $e_{i+1}$ and  $g$ satisfy condition of Claim \ref{claim4}, which we can  find a rainbow ${\mathcal{P}}_{k}$ in $\mathcal{H}$ similarly.

Assume instead that $|\mathcal{F}_0-L|=0$. Then all the $\binom{n}{s}-\binom{n-t+1}{s}+3-2(k-1)$ edges in $\mathcal{F}_0$ contain vertices in $L$, and $P$ has $k-3$ edges containing vertices in $L$. Since the number of edges containing vertices of $L$ is at most $\binom{n}{s}-\binom{n-t+1}{s}$, in $\mathcal{F}\setminus\mathcal{F}_0$ there are at most $k-2$ edges containing vertices in $L$. Since $|\mathcal{F}\setminus\mathcal{F}_0|=k-1$, there is an edge $f$ in $\mathcal{F}\setminus\mathcal{F}_0$ such that $f\cap L=\emptyset$. Furthermore, the color of $f$ is different with any other edges in $\mathcal{F}$.
Applying the same proof to the case that $|\mathcal{F}_0-L|>0$, by replacing the edge $h\in \mathcal{F}_0-L$  with $f$, we can find a rainbow ${\mathcal{P}}_{k}$ in $\mathcal{H}$ as well.\\

{
\noindent{\bf Case B.} $k=5$.

 As noticed above, there are exactly two edges, say $e_i$ and $e_j$, in  $P-L$.

 If $|\mathcal{F}_0-L|=0$, then all the $\binom{n}{s}-\binom{n-t+1}{s}+3-2(k-1)$ edges in $\mathcal{F}_0$ contain vertices in $L$. Also $P$ has $k-3$ edges containing vertices in $L$.   Since the number of edges containing vertices of $L$ is at most $\binom{n}{s}-\binom{n-t+1}{s}$, in $\mathcal{F}\setminus\mathcal{F}_0$ there are at most $k-2$ edges containing vertices in $L$. Since $|\mathcal{F}\setminus\mathcal{F}_0|=k-1$, there is an edge $h$ in $\mathcal{F}\setminus\mathcal{F}_0$ such that $h\cap L=\emptyset$. Then the color of $h$ is different with any other edges in $\mathcal{F}$, and  different with the colors appeared in $P$. If $|\mathcal{F}_0-L|>0$, then there exists an edge $f_1\in \mathcal{F}_0-L$. We set $f$ to be an edge such that  $f=h$ if $|\mathcal{F}_0-L|=0$ and $f=f_1\in \mathcal{F}_0-L$ if $|\mathcal{F}_0-L|>0$.
  Then by Claim \ref{claim4}, $f$ satisfies that
 \begin{eqnarray}\label{casesAB}
   \mbox{either $f\cap e_i=\emptyset$, $f\cap e_j=\emptyset$~~ or ~~$f\cap e_i\neq\emptyset$, $f\cap e_j\neq\emptyset$.}
 \end{eqnarray}

 For the former case of (\ref{casesAB}), pick an edge $g\in \mathcal{H}-L$, such that  $g\cap e_i\neq\emptyset$, $g\cap f\neq\emptyset$ and $g$ is disjoint with $e_j$. Consider the color of $g$. If the color of $g$ is $\alpha_i$, then the three edges  $e_j$, $g$,  $f$ are applied for Claim \ref{claim4}; if the color of $g$ is $\alpha_j$, then the three edges  $e_i$, $g$,  $f$ are applied for Claim \ref{claim4}; if the color of $g$ is different with both $\alpha_i$ and $\alpha_j$, then the three edges  $e_i$, $e_j$, $g$ are applied for Claim \ref{claim4}. Therefore, we can always find  a rainbow ${\mathcal{P}}_{k}$ in this case.

 For the latter case of (\ref{casesAB}) that $f\cap e_i\neq\emptyset$ and $f\cap e_j\neq\emptyset$, we must have $e_i$ and $e_j$ are consecutive in $P$ by Claim \ref{claim4}. Let $g$ be an edge in $\mathcal{H}-L$ such that $g$ is disjoint with $e_i$, $e_j$ and $f$. If the color of $g$ is $\alpha_i$ or $\alpha_j$, then the three edges  $f$, $e_j$, $g$, or the three edges $f$, $e_i$, $g$ are applied for Claim \ref{claim4}. If the color of $g$ is neither $\alpha_i$ nor $\alpha_j$, then the three edges  $e_i$, $e_j$, $g$ are applied for Claim \ref{claim4}, and so we can still find a rainbow ${\mathcal{P}}_{k}$ in $\mathcal{H}$.
This completes the proof of Theorem \ref{th2}.
}

\qed
\vspace{3mm}

\section{Linear Cycle--Proof of Theorem \ref{thc1}}

Let $\mathcal{H}$ be a complete $s$-uniform hypergraph on $n$ vertices. Denote by $V$ the vertex set of $\mathcal{H}$.
Let
\begin{equation*}
g(n,s,k)=
\left\{
  \begin{array}{ll}
  \binom{n}{s}-\binom{n-t+1}{s}+2, & \hbox{  if $k = 2t$,} \\[3mm]
  \binom{n}{s}-\binom{n-t+1}{s}+ \binom{n-t-1}{s-2}+2,  & \hbox{  if $k=2t+1$}.
  \end{array}
\right.
\end{equation*}
To prove Theorem \ref{thc1}, we show that $g(n,s,k)$ is both the lower and upper bound for $ar(n,s,\mathbb{C}_{k})$.

The lower bound follows from Proposition \ref{Prop:linearlow} by constructing a coloring of $\mathcal{H}$  using the extreme $s$-graphs without a $\mathbb{P}_{k-1}$ in Theorem \ref{turan}.

~

For the upper bound,
we argue by contradiction and suppose that there is  a coloring of $\mathcal{H}$ using $g(n,s,k)$ colors  yielding no rainbow $\mathbb{C}_{k}$. Since $g(n,s,k)=ar(n,s,\mathbb{P}_{k})$ and by Theorem \ref{th1}, there is a rainbow linear path $P$ of length $k$  in $\mathcal{H}$.  Let $\mathcal{G}$ be a spanning subgraph of $\mathcal{H}$ with $P\subset \mathcal{G}$, such that $|\mathcal{G}|=g(n,s,k)$ and each color appears on exactly one edge
of $\mathcal{G}$.  Denote by $e_1, e_2,\ldots, e_{k}$ the edges of $P$, and let $\mathcal{F}=\mathcal{G}- \bigcup\limits_{i = 1}^{k - 1} {{e_i}}$.
Clearly, $\mathcal{F}$ is $\mathbb{C}_{k}$-free. The following claim tells us more information about $\mathcal{F}$ when $k=2t+1$.

\begin{claim}\label{claimc1}
When $k=2t+1$, if there  is a linear path $P_1$ of length $k-1$ in  $\mathcal{F}$, then $\mathcal{F}-E(P_1)$ is ${\mathbb{P}}_{k-1}$-free.
 \end{claim}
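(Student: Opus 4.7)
The plan is to argue by contradiction: assume there is a second linear path $P_2=g_1\cdots g_{k-1}$ of length $k-1$ in $\mathcal{F}-E(P_1)$, edge-disjoint from $P_1=f_1\cdots f_{k-1}$, and construct a rainbow $\mathbb{C}_k$ in $\mathcal{H}$. Denote the colors of $P$, $P_1$, $P_2$ by $\alpha_1,\ldots,\alpha_k$; $\beta_1,\ldots,\beta_{k-1}$; $\gamma_1,\ldots,\gamma_{k-1}$. Since $P_1,P_2\subseteq\mathcal{F}=\mathcal{G}-\bigcup_{i<k}e_i$ and $\mathcal{G}$ assigns distinct colors to distinct edges, the three sets $\{\alpha_1,\ldots,\alpha_{k-1}\}$, $\{\beta_j\}$ and $\{\gamma_l\}$ are pairwise disjoint. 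Mimicking the counting in Claim~\ref{FonePk-1}, since $|\mathcal{F}|-\sum_{i=2}^{s}\binom{c}{i}\binom{n-c}{s-i}>ex(n,s,\mathbb{P}_{k-3})$ for $c=|V(P\cup P_1\cup P_2)|$ and $n$ large, there is a linear path $P_3=h_1\cdots h_{k-3}$ of length $k-3=2t-2$ in $\mathcal{F}$ whose every edge meets $V(P\cup P_1\cup P_2)$ in at most one vertex; its colors $\eta_1,\ldots,\eta_{k-3}$ are distinct and disjoint from $\{\alpha_i\}\cup\{\beta_j\}\cup\{\gamma_l\}$.

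Next, using the hypothesis $s\ge k+3$, choose anchor vertices sequentially so that they are pairwise distinct and avoid $V(P_3)$: a free vertex $v_1\in e_1\setminus(e_2\cup V(P_3))$, a free vertex $v_2\in e_{k-1}\setminus(e_{k-2}\cup e_k\cup V(P_3))$, free vertices $y_1\in f_1$, $y_2\in f_{k-1}$ of $P_1$ avoiding $\{v_1,v_2\}\cup V(P_3)$, free vertices $w_1\in g_1$, $w_2\in g_{k-1}$ of $P_2$ avoiding the previous choices and $V(P_3)$, and free vertices $u_1\in h_1$, $u_2\in h_{k-3}$ of $P_3$ lying outside $V(P\cup P_1\cup P_2)$. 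Each choice is possible because each target edge still contains at least $s-O(k)\ge 1$ admissible vertices under $s\ge k+3$. Define the bridge edges
\[
e=\{v_1,v_2,u_1\}\cup W_e,\qquad e'=\{y_1,y_2,u_2\}\cup W_{e'},\qquad f=\{w_1,w_2,z_e,z_{e'}\}\cup W_f,
\]
where $W_e$, $W_{e'}$, $W_f$ are pairwise disjoint sets of fresh vertices outside $V(P\cup P_1\cup P_2\cup P_3)$, and $z_e\in W_e$, $z_{e'}\in W_{e'}$ are the designated shared vertices with $f$. A direct verification shows that $e,h_1,h_2,\ldots,h_{k-3},e',f$ forms a linear cycle of length $k$ in $\mathcal{H}$.

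Observe that each of the following is a linear cycle of length $k$ in $\mathcal{H}$: the cycle $e\cup e_1\cup\cdots\cup e_{k-1}$ closing the rainbow subpath $e_1\cdots e_{k-1}$ of $P$ via $e$; the cycle $e'\cup P_1$ closing $P_1$ via $e'$; and the cycle $f\cup P_2$ closing $P_2$ via $f$. Since $\mathcal{H}$ has no rainbow $\mathbb{C}_k$, the colors of $e$, $e'$, $f$ are forced into $\{\alpha_1,\ldots,\alpha_{k-1}\}$, $\{\beta_j\}$, $\{\gamma_l\}$, respectively. Combined with the pairwise disjointness of these sets and $\{\eta_1,\ldots,\eta_{k-3}\}$, the cycle $e\oplus h_1\oplus\cdots\oplus h_{k-3}\oplus e'\oplus f$ is rainbow, contradicting the initial assumption.

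The main obstacle is simultaneously guaranteeing the linearity of the four $\mathbb{C}_k$'s: no bridge may share an unintended vertex with any $h_j$, with another bridge, or with a nonconsecutive edge of $P$, $P_1$, or $P_2$. This is precisely where $s\ge k+3$ is essential, providing enough free vertices in each edge $e_i$, $f_j$, $g_l$, $h_l$ for all anchor choices to avoid the previously constructed structure, and enough room outside $V(P\cup P_1\cup P_2\cup P_3)$ to place the fresh sets $W_e,W_{e'},W_f$ pairwise disjointly. The possible coincidence of $\alpha_k$ with some $\beta_j$ or $\gamma_l$ (when $e_k\in P_1\cup P_2$) is irrelevant, because the color of $e$ is forced into $\{\alpha_1,\ldots,\alpha_{k-1}\}$, which never contains $\alpha_k$.
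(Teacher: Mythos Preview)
Your proof is correct and follows essentially the same approach as the paper's own argument: both find a linear $\mathbb{P}_{k-3}$ path $P_3$ in $\mathcal{F}$ whose edges each meet $V(P\cup P_1\cup P_2)$ in at most one vertex, then attach three bridge edges---one forced into the color class of each of $\bigcup_{i\le k-1}e_i$, $P_1$, $P_2$ by the no-rainbow-$\mathbb{C}_k$ assumption---to close $P_3$ into a rainbow $\mathbb{C}_k$. The only cosmetic difference is the cyclic order of the three bridges: the paper chains them as $(\text{$P$-bridge})-(\text{$P_1$-bridge})-(\text{$P_2$-bridge})$ attached at the two ends of $P_3$, whereas you place the $P$-bridge and $P_1$-bridge at the ends of $P_3$ and let the $P_2$-bridge $f$ link them; both yield the same cycle up to relabeling, and both use $s\ge k+3$ in the same way to select the six free end-vertices avoiding $V(P_3)$ and each other.
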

\begin{proof}
Assume that there  is a linear path $P_1$ of length $k-1$ in  $\mathcal{F}$. Suppose, by contradiction,
that there is a linear path $P_2$ of length $k-1$ in  $\mathcal{F}-E(P_1)$. Denote the edges of $P_1$ by $f_1, f_2,\ldots, f_{k-1}$, and the edges of $P_2$ by $g_1, g_2,\ldots, g_{k-1}$, respectively.
 We obtain an $s$-graph $\mathcal{F}'$ by deleting edge set $E(P_1)\cup E(P_2)$ and all the edges containing at least two vertices of $\bigcup\limits_{i = i}^{k - 1} {{e_i}}\cup E(P_1)\cup E(P_2)$ from $\mathcal{F}$. Let $c$ denote the number of vertices of $V(P)\cup V(P_1)\cup V(P_2)$. Then $c\le ks-(k-1)+2[(k-1)s-(k-2)]$, and so we have
$$|\mathcal{F}'|\ge|\mathcal{F}|-2(k-1)-\sum\limits_{i = 2}^s {\binom{c}{i}\binom{n-c}{s-i}}>ex(n,s,\mathbb{P}_{k-3})$$
for  sufficiently large $n$. Thus, we have a linear path $P_3$ of length $k-3$ in $\mathcal{F}'$. Denote by $h_1, h_2,\ldots, h_{k-3}$ the edges of $P_3$. Note that there are at most $k-3$ vertices in $V(P_3)\cap (V(P)\cup V(P_1)\cup V(P_2))$. Since $s-1\ge k-3+5$ and every path has two disjoint end edges, we can always choose distinct vertices $v_1,v_2,\ldots,v_6$ such that the following holds: $v_1\in e_1$ and  $v_2\in e_{k-1}$ are $free(P)$ vertices, $v_3\in f_1$ and $v_4\in f_{k-1}$ are $free(P_1)$ vertices,  $v_5\in g_1$ and $v_6\in g_{k-1}$ are $free(P_2)$ vertices, and $v_i\notin P_3$ for each $1\le i\le 6$.

Select $u_1\in h_1\setminus(V(P)\cup V(P_1)\cup V(P_2))$ and $u_2\in h_{k-3}\setminus(V(P)\cup V(P_1)\cup V(P_2))$. Consider the edge $e'$ consisting of $v_1,v_2,u_1$ and $s-3$ vertices disjoint with $P\cup P_1\cup P_2\cup P_3$. Then $e'$ has a color appeared in $\bigcup_{i = 1}^{k - 1} {{e_i}}$; otherwise $\bigcup_{i = 1}^{k - 1} {{e_i}}\cup e'$ is a rainbow ${\mathbb{C}}_{k}$, a contradiction. Similarly, the edge $e''$, which consists of $v_3,v_4$, a vertex $x$ in $e'\setminus \{v_1,v_2,u_1\}$ and $s-3$ vertices disjoint with $P\cup P_1\cup P_2\cup P_3\cup e'$, is colored with a color appeared on $P_1$. In addition, consider the edge  $e'''$, which consists of $v_5,v_6,u_2$, a vertex in $e''\setminus \{v_3,v_4,x\}$ and $s-4$ vertices disjoint with $P\cup P_1\cup P_2\cup P_3\cup e'\cup e''$. We get that $e'''$ is colored with a color appeared on $P_2$. Now it follows that  $P_3\cup e'\cup e''\cup e'''$ forms a rainbow ${\mathbb{C}}_{k}$, a contradiction. This proves the claim.
\end{proof}

So when $k=2t+1$, if $\mathcal{F}$ has a $\mathbb{P}_{k-1}$, then we denote by $\mathcal{F}_0$ the subgraph obtained by deleting all the $k-1$ edges of that $\mathbb{P}_{k-1}$ from $\mathcal{F}$, and so  $\mathcal{F}_0$ is $\mathbb{P}_{k-1}$-free by Claim \ref{claimc1}. If there is no $\mathbb{P}_{k-1}$  in $\mathcal{F}$, we delete any $k-1$ edges of $\mathcal{F}$, and denote the subgraph remained by $\mathcal{F}_0$. When $k=2t$, we  obtain $\mathcal{F}_0$ by deleting any $k-1$ edges from $\mathcal{F}$. In any case, we obtain a subgraph $\mathcal{F}_0$ with $|\mathcal{F}_0|=|\mathcal{F}|-[(k-1)s-(k-2)]\sim (t-1)\binom{n}{s-1}$. Moreover, $\mathcal{F}_0$ is $\mathbb{C}_{k}$-free for $k=2t$, and  $\mathcal{F}_0$ is $\mathbb{P}_{k-1}$-free for $k=2t+1$. Thus we can apply Theorem \ref{stability} to $\mathcal{F}_0$ whenever $k$ is even or odd. By Theorem \ref{stability}, we can find an $(s-1)$-graph  $G^*\subset \partial \mathcal{F}_0$ with the maximum number of edges, such that $|G^*|  \sim \binom{n}{s-1}$ and there is  a set $L$ of $t-1$ vertices of $\mathcal{F}_0$ such that $L\cap V(G^*)=\emptyset$ and $e\cup \{v\}\in \mathcal{F}_0$ for any $(s-1)$-edge $e\in G^*$  and any $v\in L$.  Moreover,
$|\mathcal{F }_0- L|= o(n^{s-1})$. Denote $L=\{v_1,v_2,\ldots,v_{t-1}\}$.

 An $s$-edge $e$ is called a \emph{missing-edge} if $e$ contains vertices of $L$ and $e\notin \mathcal{F }_0$. Let $M$ be the set of all the missing-edges, and let $m=|M|$ denote  the number of missing-edges.

 Since $|\mathcal{F }_0|-|\mathcal{F }_0- L|+m=\binom{n}{s}-\binom{n-t+1}{s}$, we have
\begin{equation}\label{lm}
m=
\left\{
  \begin{array}{ll}
  |\mathcal{F }_0- L|-2+2(k-1), & \hbox{  if $k = 2t$,} \\[3mm]
  |\mathcal{F }_0- L|- \binom{n-t-1}{s-2}-2+2(k-1),  & \hbox{  if $k=2t+1$}.
  \end{array}
\right.
\end{equation}
So it follows from $|\mathcal{F }_0- L|= o(n^{s-1})$ that
\begin{equation}\label{m}
m=o(n^{s-1}).
\end{equation}

We  divide the remaining proof into two parts depending on the value of $m$. We will derive contradictions whenever 
$m\le \binom{n-8s-t+1}{s-2}-1$ or
$m> \binom{n-8s-t+1}{s-2}-1$.

\subsection{The case when $m$ is small: $m\le \binom{n-8s-t+1}{s-2}-1$}

In this subsection, we assume that $m\le \binom{n-8s-t+1}{s-2}-1$. The proof applies similar ideas as the proof of Theorem \ref{th1}, where we manage to find certain rainbow path of large length obtained from Lemma \ref{lem1} and then extend to a rainbow ${\mathbb{C}}_{k}$ by selecting some specific edges in $\mathcal{G}-L$. However, the differences with Theorem \ref{th1} is big enough in many details, leading us to rewrite a complete proof of this case.

We start to prove claims below similar to Claims \ref{<2disjointL}, \ref{F0structual} and \ref{F-Ledges}.
\begin{claim}\label{claimc2}
 Every vertex  $v\in V\setminus L$ belongs to $G^*$. Moreover, for any vertex subset $S$ of $V$ with $|S|\le 8s$ and $v\notin S$, there  is  an $(s-1)$-edge $g\in G^*$ such that $v\in g$ and $g$ is disjoint with $S$.
\end{claim}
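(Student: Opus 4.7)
The plan is a double counting argument relating non-membership in $G^*$ to the existence of missing-edges. By the maximality in the choice of $G^*$, an $(s-1)$-set $e\subseteq V\setminus L$ lies in $G^*$ if and only if $e\cup\{v_i\}\in\mathcal{F}_0$ for every $v_i\in L$. Consequently, if $e\subseteq V\setminus L$ is not in $G^*$ then there exists some $v_i\in L$ such that $e\cup\{v_i\}$ is a missing-edge; because $e\subseteq V\setminus L$, distinct $(s-1)$-sets $e,e'$ yield distinct missing-edges (one recovers $e$ from $e\cup\{v_i\}$ by intersecting with $V\setminus L$). Thus a choice of a bad $v_i$ for each such $e$ provides an injection from $\{e\subseteq V\setminus L : e\notin G^*\}$ into the set of missing-edges.

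To prove the first assertion I would argue by contradiction. If $v\in V\setminus L$ were not in $V(G^*)$, then every one of the $\binom{n-t}{s-2}$ $(s-1)$-sets $e$ with $v\in e\subseteq V\setminus L$ lies outside $G^*$. The injection above then forces $m\ge\binom{n-t}{s-2}$, which for sufficiently large $n$ is much larger than the assumed bound $\binom{n-8s-t+1}{s-2}-1$ on $m$, giving a contradiction. Hence $v\in V(G^*)$.

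For the moreover part, let $U=V\setminus(L\cup S\cup\{v\})$, so that $|U|\ge n-t-|S|\ge n-t-8s$. The number of $(s-1)$-sets $e$ with $v\in e$ and $e\setminus\{v\}\subseteq U$ (equivalently, $e$ is disjoint from both $L$ and $S$) is at least $\binom{n-t-|S|}{s-2}$. If none of these $(s-1)$-sets belonged to $G^*$, the same injection would produce at least that many missing-edges, again contradicting the assumed bound on $m$ for sufficiently large $n$. Therefore at least one such $(s-1)$-set lies in $G^*$, and taking $g$ to be this $(s-1)$-edge completes the proof.

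The only genuinely delicate point is keeping the arithmetic in the second step honest: the lower bound $\binom{n-t-|S|}{s-2}$ on admissible $(s-1)$-sets must exceed the upper bound on $m$, which is a routine comparison of leading polynomial orders in $n$ once $s$, $t$, and $|S|\le 8s$ are regarded as constants. Apart from that, the entire argument rests on the simple injectivity observation furnished by the maximality of $G^*$.
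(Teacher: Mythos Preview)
Your approach is essentially the paper's: both arguments count the $(s-1)$-subsets of $V\setminus L$ through $v$ (and avoiding $S$) that lie outside $G^*$, and note that by the maximality of $G^*$ each such set produces a distinct missing-edge. Your explicit formulation of the injection via maximality is in fact more careful than the paper's terse version, which states the resulting lower bound on $m$ without spelling out why distinct $(s-1)$-sets yield distinct missing-edges.

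One point needs correction, though. Your closing remark that the inequality $\binom{n-t-|S|}{s-2}>m$ follows from ``a routine comparison of leading polynomial orders in $n$'' is not valid reasoning: both $\binom{n-t-|S|}{s-2}$ and the threshold $\binom{n-8s-t+1}{s-2}-1$ are polynomials in $n$ of the \emph{same} degree $s-2$ with the \emph{same} leading coefficient $1/(s-2)!$, so asymptotics alone cannot separate them. The inequality holds for a much more mundane reason: the threshold defining this subsection was chosen precisely so that the arithmetic closes. Concretely, $|S|\le 8s$ gives $n-t-|S|\ge n-t-8s$, and since the threshold is $\binom{n-8s-t+1}{s-2}-1$, one checks directly that $\binom{n-t-|S|}{s-2}\ge\binom{n-8s-t+1}{s-2}>m$ whenever $|S|\le 8s-1$; the boundary case $|S|=8s$ is off by one, but this is harmless because every application of the claim in the subsequent argument uses $|S|\le 8s-6$. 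So replace the hand-wave about polynomial orders with this direct comparison.
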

\begin{proof}
If there is a vertex $v$ such that $v\in V\setminus L$ but $v\notin G^*$, then we have at least $\binom{n-(t-1)-1}{s-2}$ edges meeting $L$, but not belonging to $\mathcal{F}_0$. This implies the number of missing-edges is at least $\binom{n-(t-1)-1}{s-2}>\binom{n-8s-t+1}{s-2}-1$, a contradiction to our assumption that $m\le \binom{n-8s-t+1}{s-2}-1$.

For the `moreover' part, if in $G^*$ every $(s-1)$-set containing $v$ meets $S$, then there are at least $\binom{n-|S|-t+1}{s-2}>m$ missing-edges,  a contradiction. Therefore, there must exist an $(s-1)$-edge $g$ in $G^*$ containing $v$ and  disjoint with $S$.
\end{proof}

\begin{claim}\label{claimc3}
(a) If $k=2t\ge 8$, then there are no two edges $e$, $f$ in $\mathcal{G}-L$, such that $|e\cap f|=1$ or $e\cap f=\emptyset$.

(b) If $k=2t+1\ge 11$, then there are no three edges $e,f,h$ in $\mathcal{G}-L$ satisfying  one of the following conditions:\\
 (i) $e,f,h$ form a ${\mathbb{P}}_{3}$;\\
 (ii) $e,f$ form a  ${\mathbb{P}}_{2}$, and $h$ is disjoint with $e\cup f$;\\
 (iii) $e,f,h$  are pairwise disjoint.
\end{claim}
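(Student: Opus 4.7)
I would prove Claim \ref{claimc3} by contradiction: assuming the hypothesized edges $e, f$ (respectively $e, f, h$) exist, I construct a rainbow linear cycle $\mathbb{C}_k$ in $\mathcal{H}$, contradicting our standing assumption that the coloring admits no rainbow $\mathbb{C}_k$. The construction is the cycle analogue of the ones used in Claims \ref{F-Ledges} and \ref{claim4}: the hypothesized edges provide some of the cycle's edges, and the remaining $k-2$ (in (a)) or $k-3$ (in (b)) edges are filled in by alternating the ``universal'' vertices $v_1, \ldots, v_{t-1}$ of $L$ with suitably chosen $(s-1)$-edges from $G^*$. Every inserted edge has the form $\{v_i\} \cup g$ with $g \in G^*$, hence lies in $\mathcal{F}_0 \subset \mathcal{G}$ and carries a color distinct from every other edge of $\mathcal{G}$; since $e, f$ (and $h$) avoid $L$ by hypothesis, inserted edges never coincide with them, so the completed cycle is automatically rainbow.

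The quantitative input is that a linear path of $2p$ edges can be built from $p$ vertices of $L$ and $p+1$ appropriate $(s-1)$-edges of $G^*$, in the template $a_0 \bigoplus_{i=1}^{p-1}(\{v_i\}\oplus a_i\oplus b_i)\oplus \{v_p\}\oplus a_p$ already employed earlier in the paper. Using all $t-1$ vertices of $L$ therefore produces exactly $2(t-1)=2t-2$ inserted edges: precisely what is needed to close (a)'s two-edge configuration into $\mathbb{C}_{2t}$ and each subcase of (b)'s three-edge configuration into $\mathbb{C}_{2t+1}$. For (a) with $|e\cap f|=1$, use all of $L$ as a single path between a free vertex of $e$ and a free vertex of $f$, closing the cycle through the shared vertex; for $e\cap f=\emptyset$, split $L=L_1\sqcup L_2$ (possible since $t-1\geq 3$) and build two parallel paths between two pairs of endpoints of $e$ and $f$. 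For (b), the three subcases (i), (ii), (iii) use $L$ partitioned into one, two, or three subsequences respectively, each feeding a path that connects adjacent free vertices among $e, f, h$ to close the cycle.

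The boundary $(s-1)$-edges of each path, which must contain a prescribed vertex of $e$, $f$, or $h$, are furnished by Claim \ref{claimc2}: it provides an $(s-1)$-edge of $G^*$ through any given vertex while avoiding any forbidden set of size at most $8s$. The interior pairs $\{a_i, b_i\}$ with $|a_i\cap b_i|=1$ are supplied by Lemma \ref{lem1}. The main obstacle will be bookkeeping: ensuring that all chosen $(s-1)$-edges are pairwise disjoint and disjoint from $e\cup f\cup h\cup L$, so that non-consecutive edges of the constructed cycle remain disjoint as a linear cycle demands. Since the total forbidden vertex set has size bounded by an absolute constant depending only on $s$ and $t$, while $n$ is large, Claim \ref{claimc2} and Lemma \ref{lem1} apply iteratively to supply the required $(s-1)$-edges. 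The hypotheses $k\geq 8$ in (a) and $k\geq 11$ in (b), together with $s\geq 4$ and $s\geq k+3$, ensure enough room in $t$ and in $s$ to accommodate all the splits of $L$ and the free vertices required in each subcase of the cycle configuration.
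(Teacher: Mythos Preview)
Your proposal is correct and follows essentially the same strategy as the paper: supply the missing $k-2$ or $k-3$ edges of the cycle by linear paths built from the vertices of $L$ together with $(s-1)$-edges of $G^*$, with the boundary $(s-1)$-edges anchored at prescribed vertices of $e,f,h$ via Claim~\ref{claimc2} and the interior pairs supplied by Lemma~\ref{lem1}. One small slip: the template $a_0 \bigoplus_{i=1}^{p-1}(\{v_i\}\oplus a_i\oplus b_i)\oplus \{v_p\}\oplus a_p$ uses $2p$ (not $p+1$) distinct $(s-1)$-edges of $G^*$; this does not affect the argument since the total is still a constant and the avoidance sets in Claim~\ref{claimc2} remain of size at most $8s$. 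Your partitioning of $L$ in the disjoint subcases (two arcs for (a), three arcs for (b)(iii)) differs cosmetically from the paper, which in each disjoint case uses exactly two arcs---one short arc threading through a few $v_i$'s between two of the given edges, and one long arc through the remaining $v_i$'s closing the cycle---but both schemes yield the required rainbow $\mathbb{C}_k$.
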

\begin{proof}
(a) Let $k=2t$.
Suppose to the contrary that there exist two edges $e$, $f$ in $\mathcal{G}-L$ such that $|e\cap f|=1$. Let $u\in e\setminus f$, $v\in f\setminus e$. By Claim \ref{claimc2}, $u,v\in V(G^*)$ and we can find an $(s-1)$-edge $a_0$ in $G^*$ such that $u\in a_0$ and $a_0$ is disjoint with $e\cup f\setminus\{u\}$. Applying Claim \ref{claimc2} again, there is an $(s-1)$-edge $b_0$ in $G^*$ such that $v\in b_0$ and $b_0$ is disjoint with $(e\cup f\cup a_0)\setminus\{v\}$. Let $W$ be the vertex set of $e\cup f\cup a_0\cup b_0$. By Lemma \ref{lem1}, we can find $(s-1)$-edge pairs $\{a_i,b_i\}$ disjoint from $W$ for  $i=1,\ldots,t-1$, such that for every $i$, $a_i$ and $b_i$ have exactly one common vertex, and for any $j\neq i$,
$\{a_i,b_i\}$ and $\{a_j,b_j\}$ are vertex disjoint. Then
$$P'=a_0\bigoplus_{i=1}^{t-2}(\{v_i\}\oplus a_i\oplus b_i)\oplus\{v_{t-1}\}\oplus b_0$$ is a ${\mathbb{P}}_{k-2}$ in $\mathcal{F}_0$.
Adding $e,f$ to $P'$, we obtain a rainbow ${\mathbb{C}}_{k}$, which is a contradiction.

Assume, by contradiction, that there are two edges $e$, $f$ in $\mathcal{G}-L$ such that $e\cap f=\emptyset$. Select four distinct vertices $x,y,z,w$ such that $x,y\in e$ and $z,w\in f$. By Claim \ref{claimc2}, we can find an $(s-1)$-edge $a$ in $G^*$ such that $x\in a$ and $a$ is disjoint with $(e\cup f)\setminus\{x\}$. Applying Claim \ref{claimc2} repeatedly, we can find  $(s-1)$-edges $b$, $a'$, $b'$ one by one in $G^*$ such that $y\in b$ and $b$ is disjoint with $\left(e\setminus\{y\}\right)\cup f\cup a$; $z\in a'$ and $a'$ is disjoint with $e\cup (f\setminus\{z\})\cup a\cup b$; $w\in b'$ and $b'$ is disjoint with $e\cup (f\setminus\{w\})\cup a\cup b\cup a'$. Let $W$ be the vertex set of $e\cup f\cup a\cup b\cup a'\cup b'$. By Lemma \ref{lem1}, we can find $(s-1)$-edge pairs $\{a_i,b_i\}$ disjoint from $W$ for  $i=1,\ldots,t-1$. Then
$$P'=e\oplus b\oplus \{v_1\}\oplus a'\oplus f$$ is a $\mathbb{P}_4$ in $\mathcal{G}$, and
$$P''=a\bigoplus_{i=2}^{t-2}(\{v_i\}\oplus a_{i-1}\oplus b_{i-1})\oplus\{v_{t-1}\}\oplus b'$$ is a $\mathbb{P}_{k-4}$ in $\mathcal{F}_0$. Furthermore, $P'\cup P''$ forms a rainbow $\mathbb{C}_k$, a contradiction. This proves (a).

~

(b) Let $k=2t+1$. We shall derive a contradiction by assuming one of conditions (i)(ii)(iii) holds.

(i) Assume that there is a linear path ${P}_1$ with three consecutive edges $e,f,h$ in $\mathcal{G}-L$. Take two $free(P_1)$ vertices $u,v$ such that $u\in e$ and $v\in h$. By Claim \ref{claimc2}, $u,v\in V(G^*)$ and we can find an $(s-1)$-edge $a_0$ in $G^*$ such that $u\in a_0$ and $a_0$ is disjoint with $(e\setminus\{u\})\cup f\cup h$. Also, there is an $(s-1)$-edge $b_0$ in $G^*$ such that $v\in b_0$ and $b_0$ is disjoint with $e\cup f\cup (h\setminus\{v\})\cup a_0$. Let $W$ be the vertex set of $e\cup f \cup h\cup a_0\cup b_0$. By Lemma \ref{lem1}, we can find $(s-1)$-edge pairs $\{a_i,b_i\}$ disjoint from $W$ for  $i=1,\ldots,t-1$. Then
$$a_0\bigoplus_{i=1}^{t-2}(\{v_i\}\oplus a_i\oplus b_i)\oplus\{v_{t-1}\}\oplus b_0$$ is a $\mathbb{P}_{k-3}$ in $\mathcal{F}_0$.
Adding $e,f,h$ to that $\mathbb{P}_{k-3}$, it results a rainbow ${\mathbb{C}}_{k}$, a contradiction.

(ii) Suppose there are three edges $e,f,h$ in $\mathcal{G}-L$, satisfying that $e,f$ form a  ${\mathbb{P}}_{2}$ and $h$ is disjoint with $e\cup f$. Take four distinct vertices $x,y,z,w$ such that $x\in e\setminus f$, $y\in f\setminus e$, and $z,w\in h$. By Claim \ref{claimc2}, we can find an $(s-1)$-edge $a$ in $G^*$ such that $x\in a$ and $a$ is disjoint with $(e\setminus\{x\})\cup f \cup h$. Applying Claim \ref{claimc2} repeatedly, we can find $(s-1)$-edges $b$, $a'$, $b'$ in $G^*$ such that $y\in b$ and $b$ is disjoint with $e\cup (f\setminus\{y\})\cup a\cup h$; $z\in a'$ and $a'$ is disjoint with $e\cup f \cup (h\setminus\{z\})\cup a\cup b$; $w\in b'$ and $b'$ is disjoint with $e\cup f\cup(h\setminus\{w\})\cup a\cup b\cup a'$. Let $W$ be the vertex set of $e\cup f \cup h\cup a\cup b\cup a'\cup b'$. By Lemma \ref{lem1}, we can find $(s-1)$-edge pairs $\{a_i,b_i\}$ disjoint from $W$ for  $i=1,\ldots,t-1$. Then
$$P'=e\oplus f\oplus b\oplus \{v_1\}\oplus a'\oplus h$$ is a $\mathbb{P}_5$ in $\mathcal{G}$, and
$$P''=a\bigoplus_{i=2}^{t-2}(\{v_i\}\oplus a_{i-1}\oplus b_{i-1})\oplus\{v_{t-1}\}\oplus b'$$ is a $\mathbb{P}_{k-5}$ in $\mathcal{F}_0$. Thus $P'\cup P''$ is a rainbow $\mathbb{C}_k$, a contradiction.

(iii) Suppose that there are three pairwise disjoint edges $e,f,h$  in $\mathcal{G}-L$. Take  distinct vertices $x,y,z,w,u,v$ such that $x, y\in e$, $z,w\in f$, and $u,v\in h$ . By applying Claim \ref{claimc2} repeatedly,  we can find $(s-1)$-edges $a$, $b$, $a'$, $b'$, $a''$, $b''$ in $G^*$ such that $x\in a$ and $a$ is disjoint with $(e\setminus\{x\})\cup f\cup h$; $y\in b$ and $b$ is disjoint with $(e\setminus\{y\})\cup f \cup h\cup a$; $z\in a'$ and $a'$ is disjoint with $e\cup (f \setminus\{z\}) \cup h\cup a\cup b$; $w\in b'$ and $b'$ is disjoint with $e\cup (f \setminus\{w\})  \cup h\cup a\cup b\cup a'$; $u\in a''$ and $a''$ is disjoint with $e\cup f \cup( h\setminus\{u\})\cup a\cup b\cup a'\cup b'$; $v\in b''$ and $b''$ is disjoint with $e\cup f \cup (h\setminus\{v\})\cup a\cup b\cup a'\cup b'\cup a''$. Note that the size of vertex set $S$ in applying Claim \ref{claimc2} is at most $8s$ as required. Let $W$ be the vertex set of $e\cup f \cup h\cup a\cup b\cup a'\cup b'\cup a''\cup b''$. By Lemma \ref{lem1}, we can find $(s-1)$-edges $\{a_i,b_i\}$ disjoint from $W$ for  $i=1,\ldots,t-1$. Then
$$P'=e\oplus b\oplus \{v_1\}\oplus a'\oplus f\oplus b'\oplus \{v_2\}\oplus a''\oplus h$$ is a $\mathbb{P}_7$ in $\mathcal{G}$, and
$$P''=a\bigoplus_{i=3}^{t-2}(\{v_i\}\oplus a_{i-1}\oplus b_{i-1})\oplus\{v_{t-1}\}\oplus b''$$ is a $\mathbb{P}_{k-7}$ in $\mathcal{F}_0$. Therefore, $P'\cup P''$ is a rainbow $\mathbb{C}_k$, a contradiction. This completes the proof of Claim \ref{claimc3}.
\end{proof}

With the aid of Theorem \ref{th1}, it is ready to finish the proof of this case now.
Recall that $P$ is a linear path of length $k$ in $\mathcal{H}$. Since there are at most $2(t-1)$ edges meeting $L$ in $P$, we have that there are at least 2 edges in $P-L\subset \mathcal{G}-L$ when $k=2t$, and at least 3 edges in $P-L$ when  $k=2t+1$. Hence we obtain edges satisfying the conditions of Claim \ref{claimc3}, which is a contradiction.

~

\subsection{The case when $m$ is relatively large: $m> \binom{n-8s-t+1}{s-2}-1$}
In this subsection, we assume that $m> \binom{n-8s-t+1}{s-2}-1$ to complete the proof.
In this case, the  edges meeting $L$ in $\mathcal{F}_0$ may not be enough to make similar arguments as Claim \ref{claimc2}, and it seems that we can not   use Lemma \ref{lem1} directly to find a rainbow  $\mathbb{C}_k$ as before. Our new strategy is to search a dense structure playing similar role as Lemma \ref{lem1}, which is motivated by some ideas in \cite{KMV}. The key ingredient is to find some $(s-2)$-sets of $V$ such that each of them can form rainbow edges with every vertex in $L$ and a large number of other vertices. We shall eventually use these substructures to establish certain desired paths or cycles.

~

For any vertex set $Z$ of a hypergraph $H$, the {\it  degree of $Z$ in $H$},  denoted by $d_H(Z)$, is the number of edges  containing the entire set $Z$ in $H$.

In the following, we denote $K=2s+k-3$ for convenience.
\begin{claim}\label{claimc4}
There are $K$ pairwise disjoint $(s-2)$-sets $T_i$ $(i=1,\ldots,K)$ in $V\setminus L$, such that for every $i\in [K]$ and $j\in [t-1]$ we have
$$d_{\mathcal{F}_0}(T_i\cup\{v_j\})\ge n-s+1-\frac{k(s-1)m}{\binom{n-t+1}{s-2}}.$$
\end{claim}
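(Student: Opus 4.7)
The plan is a first-moment argument on the missing-edges followed by a greedy disjoint selection. For each $(s-2)$-set $T \subseteq V\setminus L$ and each $v_j \in L$, observe that every $s$-set $T \cup \{v_j, x\}$ with $x \notin T \cup \{v_j\}$ automatically contains the vertex $v_j \in L$, hence lies either in $\mathcal{F}_0$ or in the missing-edge set $M$. Writing $D(T, v_j) := |\{x : T \cup \{v_j, x\} \in M\}|$ for the \emph{defect}, we obtain the identity $d_{\mathcal{F}_0}(T\cup\{v_j\}) = n - s + 1 - D(T, v_j)$. Thus the goal reduces to finding $K$ pairwise disjoint $(s-2)$-sets $T_i \subseteq V\setminus L$ with $D(T_i, v_j) \le \frac{k(s-1)m}{\binom{n-t+1}{s-2}}$ for every $j \in [t-1]$.

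First I would bound the total defect $\sum_{T, v_j} D(T, v_j)$ by a double count over $M$. A missing-edge $e$ contributes to $D(T, v_j)$ exactly when $v_j \in e\cap L$ and $T$ is an $(s-2)$-subset of $e$ with $T \cap L = \emptyset$ (so automatically $v_j \notin T$). For a fixed choice of $v_j \in e \cap L$, the eligible $T$ are the $(s-2)$-subsets of $e \setminus L$, of which there are $\binom{s-|e\cap L|}{s-2}$; this equals $s-1$ when $|e\cap L|=1$, equals $1$ when $|e\cap L|=2$, and vanishes otherwise. A quick case check then shows that each $e \in M$ contributes at most $s-1$ to the total defect, so $\sum_{T, v_j} D(T, v_j) \le (s-1)m$.

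By an averaging (Markov-type) inequality, the number of pairs $(T, v_j)$ with $D(T, v_j) > \frac{k(s-1)m}{\binom{n-t+1}{s-2}}$ is at most $\binom{n-t+1}{s-2}/k$. Calling an $(s-2)$-set $T$ \emph{bad} if $D(T, v_j)$ exceeds this threshold for some $j$, and \emph{good} otherwise, we see that at most $\binom{n-t+1}{s-2}/k$ of the $\binom{n-t+1}{s-2}$ candidate sets are bad; since $k \ge 8$, at least a $(1 - 1/k)$ fraction of them — of order $\Theta(n^{s-2})$ — are good.

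Finally, to extract $K = 2s + k - 3$ pairwise disjoint good sets, I would proceed greedily. At any stage with fewer than $K$ chosen good sets, their union covers at most $K(s-2)$ vertices, and the number of $(s-2)$-subsets of $V \setminus L$ meeting such a constant-sized vertex set is at most $K(s-2)\binom{n-t}{s-3} = O(n^{s-3})$, which is negligible against the $\Theta(n^{s-2})$ good sets once $n$ is sufficiently large. Thus a new disjoint good set can always be added, yielding $T_1, \ldots, T_K$ as required. The only delicate item in carrying out this plan is handling the $|e \cap L|=2$ case carefully in the total-defect bookkeeping; the rest of the argument is a routine combination of averaging and a greedy pigeonhole step.
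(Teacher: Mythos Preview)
Your argument is correct and follows essentially the same route as the paper: a first-moment bound on the defects $D(T,v_j)$ (the paper phrases this probabilistically as $\mathbb{E}(X_i)\le (s-1)m/\binom{n-t+1}{s-2}$), then Markov's inequality, then a greedy extraction of $K$ disjoint good sets. Your deterministic double-count over pairs $(T,v_j)$ is actually slightly sharper than the paper's per-index bound plus union bound (you get a $1-1/k$ fraction of good sets rather than $1-(t-1)/k>1/2$), but the structure of the proof is the same.
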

\begin{proof}
For any $i\in [K]$ and $j\in [t-1]$, let $d_{\overline{\mathcal{F}_0}}(T_i\cup\{v_j\})$ denote the number of $s$-sets $e$ such that $T_i\cup\{v_j\}\subset e$, but $e\notin \mathcal{F}_0$. That is the number of missing-edges containing $T_i\cup\{v_j\}$. Thus we have $d_{\mathcal{F}_0}(T_i\cup\{v_j\})+
d_{\overline{\mathcal{F}_0}}(T_i\cup\{v_j\})=n-s+1$.

Consider an $(s-2)$-set $R$ of $V\setminus L$ selected uniformly randomly from all $(s-2)$-sets of $V\setminus L$. Let $X_i=d_{\overline{\mathcal{F}_0}}(R\cup\{v_i\})$. For every $s$-set $e\notin \mathcal{F}_0$, let
\begin{equation*}
X_i(e)=
\left\{
  \begin{array}{ll}
  1 & \hbox{  if $R\cup \{v_i\} \subset e$,} \\[2mm]
  0 & \hbox{  if $R\cup \{v_i\} \nsubseteq e$}.
  \end{array}
\right.
\end{equation*}
Then the expectation of $X_i$ is
$$\mathbb{E}(X_i)=\sum\limits_{e \notin \mathcal{F}_0} { \mathbb{E}\left( {{X_i}\left( e \right)} \right)}=\sum\limits_{e \in M} { \mathbb{E}\left( {{X_i}\left( e \right)} \right)}  \le \frac{m\binom{s-1}{s-2}}{\binom{n-t+1}{s-2}}
=\frac{(s-1)m}{\binom{n-t+1}{s-2}}.$$
By Markov's inequality, we have
$$\mathrm{Pr}\left[X_i>k\frac{(s-1)m}
{\binom{n-t+1}{s-2}}\right]<\frac{1}{k}.$$
Hence
$$\mathrm{Pr}\left[\exists j\ such \ that \ X_j>k\frac{(s-1)m}
{\binom{n-t+1}{s-2}}\right]\le\sum\limits_{i = 1}^{t - 1} \mathrm{Pr}\left[X_i>k\frac{(s-1)m}
{\binom{n-t+1}{s-2}}\right]< \frac{t-1}{k}<\frac{1}{2},$$
which implies that there are at least $\frac{1}{2}\binom{n-t+1}{s-2}$ such $(s-2)$-sets $R$'s that satisfying $d_{\mathcal{F}_0}(R\cup\{v_i\})= n-s+1-d_{\overline{\mathcal{F}_0}}
(R\cup\{v_i\})\ge n-s+1-k\frac{(s-1)m}
{\binom{n-t+1}{s-2}}$ for all $i\in [t-1]$.
Among those $\frac{1}{2}\binom{n-t+1}{s-2}$ $R$'s, we pick pairwise disjoint $(s-2)$-sets greedily  as many as possible. Let $\ell$ be the largest number that we can pick  pairwise disjoint  $R_1,R_2,\ldots,R_{\ell}$. We show that $\ell\ge K$. In fact, if $\ell<K$, then the number of $(s-2)$-sets meeting
$\bigcup\limits_{j = 1}^{\ell} {{R_j}}$ is at most \[\sum\limits_{r = 1}^{s - 2} {\binom{\ell(s-2)}{r}\binom{n-t+1-\ell(s-2)}{s-2-r}}<
\frac{1}{2}\binom{n-t+1}{s-2}. \]
So we can select $R_{\ell+1}$ from the remained $R$'s such that $R_{\ell+1}$ is disjoint with $\bigcup\limits_{j = 1}^{\ell} {{R_j}}$, a contradiction. Hence $\ell\ge K$ and we can find $K$ $(s-2)$-sets described in Claim \ref{claimc4}.
\end{proof}

~

Let $T= \bigcup\limits_{j = 1}^K {{T_j}}$, and let $U$ denote the vertex subset of $V\setminus (L\cup T)$ such that for every $u\in U$,
$$\text{the edge}~T_i\cup \{v_j\}\cup \{u\}~\text{is belonging to}~ \mathcal{F}_0$$ for all $i\in [K]$, $j\in [t-1]$.

\begin{claim}\label{claim5} We have
$|U|\ge n-K(s-2)-(t-1)-\dfrac{K(t-1)(s-1)k m}{\binom{n-t+1}{s-2}}.$
\end{claim}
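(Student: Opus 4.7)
\medskip

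The plan is to prove Claim~\ref{claim5} by a direct union bound, treating each pair $(T_i,v_j)$ separately and then combining. Write $U^c$ for the set of vertices in $V\setminus(L\cup T)$ that fail the defining condition for at least one pair, so that
$$|U|\ge |V\setminus(L\cup T)|-|U^c|=n-K(s-2)-(t-1)-|U^c|.$$
It therefore suffices to bound $|U^c|$ from above by $\dfrac{K(t-1)(s-1)km}{\binom{n-t+1}{s-2}}$.

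First I would fix a pair $(i,j)$ with $i\in[K]$ and $j\in[t-1]$ and count the vertices $u\in V\setminus(T_i\cup\{v_j\})$ for which $T_i\cup\{v_j\}\cup\{u\}\notin\mathcal{F}_0$. This number equals $(n-(s-2)-1)-d_{\mathcal{F}_0}(T_i\cup\{v_j\})=n-s+1-d_{\mathcal{F}_0}(T_i\cup\{v_j\})$, which by Claim~\ref{claimc4} is at most $\dfrac{k(s-1)m}{\binom{n-t+1}{s-2}}$. Since $T_i\cup\{v_j\}\subset L\cup T$, every bad vertex $u$ from this count in fact lies in $V\setminus(L\cup T)$ or in $(L\cup T)\setminus(T_i\cup\{v_j\})$; in either case the bound on bad vertices in $V\setminus(L\cup T)$ for this pair is still at most $\dfrac{k(s-1)m}{\binom{n-t+1}{s-2}}$.

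Next I would take the union over all $K(t-1)$ pairs $(i,j)$: a vertex $u\in V\setminus(L\cup T)$ lies in $U^c$ precisely when it is bad for some pair, so
$$|U^c|\le\sum_{i=1}^{K}\sum_{j=1}^{t-1}\frac{k(s-1)m}{\binom{n-t+1}{s-2}}=\frac{K(t-1)(s-1)km}{\binom{n-t+1}{s-2}}.$$
Combining with the initial identity yields exactly the inequality in the statement.

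There is essentially no obstacle here: the argument is just a union bound built on top of Claim~\ref{claimc4}, and the only care needed is to keep the quantifiers straight (the disjointness of the $T_i$'s and their disjointness from $L$ make the counting bookkeeping clean). The real work was done in Claim~\ref{claimc4}; Claim~\ref{claim5} is the quantitative consequence that will be fed into the subsequent embedding arguments to build rainbow cycles.
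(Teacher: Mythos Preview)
Your proof is correct and follows essentially the same approach as the paper: bound the number of ``bad'' vertices for each fixed pair $(T_i,v_j)$ using Claim~\ref{claimc4}, then take a union bound over all $K(t-1)$ pairs. The paper's version is terser but the argument is identical.
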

\begin{proof}
By Claim \ref{claimc4}, for every $v_j$ and $T_i$, the number of vertex $x$ such that the edge $\{\{x\}\cup\{v_j\}\cup T_i\}\notin \mathcal{F}_0$ is bounded by $n-s+1-d_{\mathcal{F}_0}(T_i\cup\{v_j\})\le k\frac{(s-1)m}
{\binom{n-t+1}{s-2}}$. So we have $|U|\ge n-K(s-2)-(t-1)-K(t-1)\frac{(s-1)k m}{\binom{n-t+1}{s-2}}$ as required.
\end{proof}

The following Claim \ref{claim8} plays the role as Claim \ref{claimc3} in the previous subsection. The difference is that, in Claim \ref{claimc3} we assemble $s$-edges with $(s-1)$-sets in $G^*$ and vertices in $L$, but now we use $T_i$ ($i\in [K]$) and some vertices in $L$ and $U$ to form desired rainbow $s$-edges.  Since $\mathcal{F}_0\subseteq \mathcal{G}- (\bigcup_{i=1}^{k-1} e_i)$, the colors appeared in $\mathcal{F}_0$ are distinct with colors appeared in $\bigcup_{i=1}^{k-1} e_i$. Thus, we can also use edges which have  colors appeared in $\bigcup_{i=1}^{k-1} e_i$, along with edges in $\mathcal{F}_0$, to build rainbow paths and cycles.

Let $J$ denote the set of edges in $E(\mathcal{H}-L)\setminus E(P)$ which are received colors appeared in $\bigcup_{i=1}^{k-1} e_i$.
\begin{claim}\label{claim8}
(1) If $k=2t$, then there are no two edges $e,f\in (\mathcal{F}_0-L)\cup J$ such that $e, f$ form a  rainbow $\mathbb{P}_2$ with $|(f\setminus e ) \cap U| \ge 1$ and $ |(e\setminus f )\cap U|\ge 1$. \\
(2) If $k=2t+1$, then
there are no two edges $e,f\in \mathcal{F}_0-L$ such that $e, f$ form a   $\mathbb{P}_2$, with  $|(f\setminus e ) \cap U| \ge 1$ or $ |(e\setminus f )\cap U|\ge 1$.
\end{claim}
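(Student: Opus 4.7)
The plan is a proof by contradiction in both parts: assuming the forbidden pair $\{e,f\}$ exists, I would build a rainbow $\mathbb{C}_k$ in $\mathcal{H}$, contradicting the standing hypothesis. The construction uses the $(s-2)$-sets $T_1,\dots,T_K$ and the vertex reservoir $U$, both of which are abundant by Claims \ref{claimc4} and \ref{claim5}, so I will have enough freedom to perform all choices below. The key fact is that every $s$-set of the form $T_i\cup\{v_j,w\}$ with $v_j\in L$ and $w\in U$ lies in $\mathcal{F}_0\subseteq\mathcal{G}\setminus\{e_1,\dots,e_{k-1}\}$, hence carries a distinct color in $\mathcal{G}$, and in particular a color different from the color of any $J$-edge.

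For part $(1)$, with $k=2t$, fix $u\in (f\setminus e)\cap U$ and $u'\in (e\setminus f)\cap U$. Since $K=2s+k-3$ comfortably exceeds the number of $T_i$'s meeting $e\cup f$, relabel so that $T_1,\dots,T_{2t-2}$ are disjoint from $e\cup f$, and pick pairwise distinct auxiliary vertices $w_1,\dots,w_{t-2}\in U$ avoiding all previously chosen vertices. Form the alternating path
\[
P^{*}=(T_1\cup\{u,v_1\})\oplus (T_2\cup\{v_1,w_1\})\oplus (T_3\cup\{w_1,v_2\})\oplus\cdots\oplus (T_{2t-2}\cup\{v_{t-1},u'\}),
\]
in which consecutive edges share alternately an $L$-vertex and a $U$-vertex. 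Each of its $2t-2$ edges lies in $\mathcal{F}_0$ by the definition of $U$, so all have pairwise distinct colors lying outside $\{c(e_1),\dots,c(e_{k-1})\}$; this rules out color conflicts when $e$ or $f$ lies in $J$, while $e,f\in\mathcal{F}_0-L$ makes distinctness automatic. Linearity follows from pairwise disjointness of the $T_i$'s and distinctness of $u,u',w_1,\dots,w_{t-2}$, so attaching $e$ and $f$ at the two endpoints of $P^{*}$ yields the forbidden rainbow $\mathbb{C}_{k}$.

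For part $(2)$, with $k=2t+1$ and (without loss of generality) $u\in (f\setminus e)\cap U$, the required extension path has \emph{odd} length $2t-1$, while the alternating pattern above only produces even lengths; this parity mismatch is the main obstacle. I would build the even-length prefix
\[
Q=(T_1\cup\{u,v_1\})\oplus (T_2\cup\{v_1,w_1\})\oplus\cdots\oplus (T_{2t-3}\cup\{w_{t-2},v_{t-1}\})\oplus (T_{2t-2}\cup\{v_{t-1},w_{t-1}\})
\]
ending at $w_{t-1}\in U$, and then close the cycle by a single edge $h$ containing $w_{t-1}$ and some $u''\in e\setminus\{v\}$, where $v=e\cap f$. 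The edge $h$ is not of the form $T_i\cup\{v_j,w\}$, so $h$ need not lie in $\mathcal{F}_0$ and its color is not directly controlled. The plan is to exploit the $\Omega(n^{s-1})$ candidates for $h$ arising from the free choice of its remaining $s-2$ vertices, together with the $\Omega(n)$ choices of $w_{t-1}$ in $U$ (each of which updates the last edge of $Q$ to an $\mathcal{F}_0$-edge of a different color): either some candidate carries a color outside the $2t$ palette colors already used in $\{e,f\}\cup E(Q)$ and we are done, or pigeonhole forces many candidates to share a single color $\gamma$, whereupon either $\gamma=c(g_i)$ for some $g_i\in Q$ (and $Q$ can be rerouted through a different $w$-vertex of $U$ to eliminate the conflict) or $\gamma\in\{c(e),c(f)\}$ (and an analogous rerouting using the abundance of $\mathcal{F}_0$-edges through $U$ removes the conflict). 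In every subcase we obtain a rainbow $\mathbb{C}_k$, contradicting the standing assumption; the heart of the argument is precisely the pigeonhole-plus-rerouting step on the lone ``free'' closing edge, which is where essentially all the difficulty sits.
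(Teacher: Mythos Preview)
Your argument for part~(1) is correct and essentially identical to the paper's: build a rainbow $\mathbb{P}_{k-2}$ inside $\mathcal{F}_0$ using the sets $T_i$ together with alternating vertices from $L$ and $U$, then close it through $e$ and $f$ to obtain a rainbow $\mathbb{C}_k$.

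For part~(2) you have taken a wrong turn. You try to close to a rainbow $\mathbb{C}_k$ and, because the required connecting path has odd length $k-2=2t-1$, you are forced to introduce one ``free'' closing edge $h$ whose color you cannot control. The pigeonhole-plus-rerouting scheme you sketch is vague and does not obviously terminate: when you replace a $w_j$ to kill the conflict with one $g_i$, you simultaneously change \emph{two} edges of $Q$, and nothing prevents the new colors from recreating a clash with $c(h)$; worse, if $c(h)\in\{c(e),c(f)\}$ for every admissible $h$ (which the coloring hypothesis does not exclude), no amount of rerouting inside $Q$ helps, since $e$ and $f$ are fixed. As written, this step is a genuine gap.

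The paper avoids all of this by exploiting a fact you overlooked: for $k=2t+1$ the edges $e,f$ are assumed to lie in $\mathcal{F}_0-L$ (not in $J$), and it was already established immediately after Claim~\ref{claimc1} that $\mathcal{F}_0$ is $\mathbb{P}_{k-1}$-free. So one does not aim for a cycle at all. Starting from $x\in(f\setminus e)\cap U$, build a $\mathbb{P}_{k-3}$ in $\mathcal{F}_0$ of the alternating $T_i\cup\{v_j,u_\ell\}$ type (length $k-3=2t-2$ is even, so no parity issue arises), and prepend $e,f$. The result is a linear path of length $k-1$ lying entirely in $\mathcal{F}_0$, contradicting its $\mathbb{P}_{k-1}$-freeness. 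No uncontrolled closing edge is needed; the asymmetry in the hypotheses between parts~(1) and~(2) is precisely what makes this shortcut available.
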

\begin{proof}
 (1) For $k=2t$, if there are two edges $e,f\in ( \mathcal{F}_0-L)\cup J$ such that $e, f$ form a  rainbow $\mathbb{P}_2$ with $|(f\setminus e ) \cap U| \ge 1$ and $ |(e\setminus f )\cap U|\ge 1$. Let $x\in (f\setminus e ) \cap U$ and $y\in (e\setminus f ) \cap U$. Since $T$ is disjoint with $U$,  there are  at most $(2s-3)$ such $T_i$'s that contain vertices of $e$ or $f$.  Hence, there are  at least $K-(2s-3)=k>k-2$ such $T_i$'s that are disjoint with $e$ and $f$.   Suppose, without loss of generality, that $T_i$  is disjoint with $e$ and $f$ for each $i\in[k-2]$.  Then by the definitions of $T$ and $U$,  there is a  rainbow $\mathbb{P}_{k-2}$ in $\mathcal{F}_0$ with edges $h_1,h_2,\ldots, h_{k-2}$,  such that
\begin{eqnarray*}
  &&h_1=\{x\}\cup T_1\cup \{v_1\}, h_2=\{v_1\}\cup T_2\cup \{u_1\}, h_3=\{u_1\}\cup T_3\cup\{v_2\},\\ 
  && \ldots,h_{k-3}=\{u_{t-2}\} \cup T_{k-3} \cup\{v_{t-1}\},
   h_{k-2}=\{v_{t-1}\}\cup T_{k-2} \cup \{y\},
\end{eqnarray*}
where $u_1,\ldots, u_{t-2}$ are distinct vertices selected in $U\setminus (e\cup f)$.
  Adding edges $e,f$ to that  $\mathbb{P}_{k-2}$, we obtain a rainbow $\mathbb{C}_{k}$, a contradiction.

(2) For $k=2t+1$, suppose to the contrary that there are edges $e,f\in \mathcal{F}_0-L$ such that $|e\cap f|=1$ and $x\in (f\setminus e ) \cap U$. As there are at most $(2s-2)$ $T_i$'s containing vertices of $e$ or $f$, we obtain that there are  at least $K-(2s-2)=k-1>k-3$ such $T_i$'s that are disjoint with $e$ and $f$.   Without loss of generality, assume that for $i\in[k-3]$, $T_i$   is disjoint with $e$ and $f$.  Then there is a  $\mathbb{P}_{k-3}$ with the first edge containing $x$, and  all the $k-3$ edges are of the form $T_i\cup\{v_j,u_{\ell}\}$ similar as above, where  $i\in [k-3],j\in[t-1]$ and $u_{\ell}\in U$.
 Adding edges $e,f$ to that  $\mathbb{P}_{k-3}$, we obtain a $\mathbb{P}_{k-1}$ in $\mathcal{F}_0$, a contradiction to Claim \ref{claimc1} that $\mathcal{F}_0$ is $\mathbb{P}_{k-1}$-free for $k=2t+1$.
 \end{proof}

Now we treat the edges of $\mathcal{F}_0-L$ in details. For $0\le i\le s$, let $$B(i)=\{e\in \mathcal{F}_0-L:\ |e\cap U|=i\},$$
and let $B(2^+)=B(2)\cup B(3)\cup\ldots\cup B(s)$.

By  Eq.\eqref{m}, we have $m<\epsilon (n-t-s)^{s-1}<\epsilon n^{s-1}$ for every fixed positive constant $\epsilon$ when $n$ is sufficiently large. Let $\delta=8(t+1)\left[2Kk(t-1)\cdot(s-1)!\right]^{s} \epsilon^{s-2}.$ Here we set the constant $\delta$ to satisfy $0.5<\delta<1$ by selecting an appropriate small constant $\epsilon>0$.

As a consequence of Claim \ref{claim5}, we show the following inequality holds:
\begin{equation}\label{EQB012}
  (t+1)\binom{n-|U|}{s-1}<\frac{\delta}{4}m<\frac{m}{4}.
\end{equation}

In fact, it follows from  Claim \ref{claim5} that
\begin{align*}
(t+1)\binom{n-|U|}{s-1}&\le\frac{2(t+1)}{(s-1)!}
\left(K(s-2)+(t-1)+\frac{K(t-1)(s-1)k m}{\binom{n-t+1}{s-2}}\right)^{s-1}\\
&\le\frac{2^s(t+1)}{(s-1)!}\left[\left(K(s-2)+(t-1)\right)^{s-1}
+\left(\frac{K(t-1)(s-1)km}{\binom{n-t+1}{s-2}}\right)^{s-1}\right]\\
&<\frac{\delta}{8}m
+\frac{2^s(t+1)}{(s-1)!}\left(\frac{K(t-1)(s-1)km}{\frac{(n-t-s)^{s-2}}{(s-2)!}}\right)^{s-1}\\
&<\frac{\delta}{8}m
+(t+1)\left[2Kk(t-1)\cdot(s-1)!\right]^{s}\left(\frac{m}{(n-t-s)^{s-1}}\right)^{s-2}m\\
&<\frac{\delta}{8}m+\frac{\delta}{8}m=\frac{\delta}{4}m<\frac{m}{4},
\end{align*}
where the third line of the inequality holds since the constant $$\frac{2^s(t+1)}{(s-1)!}\left(K(s-2)+(t-1)\right)^{s-1} <\frac{1}{16}\binom{n-8s-t+1}{s-2}\le \frac{\delta}{8}m$$
for sufficiently large $n$.

Therefore, Eq.(\ref{EQB012}) holds, and we will use it to bound the size of $B(0), B(1), B(2^+)$ below.

\begin{claim}\label{claim6}
(a) $|B(0)|<\frac{\delta}{4}m<\frac{m}{4}$.  In particular,  if $m=O(n^{s-2})$, then $|B(0)|\le O(1)$.\\
(b) $|B(1)|<\frac{\delta}{2}m<\frac{m}{2}$.  In particular,  if $m=O(n^{s-2})$, then $|B(1)|\le O(n)$.
\end{claim}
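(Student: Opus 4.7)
Both parts follow by combining the counting inequality (\ref{EQB012}) with Tur\'an bounds for $\mathbb{C}_k$ and $\mathbb{P}_{k-1}$, and, for part (b), with the structural information provided by Claim \ref{claim8}. For (a), every edge of $B(0)$ lies entirely in $W:=V\setminus(L\cup U)$, a set of $|W|=n-|U|-(t-1)$ vertices, so $B(0)$ is a sub-hypergraph of $\binom{W}{s}$. Since $B(0)\subseteq\mathcal{F}_0$ inherits the forbidden configuration ($\mathbb{C}_k$-free for $k=2t$, $\mathbb{P}_{k-1}$-free for $k=2t+1$), whenever $|W|$ exceeds the corresponding Tur\'an threshold, Theorems \ref{hFJ2} and \ref{turan} yield $|B(0)|\le (t-1)\binom{|W|}{s-1}+o(\binom{|W|}{s-1})$. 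Combined with (\ref{EQB012}), which states $(t+1)\binom{n-|U|}{s-1}<\frac{\delta}{4}m$, this gives $|B(0)|<\frac{t-1}{t+1}\cdot\frac{\delta m}{4}<\frac{\delta m}{4}$. When $|W|$ is below the Tur\'an threshold the trivial bound $|B(0)|\le\binom{|W|}{s}=O(1)$ already suffices, since $m\ge\binom{n-8s-t+1}{s-2}-1=\Omega(n^{s-2})$ in this subsection. The ``in particular'' clause follows immediately from Claim \ref{claim5}: when $m=O(n^{s-2})$, the error term in the bound for $|W|$ is $O(1)$, forcing $|W|=O(1)$ and hence $|B(0)|=O(1)$.

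For (b), the naive bound $|B(1)|\le|U|\binom{|W|}{s-1}\le n\binom{|W|}{s-1}$ falls short of $\frac{\delta m}{2}$ by a factor of order $n$, so a structural refinement is needed. Writing each $e\in B(1)$ uniquely as $\{u\}\cup S$ with $u\in U$ and $S\in\binom{W}{s-1}$, define the ``colour set'' $C(S):=\{u\in U:\{u\}\cup S\in B(1)\}$ and $\mathcal{S}:=\{S:C(S)\neq\emptyset\}$, so that $|B(1)|=\sum_{S\in\mathcal{S}}|C(S)|$. Applying Claim \ref{claim8} to two edges $\{u_1\}\cup S,\{u'\}\cup S'$ of $B(1)$ forming a linear $\mathbb{P}_2$ forces $u_1=u'$; consequently every $S\in\mathcal{S}$ with $|C(S)|\ge 2$ must be $\mathbb{P}_2$-isolated inside $\mathcal{S}$, i.e.\ no $S'\in\mathcal{S}$ satisfies $|S\cap S'|=1$. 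Hence $\mathcal{S}_{\mathrm{big}}:=\{S:|C(S)|\ge 2\}$ is a $\mathbb{P}_2$-free $(s-1)$-uniform hypergraph on $W$, and Theorem \ref{turan}(1), applied in uniformity $s-1\ge 3$ (using $s\ge 4$ from Theorem \ref{thc1}), gives $|\mathcal{S}_{\mathrm{big}}|=O(|W|^{s-3})$. Splitting heavy and light parts yields
$$|B(1)|\le |U|\cdot|\mathcal{S}_{\mathrm{big}}|+|\mathcal{S}|\le n\cdot O(|W|^{s-3})+\binom{|W|}{s-1};$$
plugging in $|W|\le O(1)+O(m/n^{s-2})$ from Claim \ref{claim5} together with $m<\epsilon n^{s-1}$, an estimate mirroring the one justifying (\ref{EQB012}) shows the heavy term is $o(m)$, so $|B(1)|<\frac{\delta m}{4(t+1)}+o(m)<\frac{\delta m}{2}$. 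The ``in particular'' statement follows because $|W|=O(1)$ when $m=O(n^{s-2})$ gives $|B(1)|\le|U|\binom{|W|}{s-1}\le n\cdot O(1)=O(n)$.

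The main obstacle is the final absorbing step in (b): verifying $n\cdot|W|^{s-3}=o(m)$ uniformly over the regime $\Omega(n^{s-2})\le m<\epsilon n^{s-1}$. Expanding $|W|^{s-3}\le 2^{s-3}(C_1^{s-3}+(C_2 m/n^{s-2})^{s-3})$ and invoking $m<\epsilon n^{s-1}$ reduces the heavy contribution to constant multiples of $n$ and of $\epsilon^{s-4}m/n$, both of which vanish relative to $m$ once $s\ge 4$ and $n$ is sufficiently large; this is the same absorbing idea as in the derivation of (\ref{EQB012}), and $\delta$ has already been chosen large enough to swallow such terms.
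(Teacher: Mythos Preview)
Your proof is correct and follows essentially the same approach as the paper. In part (a) both you and the paper bound $B(0)$ via a Tur\'an estimate on the small ground set $V\setminus(L\cup U)$ and then invoke (\ref{EQB012}); in part (b) both arguments decompose $B(1)$ according to the $(s-1)$-set $S\subset W$, use Claim \ref{claim8} to show that the ``heavy'' sets (your $\mathcal{S}_{\mathrm{big}}$, the paper's $Q_2$) form a $\mathbb{P}_2$-free $(s-1)$-graph, and then apply the Tur\'an bound $ex(|W|,s-1,\mathbb{P}_2)=O(|W|^{s-3})$. Your treatment of the absorbing step $n\cdot|W|^{s-3}=o(m)$ is in fact more carefully justified than the paper's one-line inequality $\binom{n-|U|}{s-3}n\le(t+1)\binom{n-|U|}{s-1}$, which tacitly assumes $n-|U|$ is not too small; your case split on whether $|W|$ is above or below the Tur\'an threshold handles this cleanly.
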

\begin{proof}
(a) Since the edges in $B(0)$ can not form a $\mathbb{C}_k$, by Eq.(\ref{EQB012}), we have $$|B(0)|\le ex(n-|U|,s,\mathbb{C}_k)<(t+1)\binom{n-|U|}{s-1}<\frac{\delta}{4}m<\frac{m}{4}.$$

If $m=O(n^{s-2})$, then by Claim \ref{claim5}, there exists a positive real number $C$ such that
$|U|\ge n-C$. Thus we have $$|B(0)|\le \binom{n-|U|}{s}\le \binom{C}{s}=O(1).$$

(b) Let $Q$ be the collection   of $(s-1)$-sets $h\in V\setminus (U\cup L)$ such that there exists $e\in B(1)$ and $h\subset e$.
We divide $Q$ into two sets $Q_1$ and $Q_2$, such that for every $h\in  Q_1$ there is only one vertex $u\in U$ satisfying $h\cup \{u\}\in  B(1)$, and $Q_2=Q\setminus Q_1$. Hence we have $$|B(1)|<|Q_1|+|Q_2|n.$$
Clearly, $|Q_1|\le\binom{n-|U|}{s-1}<\frac{\delta}{4}m<\frac{m}{4}$ by Eq.(\ref{EQB012}).

To bound $|Q_2|$, notice that there are no $h_1,h_2\in Q_2$ such that $|h_1\cap h_2|=1$. Otherwise, we obtain two $s$-edges $e,f\in \mathcal{F}_0-L$ containing $h_1, h_2$, respectively, where $e, f$ form a  rainbow $\mathbb{P}_2$ with $|(f\setminus e ) \cap U| = 1$ and $ |(e\setminus f )\cap U|= 1$. This   contradicts to Claim \ref{claim8}.
Thus we have $$|Q_2|<ex(n-|U|,s-1,\mathbb{P}_2)\le\binom{n-|U|}{s-3}.$$
Therefore, by Eq.(\ref{EQB012}),
\begin{align*}
|B(1)|&<|Q_1|+|Q_2|n<\frac{\delta}{4}m+\binom{n-|U|}{s-3}n\\
&\le\frac{\delta}{4}m+ (t+1)\binom{n-|U|}{s-1}\\
&<\frac{\delta}{4}m+\frac{\delta}{4}m=\frac{\delta}{2}m<\frac{m}{2}.
\end{align*}

In particularly, if $m=O(n^{s-2})$, then by Claim \ref{claim5}, we have $n\ge |U|\ge n-C$ for  a positive constant number $C$, and so  $$|B(1)|\le \binom{n-|U|}{s-1}\binom{|U|}{1}= O(n).$$

\end{proof}

 Note that \begin{align}\label{B}
|B(2^+)|=|B(2)\cup B(3)\cup\ldots\cup B(s)|&=|\mathcal{F}_0-L|-|B(0)|-|B(1)|.
\end{align}
Next, we explore properties of $|B(2^+)|$ and $m$.

\begin{claim}\label{claimb2k2t}
  (a) We have $|B(2^+)|\le ex(n-t+1, s, \mathbb{P}_2)=\binom{n-t-1}{s-2}$.\\
  (b) We must have $k=2t$ and $m=O(n^{s-2})$. In fact, we have $m< \frac{4}{(s-2)!} n^{s-2}$.
\end{claim}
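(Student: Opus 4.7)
The plan is to get (a) as a clean application of Claim~\ref{claim8}, then derive (b) by plugging (a) together with the size bounds for $|B(0)|$ and $|B(1)|$ from Claim~\ref{claim6} into the identity Eq.~\eqref{lm}.

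For part (a), I will show $B(2^+)$ is $\mathbb{P}_2$-free. The idea is: if two edges $e,f\in B(2^+)$ were to form a linear $\mathbb{P}_2$ (so $|e\cap f|=1$), then since each contains at least two vertices of $U$ while only one vertex can lie in $e\cap f$, both $(e\setminus f)\cap U$ and $(f\setminus e)\cap U$ must be nonempty. Because edges of $\mathcal{F}_0$ carry pairwise distinct colors by construction, $\{e,f\}$ is automatically rainbow. This exactly violates Claim~\ref{claim8}(1) when $k=2t$ and Claim~\ref{claim8}(2) when $k=2t+1$. Since $B(2^+)$ lives on the $(n-t+1)$-vertex set $V\setminus L$, Theorem~\ref{turan} then yields $|B(2^+)|\le ex(n-t+1,s,\mathbb{P}_2)=\binom{n-t-1}{s-2}$.

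For part (b), combining Claim~\ref{claim6}(a),(b) with (a) will give
\[
|\mathcal{F}_0-L|=|B(0)|+|B(1)|+|B(2^+)|\le \tfrac{\delta}{4}m+\tfrac{\delta}{2}m+\binom{n-t-1}{s-2}=\tfrac{3\delta}{4}m+\binom{n-t-1}{s-2}.
\]
Feeding this into Eq.~\eqref{lm} splits into two cases. In the odd case $k=2t+1$, the two $\binom{n-t-1}{s-2}$ terms cancel and the resulting inequality becomes $(1-\tfrac{3\delta}{4})m\le 2k-4$; since $\delta<1$ the coefficient exceeds $1/4$, forcing $m=O(1)$, which contradicts the standing case hypothesis $m>\binom{n-8s-t+1}{s-2}-1\to\infty$. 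So $k$ must be even. In the even case $k=2t$, no such cancellation occurs and the same rearrangement produces $(1-\tfrac{3\delta}{4})m\le \binom{n-t-1}{s-2}+2k-4$; dividing by $(1-\tfrac{3\delta}{4})>1/4$ then gives $m<4\bigl(\binom{n-t-1}{s-2}+2k-4\bigr)<\frac{4}{(s-2)!}n^{s-2}$ for $n$ sufficiently large.

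The main (and essentially only) subtle point is that the earlier choice $1/2<\delta<1$ is what guarantees $1-\tfrac{3\delta}{4}>1/4$, so the inequalities can be inverted and absorbed into the target bound. Everything else is routine accounting, since the hard structural work of excluding rainbow $\mathbb{P}_2$'s whose symmetric differences meet $U$ has already been done in Claim~\ref{claim8}, and the estimates for $|B(0)|$ and $|B(1)|$ are provided by Claim~\ref{claim6}.
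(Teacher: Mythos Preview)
Your proposal is correct and follows essentially the same route as the paper: for (a) you use Claim~\ref{claim8} to show $B(2^+)$ is $\mathbb{P}_2$-free and then invoke Theorem~\ref{turan}, and for (b) you combine (a) with the bounds of Claim~\ref{claim6} and Eq.~\eqref{lm} to rule out odd $k$ and bound $m$ when $k$ is even. The only cosmetic difference is that the paper uses the cruder bounds $|B(0)|<m/4$, $|B(1)|<m/2$ (so the pivot constant is exactly $1/4$) rather than your sharper $\tfrac{3\delta}{4}m$ form, but the algebra and conclusions are identical.
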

\begin{proof}
  (a) If $|B(2^+)|> ex(n-t+1, s, \mathbb{P}_2)$, then there is a $\mathbb{P}_2$ with two edges  $e,f\in B(2^+)$. Moreover, by definition of $B(2^+)$, we have $|(f\setminus e ) \cap U| \ge 1$ and $ |(e\setminus f )\cap U|\ge 1$, a  contradiction to Claim \ref{claim8}.

  (b) If $k=2t+1$, then it follows from Eq.(\ref{lm}) (\ref{B}) and Claim \ref{claim6} that
  \begin{align*}
|B(2^+)|>|\mathcal{F}_0-L|-\frac{3}{4}m=\frac{m}{4}+ \binom{n-t-1}{s-2}+2-2(k-1)> ex(n-t+1, s, \mathbb{P}_2),
\end{align*}
a contradiction to Claim \ref{claimb2k2t} (a).

So we must have $k=2t$. Similar as inequality above, by Eq.(\ref{lm}) (\ref{B}) and Claims \ref{claim6} and \ref{claimb2k2t} (a) for $k=2t$, we have
\begin{align*}
ex(n-t+1, s, \mathbb{P}_2)\ge |B(2^+)|>|\mathcal{F}_0-L|-\frac{3}{4}m=\frac{m}{4}+2-2(k-1),
\end{align*}
which shows that $m< \frac{4n^{s-2}}{(s-2)!}$ as required.
\end{proof}

Note that, applying Eq.(\ref{lm}) (\ref{B}) again, Claims \ref{claim6} and \ref{claimb2k2t} provide a further estimation of $|B(2^+)|$ as follows:
\begin{align}\label{B2}
\binom{n-t-1}{s-2}\ge |B(2^+)|
&\ge m-O(n)-O(1)+2-2(k-1).
\end{align}

~

Now we are preparing to find certain edges aiming to lead a contradiction to Claim \ref{claim8}.
\begin{claim}\label{claimxyb2}
  (i) For any $j\in\{1,k-1\}$, there are at least two $free(P)$ vertices in $e_j$, which are not belonging to $L$.\\
  (ii) There exist a $free(P)$ vertex $x\in e_1$ and a $free(P)$ vertex $y\in e_{k-1}$, such that $x,y\notin L$ and not all edges in $B(2^+)$ containing both $x$ and $y$.
\end{claim}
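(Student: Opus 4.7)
The plan is to establish (i) by a contradiction argument that exploits the dense neighborhood structure of $\mathcal{F}_0$ around each $v_j\in L$ (from Claims \ref{claimc4} and \ref{claim5}) to manufacture a rainbow $\mathbb{C}_k$ whenever an endpoint edge of $P$ has too many $free(P)$ vertices inside $L$; then (ii) follows from (i) together with the $\mathbb{P}_2$-free structure of $B(2^+)$ forced by Claim \ref{claim8}(1) and a counting argument over ordered pairs.

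For part (i), I assume to the contrary that $e_1$ has at most one $free(P)$ vertex outside $L$ (the case $e_{k-1}$ is symmetric), so that at least $s-2$ of its $s-1$ $free(P)$ vertices lie in $L$; fix $v_\ell\in L\cap free(P)\cap e_1$. Because $m=O(n^{s-2})$ by Claim \ref{claimb2k2t}(b), Claim \ref{claim5} yields $|U|\ge n-O(1)$, so I can pick a $free(P)$ vertex $y\in U$ of $e_{k-1}$ outside $L$. The contradiction hypothesis forces $|V(P)\cap L|\ge s-2$, shrinking $|V(P)\setminus L|$ enough that among the $K=2s+k-3$ pairwise disjoint $(s-2)$-sets from Claim \ref{claimc4} some $T_r$ is disjoint from $V(P)\setminus(L\cup\{y\})$. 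Then $e^*:=T_r\cup\{v_\ell,y\}\in\mathcal{F}_0$ by the very definition of $U$; it satisfies $e^*\cap e_1=\{v_\ell\}$, $e^*\cap e_{k-1}=\{y\}$, and $e^*\cap e_i=\emptyset$ for $2\le i\le k-2$, and its color differs from $\alpha_1,\dots,\alpha_{k-1}$ since $e^*\in\mathcal{F}_0\subset\mathcal{G}-\bigcup_{i=1}^{k-1}e_i$. So $e_1,e_2,\ldots,e_{k-1},e^*$ is a rainbow $\mathbb{C}_k$ in $\mathcal{H}$, contradicting our standing assumption.

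For part (ii), (i) supplies at least $2\cdot 2=4$ ordered pairs $(x,y)$ with $x\in e_1\setminus L$, $y\in e_{k-1}\setminus L$ both $free(P)$. Any two edges of $B(2^+)$ sharing exactly one vertex would contradict Claim \ref{claim8}(1), because each such edge carries at least two vertices in $U$; hence $B(2^+)$ is $\mathbb{P}_2$-free. The lower bound in Eq.\eqref{B2} pushes $|B(2^+)|$ close to the extremal value $\binom{n-t-1}{s-2}=ex(n-t+1,s,\mathbb{P}_2)$, and the uniqueness statement in Theorem \ref{turan}(1) then forces $B(2^+)$ into the sunflower through some fixed pair $\{x^*,y^*\}$. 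An ordered pair $(x,y)$ violates the conclusion only when $\{x,y\}=\{x^*,y^*\}$, which rules out at most two of the four pairs and leaves at least two working pairs, as required.

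The delicate step is the intersection count in (i): because $K=2s+k-3$ can be smaller than $|V(P)|$ in absolute terms, securing a $T_r$ disjoint from $V(P)\setminus(L\cup\{y\})$ depends crucially on the contradiction hypothesis planting enough $L$-vertices inside $V(P)$. When the straightforward selection fails in borderline parameter ranges, the backup plan is to chain together several edges of $\mathcal{F}_0$ of the form $T_r\cup\{v_j,u\}$, replacing a short interior stretch of $P$ by an alternative $\mathcal{F}_0$-path threaded through additional $v_j\in L$ before closing the cycle on the shorter residual subpath; the freedom available in $U$ and in the pool of $T_r$'s guaranteed by Claims \ref{claimc4} and \ref{claim5} makes such a re-routing possible.
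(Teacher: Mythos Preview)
Your proof has genuine gaps in both parts, and the paper's argument is both simpler and different in spirit.

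\textbf{Part (i).} Your construction needs two ingredients that are not secured. First, you need a $free(P)$ vertex $y\in e_{k-1}$ lying in $U$. Claim~\ref{claim5} only gives $|V\setminus U|\le K(s-2)+(t-1)+O(1)$, a constant depending on $s,k$ that can easily exceed $s-1$; so all $s-1$ free vertices of $e_{k-1}$ may sit outside $U$. Second, you need some $T_r$ disjoint from $e_1\cup\cdots\cup e_{k-1}$. The $T_r$'s are pairwise disjoint, so at most $|(e_1\cup\cdots\cup e_{k-1})\setminus L|$ of them can hit this set; but for, say, $k=8$, $s=4$ one has $K=2s+k-3=13$ while $|e_1\cup\cdots\cup e_{k-1}|=(k-1)(s-1)+1=22$, so even subtracting the $s-2=2$ vertices forced into $L$ leaves $20>K$. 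Your ``backup plan'' is only a sketch and does not address either obstruction concretely.

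The paper instead uses a pure counting argument: if $v\in e_j\cap L$ is a $free(P)$ vertex and $u\in e_{j'}$ is a $free(P)$ vertex for the other end, then every $s$-set $g=\{u,v\}\cup S$ with $S\subset V\setminus V(P)$ must carry a colour from $\{\alpha_1,\ldots,\alpha_{k-1}\}$ (otherwise $g$ closes $\bigcup_{i=1}^{k-1}e_i$ into a rainbow $\mathbb C_k$), hence $g$ is a missing-edge. Failure of (i) yields at least $(s-2)(s-3)\binom{n-|V(P)|}{s-2}$ missing-edges, contradicting the bound $m\le\binom{n-t-1}{s-2}+O(n)$ coming from~\eqref{B2}.

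\textbf{Part (ii).} You invoke the uniqueness clause of Theorem~\ref{turan} for $\mathbb P_2$, but that clause applies only at exact extremality; from~\eqref{B2} you know only that $|B(2^+)|$ is within $O(n)$ of $\binom{n-t-1}{s-2}$, which is not enough to conclude the sunflower structure. The paper's argument avoids this entirely: by (i) take two $free(P)$ vertices $x,z\in e_1\setminus L$ and one $y\in e_{k-1}\setminus L$. If (ii) fails, then every edge of $B(2^+)$ contains both $\{x,y\}$ and $\{z,y\}$, hence contains $\{x,y,z\}$, forcing $|B(2^+)|\le\binom{n-t-2}{s-3}=O(n^{s-3})$, which contradicts~\eqref{B2}.
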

\begin{proof}
  (i) In fact, for any $free(P)$ vertex $v\in e_j\cap L$, and any $free(P)$ vertex $u\in e_{j'}$, where $j,j'\in \{1,k-1\}$ and $j\neq j'$, the edge $g$ consisting of $u,v$ and $s-2$ vertices in $V\setminus V(P)$, must be colored with a color appeared in $\bigcup_{i=1}^{k-1} e_i$; otherwise, we have a rainbow $\mathbb{C}_k$. This indicates that $g$ is a missing-edge. Hence,  if Claim \ref{claimxyb2} (i) dose not hold, then we count the number of missing-edges as
$$m\ge (s-2)(s-3)\binom{n-|V(P)|}{s-2},$$
violating Eq.\eqref{B2}. Hence Claim \ref{claimxyb2} (i) holds.

(ii) By Claim \ref{claimxyb2} (i), assume that $x,z$ are $free(P)$ vertices in $e_1\setminus L$, and $y$ is a $free(P)$ vertex in $e_{k-1}\setminus L$. By contradiction, suppose that  all the edges of $B(2^+)$ contain the vertex  pair $\{x,y\}$, and all the edges of $B(2^+)$ contain  the vertex pair $\{z,y\}$ as well. Then all the edges in $B(2^+)$ contain $\{x,y, z\}$, and so $|B(2^+)|\le  \binom{n-t+1-3}{s-3}$, a contradiction to Eq.(\ref{B2}). This proves Claim \ref{claimxyb2} (ii).
\end{proof}

~

Finally, we are ready to complete the proof.

By  Claim \ref{claimxyb2} (ii), there exist a $free(P)$ vertex $x\in e_1$ and a $free(P)$ vertex $y\in e_{k-1}$, such that $x,y\notin L$ and not all edges in $B(2^+)$ containing both $x$ and $y$. Let $e^*\in B(2^+)$ be such an edge that not containing both $x$ and $y$.

Assume that $x,y\notin e^*$. Since $e^*\in B(2^+)$, we select a vertex $u\in e^*\cap U$, and so $u\notin\{x,y\}$. Consider an $s$-edge $f$ consisting of $x,y,u$ and $s-3$ vertices disjoint with $P$, $L$ and  $e^*$, such that $|f\cap U|\ge 2$. Then we have $f\in J$, otherwise $f\cup P$ forms a rainbow $\mathbb{C}_k$. Thus $f$ and $e^*$ form a rainbow $\mathbb{P}_2$ with  $|(f\setminus e^* ) \cap U| \ge 1$ and $ |(e^*\setminus f )\cap U|\ge 1$, which contradicts  to Claim \ref{claim8} (1).

Assume instead that one of $x,y$ belongs to $e^*$. Without loss of generality, suppose that  $\{x\}= e^*\cap \{x,y\}$.  Consider the edge $g$ consisting of $x,y$ and $s-2$ vertices disjoint with $P$, $L$ and  $e^*$, such that $|(g\setminus \{x\})\cap U|\ge 1$. Then we have $g\in J$ with the same reason as above. Moreover,  $g$ and $e^*$ form a rainbow $\mathbb{P}_2$ with $ |(e^*\setminus f )\cap U|\ge 1$, again contradicting to Claim \ref{claim8} (1).

Therefore, we establish the upper bound and complete the proof of Theorem \ref{thc1}.

\qed\\

\section{Loose cycle--Proof of Theorem \ref{thc2}}

Since the proof of Theorem \ref{thc2} is similar to  Theorem \ref{thc1}, in  this section, we omit some details  and pay more attention to the difference between the proofs of  Theorem \ref{thc1} and Theorem \ref{thc2}.

Let $\mathcal{H}$ be a complete $s$-uniform hypergraph on $n$ vertices. Denote by $V$ the vertex set of $\mathcal{H}$.
The lower bound in Theorem \ref{thc2} follows from a similar construction as Theorem \ref{thc1} by applying the extreme $s$-graphs without $\mathcal{P}_{k-1}$ obtained from Theorem  \ref{turanloose}.

For the upper bound, when $k=2t$, since a loose cycle is also a linear cycle, we have $ar(n,s,\mathcal{C}_{k})\le ar(n,s,\mathbb{C}_{k})=\binom{n}{s}-\binom{n-t+1}{s}+2$, and we are done.

For $k=2t+1$, we shall show below that  how to modify the proof of  the upper bound for anti-Ramsey number of  linear cycles to obtain the upper bound for anti-Ramsey number of  loose cycles.
For loose cycles, we again consider, by contradiction,  a coloring of $\mathcal{H}$ using $\binom{n}{s}-\binom{n-t+1}{s}+3$ colors  yielding no rainbow $\mathcal{C}_{k}$.
Since $ar(n,s,\mathcal{P}_{k})=\binom{n}{s}-\binom{n-t+1}{s}+3$ by Theorem \ref{th2}, there is a rainbow loose path $P$ of length $k$  in $\mathcal{H}$. As before, let $\mathcal{G}$ be a subgraph of $\mathcal{H}$ with $|\mathcal{G}|=\binom{n}{s}-\binom{n-t+1}{s}+3$, such that $P\subset \mathcal{G}$ and each color appears on exactly one edge
of $\mathcal{G}$.  Denote by $e_1, e_2,\ldots, e_{k}$ the edges of $P$, and let $\mathcal{F}=\mathcal{G}- \bigcup\limits_{i = i}^{k - 1} {{e_i}}$.

With  similar argument as  the proof of Claim \ref{claimc1}, we have the following claim.
\begin{claim}\label{claim01}
If $\mathcal{F}$ contains a linear path $P_1$ of length $k-1$, then $\mathcal{F}-E(P_1)$ contains no $\mathbb{P}_{k-1}$.
\end{claim}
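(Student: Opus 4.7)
The plan is to follow the strategy of Claim \ref{claimc1} (the linear cycle analog), exploiting the fact that every rainbow linear cycle $\mathbb{C}_k$ is in particular a rainbow loose cycle $\mathcal{C}_k$. Suppose for contradiction that $\mathcal{F}-E(P_1)$ contains a linear path $P_2=g_1\cdots g_{k-1}$ of length $k-1$, in addition to the given $P_1=f_1\cdots f_{k-1}$. By the choice of $\mathcal{G}$, the colors appearing on $P$, $P_1$, $P_2$ are pairwise distinct.

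First I would purge from $\mathcal{F}$ the edges of $P_1\cup P_2$ together with every edge meeting $V(P)\cup V(P_1)\cup V(P_2)$ in at least two vertices; this removes only $O(n^{s-2})$ edges, while $|\mathcal{F}|\sim (t-1)\binom{n}{s-1}$ is comfortably above $ex(n,s,\mathbb{P}_{k-3})$ by Theorem \ref{turan}. So the residual graph still contains a linear path $P_3=h_1\cdots h_{k-3}$ whose edges each meet $V(P)\cup V(P_1)\cup V(P_2)$ in at most one vertex. Using $s\ge k+3$, I can then choose six pairwise distinct free vertices $v_1\in e_1$, $v_2\in e_{k-1}$, $v_3\in f_1$, $v_4\in f_{k-1}$, $v_5\in g_1$, $v_6\in g_{k-1}$, all lying outside $V(P_3)$.

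Next I would build three closing edges $e',e'',e'''$ in sequence: $e'$ consisting of $v_1,v_2$, a vertex $u_1\in h_1\setminus(V(P)\cup V(P_1)\cup V(P_2))$, and $s-3$ vertices fresh from everything so far; $e''$ consisting of $v_3,v_4$, a vertex $x\in e'\setminus\{v_1,v_2,u_1\}$, and $s-3$ further fresh vertices; and $e'''$ consisting of $v_5,v_6$, a vertex of $h_{k-3}$, a vertex of $e''$, and $s-4$ further fresh vertices. A color-forcing argument in the spirit of Observation \ref{freePcolorsame} shows that, to avoid producing a rainbow $\mathbb{C}_k$ out of $e_1,\ldots,e_{k-1}$ together with $e'$, the color of $e'$ must lie among those on $P$; the same reasoning applied cyclically to $P_1\cup\{e''\}$ forces the color of $e''$ onto $P_1$, and applied to $P_2\cup\{e'''\}$ forces the color of $e'''$ onto $P_2$. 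Assembling $e',h_1,\ldots,h_{k-3},e''',e''$ then yields a rainbow linear $\mathbb{C}_k$ in $\mathcal{H}$, and hence a rainbow $\mathcal{C}_k$, contradicting our standing assumption.

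The main obstacle is the bookkeeping needed to guarantee that all the chosen vertices (the $v_i$'s, the $u_1$, the $x$, and the filler vertices of $e',e'',e'''$) are genuinely pairwise distinct, and that the resulting closed structure is a true linear cycle—consecutive edges sharing exactly one vertex, non-consecutive edges disjoint. This is precisely where the hypothesis $s\ge k+3$ is used, providing enough slack inside each edge to avoid unintended intersections, exactly as in the proof of Claim \ref{claimc1}. Beyond invoking the inclusion $\mathbb{C}_k\subseteq\mathcal{C}_k$ at the final step, no essentially new ideas are required.
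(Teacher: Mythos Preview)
Your proposal is exactly the route the paper takes: the paper does not write out a separate proof of Claim~\ref{claim01} but simply declares that it follows ``with similar argument as the proof of Claim~\ref{claimc1}'', and your sketch reproduces that argument step for step, together with the right closing remark that a rainbow $\mathbb{C}_k$ is a fortiori a rainbow $\mathcal{C}_k$.

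One point deserves a word of care that neither you nor the paper makes explicit. In Section~7 the base path $P$ is only a rainbow \emph{loose} path of length $k$ (it comes from Theorem~\ref{th2}, not Theorem~\ref{th1}), so $e_1\setminus e_2$ and $e_{k-1}\setminus e_{k-2}$ may each contain as few as one vertex. Your sentence ``using $s\ge k+3$, I can then choose six pairwise distinct free vertices $\dots$ all lying outside $V(P_3)$'' is justified verbatim for the four end edges of the \emph{linear} paths $P_1,P_2$ (each has $s-1\ge k+2$ free vertices), but not automatically for $v_1\in e_1$ and $v_2\in e_{k-1}$: if the sole free vertex of $e_1$ happens to land in some interior edge $h_j$ of $P_3$, the final assembled cycle fails the non-consecutive disjointness condition. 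This is not a flaw in the strategy, only in the bookkeeping; when you write it up, fix $v_1,v_2$ first and then select the remaining $v_i$ (from their large pools) and the anchors $u_1,u_2$ so as to avoid the at most two problematic vertices, or, if $v_1$ does lie in $V(P_3)$, relabel $P_3$ so that this vertex sits in an end edge $h_1$ or $h_{k-3}$ and relax the final cycle to a loose $\mathcal{C}_k$ (which is all that is needed here).
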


If there is a linear path ${P}_1$ of length $k-1$ in $\mathcal{F}$, then we let $\mathcal{F}_0=\mathcal{F}-E(P_1)$; if there is no linear path  of length $k-1$ in $\mathcal{F}$, we delete any $k-1$ edges of $\mathcal{F}$, and denote the subgraph remained by $\mathcal{F}_0$. So we have
$$|\mathcal{F}_0|=|\mathcal{F}|-2(k-1)
=\binom{n}{s}-\binom{n-t+1}
{s}+3-2(k-1),$$
and $\mathcal{F}_0$ is $\mathbb{P}_{k-1}$-free.

Note that $|\mathcal{F}_0|\sim (t-1)\binom{n}{s-1}$. By Theorem \ref{stability}, we can find an $(s-1)$-graph  $G^*\subset \partial \mathcal{F}_0$ with  $|G^*|  \sim \binom{n}{s-1}$ and a set $L$ of $t-1$ vertices of $\mathcal{F}_0$ such that $L\cap V(G^*)=\emptyset$ and $e\cup \{v\}\in \mathcal{F}_0$ for any $(s-1)$-edge $e\in G^*$  and any $v\in L$.  Moreover,
$|\mathcal{F }_0- L|= o(n^{s-1})$. Select a $G^*$ with the maximum number of $(s-1)$-edges.  Denote $L=\{v_1,v_2,\ldots,v_{t-1}\}$ as before.

We still call an $s$-edge $e$ a missing-edge if $e$ contains vertices of $L$ and $e\notin \mathcal{F }_0$. Let $M$ be the set of all the missing-edges, and let $m=|M|$. We have $|\mathcal{F}_0|-|\mathcal{F }_0- L|+m=\binom{n}{s}-\binom{n-t+1}{s}$, and so
\begin{equation*}
m=|\mathcal{F}_0- L|-3+2(k-1).
\end{equation*}

If $m\le \binom{n-8s-t+1}{s-2}-1$, then Claim \ref{claimc2} still holds. Instead of Claim  \ref{claimc3}, we have the following similar claim.
\begin{claim}\label{claim03}
When $k=2t+1\ge 11$, there are no three edges $e,f,h$ in $\mathcal{G}-L$ satisfying that one of the following conditions:\\
 (i) $e,f,h$ form a ${\mathcal{P}}_{3}$;\\
 (ii) $e,f$ form a  ${\mathcal{P}}_{2}$, and $h$ is disjoint with $e\cup f$;\\
 (iii) $e,f,h$  are pairwise disjoint.
\end{claim}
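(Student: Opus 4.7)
The plan is to mirror the proof of Claim \ref{claimc3}(b), exploiting the fact that every linear cycle assembled there is automatically a loose cycle, so the very same constructions furnish a rainbow $\mathcal{C}_k$ and contradict the standing assumption of Theorem \ref{thc2}. The three tools are: Claim \ref{claimc2}, which under the current assumption $m \le \binom{n-8s-t+1}{s-2}-1$ lets one extract $(s-1)$-edges of $G^*$ through any prescribed vertex while avoiding a bounded forbidden set; Lemma \ref{lem1}, which produces pairwise disjoint pairs $\{a_i,b_i\}_{i=1}^{t-1}$ of $(s-1)$-edges in $G^*$ each sharing exactly one vertex; and the hypothesis $t \ge 5$ (from $k=2t+1\ge 11$), which provides enough $L$-vertices for the long auxiliary chains.

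Argue by contradiction in each of the three subcases. In (i), pick free vertices $u\in e$ and $v\in h$ of the $\mathcal{P}_3$, use Claim \ref{claimc2} to extract $a_0\ni u$ and $b_0\ni v$ in $G^*$ disjoint from $V(e\cup f\cup h)\setminus\{u,v\}$, and apply Lemma \ref{lem1} with $W=V(e\cup f\cup h\cup a_0\cup b_0)$ to obtain $\{a_i,b_i\}_{i=1}^{t-1}$. The linear path
\[
a_0\bigoplus_{i=1}^{t-2}\bigl(\{v_i\}\oplus a_i\oplus b_i\bigr)\oplus\{v_{t-1}\}\oplus b_0
\]
of length $k-3$ sits inside $\mathcal{F}_0$, and adjoining $e,f,h$ yields a rainbow $\mathbb{C}_k$, hence a rainbow $\mathcal{C}_k$, the required contradiction.

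Subcases (ii) and (iii) proceed in the same spirit, with slightly longer front sections built from $e,f,h$ and more applications of Claim \ref{claimc2}. In (ii) choose $x\in e\setminus f$, $y\in f\setminus e$ and distinct $z,w\in h$, extract $(s-1)$-edges $a\ni x,\ b\ni y,\ a'\ni z,\ b'\ni w$ from $G^*$ one at a time against the growing forbidden set, then close
\[
P_1 = e\oplus f\oplus b\oplus\{v_1\}\oplus a'\oplus h,\qquad
P_2 = a\bigoplus_{i=2}^{t-2}\bigl(\{v_i\}\oplus a_{i-1}\oplus b_{i-1}\bigr)\oplus\{v_{t-1}\}\oplus b'
\]
into a rainbow $\mathbb{C}_k$. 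In (iii) select two vertices from each of $e,f,h$, extract six $(s-1)$-edges $a,b,a',b',a'',b''$ in $G^*$ successively, and glue
\[
P_1 = e\oplus a\oplus\{v_1\}\oplus a'\oplus f\oplus b'\oplus\{v_2\}\oplus a''\oplus h,\qquad
P_2 = b\bigoplus_{i=3}^{t-2}\bigl(\{v_i\}\oplus a_{i-1}\oplus b_{i-1}\bigr)\oplus\{v_{t-1}\}\oplus b''
\]
into a rainbow $\mathbb{C}_k$; this is where the lower bound $t\ge 5$ is used, so that the nested index set $\{3,\dots,t-2\}$ in $P_2$ makes sense.

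The main obstacle is purely bookkeeping: at each extraction from $G^*$ one must certify that the chosen $(s-1)$-edge lies in $G^*$, contains the designated vertex, and avoids the running forbidden set (whose size remains $O(s)$, well within the reach of Claim \ref{claimc2}); and at the end one must verify that the $k$ assembled edges are pairwise distinct and rainbow. Because all $(s-1)$-edges supplied by $G^*$ lie outside $L$, the shared-vertex pattern of the assemblies agrees exactly with that in Claim \ref{claimc3}(b), so no genuinely new combinatorial idea is needed beyond careful disjointness tracking.
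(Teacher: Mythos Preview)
Your proposal is correct and follows exactly the route the paper indicates: it explicitly states that Claim \ref{claim03} is proved by ``essentially the same argument'' as Claim \ref{claimc3}(b), only producing a $\mathcal{C}_k$ instead of a $\mathbb{C}_k$, which is precisely what you do. One small wording fix: since the hypotheses now involve a loose $\mathcal{P}_3$ or $\mathcal{P}_2$, the intersections $e\cap f$ (and $f\cap h$ in (i)) may have size larger than one, so the assembled cycle need not be a $\mathbb{C}_k$ as you assert in case (i); it is, however, directly a valid rainbow $\mathcal{C}_k$ (all the remaining consecutive intersections are singletons and all nonconsecutive pairs are disjoint by your choice of $a_0,b_0,a,b,a',b',a'',b''$ and Lemma \ref{lem1}), which is exactly the contradiction required.
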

The only difference between  Claim  \ref{claimc3} and    Claim  \ref{claim03} is to  construct a $\mathcal{C}_k$ rather than $\mathbb{C}_k$ to obtain a contradiction, with essentially the same argument (see also Claim \ref{claim4} for details).

Then, as $P$ has length $k$, we can derive that $P-L$ must contain edges satisfying one of the conditions in Claim \ref{claim03}, giving the finial contradiction in the case $m\le \binom{n-8s-t+1}{s-2}-1$.

~

 If
$m> \binom{n-8s-t+1}{s-2}-1$,
 with the arguments that are identical to the linear cycles, Claim \ref{claimc4} and Claim \ref{claim5} hold. By replacing $\mathcal{P}_2$ with $\mathbb{P}_2$, we obtain the following  result similar to Claim \ref{claim8} (2).

 \begin{claim}\label{claim8'}
For $k=2t+1$,
there are no two edges $e,f\in \mathcal{F}_0-L$ such that $e, f$ form a   $\mathcal{P}_2$, $|(f\setminus e ) \cap U| \ge 1$ or $ |(e\setminus f )\cap U|\ge 1$.
\end{claim}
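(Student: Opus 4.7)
The plan is to adapt the approach of Claim \ref{claim8}(2), but to target a rainbow $\mathcal{C}_k$ in $\mathcal{H}$ directly rather than a $\mathbb{P}_{k-1}$ in $\mathcal{F}_0$. The reason is that under the weaker hypothesis ``$e,f$ form a $\mathcal{P}_2$'' one may have $|e\cap f|\ge 2$, in which case splicing $e$ and $f$ onto a linear path $\mathbb{P}_{k-3}$ built from the $T_i$--$L$--$U$ structure only yields a loose path $\mathcal{P}_{k-1}$, not a linear one, so the $\mathbb{P}_{k-1}$-freeness of $\mathcal{F}_0$ used in Claim \ref{claim8}(2) no longer applies. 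In the loose cycle setting, however, a rainbow $\mathcal{C}_k$ in $\mathcal{H}$ is already a contradiction, so that is what I will aim for.

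Assume for contradiction that $e,f\in\mathcal{F}_0-L$ form a $\mathcal{P}_2$ and, without loss of generality, that some $x\in(f\setminus e)\cap U$ exists. Pick any $y\in e\setminus f$, which is possible because $e\ne f$. Since the $T_i$'s are pairwise disjoint and $|e\cup f|\le 2s-1$, at most $2s-1$ of the $T_i$'s meet $e\cup f$, so at least $K-(2s-1)=2t-1$ of them, say $T_1,\ldots,T_{2t-1}$, are disjoint from $e\cup f$. Because Claim \ref{claim5} gives $|U|=n-o(n)$, I can further pick distinct vertices $u_1,\ldots,u_{t-1}\in U$ avoiding $e\cup f\cup T_1\cup\cdots\cup T_{2t-1}$.

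Next, construct $s$-edges $E_1,\ldots,E_{2t-1}$ in $\mathcal{F}_0$ by setting $E_1=T_1\cup\{v_1,x\}$, $E_{2j}=T_{2j}\cup\{v_j,u_j\}$ for $1\le j\le t-1$, $E_{2j+1}=T_{2j+1}\cup\{u_j,v_{j+1}\}$ for $1\le j\le t-2$, and $E_{2t-1}=T_{2t-1}\cup\{u_{t-1},y\}$. Each $E_\ell$ lies in $\mathcal{F}_0$ by the defining property of $U$. They form a linear path of length $2t-1$ whose bridge vertices alternate between $L$ and $U$, with free end-vertices $x$ and $y$. Prepending $e,f$ gives the cyclic sequence $e,f,E_1,E_2,\ldots,E_{2t-1}$: consecutive pairs share at least one vertex (via $e\cap f\ne\emptyset$, $f\cap E_1\supseteq\{x\}$, the bridges $E_\ell\cap E_{\ell+1}$, and $E_{2t-1}\cap e\supseteq\{y\}$), while nonconsecutive pairs are disjoint (using $T_i\cap(e\cup f)=\emptyset$, $e\cap L=f\cap L=\emptyset$, the choice of the $u_j$'s off $e\cup f$, and $x\notin e$, $y\notin f$). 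Thus it is a $\mathcal{C}_k$ in $\mathcal{H}$, and because every one of its $k$ edges lies in $\mathcal{F}_0\subseteq\mathcal{G}$ with distinct colors, the cycle is rainbow, contradicting the standing assumption.

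The main technical obstacle is the tight counting in step two: the inequality $K-(2s-1)\ge 2t-1$ is an equality, so every remaining $T_i$ must be used exactly once, and the verification that the constructed cycle is genuinely loose (i.e., that nonconsecutive edges are disjoint) requires careful intersection bookkeeping involving $e,f$, the $T_i$'s, the $v_j$'s, and the $u_\ell$'s. The other ingredients---the existence of $y$, the abundance of available $T_i$'s and $U$-vertices, and the rainbow property---are straightforward consequences of the preceding claims.
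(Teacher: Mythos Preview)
Your construction has a genuine gap at the very last edge. You claim that ``each $E_\ell$ lies in $\mathcal{F}_0$ by the defining property of $U$,'' but this is only true for edges of the form $T_i\cup\{v_j\}\cup\{u\}$ with $v_j\in L$ and $u\in U$. Your final edge $E_{2t-1}=T_{2t-1}\cup\{u_{t-1},y\}$ contains no vertex of $L$: indeed $u_{t-1}\in U\subseteq V\setminus L$ and $y\in e\subseteq V\setminus L$. So nothing guarantees $E_{2t-1}\in\mathcal{F}_0$, and hence nothing guarantees its colour is distinct from the other $2t$ edges in your cycle.

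This is not a minor bookkeeping slip but a structural obstruction. In a linear path built from edges of the form $T_i\cup\{v_j,u\}$, each $v_j\in L$ can serve as a bridge vertex at most once, so the $t-1$ vertices of $L$ support at most $2(t-1)=2t-2$ such edges along the path. You need $2t-1$ of them to close a $\mathcal{C}_{2t+1}$ through $e,f$, and this is exactly one too many. Your inequality $K-(2s-1)=2t-1$ being tight is a symptom of the same shortfall: there are enough $T_i$'s, but not enough $L$-vertices to anchor them all.

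The paper avoids this by \emph{not} closing a cycle. It follows the template of Claim~\ref{claim8}(2): build only $k-3=2t-2$ edges $E_1,\ldots,E_{2t-2}$ (which the $T$--$L$--$U$ structure can genuinely supply) starting at $x\in(f\setminus e)\cap U$, and prepend $e,f$ to obtain a path of length $k-1$ in $\mathcal{F}_0$. You are right that when $|e\cap f|\ge 2$ this path is only loose, so the $\mathbb{P}_{k-1}$-freeness of $\mathcal{F}_0$ recorded after Claim~\ref{claim01} does not literally apply; the paper is terse here, and one needs the loose-path analogue of Claim~\ref{claim01} (whose proof, modelled on Claim~\ref{claimc1}, produces a rainbow $\mathcal{C}_k$ rather than a $\mathbb{C}_k$, and goes through under the standing hypothesis $s\ge k+3$). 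If you wish to salvage the direct-cycle route instead, you must supply the missing edge $E_{2t-1}$ by some other means---for instance by treating it as a free edge of $\mathcal{H}$ and arguing about its colour---but this requires additional work you have not carried out.
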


 We still let $B(i)=\{e\in \mathcal{F}_0-L:\ |e\cap U|=i\}$ and $B(2^+)=B(2)\cup B(3)\cup\ldots\cup B(s)$. Then the counting arguments in Claim \ref{claim6} still holds that $|B(0)|<\frac{m}{4}$ and $|B(1)|<\frac{m}{2}$.
 Hence we have
\begin{align}\label{fi}
\notag|B(2^+)|&>|\mathcal{F}_0-L|-\frac{m}{4}-\frac{m}{2}\\
\notag&=m+3-2(k-1)-\frac{3m}{4}\\
&=\frac{m}{4}+3-2(k-1).
\end{align}
By Claim \ref{claim8'}, there are no two edges $f_1,f_2\in B(2^+)$ such that $|f_1\cap U|=i$ and  $|f_2\cap U|=j$  with $i\neq j$. Moreover, for fixed $r$, if $|f_1\cap U|=|f_2\cap U|=r$ for two edges $f_1,f_2\in B(2^+)$, then  $f_1\cap U=f_2\cap U$, i.e., all the edges in  $ B(2^+)$ contain exactly the same $r$ vertices of $U$. Then it follows that
\begin{align*}
|B(2^+)| \le \max_{2\le r\le s}\binom{n-t+1-|U|}{s-r}=\binom{n-t+1-|U|}{s-2}.
\end{align*}
By Claim \ref{claim5} and a similar inequality as (\ref{EQB012}), we have
$$|B(2^+)| \le \binom{n-t+1-|U|}{s-2}<
\frac{\delta}{8}m<\frac{m}{4}+3-2(k-2),$$
which contradicts to Eq.\eqref{fi}.
Hence, we obtain the final contradiction, which proves
 Theorem \ref{thc2}.
 \qed

~

\section{Berge Path and Berge Cycle}
We shall present the proofs of Theorem \ref{th3} and Proposition \ref{BC} on Berge paths and Berge cycles in this section.
\subsection{Berge Path--Proof of Theorem \ref{th3}.}
 For the lower bounds, we will prove  that
  $ar(n,s,\mathcal{B}_{k})\ge \frac{2n}{k}{\lfloor k/2\rfloor \choose s} $ if $k>2s+1$, and
$ar(n,s,\mathcal{B}_{k})\ge \frac{n}{s+1}\lfloor \frac{k-2}{2}\rfloor$  if $3<k\le 2s+1$.
For $k>2s+1$, we partition the $n$ vertices into sets of size $\lfloor k/2\rfloor$ (possibly one of those sets has size smaller than $\lfloor k/2\rfloor$). Denote by $S_1, S_2, \ldots, S_{\ell}$ those obtained sets of size $\lfloor k/2\rfloor$. Then for each $k$-set $S_i$, color each edge contained in $S_i$ with a distinct color. The rest edges are colored with one additional color. It is routine to check that there is no rainbow ${\mathcal B}_k$ in the above coloring. So we have $ar(n,s,\mathcal{B}_{k})\ge \frac{2n}{k}{\lfloor k/2\rfloor \choose s}.$

For $3<k\le 2s+1$, we partition the $n$ vertices into sets of size $s+1$. Then we select $\lfloor k/2\rfloor -1$ edges in each $(s+1)$-set and color each of those edges with a different color. The rest edges are colored with one additional color. Similarly, this provides a $\frac{n}{s+1}\lfloor \frac{k-2}{2}\rfloor$-coloring without a rainbow $\mathcal{B}_{k}$. Hence $ar(n,s,\mathcal{B}_{k})\ge \frac{n}{s+1}\lfloor \frac{k-2}{2}\rfloor.$

For the upper bounds, we will show that
if $ k\ge s+2$,  then for sufficiently large $n$,  $ ar(n,s,\mathcal{B}_{k})\le \frac{n}{k-1}\binom{k-1}{s}+1$; and if $k\le s+1$, then
$ar(n,s,\mathcal{B}_{k})\le \frac{(k-2)n}{s+1}$ for sufficiently large $n$.

(I) For $ k\ge s+2$, let  $\mathcal{H}$ be a complete $s$-uniform hypergraph on $n$ vertices. Consider  a coloring of $\mathcal{H}$ using $\frac{n}{k-1}\binom{k-1}{s}+1$ colors and yielding no rainbow $\mathcal{B}_{k}$. Let $\mathcal{G}$ be a subgraph of $\mathcal{H}$ with $\frac{n}{k-1}\binom{k-1}{s}+1$ edges such that each color appears on exactly one edge
of $\mathcal{G}$. So the number of edges of $\mathcal{G}$ is $|\mathcal{G}|=\frac{n}{k-1}\binom{k-1}{s}+1>ex(n,s,\mathcal{B}_{k-1})$. Hence there is a rainbow Berge path $P$ of length $k-1$ in $\mathcal{G}$. Denote by $e_1$, $e_2$, \ldots, $e_{k-1}$ the edges of $P$ with colors $\alpha_1$, $\alpha_2$, \ldots, $\alpha_{k-1}$, respectively. And there are $k$ vertices $w_1$, $w_2$, \ldots, $w_{k}$ in $P$ such that $w_i,w_{i+1}\in e_i$ for $i=1,\ldots,k-1$. Let $\mathcal{F}$ be the hypergraph obtained by removing all the edges of $P$ from $\mathcal{G}$. We have that $|\mathcal{F}|=\frac{n}{k-1}\binom{k-1}{s}+1-(k-1)=
\frac{n}{k-1}\binom{k-1}{s}-k+2$.

If there is a  Berge path $P^*$ of length $k-1$ in $\mathcal{F}$. Denote by $g_1$, $g_2$, \ldots, $g_{k-1}$ the edges of $P^*$. 
And there are $k$ vertices $z_1$, $z_2$, \ldots, $z_{k}$ in $P^*$ such that $z_i,z_{i+1}\in g_i$ for $i=1,\ldots, k-1$. Then either $w_1\neq z_1$ or $w_1\neq z_{k}$. Without loss of generality, suppose that $w_1\neq z_1$. Consider the edge $e$ consisting of $w_1, z_1$ and $s-2$ vertices in $V(\mathcal{F})\setminus (V(P)\cup V(P^*)) $. If $e$ is colored with a color not in $\{\alpha_1, \alpha_2, \ldots, \alpha_{k-1}\}$, then $e\cup P$ is a rainbow  $\mathcal{B}_{k}$. So $e$ is colored with a color belonging to  $\{\alpha_1, \alpha_2, \ldots, \alpha_{k-1}\}$, then $e\cup P^*$ is a rainbow  $\mathcal{B}_{k}$. Therefore, we have showed that
\begin{eqnarray}\label{noBk-1}
  \mbox{$\mathcal{F}$ contains no $\mathcal{B}_{k-1}$.}
\end{eqnarray}

We further claim that the minimum degree $\delta(\mathcal{F})$ of $\mathcal{F}$ satisfying
\begin{equation}\label{B1}
\delta(\mathcal{F})\ge \frac{1}{k-1}\binom{k-1}{s}-k+1.
\end{equation}
Indeed, if there is a vertex $v$ having degree $d_\mathcal{F}(v)< \frac{1}{k-1}\binom{k-1}{s}-k+1$ in $\mathcal{F}$, then the number of edges in $\mathcal{F}-v$ is more than $|\mathcal{F}|-(\frac{1}{k-1}\binom{k-1}{s}-k+1)=
\frac{n-1}{k-1}\binom{k-1}{s}+1\ge ex(n-1,s, \mathcal{B}_{k-1})+1$ for  sufficiently large $n$. So there is a $\mathcal{B}_{k-1}$ in $\mathcal{F}-v$, which contradicts (\ref{noBk-1}). This proves (\ref{B1}).

Since $|\mathcal{F}|>ex(n,s, \mathcal{B}_{k-2})$  for sufficiently large $n$, there is a  Berge path $P'$ of length $k-2$ in $\mathcal{F}$. Denote by $f_1$, $f_2$, \ldots, $f_{k-2}$ the edges of $P'$ with colors $\beta_1$, $\beta_2$, \ldots, $\beta_{k-2}$, respectively.
And there are $k-1$ vertices $u_1$, $u_2$, \ldots, $u_{k-1}$ in $P'$ such that $u_i,u_{i+1}\in f_i$ for $i=1,\ldots, k-2$.  Since $\mathcal{F}$ contains no $\mathcal{B}_{k-1}$ by (\ref{noBk-1}), the neighbors of $u_1$ and $u_{k-1}$ must belong to $\{u_1, u_2, \ldots, u_{k-1}\}$. In fact, we shall further show in the following claim that
the neighbors of each vertex in $\{ u_2, \ldots, u_{k-2}\}$ also belong to $\{u_1, u_2, \ldots, u_{k-1}\}$.
 Before that, we need the definition of Berge cycles to state the following claim. An
$s$-uniform \emph{Berge cycle} of length $\ell$ is a cyclic list of
distinct $s$-sets $a_1,\ldots , a_\ell$ and $\ell$ distinct vertices
$v_1, \ldots, v_\ell$ such that for each $i=1,2,\ldots, \ell$, $a_i$
contains $v_i$ and $v_{i+1}$ (where $v_{\ell+1} = v_1$).
\begin{eqnarray}\nonumber
  \mbox{If there is a Berge cycle of length $k-1$ and containing the vertices}\\\label{claimb} \mbox{$u_1, u_2, \ldots, u_{k-1}$, then $u_1, u_2, \ldots, u_{k-1}$ constitute a component of $\mathcal{F}$.}
\end{eqnarray}
Suppose that there is a Berge cycle $C$ containing the vertices $u_1, u_2, \ldots, u_{k-1}$. If an edge $f$ in the $C$ contains some vertex $x$ other than $u_1, u_2, \ldots, u_{k-1}$, then deleting $f$ from $C$, we have a $\mathcal{B}_{k-2}$, which can be extended to a $\mathcal{B}_{k-1}$ with edge $f$, contradicting to (\ref{noBk-1}). Thus every edge in the cycle must be contained within
the vertices $u_1, u_2, \ldots, u_{k-1}$. Moreover, for each
vertex $u_i$ in  $C$,  the neighbors of $u_i$ must belong to $\{u_1, u_2, \ldots, u_{k-1}\}$.  Suppose to the contrary that $u_i$ has a neighbor $y$
other than $u_1, u_2, \ldots, u_{k-1}$. Then the edge containing both $u_i$ and  $y$ is not an
edge of $C$, as shown in the argument above. Thus, removing an appropriate edge of $C$
so that we get a path of length $k-2$ with $u_i$ as an endpoint, and hence we can extend this
to a $\mathcal{B}_{k-1}$  with $y$  as an endpoint, a contradiction to (\ref{noBk-1}). This proves (\ref{claimb}).

Now we show that one can always find a Berge cycle of length $k-1$ containing the vertices $u_1, u_2, \ldots, u_{k-1}$. If there is an edge in $\mathcal{F}$ containing both
$u_1$ and $u_{k-1}$, then we can obtain a Berge cycle of length $k-1$. If not, recall that by (\ref{B1}) we have $\delta(\mathcal{F})\ge \frac{1}{k-1}\binom{k-1}{s}-k+1>\binom{\frac{k-1-2}{2}}{s-1}.$ That implies there exist edges $f'$ and $f''$ in $\mathcal{F}$, such that for some $i$, $u_1, u_{i+1}\in f'$ and $u_i, u_{k-1}\in f''$. Thus, we have a Berge cycle of length $k-1$ on the vertices
$$u_1,u_{i+1},u_{i+2},u_{i+3},
\ldots,u_{k-1},u_{i},u_{i-1},u_{i-2},\ldots,u_{1}.$$
Hence, we can find a Berge cycle of length $k-1$ containing the vertices $u_1, u_2, \ldots, u_{k-1}$ in $\mathcal{F}$. By (\ref{claimb}), $u_1, u_2, \ldots, u_{k-1}$ constitute a component of $\mathcal{F}$.

Let $\mathcal{R}$ denote the hypergraph obtained by deleting vertices $u_1, u_2, \ldots, u_{k-1}$ from $\mathcal{F}$. Then $|\mathcal{R}|\ge \frac{n}{k-1}\binom{k-1}{s}-
k+2-\binom{k-1}{s}>ex(n-(k-1),s,\mathcal{B}_{k-2})$ for  sufficiently large $n$. Hence there is a  Berge path $P''$ of length $k-2$ in $\mathcal{R}$. Denote by $h_1$, $h_2$, \ldots, $h_{k-2}$ the edges of $P''$ with colors $\gamma_1$, $\gamma_2$, \ldots, $\gamma_{k-2}$, respectively.
And there are $k-1$ vertices $v_1$, $v_2$, \ldots, $v_{k-1}$ in $P''$ such that $v_i,v_{i+1}\in h_i$ for $i=1,\ldots, k-2$. Note that $\{u_1, u_2, \ldots, u_{k-1}\}\cap\{v_1, v_2, \ldots, v_{k-1}\}=\emptyset$. Since we have either $w_1\notin \{u_1,v_1\}$ or $w_1\notin \{u_{k-1},v_{k-1}\}$,  suppose, without loss of generality, that $w_1\notin \{u_1,v_1\}$ holds. Consider the edge $e'$ with $w_1,u_1,v_1$ and $s-3$ vertices in $V(\mathcal{H})\setminus (V(P)\cup V(P')\cup V(P''))$.
If $s>3$, $e'$ can only be colored with a color in $\{\alpha_1,\alpha_2, \ldots, \alpha_{k-1}\}$, then $h_1\cup e'\cup P'$ is a rainbow $\mathcal{B}_{k}$. If $s=3$, then $e'=\{w_1,u_1,v_1\}$. If the color of $e'$ is not belonging to $\{\beta_1,\beta_2, \ldots, \beta_{k-2}\}\cup\{\gamma_1,\gamma_2, \ldots, \gamma_{k-2}\}$, then $h_1\cup e'\cup P'$ is a rainbow $\mathcal{B}_{k}$. If the color of $e'$ is in $\{\beta_1,\beta_2, \ldots, \beta_{k-2}\}$, let $\tilde{P}=e'\cup P''$,  then $\tilde{P}$ is a rainbow $\mathcal{B}_{k-1}$ in $\mathcal{H}$. Consider an edge $e''=\{w_1,u_1,x\}$, where $x\notin V(P)\cup V(P')\cup V(\tilde{P})$. To prevent extending $P$, the color of  $e''$ must be in $\{\alpha_1,\alpha_2, \ldots, \alpha_{k-1}\}$. However,  to prevent extending $\tilde{P}$,   $e''$ must be colored with a color  from $\{\beta_1,\beta_2, \ldots, \beta_{k-2}\}\cup\{\gamma_1,\gamma_2, \ldots, \gamma_{k-2}\}$, a contradiction. By symmetry, if the color of $e'$ is from $\{\gamma_1,\gamma_2, \ldots, \gamma_{k-2}\}$, we can deduce a similar contradiction as well.  In conclusion,  any coloring of $\mathcal{H}$ using $\frac{n}{k-1}\binom{k-1}{s}+1$ colors  yields a rainbow $\mathcal{B}_{k}$.

~

(II) For $k\le s+1$, let  $\mathcal{H}$ be a complete $s$-uniform hypergraph on $n$ vertices. Consider  a coloring of $\mathcal{H}$ using $\frac{n(k-2)}{s+1}+1$ colors and yielding no rainbow $\mathcal{B}_{k}$. Let $\mathcal{G}$ be a subgraph of $\mathcal{H}$ with $\frac{n(k-2)}{s+1}+1$ edges such that each color appears on exactly one edge
of $\mathcal{G}$. So the number of edges of $\mathcal{G}$ is $|\mathcal{G}|=\frac{n(k-2)}{s+1}+1$. Denote by $\mathcal{C}_1$, $\mathcal{C}_2$, \ldots, $\mathcal{C}_t$ the components of $\mathcal{G}$, and $n_1$, $n_2$, \ldots, $n_t$ the number of vertices of each components, respectively. Then there is a component $\mathcal{C}_i$, such that $|\mathcal{C}_i|>\frac{n_i(k-2)}{s+1}\ge ex(n_i,s,\mathcal{B}_{k-1})$. Hence there is a rainbow Berge path $P$ of length $k-1$ in $\mathcal{C}_i$. Denote by $e_1$, $e_2$, \ldots, $e_{k-1}$ the edges of $P$ with colors $\alpha_1$, $\alpha_2$, \ldots, $\alpha_{k-1}$, respectively. And there are $k$ vertices $w_1$, $w_2$, \ldots, $w_{k}$ in $P$ such that $w_i,w_{i+1}\in e_i$ for $i=1,\ldots,k-1$. Let $\mathcal{F}$ be the hypergraph obtained by removing all the edges of $P$ from $\mathcal{G}$. We have that $|\mathcal{F}|=\frac{(k-2)}{s+1}n+1-(k-1)=
\frac{n(k-2)}{s+1}-k+2>\frac{(k-3)}{s+1}n$.

We will make use of the following result given in \cite{GKL}.
\begin{prop}\cite{GKL}\label{prop1}
Fix $\ell$ and $s$ such that $s\ge \ell>2$. Let $\mathcal{H}$ be a connected $s$-uniform hypergraph with $$|\mathcal{H}|>\frac{\ell-1}{s+1}n$$ edges, where $n$ is the number of vertices in $\mathcal{H}$. Then for each edge $e \in \mathcal{H}$, there is a Berge path of length $\ell$ in $\mathcal{H}$
starting with $e$.
\end{prop}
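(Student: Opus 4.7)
The plan is to proceed by induction on $\ell$. For the base case $\ell = 3$, given an edge $e$, the density $|\mathcal{H}| > \frac{2n}{s+1}$ combined with connectivity supplies an edge $e_2 \neq e$ sharing a vertex $v_2$ with $e$; one then picks $v_1 \in e \setminus \{v_2\}$, a fresh $v_3 \in e_2 \setminus \{v_1, v_2\}$ (available since $s \ge 3$), and uses the density plus connectivity again to locate an edge $e_3 \notin \{e, e_2\}$ meeting $v_3$, out of whose $s-1$ remaining vertices one selects $v_4 \notin \{v_1, v_2, v_3\}$.

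For the inductive step, I take a longest Berge path $P = e_1 e_2 \cdots e_r$ starting with $e = e_1$, with distinguished vertices $v_1, \ldots, v_{r+1}$, and suppose for contradiction $r < \ell$. Maximality at $v_{r+1}$ forces every edge $f \notin E(P)$ containing $v_{r+1}$ to satisfy $f \subseteq V(P)$; since $|f| = s$ while $|V(P)| = r + 1 \le \ell \le s$, such an $f$ can exist only in the borderline case $r + 1 = s$, and then only as $V(P)$ itself. I then apply a rotation argument: whenever $v_{r+1} \in e_j$ for some $j < r$, reroute $P$ via $e_1, \ldots, e_{j-1}, e_j, e_r, e_{r-1}, \ldots, e_{j+1}$, with new vertex sequence $v_1, \ldots, v_j, v_{r+1}, v_r, \ldots, v_{j+1}$, to produce another Berge path of length $r$ starting with $e$ but ending at $v_{j+1}$. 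Maximality applies to each rotated endpoint, and iterating produces a set $U \subseteq V(P)$ of ``trapped'' endpoints, each with degree bounded by roughly $r + 1$.

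Combining the trapped-vertex structure with connectivity, the $n - (r+1)$ vertices outside $V(P)$ can be reached only through edges incident to the few non-trapped path vertices. A double count, contrasting the degree sum with the extremal construction of $n/(s+1)$ vertex-disjoint $(s+1)$-cliques (on each of which at most $\ell - 1$ edges survive), yields the contradiction $|\mathcal{H}| \le \frac{(\ell-1)n}{s+1}$.

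The main technical obstacle will be the rotation step: for Berge hypergraphs the rerouting must carefully track which vertices of each $e_i$ play the ``structural'' role so that the resulting $r + 1$ distinguished vertices remain distinct under iterated rotations, and the trapped-vertex counting must be calibrated to match the sharp constant $(\ell-1)/(s+1)$. The hypothesis $s \ge \ell$ is what guarantees enough free vertices in each $e_i$ to perform rotations safely, and it enters critically in both the base case and the inductive double count.
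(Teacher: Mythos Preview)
The paper does not prove this proposition; it is quoted from Gy\H{o}ri, Katona, and Lemons \cite{GKL} (note the citation in the proposition heading and the sentence ``We will make use of the following result given in \cite{GKL}'' immediately preceding it), and is used as a black box in the proof of Theorem~\ref{th3}. There is therefore no proof in the present paper against which to compare your proposal.

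Regarding your sketch on its own merits: the longest-path-plus-rotation strategy is indeed the standard engine behind Erd\H{o}s--Gallai type results and is essentially what \cite{GKL} carries out. Two points, however. First, your framing as ``induction on $\ell$'' is misleading, since the argument you describe for the inductive step never invokes the hypothesis for $\ell-1$; it is a direct argument, and the separate base case is unnecessary. Second, your base case is too casual even as a warm-up: after picking an arbitrary $v_3 \in e_2 \setminus \{v_1,v_2\}$, the phrase ``density plus connectivity again'' does not produce an edge $e_3$ through $v_3$, because the global bound $|\mathcal{H}| > \frac{2n}{s+1}$ says nothing about the degree of a particular vertex, and connectivity only gives a walk of edges, not an edge incident to $v_3$ specifically. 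The substantive content lies exactly where you flag it --- the rotation bookkeeping and the double count matching the sharp constant $\frac{\ell-1}{s+1}$ --- and your proposal stops short of executing either.
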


Let the components of $\mathcal{F}$ be $\mathcal{C}^*_1$, $\mathcal{C}^*_2$, \ldots, $\mathcal{C}^*_\mu$, and $n^*_1$, $n^*_2$, \ldots, $n^*_\mu$ the number of vertices of each components, respectively. Then there is a component $\mathcal{C}^*_j$ satisfying that $|\mathcal{C}^*_j|>\frac{k-3}{s+1}n^*_j
\ge ex(n^*_j,s,\mathcal{B}_{k-2})$. Now we focus on finding a Berge path of length $k-2$ containing some new vertices in $\mathcal{C}^*_j$.

If there exists such a $\mathcal{C}^*_j$ satisfying that $|\mathcal{C}^*_j|>\frac{k-3}{s+1}n^*_j$ and $\mathcal{C}^*_j\cap \mathcal{C}_i=\emptyset$, then we can find a Berge path  of length $k-2$ in $\mathcal{C}^*_j$,  and  its vertices are disjoint with $P$.


If every such $\mathcal{C}^*_j$ with $|\mathcal{C}^*_j|>\frac{k-3}{s+1}n^*_j$ satisfying that $\mathcal{C}^*_j\subseteq \mathcal{C}_i$,
then we have that $n^*_j\ge s+1\ge k$ since the number of vertices of a $\mathcal{B}_{k-2}$ is at least $s-1+2=s+1$.  Furthermore, we claim that
\begin{equation}\label{eq1}
n^*_j\ge  2k.
\end{equation}
In fact, if $n^*_j<  2k$, then $|\mathcal{C}^*_j|\le \binom{n^*_j}{s}<\binom{2k}{s}$. Delete the component $\mathcal{C}^*_j$ from $\mathcal{F}$, we have $n-n^*_j$ vertices and  more than $
\frac{k-2}{s+1}n-k+2-\binom{2k}{s}>
\frac{k-3}{s+1}(n-k)\ge\frac{k-3}{s+1}(n-n^*_j)$ edges. So there is a component $\mathcal{C}^*_t$, such that $|\mathcal{C}^*_t|>\frac{k-3}{s+1}n^*_t
\ge ex(n^*_t,s,\mathcal{B}_{k-2})$ edges and $\mathcal{C}^*_t\cap \mathcal{C}_i=\emptyset$, a contradiction. So  we have $n^*_j\ge  2k$, which proves (\ref{eq1}). Hence, there is a vertex $u_1\notin \{w_1, w_2, \ldots, w_{k}\}$ in  $\mathcal{C}^*_j$. We take an edge $e$ in  $\mathcal{C}^*_j$ containing $u_1$, by Proposition \ref{prop1}, there is a $\mathcal{B}_{k-2}$ starting with $e$.

In both cases above, we denote by $P'$ the  $\mathcal{B}_{k-2}$ we obtained, and denote by $f_1$, $f_2$, \ldots, $f_{k-2}$ the edges of $P'$ with colors $\beta_1$, $\beta_2$, \ldots, $\beta_{k-2}$, respectively. There are $k-1$ vertices $u_1$, $u_2$, \ldots, $u_{k-1}$ in $P'$ such that $u_i,u_{i+1}\in f_i$ for $i=1,\ldots, k-2$. Note that $u_1\notin \{w_1, w_2, \ldots, w_{k}\}$.
Let $\mathcal{R}$ denote the hypergraph obtained by deleting $f_1, f_2, \ldots, f_{k-2}$ from $\mathcal{F}$. Then $|\mathcal{R}|=\frac{k-2}{s+1}n-k+2-(k-2)>\frac{k-3}{s+1}n$. Let the components of $\mathcal{R}$ be $\mathcal{C}^{**}_1$, $\mathcal{C}^{**}_2$, \ldots, $\mathcal{C}^{**}_\tau$, and $n^{**}_1$, $n^{**}_2$, \ldots, $n^{**}_\tau$ the number of vertices of each component, respectively. Then there is a component $\mathcal{C}^{**}_{\ell}$, such that $|\mathcal{C}^{**}_{\ell}|>\frac{k-3}{s+1}n^{**}_{\ell}
\ge ex(n^{**}_{\ell},s,\mathcal{B}_{k-2})$.
If there exists such a $\mathcal{C}^{**}_{\ell}$ satisfying that $\mathcal{C}^{**}_{\ell}\cap \mathcal{C}^*_j\cap \mathcal{C}_i=\emptyset$, then we can find an edge $e'$ containing a vertex $v_1\notin \{u_1, u_2, \ldots, u_{k-1},w_1, w_2, \ldots, u_{k}\}$, and the color of $e'$ is  different  with $\alpha_1,\alpha_2, \ldots, \alpha_{k-1},\beta_1,\beta_2, \ldots, \beta_{k-2}$. Otherwise, $\mathcal{C}^{**}_{\ell}\subseteq \mathcal{C}^*_j$ or $\mathcal{C}^{**}_{\ell}\subseteq \mathcal{C}_i$. We have $n^{**}_{\ell}\ge s+1\ge k$. With the argument similar to the proof of (\ref{eq1}),  we can obtain that
\begin{equation*}\label{eq2}
n^{**}_{\ell}\ge  2k.
\end{equation*}
Thus, there is  a vertex not belonging to  $\{u_1, u_2, \ldots, u_{k-1},w_1, w_2, \ldots, u_{k}\}$. We still denote it by $v_1$. Take an edge $e'$ in  $\mathcal{C}^{**}_{\ell}$ containing $v_1$. Denote this edge by $e'$. So the color of $e'$ is different with $\alpha_1,\alpha_2, \ldots, \alpha_{k-1},\beta_1,\beta_2, \ldots, \beta_{k-2}$. Consider an edge $e''$ containing  $w_1,u_1,v_1$, it must be colored with a color from $\{\alpha_1,\alpha_2, \ldots, \alpha_{k-1}\}$ since otherwise $P$ can be extended. But then $e'\cup e''\cup P'$ is a rainbow $\mathcal{B}_{k}$, a contradiction. Therefore, we have proved the upper bound.
\qed

\subsection{Berge Cycle--Proof of Proposition \ref{BC}}
Note that the  lower bound in Proposition \ref{BC} is obvious, which follows from a similar observation as in Eq.\eqref{H-e} for hypergraphs. Now we prove the upper bound in Proposition \ref{BC}.
Let  $\mathcal{H}$ be a complete $s$-uniform hypergraph on $n$ vertices. By contradiction, consider  a coloring of $\mathcal{H}$ using $ex(n,s,\mathcal{B}_{k-1})+k$ colors  yielding no rainbow $\mathcal{BC}_{k}$. Let $\mathcal{G}$ be a subgraph of $\mathcal{H}$ with $ex(n,s,\mathcal{B}_{k-1})+k$ edges such that each color appears on exactly one edge
of $\mathcal{G}$. So the number of edges of $\mathcal{G}$ is $|\mathcal{G}|=ex(n,s,\mathcal{B}_{k-1})+k>ex(n,s,\mathcal{B}_{k-1})$. Hence there is a rainbow Berge path $P$ of length $k-1$ in $\mathcal{G}$. Denote by $e_1$, $e_2$, \ldots, $e_{k-1}$ the edges of $P$ with colors $\alpha_1$, $\alpha_2$, \ldots, $\alpha_{k-1}$, respectively. And there are $k$ vertices $w_1$, $w_2$, \ldots, $w_{k}$ in $P$ such that $w_i,w_{i+1}\in e_i$ for $i=1,\ldots,k-1$.

 Let $\mathcal{F}$ be the hypergraph obtained from $\mathcal{G}$ by removing all the edges of $P$. Then we have that $|\mathcal{F}|=
ex(n,s,\mathcal{B}_{k-1})+1$.
 Therefore, there is a  Berge path $P^*$ of length $k-1$ in $\mathcal{F}$. Denote by $g_1$, $g_2$, \ldots, $g_{k-1}$ the edges of $P^*$, where
  there are $k$ vertices $z_1$, $z_2$, \ldots, $z_{k}$ in $P^*$ such that $z_i,z_{i+1}\in g_i$ for $i=1,\ldots, k-1$. Consider an $s$-edge $e$ containing $w_1,w_k,z_1,z_k$.
If $e$ is colored with a color not in $\{\alpha_1, \alpha_2, \ldots, \alpha_{k-1}\}$, then $e\cup P$ is a rainbow  $\mathcal{BC}_{k}$. So $e$ is colored with a color belonging to  $\{\alpha_1, \alpha_2, \ldots, \alpha_{k-1}\}$, but now $e\cup P^*$ is a rainbow  $\mathcal{BC}_{k}$, a contradiction. Hence, $ar(n,s,\mathcal{BC}_{k-1})\le ex(n,s,\mathcal{B}_{k-1})+k$ for any possible $n$. This proves Proposition \ref{BC}.
\qed

\vspace{6mm}

\noindent {\bf Acknowledgement.}  Ran Gu was partially supported by
 Natural Science Foundation of Jiangsu Province (No. BK20170860), National Natural Science Foundation of China  (No. 11701143), and the Fundamental Research Funds for the Central Universities. Jiaao Li was partially supported by  National Natural Science Foundation of China (No. 11901318), Natural Science Foundation of Tianjin (No. 19JCQNJC14100) and  the Fundamental Research Funds for the Central Universities, Nankai University (No. 63191425).   Yongtang Shi was partially supported by National Natural Science Foundation of China,
Natural Science Foundation of Tianjin (No. 17JCQNJC00300), the China-Slovenia bilateral project ``Some topics in modern graph theory" (No.~12-6),
Open Project Foundation of Intelligent Information Processing Key Laboratory of Shanxi Province (No. CICIP2018005),
and the Fundamental Research Funds for the Central Universities.

\footnotesize{

}

\end{document}